\tikzstyle{decisionL} = [diamond, draw, fill=blue!10, shape aspect =2.5, 
\tikzstyle{decisionM} = [diamond, draw, fill=blue!10, shape aspect =2.5, 
\tikzstyle{decisionS} = [diamond, draw, fill=blue!10, shape aspect =2.5, 
\tikzstyle{blockS} = [rectangle, draw, fill=blue!10, 
\tikzstyle{blockM} = [rectangle, draw, fill=blue!10, 
\tikzstyle{blockL} = [rectangle, draw, fill=blue!10, text width=7em, text
\tikzstyle{cloud} = [draw, ellipse,fill=red!10, minimum height=3.0em, minimum
\newlength{\guillotine}
\def\ds@whichfont{dsrom}
\DeclareMathAlphabet{\mathds}{U}{\ds@whichfont}{m}{n}
\newcommand\bbone{\mathds{1\!}}
\newtheorem{thm}{Theorem}[section]
\newtheorem{cor}[thm]{Corollary}
\newtheorem{lemma}[thm]{Lemma}
\newtheorem{prop}[thm]{Proposition}
\newtheorem{notation}[thm]{Notation}
\newtheorem{definition}[thm]{Definition}
\newtheorem{example}[thm]{Example}
\newtheorem{conjecture}[thm]{Conjecture}
\theoremstyle{remark}
\newtheorem{rem}[thm]{Remark}
\begin{document}
%\title{xxxx}

%%%%%polina    \title{Rigorous bounds on the Hausdorff dimension 
%%%%%polina    of limit sets
%%%%%polina    % for conformal Markov IFS 
%%%%%polina    with applications }

%\title{Estimating Hausdorff dimension without
%periodic points}

\title{Hausdorff dimension estimates applied to Lagrange and Markov spectra, Zaremba
theory, and limit sets of Fuchsian groups} 
%\author{M. Pollicott and P. Vytnova}

\author{M. Pollicott and P. Vytnova\thanks{ The first author is partly
supported by ERC-Advanced Grant 833802-Resonances and EPSRC grant
EP/T001674/1 the second author is partly supported by
EPSRC grant EP/T001674/1. }}

\date{ }
\maketitle

\abstract{In this note we will describe a simple and practical approach to get 
rigorous bounds on the Hausdorff dimension of limits sets for some one dimensional  
Markov  iterated function schemes.    The general  problem has attracted 
considerable attention, but we are particularly concerned with the role of the
value of the Hausdorff dimension in solving conjectures and problems in other
areas of mathematics. 
As our first  application we confirm, and often strengthen, conjectures 
on the difference of the Lagrange and Markov\footnote{Markov's name will appear
in this article in two contexts, namely 
those of Markov spectra in number theory and the Markov condition from 
probability theory. Since the both notions are associated with the same
person~(A.~A.~Markov, 1856--1922), we have chosen to use the same spelling, despite 
the conventions often used in these different areas.}
spectra in Diophantine analysis, which appear in the work of Matheus and 
Moreira~\cite{MM}.  As a second application we (re-)validate and improve estimates 
connected  with the Zaremba conjecture in number theory, used in the work 
of Bourgain--Kontorovich~\cite{BK14}, Huang~\cite{Huang} and
Kan~\cite{Kan19}.
As a third more geometric application, we rigorously bound the bottom of the 
spectrum of the Laplacian for  infinite area surfaces, as illustrated by an 
example studied by McMullen~\cite{McMullen}.

In all approaches to estimating the dimension of limit sets there are questions 
about the efficiency of the algorithm, the computational effort required and 
the rigour of the bounds.  The approach we use has the virtues of being simple 
and efficient and we present it in Section~\ref{sec:estimates} in a way that is
straightforward to implement. 

These estimates apparently cannot be obtained by other known methods.
}

\section{Introduction}

We want to consider some interesting  problems where a knowledge of the  exact  
value of the Hausdorff dimension of some appropriate set plays an important role in an 
apparently unrelated area. For instance we consider the  applications  to 
Diophantine approximation and the difference between the Markov and Lagrange 
spectra, denominators of finite continued 
fractions and the Zaremba conjecture, and the spectrum of the Laplacian on certain Riemann surfaces.
   A common feature is that the progress on
these topics depends on accurately computing the dimension of certain limit sets for iterated function 
schemes.

The sets in question are dynamically defined sets given by Markov iterated function schemes.
The traditional approach to estimating the dimension of such sets is to use a variant of 
what is sometimes called a finite section method. This typically involves 
approximating the associated transfer operator by a finite rank operator and 
deriving approximations to the dimension from its maximal eigenvalue.  
This method originated with traditional  Ulam method and there are various 
applications and refinements due to~Falk---Nussbaum~\cite{FN18}, Hensley~\cite{Hensley}, 
McMullen~\cite{McMullen} and others.  
A second approach, which we will call  the periodic point  approach,  uses 
fixed points for combinations of contractions in the iterated function 
schemes~\cite{JP02}.  This  approach works best  for a small number of 
analytic branches whereas the finite section method often works more generally.
 However, in both of these approaches additional work is needed to address the 
important issue of validating numerical results. In the case of 
periodic point method there has been recent progress in getting rigorous estimates 
for Bernoulli systems~\cite{JP20}, but 
it can still be particularly difficult to get rigorous bounds in the case of Markov 
maps.  In the case of Ulam's method the size of matrices involved in
approximation can make it  hard to obtain reasonable bounds.

In this note we want to use a different approach which has the twin  merits  of 
giving both effective estimates on the dimension and ensuring the rigour of these values.
This is based on combining elements of the methods of Babenko--Yur'ev~\cite{Babenko} and
Wirsing~\cite{Wirsing} originally developed for the Gauss map. We will describe this in more
detail in \S~\ref{sec:estimates}.

To complete the introduction we will discuss our main  applications.

\subsection*{Application I:  Markov and Lagrange spectra}
As our first application, we can consider the work of Matheus and
Moreira~\cite{MM} on estimating  the size of the  difference of 
two subsets of the real line called the Markov spectrum $\mathcal M \subset
\mathbb R^+$ and the Lagrange spectrum $\mathcal L  \subset \mathbb R^+$.
 The two sets play an important role in Diophantine approximation theory 
%, and 
%will defined in detail  later in \S \ref{markoff}. 
and an excellent introduction to topics  in  this subsection is~\cite{CF89}.

By a classical result of Dirichlet from 1840, for any irrational number $\alpha$ 
there are  infinitely many rational numbers $\frac{p}{q}$ 
%($p,q \in \mathbb Z$, $q \neq 0$) 
satisfying
$
|\alpha - \frac{p}{q}| \leq \frac{1}{q^2}.
$
For  each irrational $\alpha$ we can choose the largest value
$\ell(\alpha) > 1$ such that the inequality
$
\left| \alpha - \frac{p}{q} \right| \leq \frac{1}{\ell(\alpha) q^2}
$
still has infinitely many solutions with $\frac{p}{q} \in \mathbb Q$.
An equivalent definition would be
$$
\ell(\alpha) \colon = \left(\inf_{ p,q  \in \mathbb Z, q \neq 0} %\left\{ 
|q (q\alpha - p)| \right)^{-1}.
%\mid p,q  \in \mathbb Z, q \neq 0 \right\}.
$$
For example, we know that $\ell\left(\frac{1 + \sqrt{5}}{2} \right) = \sqrt{5}$,
$\ell\left(1-\sqrt{2} \right) = \sqrt{8}$, etc. 
{\color{black}
The Hurwitz irrational number theorem states that for any irrational~$\alpha$
there are infinitely many rationals $\frac{p}q$ such that
$\left|\alpha-\frac{p}q\right|<\frac{1}{q^2\sqrt5}$. This implies, in
particular, that $\ell(\alpha) \ge \sqrt5$ for all $\alpha \in \mathbb R
\setminus \mathbb Q$. }
\begin{definition}
    The set $ \mathcal L = \{\ell(\alpha) \colon \alpha \in \mathbb R\setminus \mathbb Q \} $
    is called the \emph{Lagrange spectrum}.
\end{definition}  
There is another characterisation of elements of Lagrange spectrum in terms of
continued fractions~\cite{CF89}.   
We denote
the infinite continued fraction of $\alpha \in \mathbb R\backslash \mathbb Q$ by 
$$
\alpha =  [a_0;a_1,a_2,\ldots] = a_0+\cfrac{1}{a_1+\cfrac{1}{a_2+\cfrac{1}{a_3+\dots}}}
$$
where $a_0 \in \mathbb Z$ and $a_n  \in \mathbb N$ for $n\ge1$.
Assume that for $\alpha \in  \mathbb R\setminus \mathbb Q$ we have
$$
\alpha = [a_0;a_1,a_2,\ldots] =
[a_0;a_1,a_2,\ldots,a_n,\mathbf{\alpha_{n+1}}],
$$ 
in other words for the $n$'th rational approximation we may write
$$
\left|\alpha - \frac{p_n}{q_n}\right| = \frac{1}{\left(\alpha_{n+1} +
\frac{q_{n-1}}{q_n}\right)q_n^2}. 
$$
Then 
$$
\ell(\alpha) = \limsup_{n\to\infty} \left(\alpha_{n+1} +
\frac{q_{n-1}}{q_n}\right).
$$
Replacing $\limsup$ in the latter formula by supremum, we get the definition of the
Markov spectrum. 
\begin{definition}
In the notation introduced above, let
$$
\mu(\alpha) = \sup_n \left(\alpha_{n+1} + \frac{q_{n-1}}{q_n}\right).
$$
    The set $ \mathcal M = \{\mu(\alpha) \colon \alpha \in \mathbb R\setminus \mathbb Q \} $
    is called the \emph{Markov spectrum}.
%\marginnotem{I think we need the (equivalent)  classical definition of Markov spectrum mentioned too. Either here or elsewhere}
\end{definition}
%%%%%%%polina%%%%%%
%%%%%%%polina%%%%%%A second subset of the real numbers is the following.
%%%%%%%polina%%%%%%%There is a definition of a second  set in terms of binary quadratic forms
%%%%%%%polina%%%%%%%$Q(x,y) = a x^2 + b xy + c y^2$
%%%%%%%polina%%%%%%%with $a,b, c \in \mathbb R$ and  discriminant $ b^2 - 4ac =1$. 
%%%%%%%polina%%%%%%%Associate 
%%%%%%%polina%%%%%%%$$
%%%%%%%polina%%%%%%%\alpha(Q) = 
%%%%%%%polina%%%%%%%\inf\left\{ \frac{1}{|Q(n,m)|} \colon (n,m) \in \mathbb Z^2 \setminus  (0,0)\right\}.
%%%%%%%polina%%%%%%% $$ 
%%%%%%%polina%%%%%%\begin{definition}
%%%%%%%polina%%%%%%    The \emph{Markov spectrum} is given by 
%%%%%%%polina%%%%%%    $$
%%%%%%%polina%%%%%%    \mathcal M = \left\{ (\inf_{(x,y) \ne (0,0)} Q(x,y) )^{-1} \colon  
%%%%%%%polina%%%%%%    Q (x,y) = a x^2 + b xy + c y^2 \hbox{ and } b^2 - 4ac =1
%%%%%%%polina%%%%%%    \right\}
%%%%%%%polina%%%%%%    $$
%%%%%%%polina%%%%%%    which are the values of  real binary quadratic forms
%%%%%%%polina%%%%%%    %$Q(x,y) = a x^2 + b xy + c y^2$
%%%%%%%polina%%%%%%    %with $a,b, c \in \mathbb R$ and  discriminant $ b^2 - 4ac =1$. 
%%%%%%%polina%%%%%%    with discriminant $1$.
%%%%%%%polina%%%%%%\end{definition}

 There is an equivalent definition of the Markov spectrum in terms of quadratic forms.
Both notions were suggested by Markov in 1879--80
% in connection with the Markov
%spectrum problem
~\cite{gm1},~\cite{gm2}.

Naturally, the sets $\mathcal L$ and $\mathcal M$ have many similarities.
%\begin{enumerate}
%\item
The smallest value for each is~$\sqrt{5}$ and in $[\sqrt5,3]$ both sets are
countable and agree, i.e., 
$$
\mathcal L \cap [\sqrt{5},3] = \mathcal M \cap [\sqrt{5},3]
= \{\sqrt{5},\sqrt{8}, \sqrt{221}/5 \cdots \}.
$$
%\item
Furthermore, Freiman~\cite{Freiman2}, following earlier work of Hall~\cite{Hall}, 
computed an explicit constant, called Freiman constant $\sqrt{20}<c_F<\sqrt{21}$, such that % that above $4.5278 \ldots$ both are a half-line, i.e., 
$$
\mathcal L \cap [c_f, +\infty)
= \mathcal M \cap [c_F \ldots, +\infty)  =  [c_F, + \infty).
$$
The half-line $[c_F,+\infty)$ is known as Hall's ray. 
%\end{enumerate}
Nevertheless, these two sets are actually different.
%\begin{enumerate}
%\item[3.]
In particular, Tornheim~\cite{Tornheim} showed  $\mathcal L \subseteq \mathcal M$ and 
Freiman~\cite{Freiman1} showed $\mathcal L \neq \mathcal M$.
%\end{enumerate}

In a recent work Matheus and Moreira~\cite{MM}, \S B.2 give upper bounds on the
Hausdorff dimension $\dim_H(\mathcal M \setminus \mathcal L)$ in terms of the
Hausdorff dimension of limits sets of specific Markov Iterated Function Schemes. Using the approach 
presented in this article 
we compute the Hausdorff dimensions of the sets concerned, and
combining our numerical estimates  
in \S\ref{ssec:Markoff} with  the  intricate analysis of~\cite{MM} we obtain
the following result (the proof is computer-assisted).

\begin{thm}
    \label{MMtheorem}
    We have the following  bounds on the dimension of parts of $\mathcal M \setminus \mathcal L$ 
    %on different intervals of $\mathbb R$:
    \begin{enumerate}
        \item
            $\dim_H( (\mathcal M \setminus \mathcal L) \cap (\sqrt{5},
            \sqrt{13}))\phantom{0} < 0.7281096$;
        \item 
            $\dim_H( (\mathcal M \setminus \mathcal L) \cap (\sqrt{13}, 3.84)) <
            0.8552277$;
        \item
            $\dim_H( (\mathcal M \setminus \mathcal L) \cap (3.84, 3.92)) <
            0.8710525$;
        \item
            $\dim_H( (\mathcal M \setminus \mathcal L) \cap (3.92, 4.01)) <
            0.8110098$; and 
        \item
            $\dim_H( (\mathcal M \setminus \mathcal L) \cap (\sqrt{20},
            \sqrt{21})) < 0.8822195$.
    \end{enumerate}
\end{thm}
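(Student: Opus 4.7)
The strategy is a two-step reduction. First, for each of the five intervals $I$ appearing in the statement I would appeal to the analysis of Matheus and Moreira~\cite{MM}, which isolates an explicit Markov iterated function scheme $\mathcal{F}_I$ whose limit set $K_I$ satisfies $\dim_H((\mathcal{M}\setminus\mathcal{L}) \cap I) \le \dim_H(K_I)$. Concretely, $\mathcal{F}_I$ is built from finitely many inverse branches $T_{\mathbf{a}}$ of iterates of the Gauss map, indexed by admissible words $\mathbf{a}$ of continued-fraction digits, together with a finite directed graph recording which suffixes are allowed to follow which prefixes. Because the inequality $\dim_H((\mathcal{M}\setminus\mathcal{L}) \cap I) \le \dim_H(K_I)$ has already been established in~\cite{MM}, the content of the present theorem is a rigorous numerical upper bound on each $\dim_H(K_I)$.

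For the second step I would invoke Bowen's formula, which identifies $\dim_H(K_I)$ with the unique $s^\ast$ at which the leading eigenvalue $\lambda(s^\ast)$ of the associated transfer operator
\[
\mathcal{L}_s f(x) = \sum_{\mathbf{a}} |T'_{\mathbf{a}}(x)|^{s} \, f(T_{\mathbf{a}}(x))
\]
(with the sum adapted to the Markov structure of $\mathcal{F}_I$) equals one. Since $s \mapsto \lambda(s)$ is strictly decreasing, it suffices to exhibit a trial value $s_0$ together with a rigorously certified inequality $\lambda(s_0) < 1$, from which $\dim_H(K_I) < s_0$ follows; the target constants in parts~(1)--(5) dictate the precision required.

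The certification is supplied by the procedure of Section~\ref{sec:estimates}, which combines a Babenko--Yur'ev polynomial approximation with Wirsing-type bracketing of the leading eigenvalue. I would expand candidate eigenfunctions of $\mathcal{L}_s$ in a suitable polynomial basis on each vertex of the admissibility graph, carry out the finite-dimensional computation, and invoke the built-in a~posteriori error estimates to convert the numerical output into a verified strict inequality $\lambda(s_0) < 1$ at the chosen $s_0$.

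The principal obstacle is combinatorial rather than analytic. For intervals~(2), (3) and~(4) the admissibility rules extracted from~\cite{MM} are intricate: the itineraries realising $\mathcal{M}\setminus\mathcal{L}$ near $\sqrt{13}$, $3.84$ and $3.92$ are described by increasingly elaborate forbidden-word conditions, and any misreading that enlarges the alphabet or loosens the graph would enlarge $K_I$ and invalidate the bound. A secondary difficulty is computational cost: the size of the operator-valued matrix encoding the Markov graph, together with the polynomial degree needed to push the rigorous upper bound on $\lambda(s_0)$ strictly below one at the small values of $s_0$ displayed in the statement, jointly determine whether the five target constants can actually be met.
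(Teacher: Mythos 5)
Your overall two-step architecture is the same one the paper uses: (i) import a Matheus--Moreira reduction to a bound on the Hausdorff dimension of the limit set of an explicit Markov iterated function scheme, then (ii) certify that dimension numerically via Bowen's formula and the min-max/Lagrange--Chebyshev machinery of Section~\ref{sec:estimates}. Your description of step~(ii), including the observation that only the one-sided inequality (exhibit $s_0$ with $\lambda(s_0)<1$, hence $\dim_H < s_0$) is needed, is correct and matches Lemma~\ref{tech} and the Part~2 of Lemma~\ref{lem:minmax}.

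However, your step~(i) misstates the form of the Matheus--Moreira reductions, and this is a genuine gap. The inequalities from~\cite{MM} are \emph{not} of the form $\dim_H((\mathcal M\setminus\mathcal L)\cap I)\le\dim_H(K_I)$. Rather they are affine: Part~1 uses a factor of $2$,
\[
\dim_H\bigl((\mathcal M\setminus\mathcal L)\cap(\sqrt5,\sqrt{13})\bigr)\le 2\dim_H(X_M),
\]
and Parts~2--5 each involve an explicit additive constant, e.g.\ for Part~2,
\[
\dim_H\bigl((\mathcal M\setminus\mathcal L)\cap(\sqrt{13},3.84)\bigr) < 0.281266 + \dim_H(X_M),
\]
with analogous constants $0.25966$, $0.167655$, $0.172825$ in Parts~3--5. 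The underlying sets $X_M$ are genuinely much smaller than your picture suggests: their dimensions are roughly $0.364$, $0.574$, $0.611$, $0.643$, $0.709$, not the target constants $0.728$--$0.882$ in the theorem statement. Those targets arise only after doubling or adding the Matheus--Moreira constant. This matters practically: under your reading one would merely need $\dim_H(K_I)<0.728$, say, which would be a very slack estimate; what is actually required is $\dim_H(X_M)$ below the much tighter threshold $\approx 0.36405$, and similarly tight thresholds in Parts~2--5, so the numerical precision required is substantially higher than your framework implies. You would need to quote the correct affine inequalities from~\cite{MM} (equations~(B.1)--(B.6) of that paper) before the numerical step can be set up with the right targets.

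A secondary, minor point: the Markov conditions in Parts~1, 4 and~5 involve forbidden words of length~$3$ and~$4$, so the IFS must be re-alphabetised by blocks (pairs or triples of digits) before a $1$-step Markov matrix can be written; your phrase ``finite directed graph recording which suffixes are allowed to follow which prefixes'' gestures at this, but it is worth being explicit that the contractions are then indexed by multi-digit words, with the map attached to a word being the single fractional-linear map determined by its first digit.
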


In particular, taking into account the known bound  $ \dim_H(\mathcal M \setminus \mathcal L \cap (
4.01, \sqrt{20})) < 0. 873316$ (\cite{MM}, (B.6)) on the remaining interval
we obtain  an upper bound of 
$$
\dim_H(\mathcal M \setminus \mathcal L) < 0.8822195.
$$
Note that this  confirms the conjectured upper bound $\dim_H(\mathcal M
\setminus \mathcal L)<0.888$~(\cite{MM}, (B.1)) and improves on the  earlier rigorous 
bound of $\dim_H(\mathcal M \setminus \mathcal L) < 0.986927$
(\cite{MM}, Corollary 7.5 and~\cite{Moreira}, Theorem 3.6).

For the purposes of comparison, we present the bounds in Theorem 1.3 with   the
previous rigorous bounds given
on different portions of $\mathcal M \setminus \mathcal L$ in
Figure~\ref{fig:compare}.
%\begin{table}
%\begin{center}
%    \begin{tabular}{|c||c|cp{50mm}|}
%     \hline
%     Interval & Theorem~\ref{MMtheorem} & \multicolumn{2}{c|}{Heuristic Estimates
 %    from~\cite{MM} } \\
 %    \hline
%  $ (\sqrt{5}, \sqrt{10}) $ & \multirow{2}{*}{ $0.728114$ }  & $ 0.93$&\\
 % \hhline{|-~--|}  
 % $ (\sqrt{10}, \sqrt{13})$ && $ 0.706104$  & Corrolary 7.3\\
%\hline
 % $ (\sqrt{13},3.84) $ & $0.855236$ & \multirow{3}{*}{$0.986927$} &
 % \multirow{3}{*}{Corrolaries 7.5, 7.7 }  \\
%  $ (3.84,3.92) $& $0.872860$  &  & \\
%  $ (3.92,4.01) $& $0.812455$ & &\\
%\hline
 % $ (4.01,\sqrt{20}) $ && $0.873316$ & (B.6) \\  
%\hline
 % $ (\sqrt{20}, \sqrt{21})$ & $0.882325$ & $ 0.961772$ & Corrolary 7.9\\
 %   \hline
 %   \end{tabular}
%\end{center}
  %  \caption{Comparision of old and new upper bounds.}
  %  \label{tab:compare}
%\end{table}
\begin{figure}[h!]
    \includegraphics{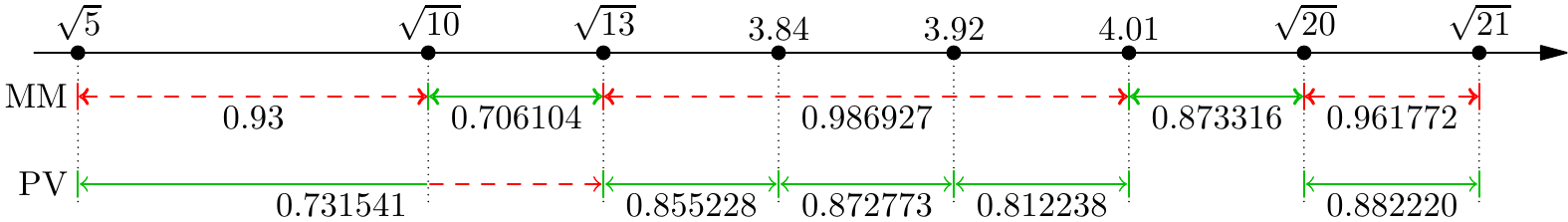}
    \caption{Comparison of old and new upper bounds.}
    \label{fig:compare}
\end{figure}

In the present work we will be looking for both lower and upper bounds. In order to
give a clearer presentation of the results we will use the following notation
\begin{notation}
  We abbreviate  $ a \in [b-c, b+c]$ as $a = b \pm c$.
\end{notation}

In order to get a lower bound on the difference of the 
Lagrange and Markov spectra
Matheus and Moreira~(\cite{MM}, Theorem 5.3) showed that there is a lower bound 
$\dim_H(\mathcal M \setminus \mathcal L) \geq \dim_H(E_2)$ where $E_2
\subset [0,1]$ denotes the Cantor set of irrational numbers with
infinite continued fraction expansions whose digits are either~$1$
or~$2$.\footnote{ A slight improvement on this lower bound is described in the 
book ``Classical and Dynamical Markov and Lagrange Spectra: Dynamical, Fractal
and Arithmetic Aspects'' by D. Lima, C. Matheus, C. G Moreira, S. Romana.}  
The study of the dimension of this set was initiated by Good in~1941~\cite{Good}.
There are various estimates on $\dim_H(E_2)$ including~\cite{JP20}
where the dimension was computed to $100$ decimal places using 
periodic points. In \S7 we will recover and improve on this estimate
giving an estimate accurate to~$200$ decimal places and thus we 
deduce the following result.
\begin{thm}\label{good}\footnote{ The calculation was done using Mathematica. The validity
    of the estimate depends on the internal error estimates of the software.}
    $$
    \begin{aligned} %\dim_H&(\mathcal M \setminus \mathcal L) \geq  
        \dim_H(E_2)        &= 0.
        5312805062\,7720514162\,
        4468647368\,4717854930\,
        5910901839\,8779888397\,
        \cr
        &\qquad 
        8039275295\,3564383134\,
        5918109570\,1811852398\,
        8042805724\,3075187633\,
        \cr
        &\qquad 
        4223893394\,8082230901\,
        7869596532\,8712235464\,
        2997948966\,3784033728\,
        \cr
        &\qquad 
        7630454110\,1508045191\,
        3969768071\,3
        \pm 10^{-201}.\cr
    \end{aligned}
    $$
\end{thm}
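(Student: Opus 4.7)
The plan is to realise $E_2$ as the limit set of the conformal iterated function scheme $\{T_1,T_2\}$ on $[0,1]$ with $T_1(x)=1/(1+x)$ and $T_2(x)=1/(2+x)$, and then invoke Bowen's formula: $\dim_H(E_2)$ is the unique $s^\ast\in(0,1)$ at which the leading eigenvalue $\lambda(s^\ast)$ of the transfer operator
\[
(\mathcal L_s h)(x) \;=\; \frac{h\!\left(\tfrac{1}{1+x}\right)}{(1+x)^{2s}} \;+\; \frac{h\!\left(\tfrac{1}{2+x}\right)}{(2+x)^{2s}}
\]
equals $1$. Since $\mathcal L_s$ preserves a cone of strictly positive real-analytic functions on a complex neighbourhood of $[0,1]$ and the eigenvalue $\lambda(s)$ is strictly decreasing in $s$, locating $s^\ast$ reduces to bracketing $\lambda(s)$ at a sequence of candidate values.

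Next I would apply the interpolation framework of Section~\ref{sec:estimates} (the Babenko--Yur'ev / Wirsing-type construction). Concretely, I would pick a large collection of interpolation nodes $\{x_1,\ldots,x_N\}\subset[0,1]$ (e.g.\ Chebyshev-type nodes adapted to the unit interval) and replace any trial eigenfunction $h$ by its Lagrange interpolant $L_N h$; applying $\mathcal L_s$ to $L_N h$ produces another smooth function whose values at the nodes can be read off exactly in high-precision arithmetic. The Perron--Frobenius inequality then gives the two-sided bound
\[
\min_{1\le i\le N}\frac{(\mathcal L_s h)(x_i)}{h(x_i)} \;\le\; \lambda(s) \;\le\; \max_{1\le i\le N}\frac{(\mathcal L_s h)(x_i)}{h(x_i)},
\]
which becomes extremely tight when $h$ is chosen as a high-accuracy numerical approximation to the true eigenfunction (obtained, e.g., by iterating $\mathcal L_s$ a few times on the constant function $1$ and normalising). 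The gap between the upper and lower bound at a given $s$ translates, via a bisection on $s$, into an interval of values certain to contain $s^\ast$.

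To reach $200$ decimal digits, I would run the scheme with $N$ large enough that the analytic interpolation error on a Bernstein ellipse surrounding $[0,1]$ (controllable using the holomorphic extension of $T_1,T_2$ to a disc avoiding $-1,-2$) drops below $10^{-210}$, and all arithmetic would be performed using Mathematica's arbitrary-precision interval arithmetic so that round-off is tracked rigorously. With the candidate value $s_0$ given in the statement, checking that both $\max_i(\mathcal L_{s_0-10^{-201}}h)(x_i)/h(x_i)>1$ and $\min_i(\mathcal L_{s_0+10^{-201}}h)(x_i)/h(x_i)<1$ pins $s^\ast$ to the claimed interval.

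The main obstacle is purely computational: the interpolation error and the required accuracy in the evaluation of $(1+x)^{-2s}$ and $(2+x)^{-2s}$ at 200-digit precision must be bounded rigorously, not heuristically. Controlling this requires an explicit analytic majorant for the eigenfunction together with a working precision safely larger than $201$ digits, and confidence that Mathematica's interval arithmetic faithfully propagates these bounds; once this infrastructure is in place, the proof reduces to verifying two inequalities on a finite set of nodes, which is unconditional modulo the soundness of the numerical software.
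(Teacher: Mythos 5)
Your high-level route is essentially the paper's: realise $E_2$ as the limit set of the Bernoulli scheme $T_1(x)=1/(1+x)$, $T_2(x)=1/(2+x)$, locate the zero of the pressure by bracketing the leading eigenvalue of $\mathcal L_s$, build candidate eigenfunctions by Lagrange--Chebyshev interpolation in the Babenko--Yur'ev/Wirsing spirit, exploit holomorphy on a Bernstein ellipse to control truncation, and bisect in arbitrary-precision interval arithmetic starting from the $100$-digit value of Jenkinson--Pollicott. (The paper takes $m=270$ Chebyshev nodes and degree-$269$ polynomial test functions obtained from the leading left eigenvector of the collocation matrix, and uses Lemma~\ref{tech}.)

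There is, however, a genuine gap in the way you propose to certify the eigenvalue bound. The display
\[
\min_{1\le i\le N}\frac{(\mathcal L_s h)(x_i)}{h(x_i)}\;\le\;\lambda(s)\;\le\;\max_{1\le i\le N}\frac{(\mathcal L_s h)(x_i)}{h(x_i)}
\]
is not a valid inequality. Lemma~\ref{lem:minmax} brackets $\lambda(s)$ between $\inf_{[0,1]}\mathcal L_s h/h$ and $\sup_{[0,1]}\mathcal L_s h/h$ over the \emph{entire} interval; restricting to a finite set of nodes moves both extrema in the wrong direction (the node-minimum may exceed $\lambda(s)$, the node-maximum may be smaller), so neither half of your chain holds, and the claim that "the proof reduces to verifying two inequalities on a finite set of nodes" does not follow. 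Invoking a Bernstein-ellipse interpolation estimate does not by itself repair this: it controls $\|\mathcal L_s h - L_N(\mathcal L_s h)\|_\infty$, but the interpolant $L_N(\mathcal L_s h)$, being a degree-$(N-1)$ polynomial, may still oscillate between the Chebyshev nodes, so its values at the $x_i$ do not pin down $\inf_{[0,1]}\mathcal L_s h/h$. The missing step is exactly what the paper supplies in \S\ref{sec:estimates} under ``Applying Lemma~\ref{tech} in practice'': compute a rigorous upper bound on the \emph{derivative} $\bigl(\mathcal L_s h/h\bigr)'$ (Lemma~\ref{lem:ratio}), partition $[0,1]$ finely, and use ball arithmetic to certify the strict inequality on every subinterval and hence on all of $[0,1]$. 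Without that device (or an equivalent, e.g.\ a Chebyshev-coefficient tail bound showing $L_N(\mathcal L_s h)-(1\pm\delta)h$ is sign-definite on $[0,1]$), the verification is not rigorous and the stated $\pm 10^{-201}$ error bar is unjustified.
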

Details on the proof of this bound appear in 
\S\ref{ss:e2proof}.
Whereas it may not be clear why a knowledge of $\dim_H(E_2)$
to $200$ decimal places is beneficial, it at least serves to illustrate  
the effectiveness of the method we are using compared with earlier
approaches.

\subsection*{Application II: Zaremba theory}
A second application is to problems on finite continued fractions related to the Zaremba conjecture.  
% The original  conjecture states that for any natural number $q$ there exists a positive integer $p$ and digits. $a_0\in \mathbb Z$ and   $a_1, \cdots, a_n \in \{1,2,3,4,5\}$ such that $p/q$ can be written as the finite continued fraction $p/q = [a_0, a_1, \cdots, a_n]$.
The Zaremba conjecture~\cite{Zaremba} was formulated in 1972, motivated by problems in numerical analysis.  
It deals with the denominators that can occur in finite continued
fraction expansions using a uniform bound on the digits.  A nice account
appears in the very informative survey of Kontorovich~\cite{Kontorovich}.   

\bigskip
\noindent
{\bf Zaremba conjecture.}  For any natural number $q \in \mathbb N$ there exists $p$ 
(coprime to $q$) and $a_1, \cdots, a_n \in\{1,2,3,4,5\}$ such that 
$$
\frac{p}{q} = [0; a_1, \cdots, a_n]
\colon =  \cfrac{1}{a_1 + \cfrac{1}{a_2+  \cfrac{1}{\cdots + \cfrac{1}{a_n}}}}.
$$
 Let us denote for each  for $N \geq 1$ and  $m \geq 2$, 
$$
\begin{aligned}
    &D_m(N):=\cr
    & \mbox{Card} \left\{1 \!\leq q\! \leq N \mid  \exists p \in \mathbb N,
    (p,q)\!=\!1,\, 
    %\hbox{ coprime to $q$ }; 
    a_1, \cdots, a_n\! \in\!\{1,2,\cdots, m\}
    \mbox{ with } \frac{p}{q} \!=\! [0;a_1, \cdots, a_n]
    \right\},
\end{aligned}
$$
i.e., the number of $1\leq q \leq N$ which occur as denominators of finite continued fractions using digits $|a_i| \leq m$.
The Zaremba conjecture would correspond to $D_5(N) = N$ for all $N \in \mathbb
N$.
The conjecture remains open, but Huang~\cite{Huang}, building on  work
of Bourgain and Kontorovich~\cite{BK14}, proved  the following 
version of Zaremba conjecture.

\begin{thm}[Bourgain---Kontorovich, Huang]\label{thm:Huang} There is a density one version of the Zaremba conjecture, i.e., 
    $$
    \lim_{N \to +\infty} \frac{D_5(N)}{N}=1.
    $$
\end{thm}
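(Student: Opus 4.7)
The plan is to reduce Theorem~\ref{thm:Huang} to a rigorous lower bound on the Hausdorff dimension of the Cantor set $E_5 \subset [0,1]$ consisting of irrationals whose continued fraction digits all lie in $\{1,2,3,4,5\}$. Huang's refinement~\cite{Huang} of the Bourgain--Kontorovich circle-method argument~\cite{BK14} is conditional: once one knows that $\dim_H(E_5)$ exceeds an explicit threshold $\delta^*$ arising from the major-arc / minor-arc analysis, the density-one conclusion $D_5(N)/N \to 1$ follows. So the mathematical content that must be supplied is precisely the inequality $\dim_H(E_5) > \delta^*$, and our job is to establish this rigorously.

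First I would set up the iterated function scheme $\{T_i\}_{i=1}^{5}$ with $T_i(x) = 1/(i+x)$. Because any digit in $\{1,\ldots,5\}$ can follow any other, this is a full Bernoulli system rather than a genuinely Markov one, so Bowen's formula identifies $\dim_H(E_5)$ with the unique $s\in(0,1)$ solving $P(-s\log|T'|)=0$, where $P$ denotes the topological pressure. This is exactly the setting in which the method of Section~\ref{sec:estimates} is most effective, and the computation parallels the treatment of $E_2$ that leads to Theorem~\ref{good}.

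Next I would implement the Babenko--Yur'ev--Wirsing scheme on the transfer operator $\mathcal L_s f(x) = \sum_{i=1}^{5} |T_i'(x)|^{s}\, f(T_i x)$ acting on a suitable space of functions analytic on a disc containing $[0,1]$. This yields rigorous two-sided bounds on its leading eigenvalue $\lambda(s)$; by evaluating these bounds at carefully chosen rational values of $s$ and invoking the monotonicity of $s\mapsto \lambda(s)$, one can sandwich the zero of $\log\lambda(s)$ in a short interval and hence sandwich $\dim_H(E_5)$ between two explicit rationals. Comparing the lower endpoint with $\delta^*$ then closes the argument.

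The main obstacle is not the computation of $\dim_H(E_5)$ itself --- the method of this paper produces high precision cheaply --- but rather ensuring that the verified lower bound strictly exceeds the threshold $\delta^*$, which lies uncomfortably close to the true value. The advantage of the present approach over periodic-point or Ulam-type approximations is that its two-sided error control is tight enough to make this separation rigorous with modest computational effort, which is exactly what is required in order to \emph{validate} (rather than merely recompute) the hypothesis underlying Theorem~\ref{thm:Huang}.
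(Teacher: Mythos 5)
Your proposal matches the paper's treatment: Theorem~\ref{thm:Huang} is Huang's conditional result (stated in its general form as Theorem~\ref{thm:densityoneplus} with threshold $\delta^*=\tfrac{5}{6}$), and the paper's own contribution in \S\ref{ss:dimdensity1} is precisely the rigorous verification $\dim_H(E_5)>\tfrac{5}{6}$ via the Bernoulli transfer operator, Lagrange--Chebyshev interpolation, and the min-max criterion of Lemma~\ref{tech}, exactly as you outline. The only overstatement is your claim that the threshold ``lies uncomfortably close to the true value'' --- the gap $\dim_H(E_5)-\tfrac{5}{6}\approx 0.0035$ is in fact wide enough that the paper's remark following Theorem~\ref{thm:dimE5} exhibits an explicit cubic polynomial test function establishing the required inequality essentially by hand.
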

There have been other important refinements on this result by
Frolenkov--Kan~\cite{FK14},~\cite{FK14-2},
Kan~\cite{Kan16},~\cite{Kan17}, Huang~\cite{Huang} and 
Magee--Oh--Winter~\cite{MOW}.

%The proof of Theorem \ref{thm:Huang} is conditional on the set 
%$E_5$
%of real numbers having infinite continued fraction expansion containing only the digits from $\{1,2,3,4,5\}$ having Hausdorff dimension  greater than 
%$\frac{5}{6}$. 
Let us introduce for each $m \geq 2$, 
$$
E_m \colon = \left\{[0;a_1,a_2,\cdots ] \mid a_n \in\{1,2, \cdots,
m\} \mbox{ for all } n \in \mathbb N \right\}
$$
which is a Cantor set in the unit interval.
Originally, Bourgain---Kontorovich~\cite{BK14} proved an analogue to Theorem~\ref{thm:Huang} for
$D_{50}(N)$. 
%result for $m=50$, i.e. using the  larger alphabet $\{1,2, \cdots, 50\}$. 
Amongst other things, their argument, related to the circle method, used the
fact that the Hausdorff dimension $\dim_H(E_{50})$ is sufficiently close to~$1$ 
(more precisely, $ \dim_H(E_{50})>\frac{307}{312}$). % for certain aspects of the proof (related to the circle method). 
In Huang's refinement of their approach, he reduced $m$ to $5$, i.e. replaced
the alphabet $\{1,2, \cdots, 50\}$ with $\{1,2,3,4,5\}$, as in the statement of
Theorem~\ref{thm:Huang}. In Huang's approach, it was sufficient to show that
$\dim_H(E_5)>\frac56$. % of the set was larger than $\frac{5}{6}$.   
In~\cite{JP20} there is an explicit rigorous bound on the Hausdorff
dimension of this set which confirms this inequality.  The approach used
there is the periodic point method, whereas in this article  we  use a
different  method  to confirm and improve on these bounds. 
% with  less   computational effort.  
%For example, these were important in the extension of the density one Zaremba conjecture to the set
%$\{1,2,3,4,5\}$, i.e,  there is a  density one  set of denominators $q \in \mathbb N$ for which 
%there exists a numerator $p$ and $a_1, a_2, \cdots, a_n \in \{1,2,3,4,5\}$ such that the 
%associated finite continued fraction satisfies  $\frac{p}{q} = [a_1,a_2, \cdots, a_n]$.

As another example, we recall the following result for $m=4$ and the smaller 
alphabet $\{1,2,3,4\}$. 

\begin{thm}[Kan~\cite{Kan17}]\label{thm:kan} For the alphabet  $\{1,2,3,4\} $ there
    is a positive density version of the Zaremba conjecture, i.e.,  
    $$
    \liminf_{N \to +\infty}\frac{D_4(N)}{N} > 0.
    $$
\end{thm}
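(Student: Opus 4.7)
The plan is to follow the circle-method framework of Bourgain--Kontorovich~\cite{BK14} and its refinement by Kan, with a sharp rigorous lower bound on $\dim_H(E_4)$ playing the quantitative role. First I would form the ensemble $\Omega_N$ of words $(a_1,\ldots,a_n) \in \{1,2,3,4\}^*$ whose continuant denominators $q_n$ lie in a dyadic window around $N$, together with the generating exponential sum
\[
S_N(\theta) = \sum_{(a_1,\ldots,a_n)\in \Omega_N} e^{2\pi i q_n(a_1,\ldots,a_n)\theta},
\]
and decompose $\mathbb T$ into major arcs $\mathfrak M$ near rationals of small denominator and the complementary minor arcs $\mathfrak m$. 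A positive-density lower bound on $D_4(N)$ would then follow from a Parseval-type analysis pairing $S_N$ against the Fourier transform of the indicator of $\{q\le N\}$, provided that the major-arc contribution is of order $N$ and the minor-arc contribution is strictly smaller.

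For the major arcs I would invoke the thermodynamic formalism for the IFS $\{T_i(x)=1/(i+x)\}_{i=1}^{4}$ generating $E_4$. The transfer operator $\mathcal L_s f(x) = \sum_{i=1}^{4} |T_i'(x)|^s f(T_i x)$ has leading eigenvalue equal to one precisely at $s = \delta := \dim_H(E_4)$, and this eigenvalue controls the leading behaviour of $S_N(0)$ and of its companion sums at nearby rationals. For the minor arcs I would twist $\mathcal L_s$ by characters modulo $q$ and appeal to the Bourgain--Gamburd--Sarnak spectral gap for the monoid generated by the corresponding matrices in $\mathrm{SL}_2(\mathbb Z)$, converting this spectral gap into Kloosterman-type cancellation in $S_N(\theta)$ away from $\mathfrak M$.

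The main obstacle is the quantitative minor-arc estimate: the bilinear-forms manipulation of~\cite{Kan17} requires $\delta$ to exceed a specific numerical threshold in order for the main term to dominate the error. This is exactly where the method of Section~\ref{sec:estimates} enters --- by producing a rigorous, high-precision lower bound on $\dim_H(E_4)$ (analogous to $\dim_H(E_5) > 5/6$ used in Huang's argument), the threshold is certified with margin to spare. Once the dimension bound is secured, the combinatorial packaging of~\cite{BK14} and the congruence-level analysis of~\cite{Kan17} take over and deliver the stated positive density; the analytic book-keeping is delicate but routine compared with the spectral gap and dimension inputs.
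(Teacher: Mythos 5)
You have correctly reconstructed the logical structure, but note that the paper does not attempt to prove Theorem~\ref{thm:kan} at all: it cites it verbatim from Kan (more precisely, as the specialisation of Theorem~\ref{thm:kanplus} to $A=\{1,2,3,4\}$), treats the entire circle-method machinery --- ensemble of continuants, major/minor arc decomposition, character-twisted transfer operators, spectral-gap input --- as a black box, and contributes only the verification of the single conditional hypothesis that Kan's argument requires, namely $\dim_H(E_4) > \frac{\sqrt{19}-2}{3} \approx 0.7862$. That verification is Theorem~\ref{thm:dimE4} in \S\ref{ss:dime4}, where the min-max inequalities of Lemma~\ref{tech} combined with Lagrange--Chebyshev interpolation give $\dim_H(E_4) = 0.7889455574\,83 \pm 10^{-12}$ (the paper also records an elementary by-hand certificate using a quintic test function). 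So your sketch of the Bourgain--Kontorovich/Kan framework is a fair description of what Kan actually does, and you correctly pinpoint the dimension bound as the place where this paper intervenes; but it would be wrong to present the circle-method portion as something established here. To be precise you should also state the actual threshold $\frac{\sqrt{19}-2}{3}$ (rather than the analogy with $5/6$ from Huang's argument), since that specific algebraic number is what Kan's bilinear estimate demands and what the paper certifies with margin.
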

The proof of the result is conditional on the lower bound $\dim_H(E_4) >
\frac{\sqrt{19}-2}{3}$.  In~\cite{Kan17} this  inequality
is attributed to Jenkinson~\cite{Jenkinson}, where this value was, in fact, only 
heuristically estimated.
In~\cite{JP20} there is an explicit rigorous bound on the Hausdorff
dimension of this set which confirms this inequality.  The approach  used
there is the periodic point method, whereas in this article  we  give a 
different  method to confirm and improve on these  bounds, as well as give new  examples. 
These results are presented in \S\ref{sec:Zaremba}.

\subsection*{Application III: Schottky  group limit sets}
A third application belongs to the area of hyperbolic geometry.  
%\marginnotem{Does this need more explanation?   Better references?}
The two dimensional hyperbolic space 
can represented as the Poincar\'e disc 
$\mathbb D^2 = \{z \in \mathbb C \colon |z| < 1\}$ 
with the Poincar\'e metric $ds^2 = 4(1-|z|^2)^{-2}$. 
A Fuchsian group $\Gamma$ is a discrete group of isometries of the  two dimensional hyperbolic space.   
In particular, the factor space $\mathbb D^2/\Gamma$ is a surface of constant curvature $\kappa = -1$.

We can associate to the Fuchsian group $\Gamma$  the limit set
$X_\Gamma \subseteq \partial D = \{z \in \mathbb C \colon |z|=1\}$  defined to be the Euclidean limit points 
of the orbit $\{g0 \colon g \in \Gamma\}$.   In the event that $\Gamma$ is cocompact, the  quotient 
$\mathbb D^2/\Gamma$ is a compact surface, and thus the limit set will
be equal to the entire unit circle.  On the other hand, if~$\Gamma$ is a
Schottky group then the limit set will be a Cantor subset of the unit
circle (of Hausdorff dimension strictly smaller than~$1$). 

In the particular case that  $\Gamma$ is a Schottky group the space 
$\mathbb D^2/\Gamma$ is a surface of infinite area.  
%Let $-\Delta \colon L^2(\mathbb D^2/\Gamma) \to  L^2(\mathbb D^2/\Gamma)$
It is known~\cite{Chavel} that the classical Laplace---Beltrami operator has positive real
spectra and in particular, its smallest eigenvalue $\lambda_\Gamma > 0$ is strictly positive.
There is a close connection between  the spectral value $\lambda_\Gamma$
and the Hausdorff dimension $\dim_H(X_\Gamma)$. 
More precisely, we  have a classical result (see~\cite{Sullivan})
$$
\lambda_\Gamma = \min\left\{\frac{1}{4}, \dim_H(X_\Gamma)
(1-\dim_H(X_\Gamma))\right\}.
$$
% or thee upper half-plane $\mathbb H^2 = \{x+iy\ \hbox{ : } y > 0}$ (with the Poincar\'e metric $ds^2 = (dx^2+dy^2)/y^2$).  The limit set of $\Gamma$ is the set of Euclidean accumulation points $\Gamma x_0$ for any $x_0$ in hyperbolic space, which will lie on the boundary space, i.e., the unit circle or the extended real line, respectively, for the two models.In either case, geodesics on hyperbolic space are circular arcs which meet the boundary orthogonally.  
Next we want to consider a concrete example of a Schottky group.  
\begin{example}
    McMullen~\cite{McMullen} considered the Schottky  group $\Gamma =
    \langle R_1, R_2, R_3\rangle$ generated by reflections $R_1, R_2,
    R_3: \mathbb D^2 \to \mathbb D^2$ in three symmetrically placed
    geodesics with end points~$e^{\pi i (2j + 1)/6}$
    with  $j=1, \cdots, 6$ on the unit circle (Figure~\ref{schottky}).

    %We can transform this to the upper half plane model 
    %using the transformation $T: \mathbb D \to \mathbb H^2$ given by 
    %$T(z) = -i \frac{z-1}{z+1}$.
    %where 
    %$\Gamma$ is generated by reflection in the three geodesics with end points:
    % (a) $- 2- \sqrt{2}$ and $2-\sqrt{2}$; (b) $1$ and $2 + \sqrt{3}$;  and (c) $-1$ and $-(2 + \sqrt{3})$ (Figure 2:2).
    % The corresponding reflections in the upper half--plane model take the form
    
    \begin{figure}[h!]
        \begin{center}
          \includegraphics{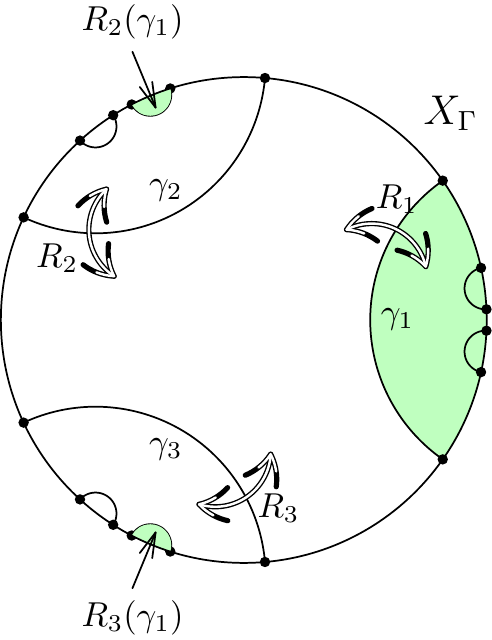}
            \end{center}
            \caption{Group $\Gamma$ generated by reflections $R_1$, $R_2$, and
            $R_3$ in the geodesics $\gamma_1$, $\gamma_2$, and $\gamma_3$. }
         \label{schottky}
        \end{figure}
\end{example}
The limit set $X_{\Gamma}$ can be written as a limit set of a suitable Markov
iterated function scheme, or, more precisely, a directed Markov graph
system~\cite{MU03}.
Applying the method described in this article we can estimate the dimension of the limit set and thus
the lowest eigenvalue of the Laplacian. 
%\marginnotem{Do we need to consider more examples?}

\begin{thm}\label{mcm}
   In notation introduced above, the dimension of the limit set of the Schottky group $\Gamma$ satisfies 
    $$ 
    %0.29554647 \leq 
    \dim_H(X_{\Gamma}) = 0.295546475\pm 5\cdot10^{-9} 
%    \leq   .29554648
    $$ 
    and the smallest value of the Laplacian satisfies 
    $$
    %0.208198754  \leq 
    \lambda_{\Gamma} = 0.2081987565\pm2.5\cdot10^{-9}
    %\leq 0.208198759.
    $$
\end{thm}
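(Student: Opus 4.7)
The plan is to realize $X_\Gamma$ as the limit set of an explicit Markov iterated function scheme on the boundary $\partial\mathbb{D}^2$, apply the general dimension-estimation scheme of Section~\ref{sec:estimates} to this IFS, and then deduce $\lambda_\Gamma$ from $\dim_H(X_\Gamma)$ via Sullivan's formula. First, I would write the three reflections $R_1,R_2,R_3$ as explicit M\"obius involutions and note that the six endpoints $e^{\pi i(2j+1)/6}$ cut $\partial\mathbb{D}^2$ into three arcs $I_1,I_2,I_3$ (the ones containing the fixed sets of $R_1,R_2,R_3$ respectively) together with three ``gaps''. Each $R_k$ sends the complement of the closed arc opposite it to a proper subarc of $I_k$, so that restricting to reduced words (those in which no generator appears twice in succession, which is automatic because the generators are involutions) produces a genuine contracting IFS with Markov structure.

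The associated graph-directed IFS has six branches $\phi_{ij}\colon I_j\to I_i$ indexed by ordered pairs $(i,j)$ with $i\ne j$, and the $3\times 3$ transition matrix $T$ with $T_{ij}=\mathbf 1_{i\ne j}$. The $\mathbb Z/3$-symmetry of the configuration and the reflection symmetries identify all branches up to isometry, so in fact one need only analyse a single branch $\phi_{12}$ and duplicate the computation. Having fixed this setup, I would form the Ruelle transfer operator
$$
(\mathcal L_s f)_i(x)\;=\;\sum_{j\ne i}|\phi_{ij}'(x)|^{s}\,f_j(\phi_{ij}(x)),
$$
acting on a suitable space of holomorphic functions on complex neighbourhoods of the $I_i$ (the branches $\phi_{ij}$ extend holomorphically to disks compactly containing their respective domains, as always for Schottky groups). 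Bowen's formula then identifies $\dim_H(X_\Gamma)$ as the unique $s$ with $\rho(\mathcal L_s)=1$.

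Next, I would apply the Babenko--Yur'ev/Wirsing scheme of Section~\ref{sec:estimates} to the family $\{\mathcal L_s\}$. Since the paper's method produces both upper and lower bounds on $\rho(\mathcal L_s)$ at a chosen $s$ by iterating $\mathcal L_s$ on carefully chosen test functions, bisection in $s$ yields a certified enclosure for the Bowen parameter. Running this to sufficiently many iterates (and with enough internal precision in the interval arithmetic) provides $\dim_H(X_\Gamma)=0.295546475\pm 5\cdot10^{-9}$. Because this value lies well below $1/2$, the function $s\mapsto s(1-s)$ is monotone increasing and bounded above by $1/4$ on the resulting interval, so Sullivan's formula collapses to $\lambda_\Gamma=\dim_H(X_\Gamma)(1-\dim_H(X_\Gamma))$; substituting and propagating the error quadratically gives $\lambda_\Gamma=0.2081987565\pm 2.5\cdot 10^{-9}$.

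The main obstacle is the first step: converting the reflection-group data into an IFS for which the general bounds of Section~\ref{sec:estimates} truly apply. Concretely, one must produce honest analytic extensions of the $\phi_{ij}$ to explicit complex disks with compact image, and provide rigorous upper and lower bounds on $|\phi_{ij}'|$ and on the distortion on these disks, so that the test-function iteration can be certified rather than merely numerical. Once these analytic inputs are in hand, the remainder of the argument is the now-routine application of the paper's estimation scheme, and the symmetries of the configuration keep the computational cost small.
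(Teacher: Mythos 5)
Your overall strategy matches the paper's: realize $X_\Gamma$ as the limit set of a Markov IFS with the $3\times 3$ transition matrix having zeros on the diagonal and ones off it, locate the Bowen parameter by the bisection/min-max machinery of Section~\ref{sec:estimates}, and convert to $\lambda_\Gamma$ via Sullivan's formula. The one step you flag as the ``main obstacle'' --- turning the reflection data into an IFS with explicit analytic extensions to which the interpolation machinery really applies --- is precisely what the paper resolves, and it does so by not staying on $\partial\mathbb{D}^2$ as you propose. Instead the paper applies the conformal map $T\colon z\mapsto -i\tfrac{z-1}{z+1}$ to $\mathbb{H}^2$. This sends the six endpoints to real numbers via $T(e^{2i\varphi})=\tan\varphi$, turns the three boundary arcs into the concrete real intervals $X_0=[-2+\sqrt3,2-\sqrt3]$, $X_1=[1,2+\sqrt3]$, $X_2=[-2-\sqrt3,-1]$, and turns the reflections into explicit fractional-linear maps $R_j$ on $\mathbb{R}$ (given in~\eqref{rjref:eq}), so that the transfer operator has the exact form~\eqref{Bsmall:eq}--\eqref{eq:Btmatrix} used throughout the paper and the derivative/positivity checks reduce to polynomial arithmetic. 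Working directly with circular arcs and asking for ``honest analytic extensions to explicit complex disks with compact image'' would require some reparametrisation to land in this framework; the paper's choice of model is what makes the computation immediate rather than a stated obstacle. A minor further difference: you invoke the full $\mathbb{Z}/3$ cyclic symmetry to reduce to a single branch, but the half-plane transfer breaks that symmetry (it distinguishes the reflection through $0$ from the other two); the paper only exploits the surviving $\mathbb{Z}/2$ symmetry $z\mapsto -z$, restricting the operator to the subspace $V_0=\{(f_0,f_1,f_2): f_1(z)=f_2(-z),\ f_0(z)=f_0(-z)\}$. Finally, your remark that the $\min$ in Sullivan's formula is decided because $\delta<1/2$ is unnecessary: $s(1-s)\le 1/4$ for all $s$, so the minimum is always attained by $\dim_H(X_\Gamma)(1-\dim_H(X_\Gamma))$ regardless of which side of $1/2$ the dimension falls on, which is how the paper applies it.
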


% \subsection{Further estimates on Hausdorff Dimension}

Finally, the same approach we have used in these applications can also be used
to estimate the Hausdorff dimension of the various dynamically
defined limit sets of iterated function schemes which have been
considered by other authors cf.~\cite{Hensley}.
We will return to this in \S\ref{sec:moreExamples}, where we verify and improve
estimates of Hensley~\cite{Hensley} and Jenkinson~\cite{Jenkinson} for some
iterated function schemes and estimates of  Chousionis at
al.~\cite{CLU20} for some countable iterated function systems.
%\marginnotem{More details?}

\bigskip
In the next section we will describe the general setting of one dimensional Markov iterated function schemes 
which is the main focus of our study and the key to proving these theorems.

%\bigskip  

%In the next section we collect together some preparatory material.
\section[Definitions]{Definitions and Preliminary results}
In this section we collect together some of the background material we require.

\subsection{Hausdorff Dimension}
\label{ss:hdim}
The following classical definition of the Hausdorff dimension is well known, and
an excellent general reference is the textbook of Falconer~\cite{Falconer}.
%a good introduction can be found in Falconer~\cite{Falconer}. 
Given $X \subset \mathbb R^+$ for each $0 < s <1$ and $\delta > 0$ we define the
$s$-dimensional \emph{Hausdorff content} of~$X$ by
$$
H_\delta^s (X) =  \inf_{\substack{\mathcal U = \{U_i\},\\ |U_i|\leq \delta}}
\sum_{i} |U_i|^s, 
$$
where the infimum is taken over all open covers $\mathcal U = \{U_i\}$
of~$X$ with each open set~$U_i$ having diameter~$|U_i|\leq\delta$. 
The $s$-dimensional \emph{Hausdorff outer measure} of~$X$ is given by
$$
H^s(X) = \lim_{\delta \to 0} H_\delta^s(X)  \in [0, +\infty].
$$
Finally, the \emph{Hausdorff dimension} of $X$ is defined as infimum of
values~$s$ for which the outer measure vanishes:
    $$
    \dim_H(X) = \inf \{s \mid H^s(X) = 0\}.
    $$

\subsection[Markov IFS]{Markov iterated function schemes}
We say that the contractions  $T_j\colon [0,1] \to [0,1]$ ($j=1, \cdots, d$) satisfy the 
{\it Open Set Condition} if there exists a non-empty open
set~$U\subset[0,1]$ such that the images $\{T_j U\}_{j=1}^d$ are pairwise
disjoint.  This will be the case in all  the examples we consider. 

We begin with the definition of a one dimensional  Markov iterated function scheme.
Recall that a matrix~$M$ is called aperiodic if there exists~$n \geq 1$
such that~$M^n > 0$, i.e., all of the entries are strictly positive.

\begin{definition}\label{def:mifs}
   Let $d \geq 2$.
   A Markov iterated function scheme consists of:
   \begin{enumerate}
\item 
 a family $T_j\colon [0,1] \to [0,1]$ ($j=1, \cdots, d$) of~$C^{1+\alpha}$ contractions satisfying the Open
    Set Condition; and 
    \item an aperiodic~$d\times d$ matrix~$M$ with entries~$0$ and~$1$, which
        gives the \emph{Markov condition}.
    \end{enumerate}
     We can define \emph{ the limit set} of
    $\{T_j\}_{j=1}^d$ with respect to the matrix~$M$ by 
    $$
    X_M = \left\{ \lim_{n\to +\infty} T_{j_1}\circ \cdots \circ
    T_{j_n}(0) \mid j_k \in \{1, \cdots, d\}, M(j_k, j_{k+1})\!=\!1, 1
    \!\leq \!k \!\leq\! n-1
    \right\}.
    $$
\end{definition}
{\color{black}
\begin{rem}
    In one of the examples we consider, the matrix~$M$ has an entire column of
    zeros. In this case, we can remove the contraction corresponding to this
    column from the iterated function scheme without changing the limit set.
    This corresponds to removing the row and the column corresponding to this
    contraction from the matrix~$M$. 
\end{rem}

 \begin{rem}
More generally, let~$M$ be a~$k\times k$ matrix with entries~$0$ or~$1$, and
assume its rows and columns are indexed by~$\{1, \cdots, k\}$. Given $r,s \in
\{1, \cdots, k\}$ we say~$s$ is accessible from~$r$ if there exists~$n \geq 1$
with~$M^n(r,s) \geq 1$. After reordering the index set, if necessary,  we can
assume that if $n \geq 1$ with $M^n(r,s) \geq 1$ then~$s \geq r$. 
We can then define an equivalence relation on $\{1, \cdots, k\}$ by~$r \sim s $
if both~$s$ is accessible from~$r$ and also~$r$ is accessible from~$s$ 
and assume that there are~$q$ distinct equivalence classes  $[j_1], \cdots, [j_q]$.  
The matrix~$M$ then takes the form of sub-matrices $M_1, \cdots, M_q$ 
on the diagonal indexed by the equivalence classes $[j_1], \cdots, [j_q]$, 
say,  with any other non-zero entries appearing only above the main
diagonal~\cite[Ch. 1]{seneta}. %as super diagonal terms. 

In particular, each matrix~$M_j$ is irreducible, i.e. for pair of indices $(r,s)$
there exists~$n = n(j,r,s)$ such that~$M_j^n(r,s) \ge 1$. 
The period~$d_j$ of~$M_j$  
is the greatest common divisor of~$n(j,r,s)$ for all pairs of indices $(r,s)$. 
%The period depends only on the equivalence classes.  (i.e.,  $\forall r,s \in [i]$ then $d_r = d_s$).
Finally,  after further reordering of the index set, if necessary, the $d$th power~$M_j^d$  takes the form of 
aperiodic sub-matrices $M_{j1}, M_{j2}, \cdots, M_{jp}$ on the diagonal (i.e., 
there exists $n=n(j,k) \geq 1$ such that~$\forall r,s$ in the index set
for~$M_{jk}$ we have~$M_{jk}^n(r,s) \geq 1$).

We can also  consider the iterated function schemes~$X_{M_{jk}}$
associated to the matrices~$M_{jk}$, where the corresponding contractions being 
$d$-fold compositions of the original contractions.   
Then the iterated function scheme with contractions~$T_i$ where $i
\in \{1, \cdots, k\}$ with the limit set~$X_{M}$ has dimension $\dim(X_M) =
\max_{j,k} \dim(X_{M_{jk}})$.

\end{rem}

%%%%polina%%%%\begin{rem}
%%%%polina%%%%    More generally, in the case when~$M$ is not aperiodic we can decompose~$M$ into a union of 
%%%%polina%%%%    aperiodic submatrices $M_1, \ldots, M_n$. Furthermore, if $M_j$, ($1 \le j \le n$) has
%%%%polina%%%%    period~$p_j$, then we can consider instead the associated aperiodic matrix
%%%%polina%%%%    $M_j^{p_j} = \overline{M_j}$. If $X_j$ is the limit set associated to the
%%%%polina%%%%    matrix~$\overline{M_j}$, with contractions being $p_j$-fold compositions of the
%%%%polina%%%%    initial contractions, then $\dim_H(X) = \max_j \dim_H(\overline{X_j})$. 
%%%%polina%%%%\end{rem}
}

Definition \ref{def:mifs} is a special case of a more general  graph directed Markov
system~\cite{MU03}, where the contractions $T_j$ may have different domains and
ranges, to which our analysis also applies.
However, the above definition suffices for the majority of our
applications, although in the case of Fuchsian---Schottky groups a more general setting 
is implicitly used. 

%The matrix~$A$ imposes a Markov condition on the sequences that can occur in the limit sets.  

\begin{rem}
    For Markov iterated function schemes, the  Hausdorff dimension coincides with
    the box counting dimension~\cite[Ch. 5]{P97}, see also~\cite{PW1}, which has a slightly easier
    definition. More precisely, for~$\varepsilon > 0$ we denote by
    $N(\varepsilon)$ the smallest number of~$\varepsilon$-intervals required to
    cover~$X$.  We define the Box dimension to be 
    $$
    \dim_B(X_M) = \lim_{\varepsilon \to 0} \frac{\log N(\varepsilon)}{\log(1/\varepsilon)}
    $$
    provided the limit exists.  Then $\dim_H(X_M) = \dim_B(X_M)$.
     However, we needed to introduce the definition of Hausdorff
    dimension,  for the benefit of the statements of results on Markov and Lagrange spectra. 
    %~$\mathcal     M\setminus \mathcal L$.  
\end{rem}

\subsection{Pressure function}
We would like to use the Bowen---Ruelle formula~\cite{Ruelle82} to compute the
value of the Hausdorff dimension of the limit set of a Markov iterated function
scheme. We will use the following notation. 
{\color{black}
\begin{notation}
In what follows, $A(t)\lessapprox B(t)$
denotes that there exists $C>0$ such that $A(t) \le C \cdot B(t)$.
We write $A(t) \asymp B(t)$ if
 there exist constants $C_1,C_2 > 0$ such that $C_1 A(t) \le B(t) \le C_2 A(t)$
 for all sufficiently large~$t$.
\end{notation} 
}

Before giving the statement we first need to introduce the following defintion.
%\marginnotem{We could write the subscript $\prod_{r=1}^{n-1} A(j_r, j_{r+1})=1$}
\begin{definition}\label{def:pressure}
    A pressure function $P\colon[0, 1] \to \mathbb R$ associated to a system of
    contractions $\{T_j\}_{j=1}^k$ with Markov condition defined by a matrix $M$
    is given by
    $$
    P(t) = \lim_{n\to +\infty}  \frac{1}{n}\log 
    \left(
     \sum_{\substack{ M(j_1, j_2) =  \cdots = \\ M(j_{n-1}, j_{n}) = 1}}
    \left|(T_{j_1} \circ \cdots \circ T_{j_n})'(0)\right|^t \right), % \quad t \in [0,1].
    $$
    where the summation is taken over all compositions $T_{j_1} \circ
    \cdots \circ T_{j_n}\colon [0,1] \to [0,1]$ which are allowed by
    the Markov condition and the summands are the absolute
    values of the derivatives of these contractions at a fixed
    reference point (which for convenience we take to be~$0$) 
    raised to the power~$t$. 
\end{definition}
The pressure function depends on the matrix~$M$ but we omit this in the
notation. 
In the present context the function is  well defined as 
the limit in this definition of $P(t)$ always exists.
There are various other definitions of the pressure, and we
refer the reader to the books~\cite{PP90} and~\cite{Walters}
for more details. A sketch of the graph of the pressure function is
given in Figure~\ref{fig:pressure}.

The following well known connection between $\dim_H(X_M)$ and the pressure
function is useful for practical applications.
\begin{lemma}[Bowen~\cite{bowen}, Ruelle~{\cite[Proposition 4]{Ruelle82}} ]
  \label{bowenRuelle}
    In the setting introduced above, the pressure function of a Markov Iterated
    Function Scheme has the following properties:
    \begin{enumerate}
        \item
            $P(t)$ is a smooth monotone strictly decreasing  analytic function; and 
        \item 
            The Hausdorff dimension of the limit set is the unique zero of the
            pressure function i.e., $P(\dim_H(X_M)) = 0$.  
    \end{enumerate}
\end{lemma}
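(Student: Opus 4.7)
The plan is to establish both properties by recoding the iterated function scheme symbolically and applying Ruelle's Perron--Frobenius theory. Let $\Sigma_M = \{(j_k)_{k \geq 1} : M(j_k, j_{k+1}) = 1\}$ be the one-sided subshift of finite type, which is topologically mixing because~$M$ is aperiodic. Since the maps $T_j$ are $C^{1+\alpha}$, the standard bounded distortion estimate gives a constant $C \geq 1$ such that for every admissible word $(j_1, \ldots, j_n)$ and every $x, y \in [0,1]$,
$$
C^{-1} \leq \frac{|(T_{j_1} \circ \cdots \circ T_{j_n})'(x)|}{|(T_{j_1} \circ \cdots \circ T_{j_n})'(y)|} \leq C.
$$
Consequently the potential $\varphi_t(\mathbf{j}) = t \log |T_{j_1}'(\pi \sigma \mathbf{j})|$, where $\pi : \Sigma_M \to X_M$ is the coding map, is Hölder continuous, placing us in the classical thermodynamic framework.

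For property~(1), I would consider the transfer operator
$$
(\mathcal{L}_t f)(\mathbf{j}) = \sum_{\sigma \mathbf{k} = \mathbf{j}} e^{\varphi_t(\mathbf{k})} f(\mathbf{k})
$$
on a suitable Banach space of Hölder functions on $\Sigma_M$. Ruelle's theorem yields a simple isolated maximal eigenvalue $\lambda(t)$ with spectral gap, and the definition of $P(t)$ together with a standard computation identifies $P(t) = \log \lambda(t)$. Since $t \mapsto \mathcal{L}_t$ is a real-analytic family in operator norm, Kato perturbation theory makes $\lambda(t)$, and therefore $P(t)$, real analytic. Strict decrease follows from the variational expression $P'(t) = \int \log |T'| \, d\mu_t$, where $\mu_t$ is the equilibrium state at parameter~$t$; the contractivity assumption $\sup_j \|T_j'\|_\infty < 1$ makes $\log|T'|$ uniformly bounded away from zero and negative, so $P'(t) < 0$.

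For property~(2), I would use the natural cover of $X_M$ by level-$n$ cylinder intervals $I_{j_1 \cdots j_n} := T_{j_1} \circ \cdots \circ T_{j_n}([0,1])$ over admissible words. Bounded distortion forces $|I_{j_1 \cdots j_n}| \asymp |(T_{j_1} \circ \cdots \circ T_{j_n})'(0)|$ uniformly in the word, so
$$
H^t_{\delta_n}(X_M) \;\lessapprox\; \sum_{\text{admissible}} \bigl|(T_{j_1} \circ \cdots \circ T_{j_n})'(0)\bigr|^t \;\asymp\; e^{n P(t)}.
$$
If $P(s_0) < 0$ the right-hand side vanishes exponentially as $n \to \infty$, yielding $\dim_H(X_M) \leq s_0$; combined with the strict monotonicity in~(1) this gives $\dim_H(X_M) \leq s$, where $s$ denotes the unique zero of~$P$. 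For the reverse inequality I would construct the Gibbs measure $\mu_s$ associated with the potential $\varphi_s$; the Gibbs property says $\mu_s(I_{j_1 \cdots j_n}) \asymp e^{-n P(s)} |(T_{j_1} \circ \cdots \circ T_{j_n})'(0)|^s = |(T_{j_1} \circ \cdots \circ T_{j_n})'(0)|^s$, and applying the mass distribution principle with the Open Set Condition to pass from cylinders to Euclidean balls produces the lower bound $\dim_H(X_M) \geq s$.

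The main obstacle is the lower bound in property~(2): transferring the Gibbs estimate $\mu_s(I_{j_1 \cdots j_n}) \asymp |I_{j_1 \cdots j_n}|^s$ from cylinders to arbitrary Euclidean balls $B(x,r)$. The Open Set Condition is what makes this work, as it guarantees that a ball of radius $r$ can meet only a uniformly bounded number of cylinders whose diameter is comparable to $r$; without it, overlapping cylinders could pile up and destroy the local estimate. The bounded distortion lemma is the technical backbone of the whole argument, converting symbolic sums of $|(T_{j_1} \circ \cdots \circ T_{j_n})'(0)|^t$ into genuine geometric cover sums.
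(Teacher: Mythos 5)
Your proposal is correct and follows essentially the same route the paper takes, where the argument is distributed across Remark~\ref{rem:pdim1}, Lemma~\ref{spectral}, Remark~2.14, and Remark~\ref{rem:pdim2}: identify $e^{P(t)}$ with the leading eigenvalue of the transfer operator (the paper constructs the eigenfunction via Schauder; you invoke Ruelle--Perron--Frobenius, which amounts to the same thing), obtain analyticity and strict monotonicity from the simple isolated eigenvalue and the derivative formula $P'(t)=\int \log|T'|\,d\mu_t<0$, get the upper bound $\dim_H(X_M)\le s$ from covers by images $T_{j_1}\circ\cdots\circ T_{j_n}([0,1])$ together with bounded distortion, and get the lower bound from the mass distribution principle applied to the conformal/Gibbs measure, with the Open Set Condition controlling how many comparable cylinders can meet a Euclidean ball. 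The only cosmetic difference is that you make the symbolic recoding to a subshift of finite type explicit, whereas the paper works directly with the Markov IFS; this does not change the substance.
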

The general result of Ruelle extended a more specific posthumous result of Bowen on the Hausdorff dimension for quasi-circles.
Thus the problem of estimating $\dim_H(X_M)$ is reduced to the problem of locating the zero of the pressure function. 

\begin{rem}
  \label{rem:pdim1}
To understand the connection between the pressure and the Hausdorff dimension,
described in
Lemma~\ref{bowenRuelle}, we can consider covers of $X_M$
of the form $\mathcal U = \{ T_{i_1}\circ  \cdots  \circ T_{i_n} U\}$, where
$T_{i_1}\circ  \cdots  \circ T_{i_n}$ are allowed compositions {\color{black}(i.e.
$M(i_1,i_2) = \ldots = M(i_{n-1},i_n) = 1$)}
   and $U \supset [0,1]$ is an open set.
 The diameters of the elements of this cover for large~$n$ are comparable to the
 absolute values of the derivatives
 $ (T_{i_1}\circ  \cdots  \circ T_{i_n})'(0)$. {\color{black} In particular, by
 the mean value theorem, $\mathop{\rm diam} (T_{i_1}\circ \ldots \circ T_{i_n}(U)) \le
 \sup_{y \in U} |(T_{i_1}\circ \ldots \circ T_{i_n})^\prime (y) |$ and taking
 into account
 $$
 \frac{|(T_{i_1}\circ \ldots \circ T_{i_n})^\prime(y)| }{|(T_{i_1}\circ \ldots
 \circ  T_{i_n})^\prime(0)|}  
% = \exp(\log |(T_{i_1}\circ \ldots \circ  T_{i_n})^\prime(y)| - \log |(T_{i_1}\circ \ldots \circ  T_{i_n})^\prime(0)|) 
 \le \sup_{z \in U} \exp\left(\left( \log |(T_{i_1}\circ \ldots \circ  T_{i_n})^\prime(z)| \right)^\prime \right) < + \infty.
% \left|(T_{i_1} \circ \ldots \circ  T_{i_n})^{\prime\prime}(z)\right| 
 $$
 }
 If  $P(t) = 0$ then for any $t_0 > t$
 we have that $P(t_0) < 0$ and therefore the Hausdorff content
$$
H_\delta^{t_0}(X_M) \lessapprox  \sum_{\substack{M(i_1,i_2) =  \cdots \\=M(i_{n-1},
i_n)=1} } \left|(T_{i_1}\circ  \cdots  \circ T_{i_n})'(0)\right|^{t_0}
$$
for $n$ sufficiently large.  In particular, letting $n \to +\infty$
we can deduce  from the definition of pressure that the right hand side of the
inequality tends to zero and therefore $H^{t_0}(X) = 0$. We then
conclude that outer measure vanishes and thus $\dim_H(X_M) \leq t_0$ and, since $t_0>t$ was arbitrary, we have
$\dim_H(X_M) \leq t$ (see \S\ref{ss:hdim}). %We will comment on 
For the reverse inequality see Remark~\ref{rem:pdim2}.
\end{rem}

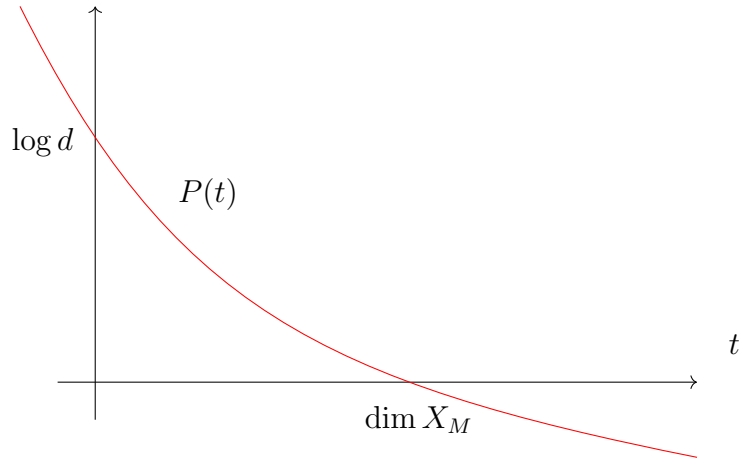
\begin{figure}
    \centerline{
    \begin{tikzpicture}
        \draw[<-] (0,5) --(0,-0.5); % --(8,0);
        \draw[->] (-0.5,0) --(8,0);
        \draw[red](-1,5) .. controls (1,1)  and (3,0) .. (8,-1.0); 
        \node at (4.3,  -0.5) {$\dim X_M$};
        \node at (1.5,  2.5) {$P(t)$};
        \node at (8.5,  0.5) {$t$};
        \node at (-0.7,  3.2) {$\log d$};
    \end{tikzpicture}
    }
    \caption{A typical plot of the pressure function $P$.}
    \label{fig:pressure}
\end{figure}

\subsection[Transfer operators]{Transfer operators for Markov iterated function schemes}
We explained in the previous subsection  that the dimension
$\dim_H(X_M)$ corresponds to the zero of the pressure
function $P$. This function can also be expressed in terms
of a linear operator on a suitable space of H\"older functions. 

In order to  accommodate the Markov condition it is convenient to consider the
space consisting of $d$ disjoint copies of $[0,1]$, which we denote by
$S : =\oplus_{j=1}^d [0,1]\times \{j\}$ and to introduce the maps 
$$
T_{j} \colon S \to S, \qquad  %[0,1]\times\{j\} \to [0,1] \times \{k\}, \quad 
T_{j} \colon (x,k) \mapsto (T_j(x),j).
$$
We omit the dependence on $d$ where it is clear.

The transfer operator associated to $\{T_j\}$ is a linear operator acting on the
space of H\"older functions $ C^\alpha(S) \colon = \oplus_{j=1}^d
C^\alpha([0,1]\times\{j\}) $, where $0 < \alpha \le 1$. 
This is a Banach space with the  norm $\|f_1, \cdots, f_d\| = \max_{1 \leq j
\leq d} \left\{ \|f_j\|_\alpha + \|f_j\|_\infty\right\}$ where 
 $\|f_j\|_\alpha = \sup_{x\neq y} \frac{|f_j(x) - f_j(y)|}{|x-y|^\alpha}$.
 For many applications we can take~$\alpha=1$. 
\begin{definition}
    \label{def:trop}
    For $0 \leq t \leq 1$ the {\it transfer  operator} $\mathcal L_t \colon
    C^\alpha(S) \to C^\alpha(S)$ 
    for the scheme $T_j \colon S \to [0,1] \times
    \{j\}$ %for which $M(j,k) =1$
    is defined by the formula  $\mathcal L_t\colon (f_1, \cdots, f_d) \mapsto (F_1^t, \cdots, F_d^t)$ where 
    $$
    F_k^t (x,k) = \sum_{j=1}^d M(j,k) \cdot f_j(T_{j} (x,k)) |T_{j}'( x,k )|^t. %\eqno(2.1) 
    $$
    We omit the dependence on $M$ where it is clear.
\end{definition}

\begin{rem}
    For some applications where the contractions form a
    Bernoulli system (i.~e. there are no restrictions and  all
    the entries of $M$ are $1$) it is sufficient to take one
    copy of the interval $[0,1]$, i.~e. $d=1$. This applies, for example, in
    all of the Zaremba examples. In this case the transfer operator takes
    simpler form 
    \begin{equation}
      \label{trop:eq}
      \mathcal L_t : C^\alpha([0,1]) \to C^\alpha([0,1]) \qquad 
      \mathcal L_t : f \mapsto \sum_{j=1}^d f(T_j) |T_j^\prime|^t. 
    \end{equation}
\end{rem}
\begin{definition}
    We say that a function $\underline f \in C^\alpha(S)$ is positive if $f =
    (f_1,\ldots, f_d)$ and each $f_j \in C^\alpha([0,1] \times \{j\})$ takes only
    positive values. 
\end{definition}
The connection between the linear operator $\mathcal L_t$ and
the value of the pressure function~$P(t)$ comes from the first part of the following
version of a standard result, cf.~\cite{Ruelle}. 
\begin{lemma}[after Ruelle]\label{spectral}
    Assume that the matrix~$M$ is aperiodic. 
    In terms of the
    notation introduced above, the spectral radius of  $\mathcal L_t$ 
    %\oplus_{i=1}^dC^\alpha([0,1])  \to \oplus_{i=1}^dC^\alpha([0,1]) $ 
    is~$e^{P(t)}$. Furthermore,  
    \begin{enumerate}
        \item $\mathcal L_t$ has an isolated maximal eigenvalue $e^{P(t)}$
            associated to a positive eigenfunction $\underline h \in
            C^\alpha(S)$
            %= (h_j)_{j=1}^d$ (i.e., $h_j>0$ for $1 \leq j \leq     d$) 
            and a positive eigenprojection\footnote{For a general
            introduction see~\cite{Kato}, Ch.3,
            \S6; we say that the eigenprojection is positive if elements of the positive cone
            are mapped into $\mathbb R^+ \underline h$.} 
            $    \eta \colon  C^\alpha(S) \to \langle \underline h\rangle $; and
            % \mathbb  
            %\qquad % \to \langle \underline h \rangle \colon 
            %\eta \colon \underline f = \underline f_h \underline h, \nu_f > 0
            %$$  
        \item
            for any  $\underline f \in C^\alpha(S)$ we have 
            $$
            \|e^{-nP(t)}\mathcal L_t ^n \underline f 
            - \eta(\underline f) \|_\infty \to 0 \mbox{ as } n \to+\infty.
            $$
    \end{enumerate}
\end{lemma}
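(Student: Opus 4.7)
The plan is to follow the classical Ruelle--Perron--Frobenius strategy adapted to the Markov (block-diagonal) setting. First I would verify that $\mathcal L_t$ is a bounded operator on $C^\alpha(S)$ by checking the defining norm on each component $F_k^t$; the $C^{1+\alpha}$ regularity of the contractions $T_j$ makes each weight $|T_j'|^t$ belong to $C^\alpha$, and the contraction factor controls the H\"older seminorm of the composition $f_j \circ T_j$. Iterating, a routine computation gives
$$
\bigl(\mathcal L_t^n \underline 1\bigr)_k(x)
= \sum_{\substack{j_0=k,\, j_n \text{ free}\\ M(j_i,j_{i+1})=1}}
\bigl|(T_{j_n}\circ\cdots\circ T_{j_1})'(x)\bigr|^t,
$$
and by the standard bounded distortion estimate for $C^{1+\alpha}$ contractions, the ratio between this sum evaluated at $x$ and at the reference point $0$ is uniformly bounded independent of $n$. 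Taking $\tfrac1n \log$ and letting $n\to\infty$, Definition~\ref{def:pressure} immediately identifies the exponential growth rate of $\|\mathcal L_t^n \underline 1\|_\infty$ (equivalently, by Gelfand's formula and positivity, the spectral radius) with $e^{P(t)}$.

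The second step is to prove that $\mathcal L_t$ is quasi-compact on $C^\alpha(S)$, which is the main technical obstacle. I would establish a Lasota--Yorke type inequality of the form
$$
\|\mathcal L_t^n \underline f\|_\alpha \;\le\; C\,\theta^n\,\|\underline f\|_\alpha \;+\; C\,e^{nP(t)}\,\|\underline f\|_\infty,
$$
for some $\theta<e^{P(t)}$, by splitting the H\"older seminorm of $\mathcal L_t^n \underline f$ into a term controlled by the contraction factor $\theta^n$ of the compositions and a term controlled by the uniform norm via bounded distortion. Combined with the compact embedding $C^\alpha(S)\hookrightarrow C(S)$ provided by Arzel\`a--Ascoli, Hennion's theorem yields that the essential spectral radius of $\mathcal L_t$ is strictly smaller than $e^{P(t)}$, so every spectral value of modulus $e^{P(t)}$ is an eigenvalue of finite multiplicity and isolated from the rest of the spectrum.

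Next I would identify $e^{P(t)}$ as a \emph{simple} eigenvalue with a positive eigenfunction. Since each $T_j$ maps into $[0,1]$ and each summand in $F_k^t$ is nonnegative, $\mathcal L_t$ preserves the positive cone of $C^\alpha(S)$. Aperiodicity of $M$ means that for some $N\ge1$ every entry of $M^N$ is strictly positive, and hence $\mathcal L_t^N$ sends any nonzero, nonnegative $\underline f$ into the strictly positive cone. I would then apply the Birkhoff--Hilbert projective metric argument (or, equivalently, Krein--Rutman together with Perron--Frobenius) to the normalised operator $e^{-P(t)}\mathcal L_t$: the strict positivity after $N$ iterates makes it a strict contraction on the Hilbert projective metric of the positive cone, producing a unique (up to scalar) positive fixed function $\underline h \in C^\alpha(S)$ with $\mathcal L_t \underline h = e^{P(t)} \underline h$. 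A dual argument applied to $\mathcal L_t^*$ yields a positive eigenmeasure $\nu$ normalised so that $\nu(\underline h)=1$; the rank-one eigenprojection $\eta(\underline f) = \nu(\underline f)\,\underline h$ is then positive.

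Finally, conclusion (2) follows from combining quasi-compactness with simplicity. Writing $\mathcal L_t = e^{P(t)}\eta + R$ with $\eta R = R\eta = 0$, the spectral gap implies $\|R^n\|_\alpha \le C\rho^n$ for some $\rho < e^{P(t)}$; consequently
$$
\bigl\| e^{-nP(t)}\mathcal L_t^n \underline f - \eta(\underline f)\bigr\|_\infty
\;\le\; \bigl\| e^{-nP(t)}\mathcal L_t^n \underline f - \eta(\underline f)\bigr\|_\alpha
\;\le\; C\,(\rho\,e^{-P(t)})^n\,\|\underline f\|_\alpha \;\to\; 0.
$$
The main obstacle is the Lasota--Yorke inequality underpinning quasi-compactness; all other steps are standard once the spectral gap is in hand.
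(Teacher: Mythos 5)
Your proposal is correct in substance, but it takes a genuinely different route from the one in the paper. The paper proves Part~1 by constructing a compact convex cone $\mathcal C$ of functions that are uniformly bounded and satisfy a H\"older-type ratio estimate $f_j(x)\le f_j(y)e^{B|x-y|^\alpha}$, then applies the Schauder fixed point theorem to a family of normalised perturbed operators $\Phi_n$ to produce the eigenfunction $\underline h$ as an accumulation point, and finally uses aperiodicity to upgrade nonnegativity of $\underline h$ to strict positivity. For Part~2 the paper conjugates by multiplication by $\underline h$ to obtain a normalised operator $\mathcal K$ with $\mathcal K\underline 1 = \underline 1$, then argues that $\inf\mathcal K^n\underline f$ is nondecreasing and $\sup\mathcal K^n\underline f$ is nonincreasing and that the limits agree, giving uniform convergence to the constant $\eta(\underline f)$ --- a direct, elementary argument that avoids spectral theory of quasi-compact operators altogether. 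You instead take the standard modern route: a Lasota--Yorke inequality and Hennion's theorem to establish quasi-compactness, Birkhoff cone contraction (or Krein--Rutman) to get a simple maximal eigenvalue with positive eigenfunction, and a rank-one spectral decomposition $\mathcal L_t = e^{P(t)}\eta + R$ with $\|R^n\|\le C\rho^n$ to conclude. Both are valid. What yours buys is more quantitative information --- an explicit spectral gap and geometric rate of convergence in Part~2, which the paper's monotonicity argument does not directly give (the paper notes later that the Fortet--Doeblin inequality would be needed to get the isolated eigenvalue/analyticity). What the paper's argument buys is a shorter, more self-contained path that needs only Arzel\`a--Ascoli and Schauder, and in particular sidesteps the Lasota--Yorke estimate that you correctly flag as the main technical burden in your plan. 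One small cautionary note: the paper's Part~2 is stated for uniform convergence of $e^{-nP(t)}\mathcal L_t^n\underline f$ for arbitrary $\underline f\in C^\alpha(S)$, not assumed positive; your spectral decomposition handles this automatically, whereas the paper's monotone sup/inf argument is phrased for general $\underline f$ and relies implicitly on equicontinuity of the iterates to pass from the scalar sequences to uniform convergence of the functions.
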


\begin{proof}
For the reader's convenience we sketch a proof adapted to the present context 
(see~\cite{R89} for a different proof). 

\paragraph{Part 1.}
{\color{black}

%This follows from \cite{ruelle}, but we will present an alternative  proof.
Let $c = \max_{1 \leq i \leq d} \|T_i'\|_\infty  < 1$
and $A = \max_{1 \leq i \leq d}  \|\log|T_i'\|_\alpha < 1$
and choose~$B$ sufficiently large that $A + Bc < B$.
We can define a convex space
$$
\mathcal C  = \left\{ \underline f = (f_1, \cdots, f_d) = C^\alpha (S)
\colon  
\begin{aligned}
&\forall 1 \leq j \leq d, \forall 0 \leq x \leq 1, f_j(x) \leq 1\cr
%&\exists 1 \leq j \leq d, \forall 0 \leq x \leq 1, \leq f_j(x) \leq 1\cr
&\forall 1 \leq j \leq d,\forall 0 \leq x,y \leq 1,  f_j(x) \leq f_j(y) e^{B|x-y|^\alpha}\cr
\end{aligned}
\right\}
$$
 which is uniformly compact by the Arzela--Ascoli theorem.  
 Given $\underline f \in \mathcal C$  and $n \geq 1$ we can define
 $\Phi_n(\underline f ) = \frac{\mathcal L_t(\underline f + \frac{1}{n} )}{\|\mathcal
 L_t(\underline f + \frac{1}{n} )\|_\infty} \in C^\alpha(S)$.  For all $0 \leq
 x,y \leq 1$ 
 and $\underline f = (f_1, \cdots, f_d) \in \mathcal C$ and each $1 \leq j \leq
 d$, using $0 \le t \le 1$, we get
 $$
 F_k^t(x) \leq e^{(A+Bc)|x-y|^\alpha} \sum_{j=1}^d M(j,k) \cdot f_j(T_j(y))
 |T_j'(y)|^t \leq e^{B|x-y|^\alpha} F_k^t(y).
 $$
 Hence for each $n \geq 1$ and $0 \leq x,y \leq 1$ we have $\Phi_n(\underline
 f)(x) \leq e^{B|x-y|^\alpha}\Phi_n(\underline f)(y)$ and thus $\|\Phi_n(\underline
 f)(x) \|_\infty =1$.  
 %Moreover, since $\|\Phi_n(t)\|_\infty = 1$ we can deduce that 
 Therefore, for each $n \geq 1$ the map $\Phi_n: \mathcal C \to \mathcal C$ is well defined and has a 
 nontrivial fixed point 
 $\underline h_n = \Phi_n(\underline h_n) \neq 0$ by the Schauder fixed point theorem.  
 We can take an accumulation point $\underline h$ of $\{\underline h_n\} \subset \mathcal C$.
 This is an eigenfunction of $\mathcal L_t$  for the eigenvalue $\lambda = \|\mathcal L_t\underline h\|_\infty$ 
and we can choose $x_0$ and $j$, say,  with 
 $|h_j(x_0)| = \|h\|_\infty$. 

 Since~$M$ is aperiodic we can choose~$N$ sufficiently large such that for
 arbitrary~$k$ and $j_1$ we have a  contribution 
 $h_{k}(T_{j_1} \circ \cdots \circ T_{j_N} x_0) |(T_{j_1} \circ \cdots \circ T_{j_N})'(x_0)|^t > 0$
to $\mathcal L_t^N$. It follows from the second condition of the definition of
$\mathcal C$ that $h_{k}(x_1) > 0$ where we let $x_1=T_{j_1} \circ \cdots \circ T_{j_N}x_0$.
Since $\underline h \in \mathcal C$ we conclude that for any~$x$ we have 
$ h_{k}(x) \geq h_{k}(x_1) e^{-B} > 0$.
% \geq 
%\left( \min_j\inf_x|T_j'(x)|^t\right) 
 %> 0$. 
 By Definition~\ref{def:pressure}  of the pressure function, 
 $$
 P(t) = \lim_{n \to +\infty} \frac{1}{n} \log \|\mathcal L_t^n \underline 1 \| 
 = \lim_{n \to +\infty} \frac{1}{n} \log \|\mathcal L_t^n \underline h \| 
 = \log \lambda.
 $$
 
 }

 \paragraph{Part 2.} Let $\Delta(\underline h): C^\alpha(S) \to C^\alpha(S)$ denote multiplication by 
$\underline h$ and then introduce the linear operator 
 $\mathcal K = \frac{1}{\lambda}\Delta(\underline h)^{-1} \mathcal L_t \Delta(\underline h)$, which now satisfies $\mathcal K 
 \underline 1 = \underline 1$.   
 This implies that for  $\underline f \in C^\alpha(S)$ we have that for each $\underline x\in S$ :
 \begin{enumerate}
 \item[(a)]
 the sequence 
 $ \inf_{0 \leq x \leq 1}\mathcal K^n\underline f (\underline x)$  ($n\geq 1$)
 is monotone increasing and bounded, and 
  \item[(b)]
  the sequence $\sup_{0 \leq x \leq 1}\mathcal K^n\underline f (\underline x)$
  ($n\geq 1$) is monotone decreasing and bounded.
  \end{enumerate}
  %$$
  %\inf_{0 \leq x \leq 1}\underline f(x)\leq 
 % \inf_{0 \leq x \leq 1}\mathcal K\underline f(x) \leq \inf_{0 \leq x \leq 1}\mathcal K^2\underline f (x)
 % \leq \cdots
 % \leq 
 %  \sup_{0\leq x \leq 1}\mathcal K^2\underline f(x) \leq 
 % \sup_{0\leq x \leq 1}\mathcal K\underline f(x) \leq \sup_{0\leq x \leq 1}\underline f(x)$.
The limits are fixed points for $\mathcal K$, and thus without too much effort  we can deduce that
$\mathcal K^n\underline f$ converges uniformly to a constant function.
Reformulating this for the original  operator $\mathcal L_t$ gives the claimed
result. 
  \end{proof}

 \begin{rem}
 The properties of the transfer operator
 in Lemma~\ref{spectral} 
  now clarify the reasoning behind the remaining parts of  Lemma~\ref{bowenRuelle}.  
 With a little more work (and the Fortet---Doeblin inequality) one can show that
 $e^{P(t)}$ is an isolated eigenvalue of $\mathcal L_t$ and thus has an analytic
 dependence on $t$ (compare with~\cite{PP90}).
 \end{rem} 
% Firstly we claim $e^{P(t)}$ is an isolated eigenvalue of $\mathcal L_t$.  More precisely, 
%on  the quotient space $\oplus_{i=1}^dC^1 ([0,1])/ \mathbb C$ we have 
 %$ \| \mathcal K^n \underline f +\mathbb C\|_\infty \to 0$ as $n \to +\infty$, and t 
 %For any $\underline f \in \oplus_{i=1}^dC^1 ([0,1])$,  and $n \geq 1$,
 % we can differentiate $\mathcal K^n \underline f$ 
 % to obtain 
%the  Fortet-Doeblin inequality, i.e., there exists $D>0$ such that for any $n \geq 1$, 
%$\|(\mathcal K ^n\underline f)'\|_\infty \leq 
%D \| \underline f\|_\infty + c^n \|\underline f'\|_\infty$.  Iterating this we obtain 
%$\|(\mathcal K ^{2n}\underline f)'\|_\infty \leq 
%D \| \mathcal K^n \underline f\|_\infty + c^n (D \| \underline f\|_\infty + c^n \|\underline f'\|_\infty)$.
 %Thus on the quotient space
 %$\|(\mathcal K ^{2n}\underline f)'\|_\infty + \|\mathcal K ^{2n}\underline f+\mathbb C\|_\infty \leq   \frac{1}{2}(\|f'\|_ \infty + \|f\|_\infty)$, for sufficiently large $n$,  and therefore  by the spectral radius theorem we can deduce that the spectral radius of 
 %$\mathcal K$ on the quotient space is smaller than $1$. This implies that  $\mathcal L_t: \oplus_{i=1}^dC^1 ([0,1])
 %\to \oplus_{i=1}^dC^1 ([0,1])$  has $e^{P(t)}$ as an isolated eigenvalue.  It then follows by analytic perturbation theory that 
 %$P(t)$ depends analytically on $t$.  
 
 \begin{rem}
   \label{rem:pdim2}
   To explain the idea behind Lemma~\ref{bowenRuelle}, it remains to recall
   why if $P(t)=0$ then $\dim_H(X) \geq t$.  

 Let $\mathcal M$ be the space of probability measures supported on~$S$ with the weak star topology. 
 By Alaoglu's theorem this space  is compact.  The map $\Psi: \mathcal M \to \mathcal M$
 defined by $[\Psi \mu](\underline g) = e^{-P(t)} \mu(\mathcal L_t \underline g)$
 for  $\underline g \in C^\alpha\left( S\right)$ has the fixed point~$\eta$.

 We would like to apply the  mass distribution principle (cf.~\cite[p.~67]{Falconer}) to the
 measure~$\eta$. {\color{black} In other words, to show that~$t$ corresponding
 to~$P(t)=0$ is a lower bound on~$\dim_H(X)$, it is sufficient to show that there exists $C>0$ for which on any
 small interval~$V$ we have $\eta(V) \le C |V|^t$. Moreover, it suffices to
 consider intervals of the form $V = T_{i_1}\circ \ldots \circ T_{i_n} U$ for
 some $i_1, \ldots, i_n$.} In particular, providing $P(t)=0$ for an allowed sequence
 $i_1,\ldots,i_n$ we have
 $$
\eta (T_{i_1} \circ \cdots \circ T_{i_n} U)
\asymp \int \mathcal  L_t^n \chi_{T_{i_1} \circ \cdots \circ T_{i_n} U} d\eta
\asymp |(T_{i_1} \circ \cdots \circ T_{i_n})'(0)|^t.
$$ 
 {\color{black} By compactness of the closure $cl( T_{i_1}\circ \ldots \circ T_{i_n} U)$ for~$x\in U$ we have
 $$
 \frac{|(T_{i_1}\circ \ldots \circ T_{i_n})^\prime(0)|}{|(T_{i_1}\circ
 \ldots\circ T_{i_n})^\prime(x)|} \le \sup_{y \in U} \exp\left(\left(\log |(T_{i_1}\circ \ldots \circ
 T_{i_n})^{\prime}(y)|\right)^\prime\right) < +\infty
 $$ 
 together with the mean value theorem we get $|(T_{i_1}\circ \ldots \circ T_{i_n})^\prime(0)|
 \asymp \mathop{\rm diam}(T_{i_0}\circ\ldots \circ T_{i_n} U)$.} Hence we
 conclude 
 %from which one can easily see  
 $ \eta (T_{i_1} \circ \cdots \circ T_{i_n} U)
 \asymp\hbox{\rm diam} (T_{i_1} \circ \cdots \circ T_{i_n} U)^{t} $.  The mass distribution principle gives
 $$
 \dim_H(X) \geq \lim_{n\to +\infty} \frac{\log  \eta (T_{i_1} \circ \cdots \circ T_{i_n} U)}{\log \hbox{\rm diam} (T_{i_1} \circ \cdots \circ T_{i_n} U)^{t}} =  t.
 $$
  \end{rem}

%We can consider the restriction of the operator $\mathcal L_t$ to a smaller Banach space
%$\oplus_{i=1}^dB$ where $B$ is the Banach space of bounded analytic functions on the disk 
%$D = \{z \in \mathbb C \colon |z- \frac{1}{2}| < \frac{1}{2}\}$ which are real valued in $[0, 1]$.      The conclusion of Lemma  \ref{spectral} also holds for $\mathcal L_t: \oplus_{i=1}^dB \to \oplus_{i=1}^dB$.

This completes the preparatory material.  In the next section we will explain our basic methodology.

\section{Hausdorff dimension estimates}
\label{sec:estimates}

Effective estimates on the Hausdorff dimension come from two ingredients.  
\begin{enumerate}
\item  Min-max type inequalities presented in~\S\ref{ssec:minmax}, which give a way to rigorously
bound the largest eigenvalue $e^{P(t)}$ of the transfer operator~$\mathcal L_t$
using a \emph{suitable test function}~$\underline f$. 
 %which is and can be easily converted  into rigorous estimates on the dimension of the limit set.   
\item The Lagrange---Chebyshev interpolation scheme described
in~\S\ref{test}, which gives a polynomial that can serve as the test function~$\underline f$. 
%a good choice of test function $\underline f$.  This is realised as a polynomial 
%constructed by a basic  Lagrange---Chebyshev interpolation scheme in \S\ref{test}
\end{enumerate}

The first is inspired by the corresponding result for
 the second eigenvalue of the transfer operator for the Gauss map in the work of
Wirsing~\cite{Wirsing}. The second part is inspired by the work of
Babenko--Yur'ev~\cite{Babenko} on the problem of Gauss. 

The accuracy of the estimates which come out from the min-max inequalities 
depend on the test function.
%and so the remaining issue is to find the best testfunction possible $\underline f$. % which is 
Lagrange---Chebyshev interpolation is a very classical method of approximating
holomorphic functions and perhaps first has been used in this setting by
Babenko--Yur'ev~\cite{Babenko}. Whereas various interpolation schemes have been used by
many authors to estimate $e^{P(t)}$, it is the combination of these two
ingredients that leads to particularly effective and accurate estimates. 
\subsection{The min-max inequalities}
\label{ssec:minmax}
Our analysis is based on the maximal eigenvalue $e^{P(t)}$ for $\mathcal L_t$
being bounded  using the following simple result. 

\begin{lemma}\label{lem:minmax}
\begin{enumerate}
\item
Assume there exists $a > 0$ and a positive function 
 $ \underline f  \in  C^\alpha(S) $ such that for all $x \in S$
  $$
    a \underline f(x) \leq \mathcal  L_t  \underline f(x)
    $$
then $a \leq e^{P(t)}$.
\item
Assume there exists $b > 0$ and a positive function 
 $ \underline g  \in  C^\alpha(S) $ such that for all $x \in S$
  $$
\mathcal  L_t  \underline g(x) \leq    b   \underline g(x)
    $$
then $e^{P(t)} \leq b$.
\end{enumerate}
\end{lemma}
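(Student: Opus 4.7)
The plan is to deduce both inequalities from a monotone iteration argument combined with the spectral asymptotics established in Lemma~\ref{spectral}. The starting observation is that $\mathcal L_t$ is a positive operator on $C^\alpha(S)$: inspecting Definition~\ref{def:trop}, each coordinate $F_k^t$ is a sum of nonnegative multiples of values of $\underline f$, so $0 \leq \underline f_1 \leq \underline f_2$ pointwise implies $\mathcal L_t \underline f_1 \leq \mathcal L_t \underline f_2$ pointwise. This monotonicity is what allows the hypotheses of Parts~1 and~2 to be iterated.

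For Part~1, iterating $a \underline f \leq \mathcal L_t \underline f$ gives by induction $a^n \underline f \leq \mathcal L_t^n \underline f$ for every $n \geq 1$, and multiplying by $e^{-nP(t)}$ yields
$$
a^n e^{-nP(t)} \underline f(x) \leq e^{-nP(t)} \mathcal L_t^n \underline f(x) \qquad \text{for all } x \in S.
$$
By Lemma~\ref{spectral}(2) the right-hand side converges uniformly to $\eta(\underline f) \in \mathbb R^+ \underline h$ and is therefore bounded in sup norm. Since $\underline f$ is strictly positive and continuous on the compact space $S$, it is bounded below by some $\delta > 0$. If $a > e^{P(t)}$ then $a^n e^{-nP(t)} \delta \to +\infty$, forcing the left-hand side to be unbounded, a contradiction; hence $a \leq e^{P(t)}$.

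Part~2 is the mirror image: iterating $\mathcal L_t \underline g \leq b \underline g$ gives $\mathcal L_t^n \underline g \leq b^n \underline g$, so
$$
e^{-nP(t)} \mathcal L_t^n \underline g(x) \leq b^n e^{-nP(t)} \underline g(x).
$$
The left-hand side converges uniformly to $\eta(\underline g) = c \underline h$ for some \emph{strictly positive} scalar $c$, by the positivity of the eigenprojection $\eta$ stated in Lemma~\ref{spectral}(1). If $b < e^{P(t)}$ then the right-hand side tends pointwise to $0$, contradicting $c \underline h > 0$; hence $e^{P(t)} \leq b$. There is no substantive obstacle; the only point requiring slight care is that in Part~2 the limit is a \emph{strictly} positive multiple of $\underline h$ rather than merely nonnegative, which is exactly what the positive eigenprojection in Lemma~\ref{spectral}(1) provides.
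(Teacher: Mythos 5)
Your proof is correct and uses essentially the same approach as the paper: iterate the inequality using positivity of $\mathcal L_t$, then invoke the uniform convergence $e^{-nP(t)}\mathcal L_t^n \underline f \to \eta(\underline f) > 0$ from Lemma~\ref{spectral}(2). The only cosmetic difference is that you argue by contradiction after multiplying by $e^{-nP(t)}$, whereas the paper takes $n$'th roots and passes to the $\limsup$; the underlying ingredients and the role of the strictly positive eigenprojection are identical.
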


\begin{proof}
    By iteratively applying $\mathcal L_t$ to both sides  of the inequality in part 1
     we have that for all $x \in S $ and $n \geq 1$
    $$
    a^n \underline f(x) \leq \mathcal  L_t^n \underline f(x)   
    $$
    and thus taking $n$'th roots and passing to the limit as $n \to +\infty$
    we have for all  $x \in S$ % \cup_{i=1}^d [0,1]\times \{i\}$
    $$
    a  \leq \limsup_{n \to +\infty}|\mathcal  L_t^n \underline f(x)|^{1/n} = e^{P(t)}     
    $$
    since~$\underline f(x)$ is strictly positive and
    $e^{-nP(t)}\mathcal  L_t^n \underline f$ converges uniformly
    to $\eta(\underline f) > 0$ by the second part
    of Lemma~\ref{spectral}.   This completes the proof of part 1.
    
    The proof of part 2 is similar.
    \end{proof}

Below we will use the following shorthand notation when working with
the Banach space $C^\alpha(S)$. 
\begin{notation}
    \label{not:rats}
  Given $f, g \in C^\alpha(S)$ we abbreviate 
%\begin{align*}
    $$
         \sup_S\frac{ f}{ g }: = \sup_{1\le j \le d} \,\sup_{ x \in S } \frac{f_j(x)}{g_j(x)}  
       \qquad \mbox { and }  \qquad 
         \inf_S\frac{  f }{ g } : = \inf_{1\le j \le d} \, \inf_{ x \in S }
         \frac{f_j(x)}{g_j(x)}.
    $$    
%\end{align*}
\end{notation}

In particular, we can use Lemma~\ref{lem:minmax} to deduce a technical fact,
which is a basis for validation of all numerical results in the present work. 
{\color{black}
\begin{lemma}\label{tech}
    The Hausdorff dimension  $\dim_H X \in (t_0,t_1)$ if and only if 
    there exists two positive functions $\underline  f, \underline g \in C^\alpha(S)$  
    such that the following inequalities hold
    \begin{equation}
        \label{eq:tech}
\inf_S\frac{ \mathcal L_{t_0}\underline f}{\underline f } > 1
\quad \mbox{ and } \quad 
\sup_S\frac{ \mathcal L_{t_1}\underline g}{\underline g }< 1,
\end{equation}
\end{lemma}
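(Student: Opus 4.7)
The plan is to prove this as a direct consequence of the two earlier results: the Bowen--Ruelle characterization of $\dim_H X_M$ as the unique zero of the pressure function $P$ (Lemma~\ref{bowenRuelle}), and the min-max inequalities (Lemma~\ref{lem:minmax}) bounding $e^{P(t)}$ in terms of test functions. Since Lemma~\ref{bowenRuelle} tells us $P$ is strictly monotone decreasing with $P(\dim_H X) = 0$, the condition $\dim_H X \in (t_0, t_1)$ is equivalent to $P(t_0) > 0 > P(t_1)$, equivalently $e^{P(t_0)} > 1 > e^{P(t_1)}$. So the entire content of the lemma is to translate this pair of strict inequalities on the spectral radius into the existence of explicit witness functions $\underline f, \underline g$.

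For the \emph{if} direction, assume such $\underline f$ and $\underline g$ exist. Apply part~1 of Lemma~\ref{lem:minmax} with $a = \inf_S \mathcal L_{t_0}\underline f / \underline f > 1$ to obtain $e^{P(t_0)} \geq a > 1$, i.e.\ $P(t_0) > 0$. Apply part~2 with $b = \sup_S \mathcal L_{t_1}\underline g / \underline g < 1$ to obtain $e^{P(t_1)} \leq b < 1$, i.e.\ $P(t_1) < 0$. By strict monotonicity and continuity of $P$ (Lemma~\ref{bowenRuelle}, part~1), the unique zero of $P$ lies strictly between $t_0$ and $t_1$, which by part~2 of the same lemma is $\dim_H X$.

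For the \emph{only if} direction, assume $t_0 < \dim_H X < t_1$, so $P(t_0) > 0 > P(t_1)$. The natural candidates for the test functions are the maximal positive eigenfunctions supplied by Lemma~\ref{spectral}. Namely, let $\underline f := \underline h_{t_0}$ and $\underline g := \underline h_{t_1}$ be the positive eigenfunctions in $C^\alpha(S)$ for $\mathcal L_{t_0}$ and $\mathcal L_{t_1}$ with eigenvalues $e^{P(t_0)}$ and $e^{P(t_1)}$ respectively (aperiodicity of $M$ is assumed throughout this section, so Lemma~\ref{spectral} applies). Since these eigenfunctions are strictly positive on the compact set $S$,
$$\frac{\mathcal L_{t_0}\underline f}{\underline f} \equiv e^{P(t_0)} > 1 \quad \text{and} \quad \frac{\mathcal L_{t_1}\underline g}{\underline g} \equiv e^{P(t_1)} < 1,$$
so the inf and sup in \eqref{eq:tech} are attained as these constants, and the desired strict inequalities hold.

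I do not anticipate any real obstacle: the work is entirely contained in the two cited lemmas, and the only thing to check carefully is that the eigenfunctions $\underline h_{t_0}, \underline h_{t_1}$ supplied by Lemma~\ref{spectral} are bounded uniformly away from zero on $S$ so that the pointwise ratios are well-defined and the inf/sup bounds are finite and strict. This follows from the positivity statement in Lemma~\ref{spectral} together with compactness of $S$ and continuity of $\underline h_t$ (it lies in $C^\alpha(S)$). Minor care is needed to treat the disjoint union $S = \oplus_{j=1}^d [0,1]\times\{j\}$ componentwise in line with Notation~\ref{not:rats}, but this is cosmetic.
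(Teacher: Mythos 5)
Your proof is correct and follows essentially the same route as the paper: the forward implication is obtained by taking the positive eigenfunctions from Lemma~\ref{spectral} so that the ratios are identically $e^{P(t_0)}$ and $e^{P(t_1)}$, and the converse is obtained by feeding the witness functions into Lemma~\ref{lem:minmax} and invoking the strict monotonicity and continuity of $P$ from Lemma~\ref{bowenRuelle}. Your extra remark about the eigenfunctions being bounded uniformly away from zero (via positivity and compactness) is a sensible point of care but adds nothing beyond what the paper already implicitly uses.
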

\begin{proof}
    Assume first that $\dim_H X \in (t_0,t_1)$ then $P(t_0) > 0 >
    P(t_1)$ which is equivalent to $e^{P(t_0)} > 1 > e^{P(t_1)}$. 
    Then by part~1 of Lemma~\ref{spectral} there exist 
    positive eigenfunctions $\underline{h}_0$ and $\underline{h}_1$ of $\mathcal
    L_{t_0}$ and  $\mathcal L_{t_1}$, respectively, such that 
    $$
    \frac{ \mathcal L_{t_0}\underline h_0}{\underline h_0 } = e^{P(t_0)} > 1
\quad \mbox{ and } \quad 
\frac{ \mathcal L_{t_1}\underline h_1}{\underline h_1 } = e^{P(t_1)} < 1.
$$

Now assume that~\eqref{eq:tech} hold true for some $\underline f$ and
$\underline g$. Then by Lemma~\ref{bowenRuelle}
the first inequality in~\eqref{eq:tech} implies that the hypothesis of
    part~1 of Lemma~\ref{lem:minmax} holds with $a=\inf_S\frac{ \mathcal
    L_{t_0}\underline f}{\underline f } > 1 $. %The part~1 of
    We deduce that $e^{P(t_0)} \geq a > 1$ and thus $P(t_0) >  0$.  The second
   inequality  in~\eqref{eq:tech} implies that the hypothesis of  part 2 of
   Lemma~\ref{lem:minmax} holds with $b =  \sup_S\frac{ \mathcal L_{t_1}\underline g}{\underline g } < 1$. 
   We deduce that $e^{P(t_1)}  \leq b <  1$, thus $P(t_1) <  0$. 
    By the intermediate value theorem applied to the strictly monotone decreasing 
    continuous function~$P$ we see that the unique zero $t = \dim_H(X)$ for~$P$ satisfies
    $ t_0< \dim_H  X < t_1$, as required. 
\end{proof}
}

Therefore our aim in applications is to make choices of  $\underline  f = (f_i)_{i=1}^d > 0$ and  $\underline g = (g_i)_{i=1}^d > 0 $
in Lemma~\ref{tech} so that~$t_0$ and~$t_1$ are close, in order to get good estimates on $\dim_H(X)$.

%%%%polina%%%%We might fancifully note that if we took $t_1 = t_0$ and had an
%%%%polina%%%%{\it a priori} knowledge of the true eigenfunction $\underline
%%%%polina%%%%h$ for $\mathcal L_{t_0}= \mathcal L_{t_1}$ and took this choice for both  
%%%%polina%%%%$\underline f$ and $\underline g$ then we would immediately
%%%%polina%%%%have $t_1=t_0 = \dim_H(X_A)$.   However, in the absence of
%%%%polina%%%%divine knowledge of the eigenfunction  our strategy is the
%%%%polina%%%%following. 
%%%%polina%%%%
\begin{rem}[Domains of test functions]
    It is apparent from the proof that we need only to consider the
    minima and maxima of the ratios $\mathcal L \underline
    f(x)/\underline f(x)$ and $\mathcal L \underline
    g(x)/\underline g(x)$ for  those $x\in S$ lying in the limit set.
    However, a compromise which simplifies the use of calculus would
    be to consider the minima and maxima over the smallest
    interval containing the limit set. 
\end{rem}

\begin{rem}[Min-max theorem]
%    \marginnotem{Mayer cites Krasnoleski's book?}
    A more refined version which we won't require  is the  min-max result:
    $$
    e^{P(t)}= 
    \sup_{\underline f >0} \inf_{ S } %  \cup_{i=1}^d [0,1]\times\{i\}     }
%    \left\{
    \frac{\mathcal L_{t} \underline f}{\underline f} 
%    \right\}
    = 
    \inf_{\underline f >0}\sup_{ S } %\in \cup_{i=1}^d [0,1]\times\{i\}    }
    %\left\{
    \frac{\mathcal L_{t}\underline f}{\underline f}. 
    %\right\}.
    $$
 
  To see this, observe that by Lemma~\ref{lem:minmax} for any $\underline f,
  \underline g > 0$
    we have 
    $$
     \inf_{x \in S} 
     \frac{\mathcal L_t \underline f(x)}{\underline f(x)}
    \leq e^{P(t)} \leq  \sup_{x \in S} \frac{\mathcal L_t \underline
    g(x)}{\underline g(x)},
    $$
    and therefore 
     $$
     \sup_{\underline f>0}
     \inf_{S} \frac{\mathcal L_t \underline f}{\underline f}
    \leq e^{P(t)} \leq  
      \inf_{\underline g>0}
    \sup_{S} \frac{\mathcal L_t \underline g}{\underline g}.
    $$
    
    In particular, by the Ruelle operator theorem, the equality is realized when
    $\underline f = \underline g = \underline h>0$ is the eigenfunction associated to
    $e^{P(t)}$. We refer the reader to~(\cite{Mayer}, p.88) and (\cite{IK02}, \S2.4.2)   for more details.
\end{rem}

\subsubsection{Applying Lemma~\ref{tech} in practice}
In order to obtain good estimates on the Hausdorff dimension based on
Lemma~\ref{tech} it is necessary to construct a pair of functions $\underline f$
and~$\underline g$ and to rigorously verify the inequalities~\eqref{eq:tech}.
{\color{black}
We shall now explain how the verification has been realised in practice, i.e. in
our computer program. To simplify the exposition, we demonstrate the method
in the case of Bernoulli scheme. It will be clear how to generalise it to treat
a more general Markov case. 

Evidently for any interval $[t_0,t_1] \subset [0,1]$ and for any $x \in [t_0,t_1]$ we
have\footnote{Here by $\|f\|_\infty$ we understand $\sup_{[t_0,t_1]}|f|$.}
\begin{equation}
    \label{eq:ubound}
  \left| \frac{[\mathcal L_t f](t_0)}{f(t_0)} - \frac{[\mathcal L_t f](x)}{f(x)}
  \right| \le \left\| \Bigl( \frac{\mathcal L_{t} f}{ f} \Bigr)^\prime \right\|_\infty
    (t_1-t_0),
\end{equation}
therefore if we can get an upper bound on $\bigl\| \bigl( \frac{\mathcal L_{t} f}{
f} \bigr)^\prime \bigr\|_\infty$, then we can rigorously estimate the ratio by
taking a partition of $[0,1]$ and applying~\eqref{eq:ubound} on each interval of
the partition. 
Furthermore, it is clear that providing  $\bigl\| \bigl( \frac{\mathcal L_{t} f}{
f} \bigr)^\prime \bigr\|_\infty$ is small, we can allow a relatively coarse 
partition. 

We have the following simple useful fact. 
\begin{lemma}
    \label{lem:ratio}
    Let~$h$ be a positive eigenfunction of~$\mathcal L_t$ corresponding to the
    eigenvalue~$\lambda$. Then there exists a constant $r_1>0$ such that 
    for any approximation~$f$ such that $\|f - h\|_{C^1}
    < \varepsilon$ we have
    $\bigl\| \bigl( \frac{\mathcal L_{t} f}{ f} \bigr)^\prime
    \bigr\|_\infty < r_1 \varepsilon$.
\end{lemma}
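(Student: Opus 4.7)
The plan is to exploit the fact that when $f = h$ exactly, the ratio $\mathcal L_t f/f$ is the constant function equal to $\lambda$, so its derivative vanishes identically. The claim is then a first-order perturbation statement: a small $C^1$ perturbation of~$h$ produces a small $C^0$ perturbation of the derivative of the ratio.

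First I would write $f = h + e$ with $\|e\|_{C^1} < \varepsilon$, and use linearity of $\mathcal L_t$ to express
\[
\mathcal L_t f = \lambda h + \mathcal L_t e, \qquad (\mathcal L_t f)' = \lambda h' + (\mathcal L_t e)'.
\]
The quotient rule then gives
\[
\left(\frac{\mathcal L_t f}{f}\right)' = \frac{(\lambda h' + (\mathcal L_t e)')(h+e) - (\lambda h + \mathcal L_t e)(h'+e')}{(h+e)^2}.
\]
After expanding, the two $\lambda h h'$ terms cancel exactly — this is the reflection of the fact that $\mathcal L_t h / h \equiv \lambda$ — and the numerator reduces to
\[
\lambda(h' e - h e') + (\mathcal L_t e)'(h+e) - (\mathcal L_t e)(h'+e'),
\]
every term of which is manifestly linear or quadratic in the perturbation~$e$ and its derivative.

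Next I would bound each piece. Because $h$ is a strictly positive continuous eigenfunction on a compact interval, there is a constant $h_{\min} > 0$ with $h \geq h_{\min}$; restricting to $\varepsilon < h_{\min}/2$ guarantees $f \geq h_{\min}/2$, so the denominator $(h+e)^2$ is bounded below by $h_{\min}^2/4$. The transfer operator $\mathcal L_t$ maps $C^1$ into $C^1$ boundedly (the pointwise formula shows $\|\mathcal L_t e\|_\infty$ and $\|(\mathcal L_t e)'\|_\infty$ are controlled by $\|e\|_{C^1}$ times a constant depending only on $\sup_j\|T_j'\|_\infty$, $\sup_j\|T_j''\|_\infty$ and~$t$), so $\|\mathcal L_t e\|_{C^1} \leq C_1 \varepsilon$. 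Together with $\|h\|_{C^1} < \infty$ and $\|e\|_{C^1} < \varepsilon$, the numerator is bounded in sup norm by a constant multiple of $\varepsilon$ (the quadratic $\varepsilon^2$ contributions being absorbed for $\varepsilon$ small). Dividing by the lower bound on the denominator produces a constant $r_1$, depending only on $h$, $\lambda$, and the contractions, such that $\|(\mathcal L_t f / f)'\|_\infty \leq r_1 \varepsilon$, as claimed.

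The only mildly delicate step is the $C^1$ boundedness of $\mathcal L_t$; this is routine but does require differentiating the summands $f(T_j) |T_j'|^t$, so it is where all the dependence on the geometry of the iterated function scheme enters the constant~$r_1$. Everything else is algebraic cancellation and the standing positivity of the true eigenfunction~$h$.
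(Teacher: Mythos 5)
Your proof is correct and follows essentially the same route as the paper's: write $f = h + e$, use linearity of $\mathcal L_t$ and the quotient rule, and bound each term using positivity of $h$ and the $C^1$-boundedness of the transfer operator. Your version is slightly cleaner in that it makes explicit both the cancellation of the $\lambda h h'$ terms and the need to take $\varepsilon$ small enough that $f$ stays bounded away from zero, but the underlying argument is the same.
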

        \begin{proof}
        By the hypothesis of the Lemma, we may write $f = h + f_\varepsilon$, where
        $\|f_\varepsilon\|_{C^1} = \max |f_\varepsilon| + \max
        |f_\varepsilon^\prime | < \varepsilon$, and $f > r_0 > 0$.
        Then the condition $\|f_\varepsilon\|_{C^1} < \varepsilon$ implies
        $\|f\|_{C^1} \le \|h\|_{C_1}+\varepsilon$, furthermore, we calculate
        \begin{align*}
        \left|([\mathcal L_t f_\varepsilon](x))^\prime \right| & \le \sum_{j=1}^d
         \left|\left(|T_j^\prime(x)|^s f_\varepsilon(T_j(x)) \right)^\prime
         \right| \\ &\le
         \sum_{j=1}^d \left( \left|(|T_j^\prime(x)|^s)^\prime
         f_\varepsilon(T_j(x))\right| +  \left|
         |T_j^\prime(x)|^{s+1} f_\varepsilon^\prime(T_j(x)) \right| \right) \le r_2 \varepsilon,
        \end{align*}
        where $r_2 = 2d\max(\|(|T_j^\prime|^s)^\prime\|_\infty,
        \||T_j^\prime(x)|^{s+1}\|_\infty)$.  
        By linearity of the transfer operator we have that 
        $(\mathcal L_t f )^\prime = (\mathcal L_t h )^\prime + (\mathcal L_t
        f_\varepsilon)^\prime = \lambda h ^\prime + (\mathcal L_t
        f_\varepsilon)^\prime$. 
        Now we have for the derivative of the ratio
        \begin{multline*}
        \Bigl|\Bigl(\frac{\mathcal L_t f }{f } \Bigr)^\prime \Bigr| = \Bigl|\frac{(\mathcal L_t
        f )^\prime f  - f ^\prime (\mathcal L_t f )}{f ^2}\Bigr| \\ \le \Bigl| \frac{ (\lambda
        h ^\prime + (\mathcal L_t f_\varepsilon)^\prime)\cdot (h  +
        f_\varepsilon) - (h ^\prime + f^\prime_\varepsilon)\cdot( \lambda h  +
        \mathcal L_t f_\varepsilon)}{r_0^2}\Bigr| \\ \le \frac{\varepsilon}{r_0^2}  (r_2
         \|f \|_\infty +  \lambda \|h ^\prime\|_\infty
        + \|\mathcal L_t f \|_\infty + r_2\|h ^\prime\|_\infty  ).
    \end{multline*}
    Taking into account that $\|f\|_{C^1} \le \|h\|_{C^1}+\varepsilon$ we may
    now choose $r_1$, which depends on the norm of the
    eigenfunction~$\|h\|_{C^1}$, but is independent of the choice of
    approximation~$f$, such that 
    $\bigl\|\bigl(\frac{\mathcal L_t f }{f } \bigr)^\prime \bigr\|_\infty \le r_1
    \varepsilon$.
  \end{proof}

In practice, the derivative $\bigl( \frac{\mathcal L_{t} f}{ f} \bigr)^\prime$
can be effectively estimated using the following computer-assisted approach to
arbitrary precision and without excessive effort. 
We construct our test function~$f$ as a polynomial of degree~$m$ and our IFS
consists of linear-fractional transformations $T_j(x) = \frac{a_j x + b_j}{c_j x
+ d_j}$, the image functions $F^t = \mathcal L_t f$ can be written as 
$$
F^t (x) = \sum_{j=1}^d \frac{|a_j d_j - b_j c_j|^t}{|c_j x+d_j|^{2t} } f
\left(  \frac{a_j x + b_j}{c_j x + d_j}\right) = 
\sum_{j=1}^d  \frac{|a_j d_j - b_j c_j|^t}{(c_j x+d_j)^{2t} (c_j x+d_j)^m} p_j(x),
$$
where $p_j$ are polynomials of degree $m$, whose coefficients can be computed
with arbitrary precision. Then for the derivative of the ratio we
obtain 
\begin{equation}
    \label{eq:dif-err}
\left( \frac{F^t(x)}{f(x)}\right)^\prime  = \sum_{j=1}^d \frac{  |a_j d_j - b_j
c_j|^t }{(c_j x +
d_j)^{2t}(c_j x+d_j)^{m+1} f^2(x)} \widehat p_{j}(x),
\end{equation}
where 
$$
\widehat p_{j}(x) = -(m+2t) p_j(x) f(x) - (c_j x+ d_j) (f (x)p_j^\prime(x) - f^\prime(x) p_j (x) ) 
$$
is a polynomial of degree~$2m$ whose coefficients can be computed explicitly
with arbitrary precision chosen. The computation of these coefficients, together
with the coefficients of~$f$, allowed us to obtain accurate estimates on the derivative
 $\bigl(\frac{F^t}{f}\bigr)^\prime$ on the entire interval~$[0,1]$ 
 using ball arithmetic~\cite{JohArb} in all the examples we considered.  
 
\begin{rem} %[Rigorous bounds]
%\begin{equation}
%\label{eq:dif-epsilon}
%\left|\frac{d}{dx} \frac{\mathcal L_{t_0}\underline f(x)}{\underline f(x)}
%\right| < \varepsilon, \quad \mbox{ and } \quad
%\left|\frac{d}{dx} \frac{\mathcal L_{t_1}\underline g(x)}{\underline g(x)} \right| < \varepsilon. 
%\end{equation}
%The test functions are chosen to be polynomials with
%rational coefficients which can be represented exactly. In particular, we choose 
%polynomials of the same degree $m$ for all~$d$ components. 
Since the ratios are analytic functions, one would expect that they can be
approximated by polynomials. A heuristic observation suggests that for an IFS of
analytic contractions, $\varepsilon \sim 10^{-3m/4}$ in Lemma~\ref{lem:ratio}, where~$m$ is the degree of the
approximating polynomial. 
\end{rem}

\begin{rem}
        The formulae for~$F^t$, $\frac{F^t}{f}$ and~$\widehat p_{j}$ given above have been used in the actual computer program written 
        in~C to study the Examples we have in the paper. For the iterated function schemes which are not 
        linear fractional transformations, the formulae, indeed, will be
        different, and in particular,~$\widehat p_{j}$ may not be
        a polynomial. The same method applies, but the computation might require more
        computer time. The implicit constant~$r_2$ also affects the accuracy of
        the estimate. 
    \end{rem}
    }  
\subsection[Interpolation]{Lagrange---Chebyshev Interpolation}\label{test}
We might fancifully note that if we had an
{\it a priori} knowledge of the true eigenfunction~$\underline
h$ for $\mathcal L_{t}$ corresponding to the maximal eigenvalue  and took this choice for 
$\underline f$  in Lemma  \ref{lem:minmax} then we would immediately
have $a = b = e^{P(t)}$.   However, in the absence of
a knowledge of the eigenfunction  our strategy is to find an approximation.
%to the eigenfunction as follows. 

\begin{enumerate}
    \item[(a)]  
        Choose $t_0 < t_1$ as potential lower and upper bounds, respectively, in Lemma \ref{tech} based on 
        an heuristic estimate on the  dimension  (for example
        using the periodic point method, or bounds that we
        would like to justify from any other source); and   
    \item[(b)]
        Find candidates for $\underline f$ and $\underline g$ which 
        are close to the eigenfunctions $\underline h_{t_1}$ and $\underline h_{t_0}$ for the operators 
        $\mathcal L_{t_1}$ and $\mathcal L_{t_0}$, respectively.
        We do this by approximating the two operators by finite
        dimensional versions and using their eigenfunctions  for~$\underline f$
        and~$\underline g$ (in the next section). 
\end{enumerate}

There are different possible ways to find the functions we require in (b) in the previous section.  
We will use  classical Lagrange   interpolation~\cite{Trefethen}.
%This is a classical method, but our motivation comes from the 
%article of Babenko et al  (for Gauss map but now adapted to our
%Markov iterated function scheme)~\cite{babenko}.
%marginnotem{We mention Babenko again!}

\bigskip
\noindent
{\bf Step 1 (Points)}.  Fix $m \geq 2$.  We can then consider  the 
zeros of the Chebyshev polynomials: 
$$x_k = \cos\left( \frac{\pi (2 k -1)}{2 m}\right)  \in [-1,1], \mbox{ for } 1 \leq k \leq m.$$  
In the present context it  is then convenient to translate them
to the unit interval by setting $y_k = (x_k+1)/2 \in [0,1]$.  

\bigskip
\noindent
{\bf Step 2 (Functions)}.
We can  use the values $\{y_k\}$ to define the associated Lagrange interpolation polynomials:
\begin{equation}
    \label{lagcheb:eq}
lp_k(x) =  \frac{\prod_{i\neq k} (x - y_i)}{ \prod_{i\neq k} (y_k - y_i)}, \mbox{ for }
1 \leq k \leq m
\end{equation}
which are the polynomials of the minimal degree with the property that
$lp_k(y_k)=1$ and $lp_k(y_j) = 0$ for $j \neq k$ for all $1 \le j,k\le m$.
These polynomials span an $m$-dimensional subspace $\langle lp_1, \cdots, lp_m \rangle \subset C^\alpha([0,1])$. 

To allow for the Markov condition we need to consider the $(d\times m)$-dimensional subspace of $C^\alpha(S)$. 
%corresponding to $d$ disjoint copies $$X_j = [0,1] \times\{j\}$$ ($1 \leq j \leq d$) of $[0,1]$. 
To define it we simply consider $d$ copies $lp_{k,i}: [0,1]\times\{i\} \to
\mathbb R$ ($1 \leq i \leq d$)
of the Lagrange polynomials given by $lp_{k,i}(x,i) \equiv lp_k(x)$. % and their zeros $y_{j,k}$ ($1 \leq k \leq m$).

\bigskip
\noindent
{\bf Step 3 (Matrix)}.
The polynomials $lp_{k,i}$ for $i = 1,\ldots, d$ and $ k=1,\ldots,m$ 
constitute a basis of an $(d\times m)$-dimensional subspace $\mathcal E : = \langle
lp_{k,i} \rangle \subset C^\alpha(S)$. 
Using definition~\ref{def:trop} of the transfer operator we can write for any
$0<t<1$ and $(f_1, \ldots, f_d) \in \mathcal E$:  
\begin{align}
\label{ltfi:eq}
(\mathcal L_t (f_1, \ldots, f_d))_j 
&= \sum_{i=1}^d M(i,j) f_i ( T_{i} ) |T_{i}^\prime|^t, \quad 1 \le i \le d, \\  
(\mathcal L_t (f_1, \ldots, f_d))_j & \colon  [0,1] \times \{j\} \to
\mathbb{R}. \notag
%[0,1] \times\{j\} \notag 
%= \sum_{j=1}^d A(i,j) 
%\left(\sum_{k=1}^m r_{jk} f_{j,k}\right)\circ T_j  |T_j^\prime|
\end{align}
For each $1 \leq i,j \le d$ and $1 \leq k,l \leq m$ we can introduce the $m\times
m$ matrix $B^t_{ij}(l,k)$ 
\begin{equation}
    \label{Bsmall:eq}
    B^t_{ij}(l,k) \colon =
    lp_{k,i}(T_{i}(y_{l,j}))\cdot|T_{i}^\prime(y_{l,j})|^t .
%    =    lp_k(T_i(y_l))\cdot |T_i^\prime(y_l)| . 
\end{equation}  
Then we can apply the operator $\mathcal L_t$ to a basis function
$(0, \cdots, 0, lp_{k,i},0, \cdots, 0) \in \mathcal E$ and evaluate the
resulting function at the nodes $y_{l,j} = (y_l,j) \in [0,1]\times\{j\}$, $1 \le
l \le m$, $ 1 \le j \le d$ 
\begin{align*}
(\mathcal L_t (0, \cdots, 0, lp_{k,i},0, \cdots, 0)
)_j(y_{l,j}) &= M(i,j) lp_{k,i}(T_{i}(y_{l,j})) |T^\prime_{i}(y_{l,j})|^t 
\\ &= M(i,j) \cdot B^t_{i,j}(l,k).
\end{align*}

%In particular, if we know the matrices $M(i,j)B^t_{ij}(l,k)$ we can compute an
%approximation to the $j$'th component of the image of a basis function $\mathcal L_t (0,
%\cdots, 0, lp_{k,i},0, \cdots, 0)_j$ by  Lagrange polynomials: 
%$$
%(\mathcal L_t (0, \cdots, 0, lp_{k,i}, 0, \cdots, 0) )_j  \approx \sum_{l=1}^m
%M(i,j)B^t_{ij}(l,k) lp_{l,j}. 
%$$
Taking into account that the polynomials $lp_{k,i}$ constitute a basis of the
subspace $\mathcal E$ we can approximate the restriction $\mathcal
L_t|_{\mathcal E}$ by a $md \times md$ matrix: 
%Finally, we can combine these $d$ matrices to get a single matrix $B$ given by 
\begin{equation}
    \label{eq:Btmatrix}
B^t = 
%\left(
\begin{pmatrix}
    M(1,1)B^t_{11} & \cdots & M(1,d)B^t_{1d}\cr
    \vdots & \ddots & \vdots\cr
    M(d,1)B^t_{d1} & \cdots & M(d,d)B^t_{dd}\cr
\end{pmatrix}.  %\qquad
%
%\begin{matrix}
%    B_{11} & \cdots & B^t_{1d}\cr
%    \vdots & \ddots & \vdots\cr
%    B^t_{1d} & \cdots & B^t_{dd}\cr
%\end{matrix}
%\right)
\end{equation}
For large values of~$m$ the maximal eigenvalue of the
matrix~$B^t$ will be arbitrarily close to the
maximal eigenvalue $e^{P(t)}$ of the transfer operator~$\mathcal L_t$, see~\S\ref{ss:verify}. 

\bigskip
\noindent
{\bf Step 4 (Eigenvector)}.
We can consider the left eigenvector of $B^t$ \begin{equation}
    \label{eq:vtvect}
v^t = (v^t_{1,1}, \cdots , v^t_{1m}, v^t_{21}, \cdots, v^t_{2,m}  \cdots, v^t_{d1}, \cdots v^t_{dm})
\end{equation}
corresponding to the maximal eigenvalue and use it to define a function $(f^t_1, \cdots, f^t_d) \in \mathcal E$ as 
a linear combination of Lagrange polynomials
\begin{equation}
    \label{eq:ftfunc}
%f_k(x) = \sum_{i=1}^m  v_{ik} f_{ik}(x), \quad 1 \leq k \leq d, 
f^t_i = \sum_{j=1}^m v^t_{ij} lp_{i,j} , \quad 1 \le i \le d.
\end{equation}
%defined on $[0,1] \times \{i\}$
%which is .

\begin{rem}
    In many cases the calculation can be simplified. More precisely, in the
    construction above, the points $y_{l,j} \equiv y_l$ do not depend on $j$.
    %and in   the examples we consider the maps $T_{ji}$ do not depend on $j$ either. 
    Therefore the matrices $B_{ij}(l,k)$ do not depend on $j$ either, and instead
    of computing $d^2$ matrices $B_{ij}(l,k)$ it is sufficient to compute $d$
    matrices $B^t_i(l,k)$.
\end{rem}

{
\color{black}
It is not immediately clear that the polynomials given by~\eqref{eq:ftfunc} are
positive. In Proposition~\ref{prop:analytic} in \S\ref{ss:verify} below we show
that for an iterated
function scheme of analytic contractions the algorithm presented above gives
positive functions provided~$m$ is sufficiently large. However, we don't have
a priori bounds on~$m$. Therefore, for every example we consider, we rigorously
verify that the function constructed is positive using the following simple
method (and if the function turns out not to be positive, we increase~$m$). 

 Since our~$f_j^t$'s are polynomials, their
        derivatives are easy to compute symbolically. We then take a uniform partition of the
        interval into $2^{10}$ intervals. For each interval $(a,b)$ of the
        partition, we compute the following:
        \begin{enumerate}
            \item The middle point $c = \frac12(a+b)$ and half-length $r =
                \frac12(b-a)$.
            \item The first~$m-1$ derivatives at~$c$: $f_j^{(k)}(c)$ for $k = 1, \ldots,
                m-1$.
            \item The image of the interval under the~$m$'th derivative: $(a_1, b_1) = |f_j^{(m)}(a,b)|$, this is done using ball
                arithmetic. The inequality $\max_{(a,b)} |f_j^{(m)}| \le b_1$
                is guaranteed by the Arb library~\cite{JohArb}.
        \end{enumerate}
        Then we can calculate a lower bound on $f_j$ on $(a,b)$:
        $$
        f_j(x) \ge f_j(c) - (r |f_j^{(1)}(c)| + r^2 |f_j^{(2)}(c)|+\ldots + r^{m-1}
        |f_j^{(m-1)}(c)| + r^m b_1 ) \mbox{ for all } x \in (a,b).
        $$
}

    %Overall the complexity of this approach is low. Typically computing the
    %eigenvector will have complexity~$O((dm)^3)$. 

\subsection{Bisection method}
\label{bis}
The approach described in the previous two sections can be used not only to verify
given estimates $t_0 < \dim_H X < t_1$ but also to compute the Hausdorff
dimension of a limit set of a Markov iterated function scheme  with any desired accuracy using a basic
bisection method. Assume that given $\varepsilon>0$ we would like to find an
interval $\dim_H X \in (d_0,d_1)$ of length $d_1-d_0 = \varepsilon$. 
%such that $ d_0 < \dim_H X <d_1$. 

%%%%%%%polina%%%%%%We can choose $t_0 < t_1$ by a variety of different methods.  One
%%%%%%%polina%%%%%%approach is to use a heuristic method, such as the periodic
%%%%%%%polina%%%%%%point method, to suggest appropriate choices and then follow
%%%%%%%polina%%%%%%the steps in the previous sections can be used to confirm them rigorously.   
%%%%%%%polina%%%%%%However, this can be very time consuming and impractical.
%%%%%%%polina%%%%%%
%%%%%%%polina%%%%%%An alternative approach is to use a basic bisection method.  
We begin by fixing a value of $m$, say $m=6$. Then we pick $t_0 < t_1$  for which we know $P(t_0) > 1 >
P(t_1)$ (for a one-dimensional Iterated Function Scheme  a safe choice is $t_0=0$, $t_1 = 1$) and compute $q = \frac{1}{2} (t_0 + t_1)$.
Using Lagrange--Chebyshev interpolation, the method
described in~\S\ref{test}, we compute the matrix $B^{q}$ defined
by~\eqref{eq:Btmatrix}. We then calculate its left eigenvector $v^{q}$ using the
classical power method  
and construct the corresponding function~$\underline f^{q} \in \mathcal E$
according to~\eqref{eq:ftfunc} as a sum of Lagrange polynomials. We verify that
$\underline f^q$ is positive, using the approach explained above. Having the function
$\underline f^{q}$ we compute the minima and the maxima of the ratio  
$$
a^\prime \colon = \inf_{S} \frac{\mathcal L_{q} \underline
f^{q}}{\underline f^{q}}, \qquad
b^\prime \colon = \sup_{S} \frac{\mathcal L_{q} \underline f^{q}}{\underline f^{q}}
$$
Then there are three possibilities
\begin{enumerate}
    \item if $a^\prime > 1$ we deduce by Lemma~\ref{tech} that $\dim_H X \ge
        q$ and move the left bound of the interval to $t_0 = q$,
    \item if $b^\prime < 1$ we deduce from Lemma~\ref{tech} that $\dim_H X \le
        q$ and move the right bound of the interval to $t_1 = q$,
    \item if $a^\prime \le 1 \le b^\prime$ we increase $m$,
\end{enumerate}
and repeat the process described above, see Figure~\ref{fig:flowchart} for a
flowchart. 
%$\dim_H(X_A) < 0.574$, which we have now rigorously established in 
%Theorem \ref{firsttheorem}.
%\end{example}

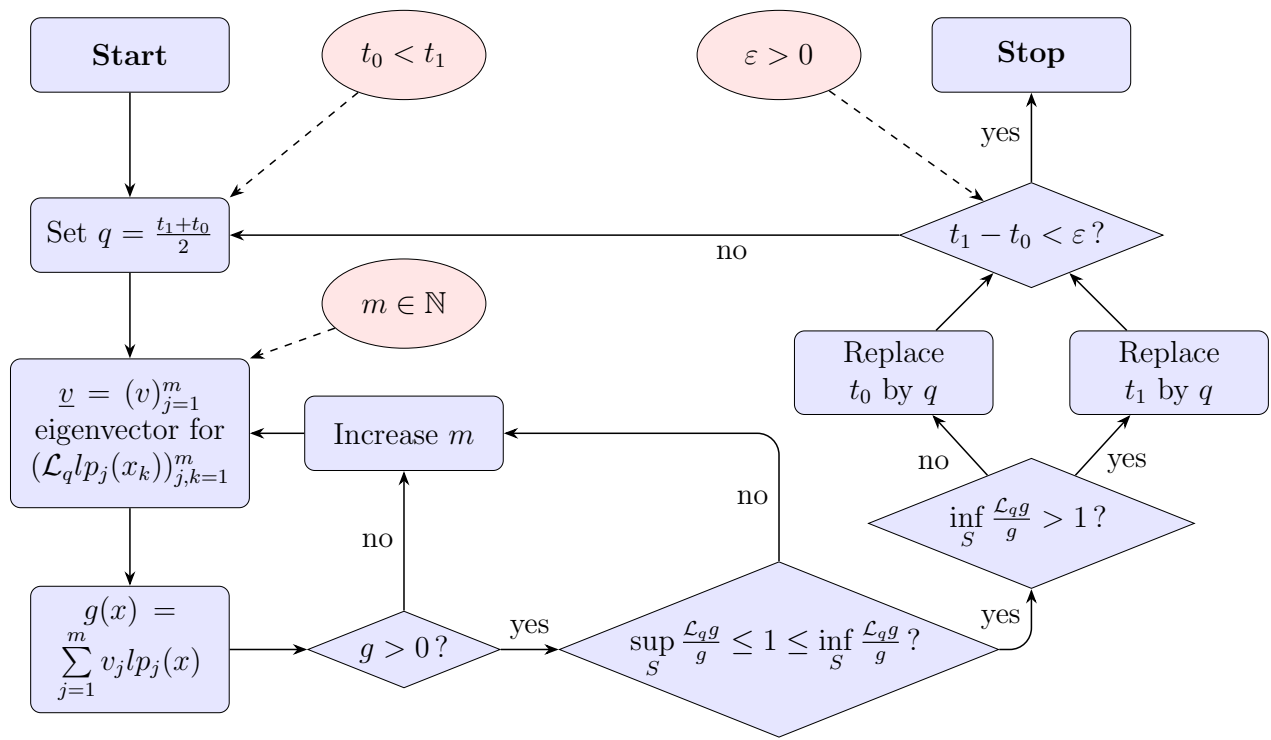
\begin{figure}
    \resizebox{17cm}{!}{
    \begin{tikzpicture}[node distance = 2cm, auto,
        arr/.style = {semithick, rounded corners=3mm, -Stealth},
        ]
        % Place nodes

        \node [blockM] (expert) {\bf Start};
        \node [cloud, right of=expert, node distance=3.8cm] (ts) {$t_0 < t_1$};
        \node [blockM, below of=expert, node distance=2.5cm] (init) {Set $q = \frac{t_1+t_0}{2}$};
        \draw [arr] (expert) -- (init);
        \draw [arr,dashed] (ts) -- (init.north east);
        \node [blockL, below of=init, node distance=2.75cm] (eigv) {
        $\underline v = (v)_{j=1}^m$ eigenvector for $(\mathcal L_q lp_j(x_k))_{j,k=1}^m$};
        \draw [arr] (init) -- (eigv);
        \node [blockM, below of=eigv, node distance=3.0cm] (evaluate) {$g(x)
        = \sum\limits_{j=1}^m v_j lp_j(x)$};
        \draw [arr] (eigv) -- (evaluate);
        \node [decisionS, right of=evaluate, node distance=3.8cm] (pos) {$g>0$\,? };
        \draw [arr] (evaluate) -- (pos);
        \node [decisionL, right of=pos, node distance=5.2cm] (decide) {$\sup\limits_S \frac{\mathcal L_qg}{g}
        \le 1 \le \inf\limits_S \frac{\mathcal L_qg}{g} $\,?};
        \draw [arr] (pos) -- node {yes} (decide);
        \node [blockS, above of=pos, node distance=3.0cm] (update) {Increase $m$ };
        \node [cloud, above of=update, node distance=1.8cm] (m) {$m \in \mathbb N$};
        \draw [arr,dashed] (m) -- (eigv.north east);
        \draw [arr] (decide) |- node [near start] {no} (update);
        \draw [arr] (pos) -- node {no} (update);
        \draw [arr] (update) -- (eigv);
        \node [decisionM, right of=init, node distance=12.5cm] (decideagainagain) {$t_1 - t_0 < \varepsilon$\,?};
        \draw [arr] (decideagainagain) -- node [near start] {no} (init);
        \node [blockM, below left of=decideagainagain, node distance=2.7cm] (updatetagain) {Replace $t_0$ by $q$};
        \node [blockM, below right of=decideagainagain, node distance=2.7cm] (updatet) {Replace $t_1$ by $q$};
        \node [decisionM, below of=decideagainagain, node distance=4.0cm] (decideagain) {$\inf\limits_S \frac{\mathcal L_qg}{g} > 1 $\,?};
        \draw [arr] (decide) -| node [near end] {yes} (decideagain);
        \draw [arr] (decideagain) -- node [pos=0.25,right,shift={(0.1cm,-0.05cm)}] {yes} (updatet);
        \draw [arr] (decideagain) -- node {no} (updatetagain);
        \draw [arr] (updatet) -- (decideagainagain);
        \draw [arr] (updatetagain) -- (decideagainagain);
        \node [cloud, right of=expert, node distance=9.0cm] (varepsilon) {$\varepsilon > 0$};
        \draw [arr,dashed] (varepsilon) -- (decideagainagain);
        %\node [cloud, right of=varepsilon, node distance=3.5cm] (stop) {Stop};
        \node [blockM, right of=varepsilon, node distance=3.5cm] (stop) {\bf Stop};
        \draw [arr](decideagainagain) -- node {yes} (stop) ;
    \end{tikzpicture}
    }
    \caption{The flow diagram summarizes how a computer programme implements the
    bisection procedure to find a small interval 
    $[t_0,t_1]$ of small size $\varepsilon > 0$ containing $\dim_H(X)$.}
    \label{fig:flowchart}
\end{figure}

{
\color{black}
\subsection{The convergence of the algorithm}
\label{ss:verify}
In this section we show that our method gives estimates 
on the dimension which are arbitrarily close to the true value, thus opening up
the possibility of arbitrarily close estimates with further computation. 

All of the examples we consider in the present work have an iterated scheme
consisting of one-dimensional real analytic contractions.
For simplicity we state the next Proposition for a single interval~$I$, which
corresponds to the Bernoulli case with $d=1$ in \S\ref{test}, but it will be clear how to extend this case to
the more general Markov setting. 
\begin{prop}
    \label{prop:analytic}
    Let $T_1, \cdots, T_d: I \to I$ be an iterated  function scheme with real analytic contractions with
    $ 0 < \inf_{x\in I}|T_j^\prime(x)|   \leq   \sup_{x\in I}|T_j^\prime(x)| < 1$ for $j=1,
    \cdots, d$. Assume that $P(t) \ne 0$ (i.e. $\dim_H(X) \ne t$). Then for any~$m$ sufficiently large the polynomial~$f^t$ defined
    by~\eqref{eq:ftfunc} satisfies one of the inequalities of Lemma~\ref{tech},
    in other words we have either $\inf_I \frac{\mathcal L_t f^t}{f^t} > 1$ or 
    $\sup_I \frac{\mathcal L_t f^t}{f^t} < 1$. 
%\end{enumerate}
\end{prop}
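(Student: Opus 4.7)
The plan is to show that as $m \to \infty$, the polynomial $f^t$ defined by~\eqref{eq:ftfunc} converges uniformly to the true positive eigenfunction $\underline h$ of $\mathcal{L}_t$, so that the ratio $\mathcal{L}_t f^t / f^t$ converges uniformly to the constant $e^{P(t)}$. Since $P(t) \ne 0$, this scalar is either strictly greater than or strictly less than $1$, and the required one-sided inequality will follow for all sufficiently large $m$.

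First I would invoke analyticity. Because each $T_j$ is a real analytic contraction on $I$, it extends to a holomorphic strict contraction of a complex neighbourhood $\Omega \supset I$ into itself, and the transfer operator $\mathcal{L}_t$ then acts compactly (in fact as a nuclear operator) on the Banach space $H^\infty(\Omega)$ of bounded holomorphic functions on $\Omega$, in the spirit of Ruelle--Mayer. In particular, Lemma~\ref{spectral} applies and the leading eigenfunction $\underline h$ is the restriction to $I$ of a bounded holomorphic function on $\Omega$, with $e^{P(t)}$ an isolated simple eigenvalue separated from the rest of the spectrum by a definite spectral gap.

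Next I would bring in the classical convergence theory of Lagrange--Chebyshev interpolation: for a function holomorphic in a Bernstein ellipse $E_\rho \subset \Omega$ containing $I$, the degree-$(m{-}1)$ Chebyshev interpolant converges uniformly to the function at rate $O(\rho^{-m})$ (Trefethen~\cite{Trefethen}), and the same exponential rate holds for derivatives of any fixed order on $I$ (by Cauchy estimates applied to a slightly smaller ellipse). The matrix $B^t$ of~\eqref{eq:Btmatrix} is precisely the matrix representing the composition of $\mathcal{L}_t$ with the Chebyshev interpolation projection $\Pi_m$ onto the $m$-dimensional subspace $\mathcal E$, evaluated in the Lagrange basis. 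Since $\mathcal{L}_t$ maps $H^\infty(\Omega)$ continuously into functions holomorphic on a strictly larger neighbourhood (each $T_j$ maps $\Omega$ strictly inside itself), $\Pi_m \mathcal{L}_t \to \mathcal{L}_t$ in operator norm exponentially fast on $H^\infty(\Omega)$.

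From the compactness of $\mathcal{L}_t$ together with the spectral gap, standard perturbation theory for spectra of compact operators (Kato~\cite{Kato}) then yields: the top eigenvalue $\lambda_m$ of $B^t$ satisfies $|\lambda_m - e^{P(t)}| = O(\rho^{-m})$, and the corresponding normalised eigenfunction $f^t \in \mathcal E$ satisfies $\|f^t - \underline h\|_{C^1(I)} = O(\rho^{-m})$ after a suitable normalisation. In particular, $f^t$ is positive on $I$ for $m$ large (since $\underline h$ is bounded away from $0$). Applying Lemma~\ref{lem:ratio} to this $C^1$-approximation, $\bigl\|(\mathcal L_t f^t / f^t)'\bigr\|_\infty = O(\rho^{-m})$. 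Combined with $\mathcal L_t f^t / f^t \to e^{P(t)}$ pointwise, the inequality~\eqref{eq:ubound} then gives
\[
\sup_I \left|\frac{\mathcal L_t f^t}{f^t} - e^{P(t)}\right| \;\longrightarrow\; 0 \quad \text{as } m \to \infty.
\]
If $P(t) > 0$ then $e^{P(t)} > 1$ and the infimum of the ratio eventually exceeds $1$; if $P(t) < 0$ then $e^{P(t)} < 1$ and the supremum is eventually strictly below $1$. Either way, one of the inequalities of Lemma~\ref{tech} holds, as required.

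The main obstacle is the justification of exponential convergence of both the eigenvalue and the eigenfunction of $B^t$ to those of $\mathcal{L}_t$ in a strong enough norm ($C^1$, not merely $C^0$) to feed into Lemma~\ref{lem:ratio}. This is resolved by working throughout on the holomorphic side, where $\Pi_m \to I$ converges in operator norm on $H^\infty(\Omega)$ and $C^k$ bounds on $I$ follow automatically from sup norm bounds on a slightly larger complex neighbourhood via Cauchy's formula.
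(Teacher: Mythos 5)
Your argument is correct and follows essentially the same route as the paper's: extend $\mathcal L_t$ to a Hardy-type space over a Bernstein ellipse, use Chebyshev interpolation convergence together with the analyticity-improving property of $\mathcal L_t$ to obtain operator-norm convergence of the discretisation, and then apply analytic perturbation theory for the isolated simple leading eigenvalue and eigenfunction (the paper cites a bound on $\|\mathcal L_t - \mathcal L_t \mathcal P_m\|$ from Bandtlow--Slipantschuk where you argue directly for $\Pi_m\mathcal L_t\to\mathcal L_t$, but since the two compositions share nonzero spectrum this is a cosmetic difference). Your final step via Lemma~\ref{lem:ratio} and~\eqref{eq:ubound} is a harmless detour: once $f^t\to\underline h$ uniformly on $I$ with $\underline h$ bounded away from zero, the ratio $\mathcal L_t f^t/f^t$ already converges uniformly to $e^{P(t)}$ without any derivative bound, which is how the paper concludes.
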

The proof of the Proposition which we will give here consists of two steps:
The first step is to construct a subspace of analytic functions in $C^\alpha(I)$ such that the
restriction of the transfer operator onto it has the right spectral properties. 
The second step is to construct an approximation of
the operator acting on the subspace of analytic functions 
by a finite rank operator acting on the subspace of polynomials of degree~$m$.
%This subspace naturally lies in a larger subspace of analytic functions. 
We begin our preparations for the proof of the Proposition by introducing the
subspace of analytic functions and defining the operator there. 
First, we need to introduce a suitable domain of analyticity. 
\begin{definition}
Given~$\rho>1$ we define an ellipse with the foci~$0$ and~$1$ by
\begin{equation}
    \label{eq:defDu}
 \partial U_\rho = \left\{ z = \frac{1}{2} + \frac{1}{4} \left(\rho e^{i \theta} + \frac{e^{-i\theta}}{\rho}\right)  
  \colon  0 \leq \theta < 2\pi  \right\}.
\end{equation}
%     The ellipse~$\partial U_\rho$ 
     It is often referred to as a \emph{Bernstein ellipse}~\cite{Trefethen}.
\end{definition}
In the setting and under the hypothesis of Proposition~\ref{prop:analytic}
without loss of generality we may assume the following:
\begin{enumerate}
\item[(a)]
 There exists $\rho>1$ such that each contraction~$T_j$ extends to a complex
 domain $U_\rho \supset [0,1]$ bounded by the ellipse~$\partial U_\rho$, 
 such that $T_j^\prime(z) \neq 0$ for any~$z \in U_\rho$; and 
 \item[(b)] 
 The closures~$\mbox{cl}(T_jU_\rho)$ of the images~$T_jU_\rho$
 satisfy~$\mbox{cl}(T_iU_\rho) \subset U_\rho$
 for all $j=1, \cdots, k$. 
 \end{enumerate}
It is clear that an ellipse satisfying (a) and (b) exists. More precisely, since  we assume real
analyticity  of the~$T_j$ we can choose an 
elliptical domain sufficiently close to $[0,1]$ and by the hypotheses of the
Proposition we can deduce~(a). Since the~$T_j$ contract we can choose the
ellipse in~(a) sufficiently close to~$I$ (by making~$\rho$ close to~$1$) that~(b) holds.   
In what follows, we shall simplify notation and omit the index~$\rho$.

After introducing the domain of analyticity, we now define a Banach space of analytic functions.
\begin{definition}
Let $H^\infty$ denote the space of bounded analytic functions on~$U_\rho$
with the norm $\|f\|= \sup_{z\in U_\rho}|f(z)|$. 
\end{definition}
The space~$H^\infty$ is special case of Hardy spaces (hence the choice of
notation), and it is known to be a Banach space~\cite{koosis}.  
For any function~$f \in H^\infty$ the restriction~$f|_I$ is a
continuously differentiable function on~$I$ and, in particular, it is
contained in~$C^\alpha(I)$ for any $0<\alpha\le1$. More precisely, we can choose a
simple closed curve $\Gamma \subset U_\rho$ 
close to $\partial U_\rho$ and use Cauchy's theorem to write the
derivative for the restriction $f|_I$ by
$$
f^\prime(x) = \frac{1}{2\pi i} \int_\Gamma \frac{f(z)}{(z-x)^2} dz,
$$
which is continuous as a function of $x\in I$.
% Where there is no risk of
%confusion we will use a slight abuse of notation and write $f \in C^\alpha(I)$.

Note that~$T_j^\prime$ is real analytic and non-vanishing on~$I$ and the same holds true for
$|T_j^\prime|^t$. Thus by slight abuse of notation we may also denote by
$|T_j^\prime(z)|^t$ its analytic extension to the domain bounded by the sufficiently small Bernstein
ellipse~$U_\rho$. Therefore we may now introduce a restriction of the transfer operator~\eqref{trop:eq}
onto~$H^\infty$. 
\begin{definition}
For $t>0$, the transfer operator $\mathcal  L_{t}: H^\infty \to H^\infty$
will again be given by the formula
\begin{equation}
    \label{eq:Lext}
[\mathcal  L_{t} f] (z) = \sum_{j=1}^k |T_j'(z)|^t f(T_jz), \quad  z\in  U_\rho.
\end{equation}
%and is well defined after further reducing~$\rho$ if necessary. 
\end{definition}

The operator $\mathcal L_{t}\colon H^\infty \to H^\infty$ given by~\eqref{eq:Lext} 
is actually compact and even nuclear~\cite{Mayer},\cite{Ruelle} although this will not be
needed.  We will use the following simpler fact instead. 
\begin{lemma} 
    \label{lem:maxeig}
    The operator $\mathcal L_{t}\colon H^\infty \to H^\infty$ shares the same
    maximal eigenvalue~$\lambda = e^{P(t)}$ as 
  $\mathcal  L_{t}\colon C^\alpha(I) \to C^\alpha(I)$ and the rest of the spectrum is contained
  in a disk of strictly smaller radius. 
  \end{lemma}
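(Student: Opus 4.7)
The plan is to transfer the spectral information from $\mathcal L_t$ on $C^\alpha(I)$ to $H^\infty$ using the observation that restriction to $I$ intertwines the two operators, together with compactness of the analytic transfer operator.

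First I would check that $\mathcal L_t$ is a well-defined bounded operator on $H^\infty$: assumption~(b) ensures $T_j(U_\rho) \subset U_\rho$, so $f \circ T_j$ is analytic on $U_\rho$ for any $f \in H^\infty$; assumption~(a) guarantees a single-valued analytic branch of $|T_j'(z)|^t$ on $U_\rho$; the sum defining $\mathcal L_t f$ is then analytic, with $\|\mathcal L_t f\| \le \bigl(\sum_j \sup_{U_\rho}|T_j'|^t\bigr)\|f\|$. The next step is to show that $\mathcal L_t\colon H^\infty \to H^\infty$ is compact. Since $\mathrm{cl}(T_j U_\rho) \Subset U_\rho$ by assumption~(b), each composition operator $C_j\colon f \mapsto f\circ T_j$ factors through the restriction of $H^\infty$-functions to the compact subdomain $\mathrm{cl}(T_j U_\rho)$; by Montel's theorem, bounded subsets of $H^\infty$ form normal families, and this restriction is therefore compact. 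Multiplication by the bounded analytic function $|T_j'|^t$ preserves compactness, so $\mathcal L_t$ is a finite sum of compact operators.

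I would then identify the spectral radius of $\mathcal L_t$ on $H^\infty$ with $e^{P(t)}$ by two complementary inequalities. For the upper bound, any eigenfunction $f \in H^\infty$ with $\mathcal L_t f = \mu f$ yields the $C^\alpha(I)$-eigenfunction $f|_I$ with the same eigenvalue, so $|\mu|$ is at most the spectral radius $e^{P(t)}$ of $\mathcal L_t$ on $C^\alpha(I)$ from Lemma~\ref{spectral}. For the lower bound, I would use that $\mathbf 1 \in H^\infty$ and, by Part~2 of Lemma~\ref{spectral}, $\|\mathcal L_t^n \mathbf 1\|_{C^0(I)} \sim e^{nP(t)}$; since the restriction map $H^\infty \to C^0(I)$ is a contraction, this forces $\|\mathcal L_t^n \mathbf 1\|_{H^\infty} \gtrsim e^{nP(t)}$, and therefore the spectral radius on $H^\infty$ is at least $e^{P(t)}$.

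Finally, to extract the spectral gap, I would run a Perron--Frobenius / Schauder fixed point argument parallel to Part~1 of the proof of Lemma~\ref{spectral}, this time in the cone of $H^\infty$-functions whose restrictions to $I$ are strictly positive; compactness of $\mathcal L_t$ together with assumption~(b) guarantees the image of the cone is relatively compact, so the fixed-point argument produces a positive eigenfunction $h_\mathrm{an} \in H^\infty$ at the eigenvalue $e^{P(t)}$. The restriction $h_\mathrm{an}|_I$ is then, up to scalar, the eigenfunction $h$ from Lemma~\ref{spectral}. Because $\mathcal L_t$ on $H^\infty$ is compact, its spectrum is at most countable with $0$ as the only possible accumulation point; a Krein--Rutman-type argument, using aperiodicity of $M$ to establish irreducibility on the positive cone, shows that $e^{P(t)}$ is simple. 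Every other spectral value is therefore an isolated eigenvalue strictly smaller in modulus, and by finite covering of the compact spectrum this yields a uniform gap, so the rest of the spectrum lies in a disk of strictly smaller radius.

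The main obstacle is the simplicity/irreducibility step in the analytic setting, where positivity is only intrinsic on the real interval~$I$ rather than on all of $U_\rho$; I expect to resolve this by working with the real cone inside $H^\infty$ defined via the positivity of the restriction to $I$, and invoking the uniqueness of the positive $C^\alpha$-eigenfunction from Lemma~\ref{spectral} to rule out independent eigenfunctions at the maximal eigenvalue.
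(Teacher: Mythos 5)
Your proposal is correct, but it takes a noticeably different and more elaborate route than the paper. The paper's proof is short: it observes that $\mathds{1}\in H^\infty$, that $\mathcal L_t$ preserves $H^\infty$, and that $e^{-nP(t)}\mathcal L_t^n\mathds{1}\to\eta(\mathds{1})\in\langle h\rangle$ uniformly on $I$ (Part 2 of Lemma~\ref{spectral}); combined with the $H^\infty$-boundedness of the iterates this yields an analytic extension of $h$, so $e^{P(t)}$ is realised as a simple eigenvalue on $H^\infty$. The spectral gap is then transferred simply because $H^\infty\subset C^\alpha(I)$ implies every $H^\infty$-eigenvalue is a $C^\alpha$-eigenvalue. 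You instead establish compactness of $\mathcal L_t|_{H^\infty}$ via Montel, pin down the spectral radius by a two-sided Gelfand argument (restriction bounds below, eigenvector inclusion bounds above), and construct the positive eigenfunction afresh by a Schauder/Krein--Rutman fixed-point in a cone. That is more work than the paper does, but it has a real advantage: the paper states that compactness "will not be needed", yet transferring the gap from $C^\alpha$ to $H^\infty$ requires knowing that the nonzero $H^\infty$-spectrum consists solely of eigenvalues — non-eigenvalue spectral points of a restriction to a continuously embedded but not closed subspace need not be controlled by the ambient spectrum — so your explicit appeal to compactness makes a step rigorous that the paper leaves implicit. Two small imprecisions in your plan: you should normalize (as in the paper's proof of Lemma~\ref{spectral}, Part~1) before invoking Schauder, since the positive cone itself is unbounded and so its image is not relatively compact; and the phrase "by finite covering of the compact spectrum" is not the right justification for the gap — it follows directly from the fact that the nonzero eigenvalues of a compact operator are isolated, have finite multiplicity, and accumulate only at $0$, together with the simplicity and maximality of $e^{P(t)}$. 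Your fallback of deducing simplicity from the uniqueness of the positive $C^\alpha$-eigenfunction plus unique analytic continuation is the cleaner choice and closer in spirit to the paper than running a full Krein--Rutman irreducibility argument in $H^\infty$.
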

  \begin{proof}
  Indeed, if we let~$\mathds{1}$ denote
  the constant function on~$[0,1]$ then by Lemma~\ref{spectral} we see that
  $e^{-nP(t)} \mathcal L_n^t \mathds{1}$ converges uniformly to
  $\eta(\mathds{1}) \in \langle \underline h \rangle$. However, since
  $\mathds{1} \in H^\infty$ and~$\mathcal L_t^n$ preserves~$H^\infty$ we can
  conclude that~$\eta(\mathds{1}) \in H^\infty$ and 
  thus $\underline h$ has an extension in $H^\infty$ and~$e^{P(t)}$ is a simple
  eigenvalue for~$\mathcal L_t \colon H^\infty \to H^\infty$. 
  Similarly, any eigenvalue for~$\mathcal L_t\colon H^\infty \to
  H^\infty$ must be an eigenvalue for~$\mathcal L_t \colon C^\alpha(I)
  \to C^{\alpha}(I)$ since $H^\infty \subseteq C^{\alpha}(I)$.
  Therefore, since~$\mathcal L_t \colon C^\alpha(I)\to C^\alpha(I)$ has the rest
  of the spectrum in a disk of the radius strictly small than~$e^{P(t)}$ this property persists for the
  operator on~$H^\infty$. 
  \end{proof}

\noindent This completes the first step of the proof of
Proposition~\ref{prop:analytic} outlined above. 

Recall that the Lagrange polynomials $lp_1, \dots,lp_m$ given
by~\eqref{lagcheb:eq} form a basis of  
the subspace of polynomials of degree~$m-1$ in~$H^\infty$; in \S\ref{test} we named this
subspace~$\mathcal E$. Let $\mathcal P_m: H^\infty  \to \mathcal E$ be 
the natural projection given by the collocation formula
\begin{equation}
    \label{eq:project}
[\mathcal P_m f](x) = \sum_{j=1}^m f(x_j)  lp_j(x), \quad x \in I.
\end{equation}
We see immediately that the restriction~$\mathcal P_m|_{\mathcal E}$ of~$\mathcal P_m$
to~$\mathcal E$ is the identity. 

The second step is to approximate the transfer operator
on~$H^\infty$ by a finite rank operator. We now recall an estimate on the norm of the
difference $\|\mathcal L_t - \mathcal L_t \mathcal P_m\|_{H^\infty}$. 
\begin{lemma}[see \cite{bs}, Theorem 3.3]
    \label{lem:approx}
   For the transfer operator on~$H^\infty$ given by~\eqref{eq:Lext} there exist $C  > 0$ and $0 < \theta < 1$ such that 
   $\| \mathcal L_t - \mathcal L_t \mathcal P_m\|_{H^\infty}  \leq C
   \|\mathcal L_t\|_{H^\infty} \theta^m$ for $m \geq 1$.
\end{lemma}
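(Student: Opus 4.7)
My plan is to reduce the bound to a classical Bernstein-type estimate for Chebyshev-Lagrange interpolation of functions that are analytic on a neighbourhood of $I=[0,1]$, then exploit the fact that each contraction $T_j$ sends the whole analyticity domain $U_\rho$ strictly inside itself to pick up an exponentially small factor after composing with $\mathcal L_t$.

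The first step is the algebraic identity $\mathcal L_t - \mathcal L_t \mathcal P_m = \mathcal L_t(I-\mathcal P_m)$, so that for any $f\in H^\infty$ and $z\in U_\rho$,
$$
[(\mathcal L_t - \mathcal L_t\mathcal P_m)f](z)
= \sum_{j=1}^{k} |T_j'(z)|^t \bigl((I-\mathcal P_m)f\bigr)(T_j z).
$$
By hypothesis (b) on the Bernstein ellipse, $\overline{T_j U_\rho}\subset U_\rho$ for each $j$, so there exists $\rho'\in(1,\rho)$ with $\bigcup_{j=1}^k T_j U_\rho \subset U_{\rho'}$. It therefore suffices to estimate $(I-\mathcal P_m)f$ in the sup-norm on $U_{\rho'}$ and then multiply by $\||T_j'|^t\|_{H^\infty}$ and sum.

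The second step is the classical interpolation error bound: for $f\in H^\infty(U_\rho)$ and any $w\in U_{\rho'}$, Hermite's contour-integral formula gives
$$
f(w) - (\mathcal P_m f)(w)
= \frac{1}{2\pi i}\oint_{\partial U_\rho} \frac{q_m(w)}{q_m(\zeta)}\cdot\frac{f(\zeta)}{\zeta-w}\,d\zeta,
$$
where $q_m(z)=\prod_{j=1}^m(z-y_j)$ is the nodal polynomial at the translated Chebyshev points. The standard estimate for the ratio $q_m(w)/q_m(\zeta)$ on the two nested Bernstein ellipses yields $|q_m(w)/q_m(\zeta)|\lesssim (\rho'/\rho)^m$ (see Trefethen~\cite{Trefethen}), so
$$
\|(I-\mathcal P_m)f\|_{H^\infty(U_{\rho'})} \le C_0 \,(\rho'/\rho)^m\, \|f\|_{H^\infty(U_\rho)}.
$$
Setting $\theta=\rho'/\rho<1$ and combining with the identity above,
$$
\|\mathcal L_t - \mathcal L_t\mathcal P_m\|_{H^\infty} \le C_0\,\theta^m \sum_{j=1}^k \bigl\||T_j'|^t\bigr\|_{H^\infty(U_\rho)}.
$$

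The main obstacle is the last cosmetic step: replacing $\sum_j\||T_j'|^t\|_{H^\infty(U_\rho)}$ with a constant multiple of $\|\mathcal L_t\|_{H^\infty}$. Since each $|T_j'|^t$ is non-vanishing and real-analytic on a neighbourhood of $I$, the analytic extensions have moduli bounded away from zero on $U_\rho$ after possibly shrinking $\rho$, and the finitely many ratios $\||T_j'|^t\|_{H^\infty(U_\rho)}/\inf_{U_\rho}|T_j'|^t$ are uniformly bounded. Evaluating $\|\mathcal L_t \mathds{1}\|_{H^\infty} = \bigl\|\sum_j |T_j'|^t\bigr\|_{H^\infty(U_\rho)}$ from below by $\sup_{U_\rho}\max_j |T_j'|^t$ up to a factor depending only on $k$, and using $\|\mathcal L_t\mathds 1\|_{H^\infty}\le\|\mathcal L_t\|_{H^\infty}$, produces a constant $C>0$ (depending on $\rho$, $k$ and the geometry of the maps but not on $m$) with $\sum_j\||T_j'|^t\|_{H^\infty(U_\rho)}\le C\|\mathcal L_t\|_{H^\infty}$. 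Absorbing $C_0$ gives the claimed inequality. The reference~\cite{bs} of Bandtlow--Slipantschuk provides a more conceptual proof via the factorisation of $\mathcal L_t$ through a nested pair of Hardy spaces, but the contour-integral route above is the most direct.
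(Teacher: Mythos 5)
Your argument is correct, and it is the standard way to prove this kind of Lagrange--Chebyshev approximation bound. The paper does not actually give a proof here: Lemma~\ref{lem:approx} is cited from Bandtlow--Slipantschuk (and noted to be implicit in Wormell), with only the remark that the key mechanism is the analyticity-improving property of $\mathcal L_t$. Your proof implements exactly that mechanism: the factorisation $\mathcal L_t - \mathcal L_t\mathcal P_m = \mathcal L_t(I-\mathcal P_m)$ shows one only needs the interpolation error $(I-\mathcal P_m)f$ on the shrunken domain $U_{\rho'}\supset \bigcup_j T_jU_\rho$, and the Hermite contour-integral estimate at nested Bernstein ellipses supplies the geometric decay $\theta^m$ with $\theta=\rho'/\rho<1$. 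Two small points worth tightening if this were written out in full: the contour in Hermite's formula should be taken slightly inside $\partial U_\rho$ (since $f\in H^\infty(U_\rho)$ need not extend to the closure), and the final reduction of $\sum_j\||T_j'|^t\|_{H^\infty(U_\rho)}$ to $C\|\mathcal L_t\|_{H^\infty}$ is a genuinely cosmetic normalisation -- the lemma already allows $C$ to depend on the scheme and $\rho$, so one could simply absorb $\sum_j\||T_j'|^t\|_{H^\infty(U_\rho)}$ into $C$ rather than appeal to $\|\mathcal L_t\mathds 1\|$, which makes the argument cleaner and avoids worrying about cancellations in the complexified sum off the real axis.
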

This is also implicit in~\cite[\S2.2]{wormell}.
\begin{rem}
    %Although we cannot expect that  $\| Id - \mathcal P_m\|_{H^\infty} \to 0$    as $m\to \infty$ 
The proof of Lemma~\ref{lem:approx} relies on the fact that
for any function~$f$ analytic on a domain bounded by a Bernstein ellipse the image~$\mathcal L_tf$ is
analytic on a larger domain bounded by another Bernstein ellipse.
This form of analyticity improving property is also essential in showing
that~$\mathcal L_t$ is nuclear~\cite{Ruelle}. 
\end{rem}

It follows from Lemmas~\ref{lem:maxeig},~\ref{lem:approx} and classical analytic 
perturbation theory (see the book of Kato~\cite[Chapter VII]{Kato}) that 
we have the following:
\begin{cor}\label{apert}
For any $\varepsilon>0$ there exists~$\delta>0$ sufficiently small such that for
all~$m$ sufficiently large we have
\begin{enumerate}
\item
 $\mathcal L_t \mathcal P_m:  H^\infty  \to H^\infty$ has a simple
 maximal eigenvalue $\lambda_m$ with $|\lambda_m - \lambda| < \varepsilon$; 
 \item  The rest of the spectrum of~$\mathcal L_t \mathcal P_m$ is contained in $\{z\in \mathbb C 
\colon |z|\ < \lambda - 2\delta\}$; and 
\item The eigenfunctions~$h_m$ of~$\mathcal L_t \mathcal P_m$ converge to the
     eigenfunction~$h$ of~$\mathcal L_t$, more precisely $\|h_m - h\|_{H^\infty} \to 0$ as
    $m \to \infty$. 
\item The exists a constant~$c>0$, independent of~$m$, such that the eigenfunction~$h_m$  for $\mathcal L_t
    \mathcal P_m$ corresponding to~$\lambda_m$  satisfies $|h_m(z)|> c$ for all $z \in U_\rho$. 
\end{enumerate}
\end{cor}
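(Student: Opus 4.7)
The plan is to treat Corollary~\ref{apert} as a direct application of Kato's analytic perturbation theory~\cite{Kato} with Lemma~\ref{lem:approx} providing the quantitative control. Writing
$$\mathcal L_t \mathcal P_m = \mathcal L_t + (\mathcal L_t \mathcal P_m - \mathcal L_t),$$
Lemma~\ref{lem:approx} tells us that $\|\mathcal L_t \mathcal P_m - \mathcal L_t\|_{H^\infty} \le C\|\mathcal L_t\|_{H^\infty}\theta^m$, which can be made arbitrarily small by taking $m$ large. Thus $\mathcal L_t \mathcal P_m$ is a small-norm perturbation of $\mathcal L_t$ on~$H^\infty$, and the rest of the proof consists of translating this into spectral statements.

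For parts~(1) and~(2), I would invoke upper semi-continuity of the spectrum and analyticity of spectral projections (Kato~\cite{Kato}, Chapters IV and VII). By Lemma~\ref{lem:maxeig}, $\lambda = e^{P(t)}$ is a simple isolated maximal eigenvalue of $\mathcal L_t$ on~$H^\infty$, with the rest of $\sigma(\mathcal L_t)$ contained in a disk $\{|z|\le r\}$ with $r < \lambda$. Given $\varepsilon>0$, I would first fix $\delta>0$ small enough that $3\delta < \lambda - r$ and $\delta < \varepsilon$, and let $\gamma$ be the circle of radius $\delta$ around~$\lambda$. For $m$ sufficiently large the perturbation norm is much smaller than the gap from $\gamma$ to $\sigma(\mathcal L_t)$, so the resolvent $(zI - \mathcal L_t\mathcal P_m)^{-1}$ exists and is uniformly bounded on $\gamma$. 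Standard perturbation theory then yields exactly one eigenvalue $\lambda_m$ of $\mathcal L_t\mathcal P_m$ inside $\gamma$, which is simple because the enclosed spectral projection has rank one; this gives $|\lambda_m - \lambda|<\varepsilon$, and the remaining spectrum stays within distance $\delta$ of the original, hence inside $\{|z|<\lambda - 2\delta\}$.

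For part~(3), the one-dimensional spectral projection
$$\Pi_m = \frac{1}{2\pi i}\int_\gamma (zI - \mathcal L_t\mathcal P_m)^{-1}\,dz$$
depends analytically on the perturbation via the second resolvent identity and therefore converges to $\Pi$ in operator norm on~$H^\infty$. Fixing a normalisation of $h_m \in \mathop{\rm Range}\Pi_m$ (say $h_m(0)=h(0)$) then forces $\|h_m - h\|_{H^\infty}\to 0$.

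The main obstacle is part~(4), which asks for a uniform lower bound $|h_m(z)|\ge c$ over all of~$U_\rho$, not merely on the real interval~$I$. A priori the analytic extension of~$h$ could vanish somewhere in $U_\rho\setminus I$, in which case the convergence of part~(3) would not deliver a uniform positive bound. I would handle this by exploiting the freedom in choosing $\rho$ in hypotheses~(a) and~(b): by Lemma~\ref{spectral} the eigenfunction~$h$ is strictly positive on the compact interval~$I$ and analytic there, so it is zero-free on some complex neighbourhood of~$I$. Shrinking $\rho$ so that $U_\rho$ is contained in this neighbourhood---permissible because (a) and (b) persist for $\rho$ close enough to~$1$---forces $\inf_{z\in U_\rho}|h(z)|\ge 2c>0$. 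Combining this with the uniform convergence $h_m\to h$ from part~(3) then yields $|h_m(z)|>c$ for all $z\in U_\rho$ whenever $m$ is sufficiently large.
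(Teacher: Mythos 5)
Your proof is correct and follows essentially the same route as the paper, which dispenses with Corollary~\ref{apert} by simply invoking Lemmas~\ref{lem:maxeig},~\ref{lem:approx} and Kato's analytic perturbation theory without spelling out the details. Your treatment of part~(4) deserves note: you correctly identified that the uniform lower bound on~$|h_m|$ over all of~$U_\rho$ (not just on~$I$) requires an extra step, and your fix---first shrinking~$\rho$ so that the zero-free neighbourhood of the strictly positive eigenfunction~$h$ (positivity on~$I$ from Lemma~\ref{spectral}, analyticity of the extension from Lemma~\ref{lem:maxeig}) contains~$U_\rho$, then appealing to the~$H^\infty$-convergence from part~(3)---is a legitimate use of the freedom in choosing the Bernstein ellipse granted by hypotheses~(a) and~(b), and fills a gap the paper glosses over.
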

We are now ready to prove Proposition~\ref{prop:analytic}. \\
%In particular, by   perturbation theory the positivity of the eigenfunction $h$
%for $\mathcal L_t$ on~$I$ implies the same for~$h_m$ (since $\inf_{x\in I} |h(x)
%- h_m(x)| \leq \|h - h_m\|$ will be arbitrary small for~$n$ sufficiently large
%and thus we only need to choose $n$ sufficiently so that $\|h - h_m\| \le
%\inf_{x \in I} |h(x)|$). 
{\it Proof} (of Proposition~\ref{prop:analytic}).
It follows from~\eqref{eq:project} that the restriction $\mathcal P_m \mathcal L_t|_{\mathcal E} $ to $\mathcal E$ is a finite rank operator 
 $\mathcal P_m \mathcal L_t\colon \mathcal E \to \mathcal E$ given by 
  $$
\mathcal P_m   \mathcal L_t\colon  f \mapsto \sum_{j=1}^n [\mathcal L_t f](x_j)
\cdot lp_{j}. %\mbox{ for $g \in \mathcal E$.}
 $$
 In the basis of Lagrange polynomials $\{lp_j\}_{j=1}^m$ the operator $\mathcal
 P_m \mathcal L_t$ is given by the $m \times m$-matrix $B^t = B^t(j,k)$,
 $j,k=1,\ldots,m$, where
  $$
  B^t(j,k) = \sum_{i=1}^m [\mathcal L_t
  lp_j](x_i) \cdot lp_i(x_k) =  [\mathcal L_t lp_j](x_k) \qquad \mbox{ for } 1
  \leq j,k \leq m;
  $$
which agrees with the matrix given by~\eqref{Bsmall:eq} in a special case~$d=1$.
A straightforward computation gives that the eigenvalue~$\lambda_m$ for~$\mathcal L_t\mathcal P_m$ is also an
eigenvalue for the matrix~$B^t$ corresponding to the eigenvector~$\mathcal P_m h_m \in
\mathcal E$. Since we have chosen the basis of Lagrange polynomials to define
the matrix~$B^t$, we conclude that 
$$
[\mathcal P_m h_m](x) = \sum_{j=1}^m v_j lp_j(x),
$$
where~$(v_1,\dots,v_m)$ is the eigenvector of~$B^t$. 

To see that~$\mathcal P_m h_m$ is a positive function, we apply a classical
result by Chebyshev~\cite{Trefethen}, which gives
\begin{equation}
    \label{eq:chebint}    
\sup_I |h_m - \mathcal P_m h_m | \le \frac{1}{2^{m-1} m!} \sup_I |h_m^{(m)}|.
\end{equation}
Since~$h_m \to h$ as~$m\to \infty$ in the space of analytic functions
$H^\infty$, the derivatives~$h_m^{(m)}$ are uniformly bounded on~$I$. Therefore 
the positivity of~$h_m$ on~$I$ guarantees the positivity of the projection $\mathcal P_m h_m
\in \mathcal E$ on~$I$ for sufficiently large~$m$.  

It remains to show that one of the inequalities in Lemma~\eqref{tech} holds
true for~$\mathcal P_m h_m$. Without loss of generality we may assume
that~$P(t)>0$, which implies $\frac{\mathcal L_t h}{h} = e^{P(t)}>1$. Therefore the
part~4 of Corollary~\ref{apert} together with~\eqref{eq:chebint} gives $\|\mathcal P_m
h_m - h\|_{H^\infty} \to 0$ as $m \to \infty$. Hence for~$m$ sufficiently large we shall have 
$\frac{\mathcal L_t \mathcal P_m h_m}{\mathcal P_m h_m} > 1$ everywhere on~$I$.
The case $P(t)<0$ is similar.

This completes the proof of Proposition~\ref{prop:analytic}. 
 \mbox{}\hfill $\square$ \par 
\begin{rem}
To avoid confusion we should stress that positivity of the polynomial $P_m
h_m \in \mathcal E$ and positivity of the entries of $B^t$ are not related;
furthermore, in the examples we consider the matrices $B^t$ typically have
entries of both signs.
%this does not mean that the
%entries in the vector $\underline v$ can be chosen strictly positive. 
\end{rem}
The convergence of the algorithm we presented in \S\ref{test}-\S\ref{bis}
follows immediately from Proposition~\ref{prop:analytic}.
\begin{cor}
    \label{cor:final}
    After applying the bisection method sufficiently many times we obtain an
    approximation to~$\dim_H X$ which is arbitrarily close to the true value.
\end{cor}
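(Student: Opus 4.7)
The plan is to analyze the algorithm of \S\ref{bis} step by step and show that it terminates after finitely many bisection iterations with an interval $[d_0,d_1]\ni \dim_H X$ of length at most $\varepsilon$. Let the outer loop of the bisection produce intervals $[t_0^{(n)}, t_1^{(n)}]$ all of which contain $\dim_H X$, with $t_1^{(n)} - t_0^{(n)} = 2^{-n}(t_1^{(0)} - t_0^{(0)})$. To prove the claim it suffices to verify that: (i) each outer step terminates, i.e.\ the inner loop which increases $m$ halts; and (ii) at termination of an inner step, the outer interval is updated correctly, so that the new interval still contains $\dim_H X$.

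For~(i), at the $n$th outer step set $q = (t_0^{(n)} + t_1^{(n)})/2$. If it happens that $q = \dim_H X$ then we are trivially done since any subsequent interval of width $\leq \varepsilon$ centred at $q$ contains the answer; a simple way to exclude this degenerate possibility is to perturb $q$ by an arbitrarily small amount chosen so that $q \neq \dim_H X$, which we may assume. Given $q \neq \dim_H X$ we have $P(q) \neq 0$, so Proposition~\ref{prop:analytic} applies to the contractions $T_1,\dots,T_d$ at the parameter $q$, and produces an $m_0 = m_0(q)$ such that for all $m \geq m_0$ the polynomial $f^q$ returned by the construction of \S\ref{test} satisfies either $\inf_I (\mathcal L_q f^q / f^q) > 1$ or $\sup_I (\mathcal L_q f^q/f^q) < 1$. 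In addition, the proof of Proposition~\ref{prop:analytic} gives that $f^q = \mathcal P_m h_m$ converges uniformly to the strictly positive eigenfunction $h$ and hence is strictly positive for $m$ large; so the positivity check in the algorithm also succeeds for $m \geq m_0$. Consequently the inner loop exits after finitely many increments of~$m$.

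For~(ii), when the inner loop exits we have a positive $f^q$ satisfying one of the two inequalities of Lemma~\ref{tech}. By Lemma~\ref{tech} the first case implies $P(q) > 0$, hence $\dim_H X > q$, and we replace $t_0^{(n)}$ by $q$; the second case implies $P(q) < 0$, hence $\dim_H X < q$, and we replace $t_1^{(n)}$ by $q$. In either case $\dim_H X \in [t_0^{(n+1)}, t_1^{(n+1)}]$ and the interval length is exactly halved. After $N \geq \lceil \log_2((t_1^{(0)} - t_0^{(0)})/\varepsilon) \rceil$ outer iterations we reach an interval of length at most $\varepsilon$ bracketing $\dim_H X$, as required.

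The main obstacle is the fact that Proposition~\ref{prop:analytic} gives no a priori bound on $m_0(q)$: the threshold depends on the spectral gap of $\mathcal L_q$ and on the rate~$\theta$ of Lemma~\ref{lem:approx}, neither of which is available in closed form. The algorithm circumvents this by simply doubling $m$ (or otherwise increasing it) until the two computable inequalities $\inf_I \mathcal L_q f^q/f^q > 1$ and $\sup_I \mathcal L_q f^q/f^q < 1$ are certified rigorously, which Proposition~\ref{prop:analytic} guarantees will eventually happen. Thus the termination of the overall procedure is a qualitative consequence of the Proposition, while the quantitative accuracy $\varepsilon$ is controlled by the outer bisection.
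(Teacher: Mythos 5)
Your proof is correct and takes essentially the same route as the paper, which simply declares the corollary an immediate consequence of Proposition~\ref{prop:analytic}; you have just unwound that statement into the explicit termination argument for the inner and outer loops. The one thing you surface that the paper passes over silently is the degenerate possibility $q=\dim_H X$, where Proposition~\ref{prop:analytic} does not apply and the inner loop would not terminate; your fix by perturbation is a reasonable patch, though in a fully rigorous account one would want a deterministic rule (e.g.\ querying $q\pm\delta$ for a small $\delta$ when the ratio test fails for many consecutive values of $m$), since one cannot knowingly choose a perturbation avoiding the unknown $\dim_H X$.
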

\begin{rem}
    We would like to note that although Proposition~\ref{prop:analytic}
    guarantees the convergence of our algorithm, it doesn't give any explicit estimates
    on how large~$m$ one will need to take in practical realisation. 
Our heuristic observation shows that it takes of order $N = -\frac{\log \varepsilon}{\log 2}$ 
iterations of the bisection method to find an interval of length~$\varepsilon$ containing 
the value of~$\dim_H X$ for a scheme of analytic contractions. 

    For instance, for many of our examples it is sufficient to take~$m=6$ to obtain an
    estimate accurate to~$4$ decimal places which makes the matrices small and the computation very fast.
%    The six Chebyshev  values are then
%    $$
%    \{x_k\}= \left\{ \frac{1}{2} + \frac{(1 \pm \sqrt{3})}{4 \sqrt{2}}, 
%    \frac{1}{2} \pm \frac{1}{2\sqrt{2}}, \frac{1}{2} \pm \frac{\ (-1 + \sqrt{3})}{4 \sqrt{2}}
%    \right\} 
%    $$
    However, when we require greater accuracy we need to
    choose~$m$ larger to provide test functions
    for Lemma~\ref{tech} which will be a more close approximation of the
    eigenfunction.  For example, to verify $\dim_H(E_2)$
    to over $200$ decimal places we choose $m=275$. 

\end{rem}

%%%%polina%%%%\begin{rem}
%%%%polina%%%%    For suitably large values of~$m$ we would expect $\underline f =
%%%%polina%%%%    (f_1,\ldots, f_d)$ to be close to the eigenfunction of
%%%%polina%%%%    $\mathcal L_t$. 
%%%%polina%%%%    %Fortunately, we don't require precise
%%%%polina%%%%    %estimates on how close~$\underline f$ is to the eigenfunction, since
%%%%polina%%%%    %its only role is as one of the functions in the hypotheses
%%%%polina%%%%    %of Lemma~\ref{tech}.  However, 
%%%%polina%%%%    It is worth mentioning that
%%%%polina%%%%    classical results of Bernstein show that Lagrange
%%%%polina%%%%    interpolation allows  an approximation of analytic functions
%%%%polina%%%%    of order $O(\rho^m)$, where $0 < \rho < 1$  comes from
%%%%polina%%%%    ellipses  in the domain (cf.~\cite{Boyd},Ch.~2). 
%%%%polina%%%%\end{rem}

\begin{example}[Contractions with less regularity] When the contractions  have less regularity the present interpolation
    method is not very efficient. For instance Falk and Nussbaum (\cite{FN18}, \S 3.3) considered
    the limit set~$X$ corresponding to the contractions
    $$
%    \begin{aligned}
      T_1(x) = \frac{x + ax^{7/2}}{3 + 2a} \quad \mbox{ and } \quad
        T_2(x) = \frac{x + ax^{7/2}}{3 + 2a} +  \frac{2 + a}{3 + 2a}
%    \end{aligned}
    $$
    for $0 < a < 1$.   For example, when $a=\frac{1}{2}$ they show that
    $$
    \mathbf{0.733474}5730\,00780 \leq \dim_H(X) \leq \mathbf{0.733474}6222\,22678
    $$
    giving an estimate accurate to~$6$ decimal places.
    By increasing the number of Chebyshev nodes to $m=100$ we obtain test
    functions that, using Lemma~\ref{tech} give 
    $$
    \dim_H(X) = 0.7334746151\,5\pm 1.5\cdot10^{-10}.
%    0.7334746150 \leq \dim_H(X) \leq 0.7334746153.
    $$
    This is a modest improvement in accuracy to $9$ decimal places.
\end{example}

}

\section{Applications}
We will now apply the method in~\S\ref{sec:estimates} to the problems described
in the introduction. 
\subsection{Markov---Lagrange theorems} 
\label{ssec:Markoff}

%%%%%%%polina%%%%%%%\subsection[Markov IFS]{One dimensional Markov iterated function schemes}
%%%%%%%polina%%%%%%%\label{sec:Markov}
%%%%%%%polina%%%%%%
%%%%%%%polina%%%%%%In \S\ref{sec:estimates} we will describe a useful approach to estimate the Hausdorff dimension 
%%%%%%%polina%%%%%%$\dim_H(X_A)$ of $X_A$ whose virtues are twofold.  Firstly, it  very
%%%%%%%polina%%%%%%simple to implement,  with minimal computational effort, and gives
%%%%%%%polina%%%%%%useful estimates. 
%%%%%%%polina%%%%%%Secondly, the  bounds on the Hausdorff dimension are completely rigorous. 
%%%%%%%polina%%%%%%%The emphasis here is on the accurate bounds rather than heuristic bounds.
%%%%%%%polina%%%%%%\marginnotem{Are we repeating?   Should we stress something else?}
%%%%%%%polina%%%%%%
%%%%%%%polina%%%%%%To illustrate the method   we  can consider explicit examples from~\cite{MM}, 
%%%%%%%polina%%%%%%which will lead directly to the bounds in Theorem
%%%%%%%polina%%%%%%\ref{MMtheorem}, as we will explain at the end of this section.  
%%%%%%%polina%%%%%%

%\begin{example}\label{firstexample}

The bounds on the Hausdorff dimension on various parts of the difference of
the Markov
and Lagrange spectra as stated in Theorem~\ref{MMtheorem} are built on the comprehensive
analysis of Matheus and Moreira~\cite{MM}, where the Hausdorff dimension
bounds are given in terms of the Hausdorff dimension of certain
linear-fractional Markov Iterated Function Schemes. 

Therefore we set ourselves  the task of establishing with an accuracy of
$\varepsilon = 10^{-5}$, say, the Hausdorff dimension of the limit sets
involved. We need to consider five different Iterated Function Schemes corresponding to five cases of
Theorem~\ref{MMtheorem}.

We begin by formulating a general framework which embraces many 
of our numerical results.
In the proof of Theorem~\ref{MMtheorem} the set $X_M$ is given in terms of sequences
from an alphabet $\{1,2,\ldots, d\}$ with certain forbidden
words $\underline w$ (of possibly varying lengths).  For example, in
Parts~2 and~3 the basic forbidden words are of length~$2$ and Markov condition is
easy to write. However, in Parts~1,~4 and~5 the basic forbidden words are of length~$3$ and~$4$. 
%Any forbidden words of shorter length can be replaced by a finite union of words of 
%the longer length. 
If the maximum length of a word is~$N$, then we can 
consider the alphabet of ``letters'' $\underline w = w_1w_2 \cdots w_{N-1}  \in \{1,2, \cdots, 
d\}^{N-1}$, which are sequences of length~$N-1$ and define a $dN\times
dN$ matrix $M^\prime$, where 
$$
M_{jk} = \begin{cases}
 1 & \mbox{ if } w^j_2 \cdots w^j_{N-1} = w^k_1 \cdots w^k_{N-2} \mbox{ and }
 w^j_1 \cdots w^j_{N-1} w^k_{N-1} \mbox{ is allowed; } \\
 0 & \mbox{ otherwise. }
 \end{cases}
$$
where we say that the word is allowed, if it doesn't contain any forbidden
subwords. 
We can then define a matrix $M$ by changing the entry in the row indexed by a
word $i_1i_2 \cdots i_N $ and column indexed by $j_1j_2 \cdots j_N$ to $0$ 
whenever the concatenation $i_1i_2 \cdots i_{N-1}j_{N-1}$ contains any of 
the forbidden words as a substring\footnote{In the special case $N=2$ this corresponds to the 
usual $1$-step Markov condition.}.    
%We need to associate a transformation to a word $\underline w$. 
The transformation associated to the word $\underline w$ is defined by the first term, 
i.e., $T_{\underline w} = T_{w_1}$.  
%{\color{blue} I am afraid I don't know what this means, so I left it in red.} 

\subsubsection{Proof of Theorem~\ref{MMtheorem}} 

We can divide the proof of Theorem~\ref{MMtheorem} into  its five constituent parts.

\paragraph{{Part 1}:} $(\mathcal M \setminus \mathcal L) \cap (\sqrt5,\sqrt{13})$.
It is proved in \S B.1 of~\cite{MM} that 
\begin{equation}
    \label{MMp1:eq}
\dim_H(\mathcal M \setminus \mathcal L \cap (\sqrt5, \sqrt{13})) \leq 2 \dim_H(X_M) .
\end{equation}
where $X_M \subset [0,1]$ is a set of numbers whose continued fractions
expansions contain only digits $1$ and $2$, except that subsequences $121$ and
$212$ are not allowed.
We can relabel the contractions $T_{ij}$, $i,j \in \{1,2\}$ for the Markov
iterated function scheme
on~$[0,1]$  as 
  $$
    T_{1}(x) =
    T_{2}(x) = \frac{1}{1+x}, \
    T_{3}(x) =
        T_{4}(x) = \frac{1}{2+x} \ \hbox{ and }
    M= \left(\begin{smallmatrix}
        1& 1& 0&0\\
        0 & 0 & 0 &1 \\
        1 & 0 & 0 &0 \\
        0 & 0 & 1 &1 \\
    \end{smallmatrix}\right).
    $$
    The associated transfer operator is acting on the H\"older space
    of functions $C^\alpha(S)$ where $S = \oplus_{j=1}^4 [0,1]\times\{j\}$. 
    We choose  $m = 8$ and apply the bisection method 
    starting with the bounds
    %\footnote{Based on~\cite{MM}, \S B.1.}  
    $t^\prime_0 = 0.35$, $t^\prime_1 = 0.38$. It gives the estimate

    \begin{equation}
        \label{PVp1:eq}
     t_0 \colon =  0.3640546 < \dim_H X_M < 0.3640548 
%    t_0 \colon = 0.3657704 < \dim_H X_M < 0.3657705 = \colon t_1.
    \end{equation}
    To justify the bounds, the functions $\underline f=(f_1,\ldots,f_4),\, \underline g=(g_1,\ldots,g_4) \in
    C^\alpha(S) $ are explicitly computed using eigenvectors of the matrices 
    $B^{t_0}$ and $B^{t_1}$, respectively. These are polynomials of degree~$7$ 
    given by
    $$
    f_j(x) = \sum_{k=0}^7 a^j_k x^k, \qquad g_j = \sum_{k=0}^7 b^j_k x^k, \quad
    j = 1,\ldots, 4.
%%%%%%%polina%%%%%%f_2(x) = \sum_{k=0}^5 b_k x^k,
%%%%%%%polina%%%%%%f_3(x) = \sum_{k=0}^5 c_k x^k  \mbox{ and }
%%%%%%%polina%%%%%%f_4(x) = \sum_{k=0}^5 d_k x^k
    $$
    whose coefficients $a_k^j$, $b_k^j$ are tabled in \S\ref{Appendix I}. 
We can obtain the following inequalities
   \begin{equation}
     \label{testp1:eq}
\sup_S 
\frac{\mathcal L_{t_1} \underline f}{\underline f} 
< 1 - 10^{-9}; \qquad 
\inf_S \frac{\mathcal L_{t_0} \underline g}{\underline g}  > 1+10^{-7},
\end{equation}
and the bound~\eqref{PVp1:eq} follows from Lemma~\ref{tech}. 
Substituting the bounds from~\eqref{PVp1:eq} into the inequality~\eqref{MMp1:eq}
we obtain 
$$
\dim_H( (\mathcal M \setminus \mathcal L) \cap (\sqrt5, \sqrt{13})) < 0.7281096.
$$
\qed

The estimates from~\eqref{PVp1:eq} confirm the conjectured upper bound of
$\dim_H (X_M ) <0.365$ obtained in~\cite{MM}, \S B.1 using the periodic point method.

\paragraph{Part 2:} $(\mathcal M \setminus \mathcal L) \cap (\sqrt{13},3.84)$. 
In~\cite{MM}, \S B.2 it is shown that 
\begin{equation}
    \label{MMp2:eq}
\dim_H\left( (\mathcal M \setminus \mathcal L)\cap (\sqrt{13},3.84) \right) <
0.281266 + \dim_H X_M,
\end{equation}
where $X_M \subset [0,1]$ is a set of numbers whose continued fractions
expansions contain only digits $1,2,3$, except subsequences $13$ and $31$ are
not allowed. This is also the limit set of the IFS 
  $$
    T_1(x) = \frac{1}{1+x}, \
    T_2(x) = \frac{1}{2+x}, \
    T_3(x) = \frac{1}{3+x},
    \quad M= \left(\begin{smallmatrix}
        1 & 1& 0\\
        1 & 1 & 1 \\
        0 & 1 & 1 \\
    \end{smallmatrix}\right).
    $$
    Following the same strategy as above, with $S = \oplus_{j=1}^3 [0,1] \times
    \{j\}$  and $m=8$, and using $t_0^\prime = 0.56$ and
    $t_1^\prime = 0.58$ as initial guesses for bisection method\footnote{Based
    on~\cite{MM}, \S B.2.}  we get the following upper and
    lower bounds
    \begin{equation}
        \label{PVp2:eq}
    t_0 \colon = 0.5739612 \leq \dim_H(X_M) \leq 0.5739617 = \colon t_1.
    \end{equation}
    The leading eigenvectors of the corresponding matrices $B^{t_0}$ and
    $B^{t_1}$ give polynomial functions 
    $$
    f_j(x) = \sum_{k=0}^{7} a^j_k x^k, \qquad g_j = \sum_{k=0}^{7} b^j_k x^k, \quad
    j = 1,2,3.
%%%%%%%polina%%%%%%f_2(x) = \sum_{k=0}^5 b_k x^k,
%%%%%%%polina%%%%%%f_3(x) = \sum_{k=0}^5 c_k x^k  \mbox{ and }
%%%%%%%polina%%%%%%f_4(x) = \sum_{k=0}^5 d_k x^k
    $$
    whose coefficients $a_k^j$, $b_k^j$ are tabled in \S\ref{Appendix II}. 
   We can obtain the following inequalities
   \begin{equation}
     \label{testp2:eq}
\sup_S 
\frac{\mathcal L_{t_1} \underline f}{\underline f} 
< 1 - 10^{-7}; \qquad 
\inf_S \frac{\mathcal L_{t_0} \underline g}{\underline g}  > 1+10^{-7},
\end{equation}
and the bound~\eqref{PVp2:eq} follows from Lemma~\ref{tech}. 
Substituting the bounds from~\eqref{PVp2:eq} into the inequality~\eqref{MMp2:eq}
we obtain 
$$
\dim_H( (\mathcal M \setminus \mathcal L) \cap (\sqrt{13},3.84)) < 0.855228.
$$
\qed

The upper bound in estimate~\eqref{PVp2:eq} confirms the conjectural bound $\dim_H(X_M)  < 0.574$ 
(\cite{MM},~\S B.2) which has been obtained using the periodic point
approach. 

\paragraph{Part 3:} $(\mathcal M \setminus \mathcal L) \cap (3.84,3.92)$.
The following inequality was established in~\cite{MM} \S B.3:
\begin{equation}
    \label{MMp3:eq}
\dim_H( (\mathcal M \setminus \mathcal L) \cap (3.84, 3.92)) <  \dim_H(X_M) +
0.25966,
\end{equation}
where $X_M\subset[0,1]$ is the set of all numbers such that its continued
fraction expansions contain only digits $1$, $2$, and $3$ with an extra
condition that subsequences $131$, $132$, $231$, and $313$ are not allowed. This
corresponds to a Markov IFS with $9$ contractions
$$
T_{ij} = \frac{j+x}{1+i(j+x)}, \quad i,j \in \{1,2,3\},
$$
with Markov condition given by the $9\times9$ matrix 
$$
M_{ij, kl} = \begin{cases}
        0& \mbox{ if }
        \{ijk, jkl \}  \cap \{131,132, 231, 313\} \ne \varnothing \cr
        %  ijr \hbox{ or } jrs \in \{131,313,231,132\}\cr
        1&  \mbox{ otherwise. }\cr
    \end{cases}
$$
Ordering the $2$-element sequences $ij$ in lexicographical order, we see that some columns of
the resulting $9\times 9$ matrix agree, more precisely, $M(j,1) \equiv M(j,2)$ and
$M(j,4) \equiv M(j,5) \equiv M(j,6)$ for all $1\le j\le 9$. 
%\pollymargin{I am not sure we want to write the matrix down}
%$$
%\begin{pmatrix}
%  1 & 1 & 1 & 1 & 1 & 1 & 0 & 0 & 1 \\
%  1 & 1 & 1 & 1 & 1 & 1 & 0 & 1 & 0 \\
%  0 & 0 & 0 & 0 & 0 & 0 & 1 & 1 & 1 \\
%  1 & 1 & 1 & 1 & 1 & 1 & 0 & 0 & 1 \\
%  1 & 1 & 1 & 1 & 1 & 1 & 0 & 1 & 1 \\
%  0 & 0 & 0 & 1 & 1 & 1 & 1 & 1 & 1 \\
%  1 & 1 & 1 & 1 & 1 & 1 & 0 & 0 & 0 \\
%  1 & 1 & 1 & 1 & 1 & 1 & 0 & 1 & 1 \\
%  1 & 1 & 0 & 1 & 1 & 1 & 1 & 1 & 1 \\
%\end{pmatrix}
%$$
Therefore, the matrix $B^t$ defined by~\eqref{eq:Btmatrix} has the same
property. Let $v^t = (\overline{v_1}, \ldots, \overline{v_d})$, where $\overline{v}_j \in
\mathbb R^m$ be its left eigenvector corresponding to the leading eigenvalue
$\lambda$. Then %$v^t$ satisfies 
$$
\sum_{k=1}^d \overline{v_k} M(k,j) \cdot B_k^t = \lambda \overline{v_j}, \quad 
1 \le j \le 9.
$$
Therefore the equality between matrix columns implies $\overline{v_1} =
\overline{v_2}$  and $\overline{v_4}=\overline{v_5}=\overline{v_6}$. 

The bisection method with $m=8$ gives the following
lower and upper bounds on dimension 
\begin{equation}
    \label{PVp3:eq}
    t_0 \colon = 0.6113922 < \dim_H X_M < 0.6113925 = \colon t_1. 
%    t_0 \colon = 0.6131122 < \dim_H X_M < 0.6131124 = \colon t_1.
\end{equation}
The coefficients of the test functions $\underline f=(f_1,\ldots,f_9)$ for $\mathcal
L_{t_1}$ and $\underline g=(g_1,\ldots,g_9)$ for $\mathcal L_{t_0}$, which are polynomials of
degree~$7$ are tabled in~\S\ref{Appendix III}. The equalities between elements of
the eigenvectors imply $f_1 = f_2$, $f_4=f_5=f_6$ and $g_1 = g_2$,
$g_4=g_5=g_6$.  Using ball arithmetic we get
bounds for the ratios
\begin{equation}
  \label{testp3:eq}
\sup_S 
\frac{\mathcal L_{t_1} \underline f}{\underline f} 
< 1 - 10^{-7}; \qquad 
\inf_S \frac{\mathcal L_{t_0} \underline g}{\underline g}  > 1+10^{-7}. 
\end{equation}
and the estimates~\eqref{PVp3:eq} follow from Lemma~\ref{tech}. 
Substituting the upper bound from~\eqref{PVp3:eq} into the inequality~\eqref{MMp3:eq}
we obtain 
$$
\dim_H( (\mathcal M \setminus \mathcal L) \cap (3.92,4.01)) < 
0.8710525.
$$

The estimates~\eqref{PVp3:eq} confirm the heuristic bound of $\dim_H (X_M) <
0.612$ given in~\cite{MM}, \S B.3.
%{\color{blue}
%The estimates~\eqref{PVp3:eq} show that the heuristic bound  
%$\dim_H(X_M) < 0.612$ given in~\cite{MM}, \S B.3 is not accurate.
%}

In the remaining two cases, corresponding to intervals $(3.92,4.01)$ and
$(\sqrt{20},\sqrt{21})$ the Markov condition is a little more
complicated and instead of numbering the contractions defining the Iterated
Function Scheme by single numbers
it is more convenient to index them by pairs or triples.   

\paragraph{Part 4:} $\dim_H( (\mathcal M \setminus \mathcal L) \cap (3.92,4.01)
)$. 
In~\cite{MM}, \S B.4 the following inequality is proved:
\begin{equation}
    \label{MMp4:eq}
\dim_H( (\mathcal M \setminus \mathcal L) \cap (3.92,4.01)) <  \dim_H(X_M) + 0.167655.
\end{equation}
where $X_A\subset[0,1]$ is the set of all numbers such that its continued
fraction expansions contain only digits $1$, $2$, and $3$ with an extra
condition that subsequences $131$, $313$, $2312$, and $2132$ are not allowed. This
corresponds to a Markov iterated function scheme
$$
T_{ijk} = \frac{1+j(k+x)}{i+(ij+1)(k+x)}, \quad i,j,k \in \{1,2,3\},
$$
with Markov condition given by the $27\times27$ matrix 
$$
M(ijk, qrs) = \begin{cases}
        0& \mbox{ if }  \{ijk, jkq, kqr, qrs \}  \cap \{131,313 \} \ne
        \varnothing, \mbox{ or }  \cr
        0& \mbox{ if }  \{ijkq, jkqr, kqrs\} \cap \{2312,2132 \} \ne \varnothing  \cr
        1&  \mbox{ otherwise. }\cr
    \end{cases}
$$
Again we see that there are 
equalities between columns of the matrix $M$. In particular, for any triple $ijk$
\begin{align}
M(ijk,111) &= M(ijk,112) = M(ijk,113) \notag  \\
M(ijk,121) &= M(ijk,122) = M(ijk,123) \notag\\
M(ijk,131) &= M(ijk,313) = 0 \label{ap4:eq}\\
M(ijk,321) &= M(ijk,322) = M(ijk,323) \notag \\
M(ijk,331) &= M(ijk,332) = M(ijk,333) \notag \\ 
M(ijk,211) &= M(ijk,2rs) \quad 1 \le r,s \le 3. \notag
\end{align}
As in the previous case, these equalities imply that the corresponding
components of the eigenfunctions are identical. 
The bisection method with $m=8$ and $\varepsilon=6\cdot 10^{-8}$ and 
gives the following
lower and upper bounds on dimension 

\begin{equation}
    \label{PVp4:eq}
    t_0 \colon = 0.6433544 < \dim_H X_A < 0.6433548 = \colon t_1.
\end{equation}

The coefficients of the test functions $\underline f$ for $\mathcal
L_{t_1}$ and $\underline g$ for $\mathcal L_{t_0}$, which are polynomials of
degree~$7$ are tabled in \S\ref{Appendix IV}. Using ball arithmetic we get
bounds for the ratios
\begin{equation}
  \label{testp4:eq}
\sup_S 
\frac{\mathcal L_{t_1} \underline f}{\underline f} 
< 1 - 10^{-7}; \qquad 
\inf_S \frac{\mathcal L_{t_0} \underline g}{\underline g}  > 1+10^{-7},
\end{equation}
and the estimates~\eqref{PVp4:eq} follow from Lemma~\ref{tech}. 
Substituting the upper bound from~\eqref{PVp4:eq} into the inequality~\eqref{MMp4:eq}
we obtain 
$$
\dim_H( (\mathcal M \setminus \mathcal L) \cap (\sqrt{20},\sqrt{21})) <
0.8110098. 
$$
The upper bound from~\eqref{PVp4:eq} makes
rigorous the heuristic bound  $\dim_H(X_M) < 0.65$ in~\cite{MM}, \S B.5 which was based on
the non-validated periodic point method. 

\begin{rem} In~\cite{MM} there are also bounds on $\dim_H((\mathcal M\setminus
    \mathcal L)\cap (4.01, \sqrt{20}))$ which use estimates on $\dim_H(X_{1,2,3})$ due to Hensley. We reconfirm
and improve these in Table~\ref{tab:hensley}.
\end{rem}

\paragraph{Part 5:}
$\dim_H( (\mathcal M \setminus \mathcal L) \cap (\sqrt{20},\sqrt{21})
)$. 
In~\cite{MM}, \S B.5 Matheus and Moreira established the following inequality:
\begin{equation}
    \label{MMp5:eq}
\dim_H( (\mathcal M \setminus \mathcal L) \cap (\sqrt{20}, \sqrt{21}) <
\dim_H(X_M) + 0.172825,
\end{equation}
where $X_M \subset [0,1]$ is a set of numbers whose continued fractions
expansions contain only digits $1$, $2$, $3$, and $4$, except that subsequences
$14$, $24$, $41$ and $42$ are not allowed. This is the limit set of the Markov
Iterated Function Scheme 
    $$
    T_{1}(x) = \frac{1}{1+x}, \
    T_{2}(x) = \frac{1}{2+x}, \
    T_{3}(x) = \frac{1}{3+x}, \
    T_{4}(x) = \frac{1}{4+x}; \
    M= \left(\begin{smallmatrix}
        1& 1& 1&0\\
        1 & 1 & 1 &0 \\
        1 & 1 & 1 &1 \\
        0 & 0 & 1 &1 \\
    \end{smallmatrix}\right).
    $$
    The bisection method with~$m=10$ and~$\varepsilon=6\cdot10^{-8}$ as before and the initial
guess $t_0^\prime = 0.7$, $t_1^\prime=0.71$ gives upper and lower bounds on the
dimension:
\begin{equation}
\label{PVp5:eq}
t_0 \colon = 0.7093943 < \dim_H X_M < 0.7093945 = \colon t_1.
\end{equation}
The coefficients of the test functions $\underline f$ for $\mathcal
L_{t_1}$ and $\underline g$ for $\mathcal L_{t_0}$, which are polynomials of
degree~$9$ are tabled in \S\ref{Appendix V}. Using ball arithmetic we get
bounds for the ratios
\begin{equation}
  \label{testp5:eq}
\sup_S \frac{\mathcal L_{t_1} \underline f}{\underline f} 
< 1 - 10^{-7}; \qquad 
\inf_S \frac{\mathcal L_{t_0} \underline g}{\underline g}  > 1+10^{-8}, 
\end{equation}
and the estimates~\eqref{PVp5:eq} follow from Lemma~\ref{tech}. 
Substituting the upper bound from~\eqref{PVp5:eq} into the
inequality~\eqref{MMp5:eq} we obtain 
$$
\dim_H( (\mathcal M \setminus \mathcal L) \cap (3.84,3.92)) <  
 0.8822195.
$$
%%%%figures%%%%    \begin{figure}[h!]\label{tablesecondtheorem}
%%%%figures%%%%    \centerline{
%%%%figures%%%%    \includegraphics[width=0.35\textwidth]{23a.pdf}
%%%%figures%%%%    \hskip 0.75cm
%%%%figures%%%%    \includegraphics[width=0.35\textwidth]{23b.pdf}
%%%%figures%%%%    }
%%%%figures%%%%    \vskip 0.5cm
%%%%figures%%%%    \centerline{
%%%%figures%%%%    \includegraphics[width=0.35\textwidth]{23c.pdf}
%%%%figures%%%%    \hskip 0.75cm
%%%%figures%%%%    \includegraphics[width=0.35\textwidth]{23d.pdf}
%%%%figures%%%%    }

%%%%figures%%%%    \caption{Theorem \ref{secondtheorem}: The four  figures are the graphs of $f_1$, $f_2$, $f_3$ and $f_4$ (which are almost indistinguishable from $g_1$, $g_2$, $g_3$ and $g_4$). 
%%%%figures%%%%    % The middle  three figures are plots of  $\frac{\mathcal L_{t_1} f_1(x)}{f_1(x)}$,   $\frac{\mathcal L_{t_1} f_2(x)}{f_2(x)}$,    $\frac{\mathcal L_{t_1} f_3(x)}{f_3(x)}$ and $\frac{\mathcal L_{t_1} f_4(x)}{f_4(x)}$.
%%%%figures%%%%    %The lower  three figures are plots of  $\frac{\mathcal L_{t_1} g_1(x)}{g_1(x)}$,   $\frac{\mathcal L_{t_1} g_2(x)}{g_2(x)}$ and   $\frac{\mathcal L_{t_1} g_3(x)}{g_3(x)}$ 
%%%%figures%%%%     }
%%%%figures%%%%    \end{figure}
%%%%figures%%%%    

The upper bound from~\eqref{PVp5:eq} confirms a heuristic estimate $\dim_H(X_{M}) < 0.715$ (in~\cite{MM}, \S B.6)
obtained using the periodic points method.

\begin{rem}
 %   In the proof we give the estimates to with a modest accuracy.
    We can very easily increase the accuracy
    significantly, at the expense of greater computation, but
    the present estimates seem sufficient for our needs. 
\end{rem}

\begin{rem}
    The initial calculations were carried out on a MacBook Pro with a
    2.8 GHz Quad-Core Intel i7 with 16GB DDR3 2133MHz RAM running MacOS
    Catalina using Mathematica.
    The bounds have been confirmed rigorously using a C program written by the
    second author based on Arb
    library for arbitrary precision ball arithmetic. %~\cite{arb}. 
%    working to 75 decimal places to  control  numerical errors in the computation.
\end{rem}

\begin{rem}
The periodic point method does not allow such accurate estimates on the
computational error. In particular, it is possible to estimate the error in the case of 
Bernoulli iterated function schemes, but the estimate is ineffective for Markov systems.
Similarly, for the McMullen algorithm~\cite{McMullen} the errors are more difficult to estimate.
\end{rem}

\subsubsection{Lower bounds and the proof of Theorem \ref{good}}
\label{ss:e2proof}
A further estimate on the Hausdorff dimension of the difference of the Lagrange
and Markov spectra uses the dimension of the Cantor set of numbers in $(0,1)$
whose continued fraction expansion contains only digits~$1$ and~$2$: 
\begin{equation}
    \label{E2def:eq}
E_2 \colon = \left\{[0;a_1,a_2, a_3, \ldots ] \mid  a_n \in \{1,2\} \mbox{ for
all } n \in \mathbb N \right\}.
\end{equation}
In particular, Matheus and Moreira~\cite{MM} showed that $\mathcal M
\setminus \mathcal L$ contains the image under a Lipschitz
bijection of the set $E_2$ (\cite{MM}, Theorem 5.3) from which they immediately deduce
the following. 

\begin{thm}[Matheus---Moreira]\label{thm:MM}
    $\dim_H(\mathcal M \setminus \mathcal L) \geq \dim_H(E_2)$.
\end{thm}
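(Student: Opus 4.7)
The plan is to construct a bi-Lipschitz injection $\phi : E_2 \to \mathbb{R}$ whose image is contained in $\mathcal{M} \setminus \mathcal{L}$; once such a $\phi$ is in hand, bi-Lipschitz invariance together with monotonicity of Hausdorff dimension immediately gives
$$
\dim_H(\mathcal{M} \setminus \mathcal{L}) \;\geq\; \dim_H(\phi(E_2)) \;=\; \dim_H(E_2).
$$

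The natural framework is Perron's characterisation of $\mathcal{M}$ and $\mathcal{L}$ via bi-infinite sequences: for any integer sequence $(a_i)_{i\in\mathbb{Z}}$ with $a_i \geq 1$, set
$$
\lambda_n\bigl((a_i)_{i\in\mathbb{Z}}\bigr) \;:=\; [a_n; a_{n+1}, a_{n+2}, \ldots] + [0; a_{n-1}, a_{n-2}, \ldots],
$$
so that $\mathcal{M}$ and $\mathcal{L}$ are, respectively, the sets of $\sup_n \lambda_n$ and $\limsup_{|n|\to\infty} \lambda_n$ as the sequence varies. I would fix an integer $D \geq 4$ together with an auxiliary backward sequence $a^\star = (a^\star_1, a^\star_2, \ldots) \in \{1,2\}^{\mathbb{N}}$, and to each $a = [0; a_1, a_2, \ldots] \in E_2$ associate the bi-infinite sequence with a single large spike at position $0$:
$$
(\ldots, a^\star_2, a^\star_1, D, a_1, a_2, \ldots).
$$
A direct computation shows that $\lambda_0 = D + [0; a_1, a_2, \ldots] + [0; a^\star_1, a^\star_2, \ldots]$, whereas for every $n \neq 0$ one has $\lambda_n \leq 2 + \sqrt{2} < D$, since all digits away from position $0$ lie in $\{1,2\}$. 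Consequently the supremum in $\mu$ is uniquely attained at $n = 0$, so $\phi(a) := D + a + [0; a^\star_1, a^\star_2, \ldots] \in \mathcal{M}$. As $\phi$ is the restriction to $E_2$ of the affine translation $a \mapsto a + \mathrm{const}$, it is automatically a bi-Lipschitz bijection onto its image.

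The crux, and the principal obstacle, is to verify $\phi(a) \notin \mathcal{L}$ uniformly in $a \in E_2$. This requires ruling out, for every candidate bi-infinite sequence $(b_i)$, the possibility that $\limsup_{|n|\to\infty} \lambda_n((b_i)) = \phi(a)$. The argument is essentially combinatorial and relies on the isolation of the spike: since $\phi(a) > 2 + \sqrt{2}$, any sequence $(b_i)$ whose $\lambda_n$ values accumulate at $\phi(a)$ must contain infinitely many digits of size comparable to $D$, and one uses Perron-style renormalisation together with the Cantor-like gap structure of $E_2$ to show that such a sequence must produce a strictly smaller limsup. This is the content of~\cite[Theorem 5.3]{MM}, whose proof one would invoke (or reproduce) here. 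Once it is in place, the inclusion $\phi(E_2) \subseteq \mathcal{M} \setminus \mathcal{L}$ together with the bi-Lipschitz property of $\phi$ yields the required dimension bound.
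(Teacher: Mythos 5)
Your overall strategy --- construct a bi-Lipschitz map from $E_2$ into $\mathcal M \setminus \mathcal L$ and then use bi-Lipschitz invariance of Hausdorff dimension --- is exactly what the paper does, and like the paper you ultimately defer the hard part to [MM, Theorem~5.3]. That part is fine. However, the explicit ``spike'' construction you sketch cannot be made to work, and it is not one that [MM, Theorem~5.3] can rescue. For $a \in E_2$ and $a^\star \in \{1,2\}^{\mathbb N}$ one has $a, [0;a^\star_1,a^\star_2,\ldots] \in [\sqrt 2 - 1, \tfrac{\sqrt 5 - 1}{2}]$, so with $D \geq 4$ your map satisfies
$$
\phi(a) = D + a + [0;a^\star_1,a^\star_2,\ldots] \;\geq\; 4 + 2(\sqrt 2 - 1) \approx 4.83,
$$
which lies strictly above the Freiman constant $c_F \in (\sqrt{20},\sqrt{21})$. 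By Freiman's theorem, $[c_F,\infty) \subseteq \mathcal L$, so every point of $\phi(E_2)$ sits in Hall's ray and hence in $\mathcal L$; the image is contained in $\mathcal M \cap \mathcal L$, not $\mathcal M \setminus \mathcal L$. This is not a combinatorial subtlety you can hand off to [MM, Theorem~5.3]: that theorem constructs a genuinely different embedding whose image lands in the narrow window $(3.7096, 3.7097)$ (see the paper's remark after Theorem~\ref{thm:MM}, citing [MM, Corollary~5.4]), below the Freiman constant, by interleaving the $E_2$ digits into a carefully chosen background word rather than introducing a single large spike. If you want to display an actual construction, you need to mimic that delicate design; otherwise you should, as the paper does, simply cite [MM, Theorem~5.3] for the existence of the Lipschitz embedding without attempting to reconstruct it.
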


\begin{rem} %[\cite{MM}, Corollary~5.4]
    Theorem \ref{thm:MM} is a corollary of the slightly   stronger local result that 
    $\dim_H(\mathcal M \setminus \mathcal L \cap   (3.7096, 3.7097)) \geq \dim_H(E_2)$
    cf. (\cite{MM}, Corollary 5.4).
\end{rem}

It is easy to see that the set $E_2$~is the limit set for the contractions $T_1, T_2: [0,1] \to [0,1]$ defined by 
$$
T_1(x) = \frac{1}{1+x} \mbox{ and } T_2(x) = \frac{1}{2+x}.
$$
In~\cite{JP02}  there is a validated value for the
Hausdorff dimension of $\dim_H(E_2)$ to $100$ decimal places using periodic
points method and a careful analysis of the error bounds. In this case the error estimates are easier because the system is Bernoulli. 
In~\cite{McMullen} Theorem~\ref{thm:MM} is combined with the numerical
value from~\cite{JP02} to give a lower bound on the dimension of
the difference of the Markov and Lagrange spectrum. 

We can use the approach in \S\ref{sec:estimates} this note to rigorously (re-)verify this bound.  We begin with the lower bound.
We choose $S = [0,1]$ since the iterated function scheme is Bernoulli.
Let $m=120$ and 
\begin{align}
    \label{e2t0:eq}
    t_0^\prime &= 0.5312805062\,7720514162\,4468647368\,4717854930\,5910901839\\
      &\qquad8779888397\,8039275295\,3564383134\,5918109570\,1811852398  \notag
      \\
    \label{e2t1:eq}
    t_1^\prime & = t_0 + 10^{-100}
\end{align}
(taken from~\cite{JP02}).
We can then use the Chebyshev---Lagrange interpolation to find a test
function $g\colon [0,1] \to \mathbb R$ which is a polynomial of degree $119$.  
We can then check that 
$$
    \inf_S \frac{\mathcal L_{t_0^\prime} g}{g}  >   1 + 10^{-100} 
$$
and then applying Lemma~\ref{tech} we can deduce that $\dim_H(E_2) > t_0$.

We proceed similarly to verify the upper bound $t_1^\prime$.
Namely, the Chebyshev---Lagrange interpolation gives another test
function $f\colon [0,1] \to \mathbb R$ which is also a polynomial of degree $119$
with the property that
$$
    \sup_S \frac{\mathcal L_{t_1^\prime} f}{f } <  1 - 10^{-101} .
$$
Thus by Lemma~\ref{tech} we conclude that $\dim_H(E_2) < t_1^\prime$.
\begin{rem}
This example demonstrates that, despite the fact that at first sight estimating
the ratio of the image of the test function and the function itself could be
potentially very time consuming and challenging, for many systems of particular
interest, the derivative $\left(\frac{\mathcal L_{q} f}{f }
\right)^\prime$ turns out to decrease sufficiently fast as $m\to \infty$ to make
realisation possible in practice.

%one of the main advantages of our approach. 
In ~\cite{JP02} the estimates involved computing
$2^{25} = 33,554,432$ periodic points up to period $25$ and the  exponentially
increasing amount of data needed makes it impractical to improve the rigorous
estimate on~$\dim_H(E_2)$ significantly. 
On the other hand, using the approach via Chebyshev---Lagrange interpolation and
Lemma~\ref{tech} we were able to confirm this result same accuracy using only two 
$120\times120$ matrices, and it would require about $600$ matrices (of
increasing size from $6\times 6$ up to $120\times 120$) in total to
recompute this estimates starting with the initial guess $t_0=0$ and $t_1 = 1$. 
This represents a significant saving in memory usage at expense of computing
$400$ coefficients for the derivative estimates. 
\end{rem}

Moreover, we can now easily improve on the estimate using 
the bisection method combined with interpolation and Lemma~\ref{tech}
where the amount of data required by our analysis \emph{grows linearly} with the
accuracy required. Indeed, letting~$m=270$ and $\varepsilon = 10^{-200}$ we
apply the bisection method choosing $t_0^\prime$ given by~\eqref{e2t0:eq} and
$t_1^\prime$ given by~\eqref{e2t1:eq} as initial
guess. It gives %the initial value $t_0$ get  
\begin{align}
    t_0 &=
    0.5312805062\,7720514162\,4468647368\,4717854930\,5910901839\,8779888397
    \label{e2t00:eq} \\
    &\qquad8039275295\,3564383134\,5918109570\,1811852398\,8042805724\,3075187633
    \notag \\
    &\qquad4223893394\,8082230901\,7869596532\,8712235464\,2997948966\,3784033728
    \notag \\
    &\qquad7630454110\,1508045191\,3969768071\,2.\notag \\
    \intertext{and} 
    t_1 &= t_0 + 2\cdot10^{-201} \label{e2t11:eq}
%           0.5312805062\,7720514162\,4468647368\,4717854930\,5910901839\,8779888397\cr
%    &\qquad8039275295\,3564383134\,5918109570\,1811852398\,8042805724\,3075187633\cr
%    &\qquad4223893394\,8082230901\,7869596532\,8712235464\,2997948966\,3784033728\cr
%    &\qquad7630454110\,1508045191\,3969768071\,4\cr
\end{align}
Then we can use the interpolation method to construct  test
functions~$f$ and $g$ which are polynomials of 
degree~$269$ defined on the  unit interval\footnote{We omit a detailed listing
of all the~$540$
coefficients of~$\underline f$ and~$\underline g$.  However,
they are easily recovered Mathematica.}.  

We can then explicitly compute
$$
    \inf_S \frac{\mathcal L_{t_0} f}{f} > 1 + 10^{-213}, \qquad 
    \sup_S \frac{\mathcal L_{t_1} g}{g} < 1 - 10^{-211}. 
%    (9.90959 \ldots
%6529
%    \times 10^{-213})  > 1\cr
$$
which implies that $t_0 \leq \dim_H(E_2) \leq t_1$.

\subsection{Zaremba Theory}

\label{sec:Zaremba}
In the introduction we described interesting results of
Bourgain---Kontorovich~\cite{BK14}, Huang~\cite{Huang}, and
Kan~\cite{Kan16},~\cite{Kan17},~\cite{Kan19}, which made progress towards
the Zaremba Conjecture. 
These results have a slightly more general formulation, which we will now recall.
For a finite alphabet set $A \subset \mathbb N$
consider the  iterated function scheme
%$$,  $n \in A$, defined by 
$$
T_n\colon [0,1] \to [0,1], \qquad
T_n(x) =  \frac{1}{x+n} \mbox{  for  } n \in A.
$$
and denote its limit set by $X_A$. 

For any $N\in \mathbb N$ we can in addition consider a set 
\begin{align*}
    D_A(N)&\colon =\cr
    &  \left\{q \in \mathbb N \mid 1 \leq q \leq N, \,  
    \exists p \in \mathbb N, (p,q)=1; 
    %\mbox{ coprime to $q$ }; 
    a_1, \cdots, a_n \in A \mbox{ with } \frac{p}{q} = [0;a_1, \cdots, a_n]
    \right\}.
\end{align*}
In particular, when $A = \{1,2, \cdots, m\}$ then $D_A$ reduces to $D_m$ as defined in the introduction.

We begin with the density one result~\cite{Huang},~\cite{Huang15}.
\begin{thm}[Bourgain---Kontorovich, Huang]\label{thm:densityoneplus}
    Let $A \subset \mathbb N$ be a finite subset for which
    that associated set $X_A$ satisfies  $\dim_H(X_A) >
    \frac{5}{6} = 0.83\dot3$.  
    Then
    $$
    \lim_{N \to +\infty} \frac{\#D_A(N)}{N} =1.
    $$
\end{thm}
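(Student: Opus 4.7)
The plan is to follow the Hardy--Littlewood circle method developed by Bourgain and Kontorovich~\cite{BK14} in the refined form given by Huang~\cite{Huang}, rather than attempt an independent proof. The starting point is to replace the combinatorial counting of denominators with an orbit-counting problem in the multiplicative semigroup $\Gamma_A \subset \mathrm{GL}_2(\mathbb{Z})$ generated by the matrices $M_a = \left(\begin{smallmatrix} a & 1 \\ 1 & 0 \end{smallmatrix}\right)$ for $a \in A$. An element $\gamma = M_{a_1} \cdots M_{a_n}$ has bottom-right entry equal to the denominator of the continued fraction $[0; a_1, \ldots, a_n]$, so $D_A(N)$ is precisely the set of distinct such bottom-right entries up to size $N$. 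The critical exponent of $\Gamma_A$ coincides with $\dim_H(X_A)$ by Lemma~\ref{bowenRuelle}, and this controls both the orbit growth rate and the spectral radii of the transfer operators that will appear.

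First I would smooth: fix a cutoff $\psi$ and introduce a representation function $R_N(n) = \sum_{\gamma \in \Gamma_A} \psi(\|\gamma\|/X)\, \mathbf{1}[q_\gamma = n]$, where $q_\gamma$ is the relevant matrix entry and $X$ is chosen so that typical $q_\gamma$ are of size $N$. A Cauchy--Schwarz argument reduces the density-one statement to showing that the support of $R_N$ fills essentially all of $[1,N]$, and this in turn reduces via the circle method to comparing the Fourier transform $\widehat{R}_N$ to its expected main term. One then decomposes $\int_0^1 |\widehat{R}_N(\theta)|^2 \, d\theta$ into major arcs near rationals $a/q$ with $q$ small, and minor arcs elsewhere.

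On major arcs, the contribution is handled by spectral theory of a family of twisted transfer operators $\mathcal{L}_{s,\theta}$ associated to the underlying Markov iterated function scheme: at $s = \dim_H(X_A)$ the untwisted operator has leading eigenvalue $1$ by Lemma~\ref{bowenRuelle}, and the spectral gap, combined with a congruence analysis yielding a singular series, produces the expected main term. On minor arcs one needs genuine cancellation in the additive character sums, obtained from expander estimates for Cayley graphs of $\mathrm{SL}_2(\mathbb{Z}/q\mathbb{Z})$ in the spirit of Bourgain--Gamburd, combined with bilinear estimates that exploit the multiplicative (renewal) structure of $\Gamma_A$.

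The main obstacle is balancing these two contributions: the major arc main term scales as a power of $N$ governed by $\delta = \dim_H(X_A)$, while the minor arc error also depends on $\delta$ through both the spectral gap and the quality of the available equidistribution bounds. The threshold $\delta > 5/6$ in Huang's refinement is precisely the point at which the minor arc losses can be absorbed into the main term; the earlier Bourgain--Kontorovich argument required $\delta > 307/312$, and closing this gap hinges on sharper bilinear forms and a more efficient use of congruence expansion. In the context of the present paper the theorem is used as a black box, and our contribution is to verify rigorously, via the interpolation and min-max framework of Section~\ref{sec:estimates}, that $\dim_H(X_A) > 5/6$ holds for the small alphabets $A$ of interest, so that the Bourgain--Kontorovich--Huang machinery applies.
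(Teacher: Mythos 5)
The paper does not prove Theorem~\ref{thm:densityoneplus}; it is stated as a known result of Bourgain--Kontorovich and Huang and cited directly from \cite{BK14}, \cite{Huang}, \cite{Huang15}. Your proposal correctly recognises this, gives a broadly accurate high-level sketch of the circle-method argument in those references (semigroup $\Gamma_A$, major/minor arc decomposition, twisted transfer operators on the major arcs, expander and bilinear estimates on the minor arcs, with $5/6$ as the threshold where the losses balance), and correctly identifies that the paper's own contribution is confined to the rigorous verification that $\dim_H(X_A) > \frac{5}{6}$ via the min-max and interpolation machinery of \S\ref{sec:estimates}.
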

The statement of Theorem~\ref{thm:Huang} corresponds 
to the particular choice of alphabet  $A = \{1,2,3,4,5\}$ in Theorem~\ref{thm:densityoneplus}.
Similarly, Theorem~\ref{thm:kan} has a slightly more
general formulation (from~\cite{Kan16}) as a positive density result.

We begin by recalling  the following useful notation. 
Given two real-valued functions~$f$ and~$g$ we say that $f \gg g$ if there exist
a constant~$c$ such that $f(x) > c g(x)$ for all~$x$ sufficiently large.  

The statement of Theorem~\ref{thm:kan} corresponds 
to the particular choice of alphabet of $A = \{1,2,3,4\}$ in Theorem~\ref{thm:kanplus}.

\begin{thm}[Kan~\cite{Kan16}, Theorem~1.4]\label{thm:kanplus}
    Let~$A \subset \mathbb N$ be  a finite set for which the  associated limit set $X_A$
    has dimension $\dim_H(X_A) > \frac{\sqrt{19}-2}{3} = 0.7862\ldots$ Then
%    Then for any $\varepsilon > 0$
    $$
    \# \left\{q \in \mathbb N \mid 1 \leq q \leq N \colon 
    \exists p \in \mathbb N; a_1, \cdots, a_n \in A \mbox{ with } \frac{p}{q} =
    [0;a_1, \cdots, a_n]
    \right\} \gg N.
    $$
\end{thm}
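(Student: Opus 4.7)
My plan is to follow the circle method framework pioneered by Bourgain and Kontorovich~\cite{BK14} and refined by Kan~\cite{Kan16}. The starting point is to introduce a suitable \emph{ensemble}: for each scale~$N$ one considers the set $\Omega_N$ of words $(a_1,\ldots,a_n)$ with letters in~$A$ whose associated continued fraction $[0;a_1,\ldots,a_n]=p/q$ has denominator $q\asymp N$. By the Bowen--Ruelle formula (Lemma~\ref{bowenRuelle}) and the thermodynamic formalism of \S\ref{sec:estimates}, the cardinality grows like $|\Omega_N|\asymp N^{2\delta}$ where $\delta=\dim_H(X_A)$, reflecting the self-similar structure of the continued-fraction semigroup.

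The main device is an $L^2$ argument: writing $R(q)=\#\{\omega\in\Omega_N:q(\omega)=q\}$, the Cauchy--Schwarz inequality gives $\#D_A(N)\ge(\sum_q R(q))^2/\sum_q R(q)^2$, so it suffices to establish $\sum_q R(q)^2\ll N^{4\delta-1}$. By orthogonality this is equivalent to bounding the $L^2$-norm of the exponential sum $S(\theta)=\sum_{\omega\in\Omega_N}e(q(\omega)\theta)$, where $e(x)=e^{2\pi i x}$; at this stage one splits the unit interval into major arcs (neighbourhoods of rationals with small denominator) and minor arcs, and treats each family separately.

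The major arc contribution would yield the predicted main term of order $N^{4\delta-1}$; the key input is a local-to-global principle (equidistribution of $q(\omega)$ in arithmetic progressions modulo small~$m$) which rests on a spectral gap for the transfer operator twisted by $\mathrm{SL}_2(\mathbb{Z}/m\mathbb{Z})$-representations, i.e.\ Bourgain--Gamburd--Sarnak expansion. The minor arc estimate is the hard step: one factorises $\omega=\omega'\omega''$ into two blocks so that $|\Omega'|\asymp N^{\delta_1}$ and $|\Omega''|\asymp N^{\delta_2}$ with $\delta_1+\delta_2=2\delta$, and bounds the resulting bilinear form by combining a trivial bound with a Weyl/sum--product input, producing a saving of the form $N^{2\delta-\eta(\delta_1,\delta_2)}$ uniformly in $\theta$ on the minor arcs.

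The principal obstacle, and the source of the numerical threshold, is that the bilinear saving~$\eta$ is only large enough to beat the major arc main term when the trade-off between $\delta_1$ and $\delta_2$ is feasible. Optimising the split subject to this constraint reduces to a quadratic inequality $3\delta^2+4\delta-5>0$ whose positive root is exactly $(\sqrt{19}-2)/3$; below this threshold the $L^2$ approach collapses, while above it Kan's refined combinatorial reorganisation of~$\Omega_N$, which imposes congruence restrictions on the middle portion of each word, recovers just enough cancellation to close the argument. I would therefore follow Kan's reorganisation essentially verbatim, with the analytic input of the present paper entering only through the rigorous verification (via Lemma~\ref{tech} and the interpolation scheme of \S\ref{sec:estimates}) that a given concrete alphabet~$A$ satisfies the hypothesis $\dim_H(X_A)>(\sqrt{19}-2)/3$.
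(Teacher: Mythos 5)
This theorem is not proven in the paper; it is imported as a black box from Kan~\cite{Kan16}, Theorem~1.4, and the paper's only contribution to the topic is the dimension estimate, i.e.\ the rigorous verification (via Lemma~\ref{tech} and the interpolation machinery of \S\ref{sec:estimates}) that concrete alphabets such as $A=\{1,2,3,4\}$ satisfy $\dim_H(X_A)>(\sqrt{19}-2)/3$. There is therefore no ``paper's own proof'' to compare against: the authors explicitly state that the derivation of Theorem~\ref{thm:kanplus} is conditional on the inequality $\dim_H(E_4)>(\sqrt{19}-2)/3$, and the dimension estimate is where the paper's analytic work enters. Your final sentence gets this division of labour exactly right.

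As a sketch of Kan's external argument your proposal is the right genre: the circle-method/$L^2$ ensemble framework with major/minor arc splitting, the local-to-global input via expansion in $\mathrm{SL}_2(\mathbb{Z}/m\mathbb{Z})$, the bilinear decomposition $\omega=\omega'\omega''$ on the minor arcs, and the observation that $\#D_A(N)\gtrsim(\sum_q R(q))^2/\sum_q R(q)^2$ reduces the positive-density claim to $\sum_q R(q)^2\lesssim N^{4\delta-1}$. The algebra giving $(\sqrt{19}-2)/3$ as the positive root of $3\delta^2+4\delta-5=0$ is also internally consistent. But you should be clear that this is a caricature of a substantial external argument, and the present paper's Bowen--Ruelle/transfer-operator technology plays no role in the circle method itself --- it only supplies the numerical hypothesis. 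If the task were to reconstruct what the paper does for this theorem, the honest answer is: cite Kan and then prove, using Theorem~\ref{thm:dimE4}, that the hypothesis holds; nothing more.
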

The  derivation of Theorem~\ref{thm:kanplus} is conditional on the inequality $\dim_H(E_4) >
\frac{\sqrt{19}-2}{3}$ % = 0.7862 \ldots$
which was based on the empirical computations by
Jenkinson~\cite{Jenkinson}, but which were rigorously
justified in~\cite{JP20}.  We will rigorously (re)confirm this inequality in the
next section using the approach in~\S\ref{sec:estimates}. 

In the case that the Hausdorff dimension of the limit set $X_A$ %associated to  $A \subset \mathbb N$ 
is smaller, in particular, 
$ \dim_H(X_A) < \frac{5}{6}$  there are still some interesting  lower bounds on $\#D_A(N)$. 
For convenience we denote (omitting dependence on $A$)
$$
\delta \colon=  \dim_H(X_A)
$$ 
then a classical result of Hensley showed that $\#D_A(N)\gg N^{2\delta}$~\cite{Hensley}.
Subsequently, this was refined in different ranges of~$\delta$ as follows:
\begin{enumerate}
    \item[i)]
        If  $\frac{1}{2} < \delta < \frac{5}{6}$ then  
        $
        \#D_A(N) \gg  N^{\delta + (2\delta -1)(1-\delta)/(5-\delta) - \varepsilon}, 
        $
        for any $\varepsilon > 0$~\cite{BK14}. 
        % This improved improved on the 
        %lower bound $>>N^{2\delta}$ of Hensley \cite{hensley06}.
        %This was improved by Kan in the following two results.
        \item[ii)]
            If   $ \frac{\sqrt{17}-1}{4} %= 0.7807 \ldots 
            <\delta  < \frac{5}{6}$ then   $     \#D_A(N) \geq N^{1-\varepsilon} $
            for all $\varepsilon > 0$ \cite{Kan16}.
        \item[iii)]
            If  $3-\sqrt{5} <  \delta  <  \frac{\sqrt{17}-1}{4} $, %= 0.7807 \ldots$ 
            then 
            $\#D_A(N)\geq N^{1 + \frac{2\delta^2 + 5 \delta -5}{2\delta -1}
            -\varepsilon}$~\cite{Kan19}.
\end{enumerate}
As a concrete application, Kan considered the finite set $A=\{1,2,3,5\}$ and by applying
the inequality in iii) obtained the following lower bound.
\begin{thm}[Kan~\cite{Kan19}, Theorem~1.5, Remark~1.3]\label{kan-lower}
    For the alphabet $A=\{1,2,3,5\}$  one has     $\#D_A(N) \gg N^{0.85}$.
\end{thm}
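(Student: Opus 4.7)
The plan is to reduce the theorem to a dimension computation and then to invoke Kan's inequality iii) from the list preceding the statement. Specifically, what we need to verify is that
\[
\delta := \dim_H(X_A) \in \left(3-\sqrt{5},\; \tfrac{\sqrt{17}-1}{4}\right) \approx (0.7639,\; 0.7808),
\]
because once $\delta$ is known to lie in this range, Kan's inequality iii) gives
\[
\#D_A(N) \geq N^{1 + \frac{2\delta^2 + 5\delta - 5}{2\delta - 1} - \varepsilon}
\]
for every $\varepsilon > 0$. A direct substitution of any $\delta$ inside this interval shows that the exponent comfortably exceeds $0.85$, so the bound $\#D_A(N)\gg N^{0.85}$ will follow.

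To carry out the dimension computation, I would apply the framework developed in Section~\ref{sec:estimates} to the Bernoulli iterated function scheme
\[
T_1(x) = \tfrac{1}{1+x},\quad T_2(x) = \tfrac{1}{2+x},\quad T_3(x) = \tfrac{1}{3+x},\quad T_5(x) = \tfrac{1}{5+x}
\]
on $[0,1]$ with the full $4\times 4$ shift. Since this is Bernoulli we may take $d=1$ and work on a single copy of the unit interval, which simplifies the transfer operator to~\eqref{trop:eq}. I would then run the bisection method of \S\ref{bis} with a modest interpolation degree (say $m=10$ or $m=12$), starting from the initial bracket $t_0' = 0.77$, $t_1'=0.78$ suggested by the range constraint. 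At each step the code constructs the left eigenvector of the matrix $B^t$ defined in~\eqref{eq:Btmatrix}, assembles the Lagrange--Chebyshev test polynomial $\underline f^q$ via~\eqref{eq:ftfunc}, and applies Lemma~\ref{tech} using ball arithmetic.

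The output is rigorous bounds of the shape $t_0 < \dim_H(X_A) < t_1$ with $t_0,t_1$ sandwiched well inside $(3-\sqrt{5}, (\sqrt{17}-1)/4)$. Plugging $\delta \in (t_0,t_1)$ into case iii) and using monotonicity of the exponent $1 + \tfrac{2\delta^2+5\delta-5}{2\delta-1}$ on this interval, we get an explicit lower bound on the exponent strictly greater than $0.85$, which is enough to absorb the $\varepsilon$ loss in Kan's estimate and conclude $\#D_A(N) \gg N^{0.85}$.

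The main obstacle is not conceptual but numerical: we must ensure that the rigorously verified dimension interval actually lies inside $(3-\sqrt{5}, (\sqrt{17}-1)/4)$, which is narrower (width $\approx 0.017$) than in most of the applications treated earlier. In particular, a very coarse interpolation could produce a confidence interval that straddles one of the endpoints, in which case Kan's formula does not apply. This is addressed by taking $m$ large enough that the derivative estimate $\bigl\|\bigl(\tfrac{\mathcal L_t \underline f}{\underline f}\bigr)'\bigr\|_\infty$ controlled in Lemma~\ref{lem:ratio} is small, guaranteed by Proposition~\ref{prop:analytic} and Corollary~\ref{cor:final}. Everything else (the passage from the dimension bound to the counting statement) is a direct appeal to result iii) cited before the statement.
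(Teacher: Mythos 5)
Your proposal is essentially the paper's route: reduce the theorem to a rigorous dimension bound for $X_A$ and then invoke Kan's counting estimate. The paper carries this out in \S\ref{ss:dimdensity1}--4.2.3 via Theorem~\ref{thm:dim1235} (the Chebyshev--Lagrange/bisection computation giving $\dim_H(X_A) = 0.7709149399\ldots$) and the remark following it. Two small points of divergence are worth flagging. First, you insist on the \emph{two-sided} containment $\delta\in(3-\sqrt{5},\tfrac{\sqrt{17}-1}{4})$ because you are re-deriving the exponent directly from inequality~(iii); the paper instead emphasizes only the one-sided bound $\delta > 3-\sqrt{5}$, which is all that Kan's Theorem~1.5 actually requires. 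This is not an oversight on the paper's part: if $\delta$ were to exceed $\tfrac{\sqrt{17}-1}{4}$, case~(ii) applies and gives the even stronger bound $\gg N^{1-\varepsilon}$, so a one-sided check really does suffice. You effectively reach the same conclusion by noting the monotonicity of the exponent $1+\tfrac{2\delta^2+5\delta-5}{2\delta-1}$ and evaluating at the lower end, but you could have avoided the worry about the confidence interval ``straddling'' the right endpoint entirely. Second, where you propose bisection with $m\approx 10$--$12$, the paper also supplies a much cheaper ``by hand'' alternative: taking the linear test function $f(x)=\tfrac{9}{10}-\tfrac{2}{5}x$ one checks $[\mathcal L_{3-\sqrt{5}}f]/f>1.00042$ on $[0,1]$, and Lemma~\ref{tech} already gives $\dim_H(X_A)>3-\sqrt{5}$. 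Your numerical strategy is correct and more precise, but heavier than the application strictly needs.
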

This gave  an improvement on the bound of $\#D_A(N) \gg N^{0.80}$ arising from i).
However, this  required  that $\dim_H(X_A) > 3-\sqrt{5}$, an estimate conjectured by Jenkinson in~\cite{Jenkinson}, but which was not validated. 
We will present a rigorous bound in the next subsection.

To further illustrate this theme we will use the present method to 
confirm the following local version of the Zaremba conjecture proposed by Huang.

\begin{thm}[after Huang]
    Let $A = \{1,2,3,4,5\}$ and consider $D_A = \cup_{n \in \mathbb N} D_A(n)$, in other words 
    $$
    D_A \colon =
   \left\{q \in \mathbb N \mid \exists p \in \mathbb N, (p,q)=1 \mbox{ and } 
    a_1, \cdots, a_n \in A \mbox{ with } \frac{p}{q} = [0;a_1, \cdots, a_n]
    \right\}.
    $$
    Then for every $m>1$ we have that $D_A = \mathbb N (\kern-6pt \mod m)$.
    In other words, for every $m>1$ and every $q \in \mathbb N$ we have
     $q \kern2pt(\kern-6pt\mod m) \in D_A$.  
%%%%%%%polina%%%%%%    For every natural number $m \in \mathbb N$, for
%%%%%%%polina%%%%%%    every denominator $q$ (mod $m$) there exists a
%%%%%%%polina%%%%%%    numerator $p$ and $a_1, a_2, \cdots, a_n \in
%%%%%%%polina%%%%%%    \{1,2,3,4,5\}$ such that $\frac{p}{q} = [a_1,a_2,
%%%%%%%polina%%%%%%    \cdots, a_n]$. 
\end{thm}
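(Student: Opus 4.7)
The plan is to derive this local version as a direct corollary of the density-one theorem of Bourgain--Kontorovich--Huang (Theorem~\ref{thm:Huang}), using nothing more than the observation that a subset of natural density~$1$ must meet every arithmetic progression infinitely often. No new analytic input beyond Theorem~\ref{thm:Huang} is required, which is why this is stated as a result ``after Huang''.

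More concretely, I would proceed as follows. First, fix $m>1$ and any residue class $r \in \{0,1,\dots,m-1\}$, and consider the arithmetic progression $\mathcal{A}_{r,m} = \{q \in \mathbb{N} : q \equiv r \pmod{m}\}$. A trivial count gives
$$
\#\left(\mathcal{A}_{r,m} \cap [1,N]\right) = \frac{N}{m} + O(1).
$$
Next, invoke Theorem~\ref{thm:Huang}, which states $D_5(N)/N \to 1$, equivalently $\#([1,N] \setminus D_A) = o(N)$ as $N \to +\infty$, where $A = \{1,2,3,4,5\}$. Intersecting with the progression yields
$$
\#\left(D_A \cap \mathcal{A}_{r,m} \cap [1,N]\right)
\;\geq\; \#\left(\mathcal{A}_{r,m} \cap [1,N]\right) - \#\left([1,N] \setminus D_A\right)
\;\geq\; \frac{N}{m} - o(N),
$$
which tends to $+\infty$ with~$N$. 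In particular $D_A \cap \mathcal{A}_{r,m} \neq \varnothing$, so some denominator $q \in D_A$ satisfies $q \equiv r \pmod{m}$. Since $r$ was arbitrary, every residue class modulo~$m$ is realized by an element of $D_A$, which is exactly the claim $D_A \equiv \mathbb{N} \pmod{m}$.

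There is essentially no obstacle: the hard work is entirely encoded in Theorem~\ref{thm:Huang}, whose proof in turn relies on the rigorous lower bound $\dim_H(E_5) > 5/6$ that the present paper's methods confirm. The only sanity check worth spelling out in the final write-up is that the $o(N)$ error is uniform in~$N$ (which it is, since the statement of Theorem~\ref{thm:Huang} is a genuine limit), so that for each fixed $m$ one may take $N$ large enough that $N/m - \#([1,N] \setminus D_A) \geq 1$ simultaneously for all $m$-many residue classes. One could, if desired, package the argument as the stronger quantitative statement that each residue class mod~$m$ contains asymptotically $N/m$ elements of $D_A \cap [1,N]$, but the qualitative nonemptiness is all that the theorem demands.
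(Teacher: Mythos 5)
Your proof is correct, but it takes a genuinely different route from the paper's. You deduce the theorem directly from the Bourgain--Kontorovich--Huang density-one result (Theorem~\ref{thm:Huang}) by a trivial counting argument: if the complement of $D_A$ in $[1,N]$ has $o(N)$ elements while a residue class mod~$m$ has $N/m + O(1)$, the two must intersect for $N$ large. This applies to any alphabet~$A$ with $\dim_H(X_A) > 5/6$, via Theorem~\ref{thm:densityoneplus}. The paper instead obtains the statement as a special case of Huang's Conjecture~\ref{conj:huang}, which it verifies through Proposition~\ref{thm:huang} (Huang, Theorem 1.3.11): a finite alphabet~$A$ has no local obstructions as soon as $\dim_H(X_A) > \max\{\dim_H(X_{2(2)}),\dim_H(X_{1(8)})\}$. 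The paper's contribution is then the rigorous computation (Table~\ref{table:clu}) that this maximum is roughly $0.7195$, comfortably below $5/6$, so that Huang's criterion applies whenever $\dim_H(X_A) > 5/6$ and in particular to $A = \{1,2,3,4,5\}$. The two routes differ in scope and intent: yours is shorter and more transparent, but it sidesteps the paper's new numerics entirely; the paper's route removes local obstructions at the weaker threshold $\approx 0.7195$ and is explicitly designed to showcase the Hurwitz-zeta/interpolation machinery used to estimate dimensions of the countable iterated function schemes $X_{2(2)}$ and $X_{1(8)}$, which is the real point of that section.
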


%\subsection{Two estimates}
\subsubsection{Dimension estimates for $E_5$.} %the density one result} %~Theorem \ref{thm:Huang}}
\label{ss:dimdensity1}

A crucial ingredient in the analysis in the proof of density one
Theorem~\ref{thm:Huang} used in~\cite{Huang} is 
that the limit set $E_5$ for the iterated function scheme 
$$
T_j: [0,1] \to [0,1], \quad  T_j(x) = \frac{1}{j+x}, \quad 1 \leq j \leq 5
$$
satisfies $\dim_H(E_5) > \frac{5}{6} = 0.83\dot3$.  
In~\cite{JP20}, this was confirmed with rigorous bounds
$$
\dim_H(E_5) = 0.836829445\pm5\cdot 10^{-9} 
%0.83682944 \leq \dim_H(X) \leq 0.83682945
$$
using the periodic points method.  For this particular
example, the error estimates are more tractable because
the iterated function scheme is Bernoulli, rather than
just Markov. 

However, we can use the method in this note to reconfirm
this bound, and improve it, with very little effort, to
the following: 
\begin{thm}
  \label{thm:dimE5}
    $$  
    \dim_H(E_5) = 0.836829443680\pm 10^{-12}.
%    0.836829443680   \leq \dim_H(X) \leq  0.836829443682.
    $$
\end{thm}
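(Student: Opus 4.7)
The plan is to apply the algorithmic framework developed in \S\ref{sec:estimates} directly, since the iterated function scheme defining $E_5$ consists of the five Bernoulli contractions $T_j(x) = 1/(j+x)$ with $j = 1,\ldots,5$ and no Markov restriction (so we may take $d=1$ and work on the single interval $[0,1]$). I would set
$$
t_0 \colon = 0.836829443680 - 10^{-12}, \qquad t_1 \colon = 0.836829443680 + 10^{-12},
$$
and produce two polynomial test functions $\underline g$ and $\underline f$ via Lagrange--Chebyshev interpolation for the operators $\mathcal L_{t_0}$ and $\mathcal L_{t_1}$, respectively.

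First, I would choose the interpolation degree $m$; the empirical rule described after Proposition~\ref{prop:analytic} suggests that roughly $\varepsilon \sim 10^{-3m/4}$, so an accuracy of $10^{-12}$ should be comfortably achievable with $m$ of order $20$--$30$. For the chosen $m$ I would build the $m\times m$ matrix $B^t$ of~\eqref{Bsmall:eq} in the Bernoulli special case with entries $B^t(l,k) = \sum_{j=1}^{5} lp_k(T_j y_l)\,|T_j'(y_l)|^t$ at the shifted Chebyshev nodes $y_l \in [0,1]$, and compute the left eigenvectors $v^{t_0}$ and $v^{t_1}$ corresponding to the maximal eigenvalues of $B^{t_0}$ and $B^{t_1}$ via the power method to high precision. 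These eigenvectors define polynomials $g(x) = \sum_k v^{t_0}_k lp_k(x)$ and $f(x) = \sum_k v^{t_1}_k lp_k(x)$ of degree $m-1$.

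Next I would perform the rigorous verification using ball arithmetic (for instance the Arb library, as in the other examples of the paper). Specifically: verify positivity of $g$ and $f$ on $[0,1]$ by the partition-and-Taylor-remainder procedure described after Proposition~\ref{prop:analytic}; compute the image polynomials $F^{t_i} = \mathcal L_{t_i} f$ (respectively $\mathcal L_{t_i} g$) exactly in the form
$$
F^t(x) = \sum_{j=1}^{5} \frac{1}{(j+x)^{2t}(j+x)^{m-1}}\, p_j(x)
$$
following~\eqref{eq:dif-err}; bound $\bigl\|(\mathcal L_{t} g/g)'\bigr\|_\infty$ and $\bigl\|(\mathcal L_{t} f/f)'\bigr\|_\infty$ on a uniform partition of $[0,1]$; and combine these Lipschitz bounds with evaluation at the partition midpoints to establish the rigorous inequalities
$$
\inf_{[0,1]} \frac{\mathcal L_{t_0} g}{g} > 1 \quad \text{and} \quad \sup_{[0,1]} \frac{\mathcal L_{t_1} f}{f} < 1.
$$
An application of Lemma~\ref{tech} then yields $t_0 < \dim_H(E_5) < t_1$, which is the assertion of the theorem.

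The main obstacle is purely computational rather than conceptual: ensuring that the chosen $m$ is large enough so that the gap between $\sup \mathcal L_{t_1}f/f$ and $1$ (and between $\inf \mathcal L_{t_0} g/g$ and $1$) exceeds the accumulated error from ball arithmetic over the partition. If on first attempt one of the inequalities fails to be verified at a node, the bisection flowchart of Figure~\ref{fig:flowchart} instructs us simply to increase $m$ and repeat; since $E_5$ is a Bernoulli scheme of analytic linear-fractional contractions, Proposition~\ref{prop:analytic} guarantees that this loop terminates, and in practice the required $m$ is very small compared to the $m = 270$ used for the $200$-digit estimate of $\dim_H(E_2)$ in Theorem~\ref{good}.
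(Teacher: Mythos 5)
Your proposal matches the paper's proof in essentially every respect: work on the single interval $[0,1]$ since the scheme is Bernoulli, build the $m\times m$ Lagrange--Chebyshev matrix $B^t$, extract its leading left eigenvector to form a polynomial test function, verify positivity and the two ratio inequalities by ball arithmetic, and conclude by Lemma~\ref{tech}. The only cosmetic difference is that the paper reports using $m=15$ (polynomials of degree~$14$) and slightly tighter interior endpoints $t_0 = 0.8368294436802$, $t_1 = t_0 + 2\cdot10^{-12}$, whereas you take the outer endpoints $0.836829443680\pm10^{-12}$ directly and estimate $m\approx 20$--$30$; both choices are valid and the argument is the same.
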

\begin{proof}
    We can choose $S = [0,1]$ since the iterated function scheme is Bernoulli.
    Applying the bisection method with~$m=15$ and $\varepsilon = 10^{-11}$, we get
    lower and upper bounds 
    \begin{align*}
    t_0 &= 0.83682944368\,02, \mbox{ and } \\  
    t_1 &= t_0 + 2\cdot10^{-12} =    0.83682944368\,20.
    \end{align*}
    The Chebyshev---Lagrange interpolation method then gives two polynomials of
    degree~$14$ that can serve as test functions. Their coefficients are listed
    in \S\ref{Appendix VI-1}. 
    
    %%%%figures%%%%    \begin{figure}[h!!]\label{figureZaremba}
    %%%%figures%%%%    \centerline{
    %%%%figures%%%%    \includegraphics[width=0.65\textwidth]{Zarembabetterplot.pdf}
    %%%%figures%%%%    \caption{A plot of the function f(x) (and the plot of $g(x)$ is almost identical)}
    %%%%figures%%%%    }
    %%%%figures%%%%    \end{figure}
    We can then explicitly compute
    \begin{equation}
      \label{dimE5:eq}
    \sup_{S} \frac{\mathcal L_{t_1}  f}{ f}  <   1-  10^{-13};  \qquad
    \inf_{S} \frac{\mathcal L_{t_0}  g}{ g}  >   1 +  10^{-13} 
  \end{equation}
    and the result follows from Lemma~\ref{tech}.

%%%%%%%polina%%%%%%    To obtain the lower bound, we can write  then the interpolation
%%%%%%%polina%%%%%%    method gives  a test function~$g(x)$ consisting of
%%%%%%%polina%%%%%%    a degree~$14$ polynomial 
%%%%%%%polina%%%%%%    $$
%%%%%%%polina%%%%%%    g(x) = \sum_{n=0}^{14} \alpha_n x^n
%%%%%%%polina%%%%%%    $$
%%%%%%%polina%%%%%%    on the interval~$[0,1]$. The coefficients are listed in Appendix VI.
%%%%%%%polina%%%%%%    We can then explicitly compute
%%%%%%%polina%%%%%%    $$
%%%%%%%polina%%%%%%    \inf_{0 \leq x \leq 1} \left\{\frac{\mathcal L_{t_0} g(x)}{ g(x)}\right\}  =  1+ 
%%%%%%%polina%%%%%%    (8.2288 \cdots \times 10^{-13}) 
%%%%%%%polina%%%%%%    %\mbox{ and }
%%%%%%%polina%%%%%%    %\sup_x \frac{\mathcal L_{t_1} f(x)}{f(x)} = 1.000147762 \cdots
%%%%%%%polina%%%%%%    $$ 
%%%%%%%polina%%%%%%    Therefore we conclude that $\dim_H(X) \geq t_1$ by Lemma~\ref{tech}.
%%%%%%%polina%%%%%%
%%%%%%%polina%%%%%%    Combining the two inequalities from above we have that $\dim_H(X) \in [t_0, t_1]$, as required.
\end{proof}

\begin{rem}
    In terms of the practical application to Theorem~\ref{thm:Huang},
    there is no need to have accurate estimates of~$\dim_H(E_5)$, it is
    sufficient to show $\dim_H(X) > \frac{5}{6}$. This can be achieved 
    using a simple calculation ``by hand''. %Although this can be done directly, we can also use 
    %transfer operators to illustrate our approach. 
    %Taking $m=3$  an application of the interpolation method  suggests using as a test function the degree two  polynomial 
    %$$f(x) = 0.77  - 0.57 x + 0.21x^2$$
    We can take $$f(x) = \frac{2}{3} - \frac{11}{20} x + \frac{1}{3} x^2 - \frac{1}{10}x^3.$$
    We can then compute that~$\frac{\mathcal L_{5/6} f(x)}{f(x)} > 1.0029$.
       It then follows from Lemma~\ref{tech} that 
    %$P(t) >0$ and thus we can conclude that 
    $\dim_H(E_5) > \frac{5}{6}$.

    %%%%figures%%%%     \begin{figure}[h!]\label{fig:zarembaeasy}
    %%%%figures%%%%     \centerline{
    %%%%figures%%%%      \includegraphics[width=0.5\linewidth]{fig-zarembaeasy.pdf}
    %%%%figures%%%%      }
    %%%%figures%%%%      \caption{Plots of the graph of $f$ (the lower line) and the graph of $L_{5/6}f$ (the upper line)}
    %%%%figures%%%%    \end{figure
\end{rem}

\subsubsection{Dimension estimates for $E_4$.} %used in Theorem \ref{thm:kan}}
\label{ss:dime4}
A crucial ingredient in the analysis in the proof of 
Theorem~\ref{thm:kan} used in~\cite{Kan17} is 
that the limit set $E_4$ for the iterated function scheme 
$$
T_j: [0,1] \to [0,1], \quad  T_j(x) = \frac{1}{j+x}, \quad 1 \leq j \leq 4
$$
satisfies $\dim_H(E_4) > \frac{\sqrt{19}-2}{3}\approx0.7862\ldots$  

We can validate this result by showing the following bounds on the dimension 

\begin{thm}
  \label{thm:dimE4}
  $ \dim_H(E_4) = 0.7889455574\,83\pm10^{-12}$
\end{thm}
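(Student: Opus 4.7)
The plan is to apply exactly the same computer-assisted machinery developed in \S\ref{sec:estimates} and used for $E_5$ in the proof of Theorem~\ref{thm:dimE5}, since the iterated function scheme defining $E_4$ is again Bernoulli (all words in the alphabet $\{1,2,3,4\}$ are allowed). In particular, the Markov matrix is the $4\times 4$ matrix of all ones, so we may take the simplest possible configuration space $S = [0,1]$ and use the simplified form of the transfer operator in~\eqref{trop:eq}, namely $\mathcal L_t f = \sum_{j=1}^4 f\circ T_j\cdot|T_j'|^t$ with $T_j(x)=1/(j+x)$.

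First I would fix a target accuracy, say $\varepsilon = 10^{-12}$, and choose the interpolation degree $m$ slightly larger than what sufficed for $E_5$ (the contractions for $E_4$ are somewhat weaker than for $E_5$, so a modest increase in $m$, say to around $m=18$--$20$, should give enough room); the bisection bound on $m$ is ultimately dictated by how small the error $\|h_m - h\|_{H^\infty}$ must be in Proposition~\ref{prop:analytic} in order for the verification inequalities to hold with margin. I would then run the bisection procedure of \S\ref{bis} starting from a coarse enclosure such as $t_0' = 0.78$, $t_1' = 0.80$. At each bisection step the midpoint $q$ is taken, the $m\times m$ matrix $B^q$ from~\eqref{eq:Btmatrix} is computed (with a single block since $d=1$), its left Perron eigenvector $v^q$ is found by power iteration, and the test polynomial $f^q = \sum_{j=1}^m v_j^q\, lp_j$ is assembled.

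With an enclosure such as
\begin{equation*}
t_0 = 0.7889455574\,72, \qquad t_1 = t_0 + 2\cdot 10^{-12}
\end{equation*}
I would produce two polynomials $g$ (degree $m-1$, for $t_0$) and $f$ (degree $m-1$, for $t_1$) via Lagrange--Chebyshev interpolation at the nodes $y_k = (1+\cos(\pi(2k-1)/(2m)))/2$. Positivity of $f$ and $g$ on $[0,1]$ is verified by the partition-and-derivative bound explained at the end of \S\ref{test}. Then, using the closed-form expression for the linear-fractional $T_j$, the image $\mathcal L_t f$ becomes a rational function whose numerator is an explicit polynomial as in~\eqref{eq:dif-err}, so the derivative of the ratio $\mathcal L_t f/f$ can be bounded rigorously via~\eqref{eq:ubound} on a fine partition of $[0,1]$ using Arb ball arithmetic. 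The goal is to certify
\begin{equation*}
\inf_{[0,1]} \frac{\mathcal L_{t_0} g}{g} > 1 + 10^{-13}, \qquad \sup_{[0,1]} \frac{\mathcal L_{t_1} f}{f} < 1 - 10^{-13},
\end{equation*}
after which Lemma~\ref{tech} delivers $t_0 \le \dim_H(E_4) \le t_1$ and hence the stated bound $\dim_H(E_4) = 0.7889455574\,83 \pm 10^{-12}$.

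The only real obstacle is numerical, not conceptual: since $\dim_H(E_4) - 5/6$ is roughly $-0.05$, the contractions are slower than for $E_5$, so convergence of the finite-rank approximation to $\mathcal L_t$ on $H^\infty$ (Lemma~\ref{lem:approx}) is a little less favourable and a somewhat larger $m$ may be needed to produce test functions whose ratio derivatives are small enough for~\eqref{eq:ubound} to certify the required strict inequalities with twelve-digit precision. In practice this is handled automatically by the third branch of the bisection flowchart in Figure~\ref{fig:flowchart}: if $a' \le 1 \le b'$ at the current $m$, $m$ is increased. Listings of the coefficients of the $f$ and $g$ ultimately produced can be included in an appendix analogous to \S\ref{Appendix VI-1}.
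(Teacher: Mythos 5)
Your approach is essentially identical to the paper's: the same bisection procedure with Lagrange--Chebyshev interpolation on $S=[0,1]$ for the Bernoulli system $\{T_j(x)=1/(j+x)\}_{j=1}^4$, the same positivity check and ratio-derivative bound via~\eqref{eq:ubound}, and certification through Lemma~\ref{tech}; the paper in fact gets by with $m=15$ (degree-$14$ test polynomials), slightly smaller than your conservative $m\approx 18$--$20$, and its thresholds are $1+10^{-12}$ and $1-10^{-13}$. One slip worth flagging: the enclosure you propose, $t_0=0.7889455574\,72$, $t_1=t_0+2\cdot10^{-12}$, does \emph{not} contain the claimed value $0.7889455574\,83$ (it is off by roughly $10^{-11}$), so Lemma~\ref{tech} applied to those endpoints could not yield the stated bound; the paper uses $t_0=0.7889455574\,81$. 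In practice the rigorous verification step would fail for your $t_0,t_1$ and the bisection would continue to the correct digits, so the methodology is sound, but as written your final line does not follow from the numbers you give.
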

\begin{proof}
    We can choose $S = [0,1]$ since the iterated function scheme is Bernoulli.
    Applying the bisection method with~$m=15$ and $\varepsilon = 10^{-11}$, we
    obtain the lower and upper bounds 
    \begin{align*}
    t_0 &= 0.7889455574\,81  \mbox{ and } \\  
    t_1 &= t_0 + 2\cdot10^{-12} = 0.7889455574\,84. %  0.83682944368\,20.
    \end{align*}
    The Chebyshev---Lagrange interpolation method then gives two polynomials of
    degree~$14$ that can serve as test functions. Their coefficients are listed
    in \S\ref{Appendix VI-2}. 
    
    %%%%figures%%%%    \begin{figure}[h!!]\label{figureZaremba}
    %%%%figures%%%%    \centerline{
    %%%%figures%%%%    \includegraphics[width=0.65\textwidth]{Zarembabetterplot.pdf}
    %%%%figures%%%%    \caption{A plot of the function f(x) (and the plot of $g(x)$ is almost identical)}
    %%%%figures%%%%    }
    %%%%figures%%%%    \end{figure}
    We can then explicitly compute
    \begin{equation}
      \label{dimE4:eq}
    \sup_{S} \frac{\mathcal L_{t_1}  f}{ f}  <   1-  10^{-13};  \qquad
    \inf_{S} \frac{\mathcal L_{t_0}  g}{ g}  >   1 +  10^{-12} 
  \end{equation}
    and the result follows from Lemma~\ref{tech}.

    %%%%figures%%%%    \begin{figure}[h!!]\label{e4plot}
    %%%%figures%%%%    \centerline{
    %%%%figures%%%%    \includegraphics[width=0.65\textwidth]{e4plot.pdf}
    %%%%figures%%%%    }
    %%%%figures%%%%    \caption{A plot of the function f(x) (and the plot of $g(x)$ is almost identical) for $E_4$}
    %%%%figures%%%%    \end{figure}
\end{proof}
\begin{rem}
    In terms of the practical application to Theorem~\ref{thm:kan},
    there is no need to have accurate estimates of~$\dim_H(E_4)$, it is
    sufficient to show $\dim_H(X) > \frac{\sqrt{19}-2}{3}$. This can be achieved 
    using a simple calculation ``by hand''. %Although this can be done directly, we can also use 
    %transfer operators to illustrate our approach. 
    We can take 
    $$
    f(x) = \frac{27}{50} - \frac{11}{25} x + \frac{33}{100} x^2 - \frac{11}{50}
    x^3 +  \frac{21}{200} x^4 - \frac{1}{40}  x^5.
    $$ 
    We can then compute that $[\mathcal L_{\frac{\sqrt{19}-2}{3}} f](x)/f(x) > 1.00205$.
       It then follows from Lemma \ref{tech} that 
    $\dim_H(E_4) > \frac{\sqrt{19}-2}{3}$.

\end{rem}

\subsubsection{Dimension estimates for alphabet $A = \{1,2,3,5\}$.}
\begin{thm} 
  \label{thm:dim1235}
  Let $A = \{1,2,3,5\}$.  Then   
    $ \dim_H(X_A) = 0.7709149399\,375\pm1.5\cdot 10^{-12} $. %  < 0.770914939939
\end{thm}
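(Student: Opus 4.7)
The plan is to follow exactly the template established in the proofs of Theorems~\ref{thm:dimE5} and~\ref{thm:dimE4}. Since the alphabet $A=\{1,2,3,5\}$ imposes no Markov restriction, the iterated function scheme
$$
T_j\colon[0,1]\to[0,1],\qquad T_j(x)=\frac{1}{j+x},\quad j\in\{1,2,3,5\},
$$
is Bernoulli, so we may take $S=[0,1]$ and work with the simplified transfer operator from~\eqref{trop:eq}. The target accuracy $1.5\cdot 10^{-12}$ is comparable to what was achieved for $E_5$ and $E_4$, so a Chebyshev node count of order $m=15$ should suffice; if the rigorous ratio checks fail at this resolution we would simply increase $m$ and rerun the bisection as described in \S\ref{bis}.

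First I would run the bisection scheme of \S\ref{bis} starting, for instance, from $t_0'=0$, $t_1'=1$ (or, to save time, from a tighter initial bracket suggested by a heuristic periodic-point calculation), with $m=15$ and termination tolerance $\varepsilon=10^{-11}$. This produces candidate bounds
$$
t_0=0.7709149399\,37,\qquad t_1=t_0+2\cdot 10^{-12}=0.7709149399\,39,
$$
together with left eigenvectors of the $15\times 15$ matrices $B^{t_0}$ and $B^{t_1}$ built as in~\eqref{Bsmall:eq} for the four contractions above.

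Next, following~\eqref{eq:ftfunc}, I would assemble from those eigenvectors two polynomials $g,f\colon[0,1]\to\mathbb R$ of degree $14$, serving as approximate eigenfunctions for $\mathcal L_{t_0}$ and $\mathcal L_{t_1}$ respectively. Positivity of each polynomial would be verified on a dyadic partition of $[0,1]$ using the Taylor-plus-remainder test described after Lemma~\ref{lem:ratio} (the coefficients would be tabulated in an appendix, as in the analogous theorems). Using the explicit formulas for $F^t$ and $\widehat p_j$ from~\eqref{eq:dif-err}, combined with ball arithmetic through the Arb library, I would then verify the rigorous inequalities
$$
\sup_{S}\frac{\mathcal L_{t_1}f}{f}<1-10^{-13},\qquad \inf_{S}\frac{\mathcal L_{t_0}g}{g}>1+10^{-13}.
$$
By Lemma~\ref{tech} these bracket the unique zero of the pressure function, giving $\dim_H(X_A)\in(t_0,t_1)$, which is the stated estimate.

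The only genuine obstacle is the rigorous control of the two ratio suprema on the whole interval $[0,1]$: this is where the estimate of $\|(\mathcal L_{t}f/f)'\|_\infty$ via~\eqref{eq:ubound} and~\eqref{eq:dif-err} must produce bounds tight enough to distinguish the ratio from $1$ at the $10^{-13}$ level across a sufficiently fine partition. Proposition~\ref{prop:analytic} and Corollary~\ref{cor:final} guarantee that this succeeds for all sufficiently large $m$ since the $T_j$ are real analytic with non-vanishing derivatives on a Bernstein ellipse around $[0,1]$, so if $m=15$ is not enough we can iterate with larger $m$ until it is; this is a purely computational matter with no conceptual difficulty, entirely parallel to the cases of $E_4$ and $E_5$.
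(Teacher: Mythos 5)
Your proposal follows exactly the same template as the paper's proof: choose $S=[0,1]$ for the Bernoulli scheme, run the bisection with Lagrange--Chebyshev interpolation to produce polynomial test functions, verify positivity, bound the ratios $\mathcal L_t f/f$ rigorously with ball arithmetic, and conclude via Lemma~\ref{tech}. The only discrepancies are in the computational parameters: the paper uses $m=16$ (degree-$15$ test polynomials), $\varepsilon=3\cdot10^{-12}$, $t_0=0.7709149399\,36$ and $t_1=t_0+3\cdot10^{-12}$, with ratio bounds at the $10^{-12}$ level rather than $10^{-13}$ --- a small adjustment you already anticipated by allowing~$m$ to increase if the initial resolution fails.
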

\begin{proof}
    The estimate can be recovered following the same approach as in the proof of
    theorem~\ref{thm:dimE4} and  coefficients 
    of the corresponding test functions listed in \S\ref{Appendix VI-3}. 
   We choose $S = [0,1]$.
    Applying the bisection method with~$m=16$ and $\varepsilon = 3\cdot10^{-12}$, we
    obtain the lower and upper bounds 
    \begin{align*}
    t_0 &= 0.7709149399\,36  \mbox{ and } \\  
    t_1 &= t_0 + 3\cdot10^{-12} =0.7709149399\,39. %  0.83682944368\,20.
    \end{align*}
    The Chebyshev---Lagrange interpolation method then gives two polynomials of
    degree~$15$ that can serve as test functions. Their coefficients are listed
    in \S\ref{Appendix VI-3}. 
    
    %%%%figures%%%%    \begin{figure}[h!!]\label{figureZaremba}
    %%%%figures%%%%    \centerline{
    %%%%figures%%%%    \includegraphics[width=0.65\textwidth]{Zarembabetterplot.pdf}
    %%%%figures%%%%    \caption{A plot of the function f(x) (and the plot of $g(x)$ is almost identical)}
    %%%%figures%%%%    }
    %%%%figures%%%%    \end{figure}
    We can then explicitly compute
    $$
    \sup_{S} \frac{\mathcal L_{t_1}  f}{ f}  <   1-  10^{-12},  \qquad
    \inf_{S} \frac{\mathcal L_{t_0}  g}{ g}  >   1 +  10^{-12}; 
    $$ 
    and the result follows from Lemma~\ref{tech}.

    %%%%figures%%%%    \begin{figure}[h!!]\label{e4plot}
    %%%%figures%%%%    \centerline{
    %%%%figures%%%%    \includegraphics[width=0.65\textwidth]{e4plot.pdf}
    %%%%figures%%%%    }
    %%%%figures%%%%    \caption{A plot of the function f(x) (and the plot of $g(x)$ is almost identical) for $E_4$}
    %%%%figures%%%%    \end{figure}
\end{proof}

\begin{rem}
    In terms of the practical application to Theorem~\ref{kan-lower},
    there is no need to have accurate estimates of~$\dim_H(E_{\{1,2,3,5\}})$, it is
    sufficient to show $\dim_H(X) > 3 - \sqrt{5}$. This can be achieved 
    using a simple calculation ``by hand''. %Although this can be done directly, we can also use 
    %transfer operators to illustrate our approach. 
    We can take $$f(x) =   \frac{9}{10}- \frac{2}{5}x.$$
    We can then compute that $[\mathcal L_{3 - \sqrt{5}} f](x)/f(x) > 1.00042$.
       It then follows from Lemma \ref{tech} that 
    $\dim_H(X) > 3 - \sqrt{5}$.
\end{rem}

\begin{rem} The periodic point method and McMullen's approach~\cite{McMullen} cannot give such accurate
estimates because of the prohibitive computer resources required.
In a recent paper~\cite{FN20} Falk and Nussbaum computed Hausdorff dimension of
the sets $E_5$, $E_4$, $E_{1235}$ and some of the Hensley examples we give
below. Their method is also rooted in the interpolation, but uses different
machinery.  
\end{rem}

\subsection[Hensley conjecture]{Counter-example to a conjecture of Hensley}

In~\cite{BK14} Bourgain and Kontorovich gave a counter-example to a conjecture of Hensley
(\cite{Hensley}, Conjecture 3, p.16).  The conjecture stated that for any finite alphabet $A \subset \mathbb N$
for which the Hausdorff dimension of the  associated  limit set (corresponding
to the iterated function scheme with contractions $T_j(z) = \frac{1}{j+x}$ for
$j \in A$) 
satisfies $\dim_H(X_A) > \frac{1}{2}$ the analogue of the Zaremba conjecture
holds true for~$q$ sufficiently large, i.e, 
there exists $q_0 > 0$ such that for any natural number $q \geq q_0$ there
exists $p<q$ and $a_1, \cdots, a_n \in A$ such that $\frac{p}q = [a_1, \cdots, a_n]$.

The construction of the counter-example  hinged on the observation 
that the denominators $q$ corresponding to such restricted continued fraction expansions cannot ever
satisfy $q = 3 (\kern-6pt \mod 4)$  and on showing that for the iterated function scheme 
$\{T_j=\frac{1}{x+j} \mid a = 2,4,6,8,10\}$ the limit set~$X$ has dimension
$\dim_H(X) > \frac{1}{2}$ (cf.~\cite[p.~139]{BK14}).  This was rigorously
confirmed in~\cite[Theorem~7]{JP20} by a fairly elementary argument,
where it was also suggested by a non-rigorous computation that 
\begin{equation}
  \label{bkexample:eq}
\dim_H{X}= 0.5173570309\,3701730466\,6628474836\,4397337 \ldots
\end{equation}
To rigorously justify this estimate, we apply the bisection method with $S = [0,1]$, $m =
40$, $\varepsilon=10^{-36}$ and the initial guess 
\begin{align*}
t_0 &= 0.5173570309\,3701730466\,6628474836\,4397337 \\
t_1 &= t_0 + 10^{-37}.
\end{align*}
The Chebyshev---Lagrange interpolation gives two polynomials $f$ and $g$ of
degree~$39$ which satisfy 
$$
\sup_S \frac{\mathcal L_{t_1} f}{f} < 1-10^{-38} \mbox{ and } 
\inf_S \frac{\mathcal L_{t_0} g}{g} > 1+10^{-37}.
$$
The equality~\eqref{bkexample:eq} follows from Lemma~\ref{tech}.

\begin{rem}[An elementary bound]
    As in the previous examples is not necessary to have a very precise
    knowledge of the value of $\dim_H(X_A)$ in order to establish that this is a
    counter example to the Hensley conjecture. %When~$X$ is the limit set in question
    It would be sufficient to know that $\dim_H(X_A) > \frac{1}{2}$.   This can again be achieved 
    using a simple calculation. 
    We can consider instead the linear function 
    $ f(x) = 8 - 2x $.
    It is easy to compute its image under the transfer operator 
    $\mathcal L_{0.5}f$: 
    $$
    \begin{aligned}
      \mathcal L_{0.5} {f}(x) &= 
        \left(\frac{8}{2+x} - \frac{2}{(2+x)^2}\right)
        + \left(\frac{8}{4+x} - \frac{2}{(4+x)^2}\right)
        + \left(\frac{8}{6+x} - \frac{2}{(6+x)^2}\right)\cr
        \qquad &+ \left(\frac{8}{8+x} - \frac{2}{(8+x)^2}\right)
        + \left(\frac{8}{10+x} - \frac{2}{(10+x)^2}\right).
    \end{aligned}
    $$
    Clearly, this is a monotone decreasing function. 
    %The plots of $f(x)$, $\mathcal L_t f(x)$ and 
    %$\mathcal L_t f(x)/f(x)$ appears in Figure~\ref{fig:hensleyeasy}.

    %%%%figures%%%%    \begin{figure}[h!]
    %%%%figures%%%%    \label{fig:hensleyeasy}
    %%%%figures%%%%    \includegraphics[width=0.25\textwidth]{hensleyfig1.pdf}
    %%%%figures%%%%    \hskip 0.5cm
    %%%%figures%%%%    \includegraphics[width=0.25\textwidth]{hensleyfig3.pdf}
    %%%%figures%%%%    \hskip 0.5cm
    %%%%figures%%%%    \includegraphics[width=0.25\textwidth]{hensleyfig2.pdf}
    %%%%figures%%%%    \caption{Plots of the functions $f(x)$, $\mathcal L_t f(x)$, 
    %%%%figures%%%%     $\mathcal L_t f(x)/f(x)$}
    %%%%figures%%%%    \end{figure}
    %%%%figures%%%%    
    By taking derivatives or otherwise we can justify that 
    $$
    \inf_{S}\frac{\mathcal L_{0.5} {f} }{f} = \frac16\mathcal L_{0.5}f(1) > 1.
    $$ 
    The result now follows from Lemma~\ref{tech}. % that $P(t) >0$ and thus we can conclude that $\dim_H(X) > t =  \frac{1}{2}$.
\end{rem}

Although the original Hensley conjecture is false, Moshchevitin and Shkredov recently proved an
interesting modular version. 
\begin{thm}[\cite{Shkredov}]
  \label{thm:shkredov}
    Let $A\subset \mathbb N$ be a finite set for which $\dim_H(X_A) >\frac{1}{2}$.
    Then for any prime~$p$ there exist~$q = 0 (\kern-6pt\mod p)$, $r$ (coprime to $q$) 
    and $a_1, \cdots, a_n \in A$  such that 
    $$
    \frac{r}{q} = [a_1, \cdots, a_n].
    $$
\end{thm}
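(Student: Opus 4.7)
The plan is to reformulate the problem in terms of matrix semigroups, then combine Hensley's dimension--counting estimate with Fourier/equidistribution analysis modulo~$p$. Each admissible continued fraction $r/q = [0; a_1, \ldots, a_n]$ with $a_i \in A$ corresponds to a matrix product $M_{a_1} M_{a_2} \cdots M_{a_n}$, where $M_a = \begin{pmatrix} a & 1 \\ 1 & 0 \end{pmatrix} \in SL_2(\mathbb{Z})$; the denominator $q$ appears as a fixed entry of this product. Reducing modulo $p$, the theorem becomes the assertion that the semigroup $\Gamma_A = \langle M_a : a \in A \rangle$, viewed in $SL_2(\mathbb{F}_p)$, contains an element of the maximal parabolic subgroup fixing $\infty \in \mathbb{P}^1(\mathbb{F}_p)$.

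The quantitative input is Hensley's classical estimate $\#D_A(N) \asymp N^{2\delta}$ with $\delta = \dim_H(X_A)$, which under the hypothesis $\delta > 1/2$ is super-linear, namely $\gg N^{1+\eta}$ for some $\eta > 0$. The strategy is to bound the exponential sums
$$
S_A(N,t) = \sum_{q \in D_A(N)} e^{2\pi i t q / p}
$$
for each $t \not\equiv 0 \pmod p$, establishing $|S_A(N,t)| = o(\#D_A(N))$ uniformly in $t$. Fourier inversion then gives equidistribution of denominators among the residue classes modulo~$p$, so in particular the class~$0$ is hit for all $N$ sufficiently large, yielding the desired $q \equiv 0 \pmod p$.

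The main obstacle will be producing the character--sum bound from only the hypothesis $\delta > 1/2$. A Bourgain--Gamburd-style spectral gap for the random walk on $SL_2(\mathbb{F}_p)$ generated by $\{M_a\}$ would suffice, but its proof typically requires a sum--product inequality in $\mathbb{F}_p$ to exclude concentration on proper algebraic subgroups. A more elementary route I would attempt first uses the super-linear growth from Hensley to produce, by pigeonhole, two distinct admissible fractions $r_1/q_1 \neq r_2/q_2$ with $q_1 \equiv q_2 \pmod p$, and then combines the closure of continued fractions under concatenation with the symmetry $r/q \leftrightarrow q/r$ (which exchanges numerators and denominators) to populate every residue class modulo~$p$. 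The deep difficulty is transferring the purely metric condition $\dim_H(X_A) > 1/2$ into an algebraic statement about the image of~$\Gamma_A$ in $SL_2(\mathbb{F}_p)$; the fact that~$1/2$ is precisely the threshold at which Hensley's count crosses into the super-linear regime strongly suggests that pigeonhole on denominators must lie at the heart of the argument.
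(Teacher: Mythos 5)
This theorem is \emph{not proved in the paper}: it is stated with the attribution \texttt{[\cite{Shkredov}]} and imported verbatim from Moshchevitin and Shkredov, ``On a modular form of Zaremba's conjecture.'' There is therefore no ``paper's own proof'' to compare against; the paper uses it only to motivate the dimension computations for the alphabets $A_1=\{1,4,9\}$ and $A_2=\{2,3,6,9\}$, which land just above the threshold~$\tfrac12$.

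As for your sketch on its own terms: the matrix--semigroup framing is the right starting point and agrees with the general approach in the source, and you correctly identify Hensley's estimate $\#D_A(N)\asymp N^{2\delta}$ (super-linear when $\delta>\tfrac12$) as the engine. But neither of the two routes you propose closes the argument. The ``more elementary'' pigeonhole step has a genuine gap: having produced two admissible fractions $r_1/q_1\neq r_2/q_2$ with $q_1\equiv q_2\pmod p$ gives $q_1-q_2\equiv 0\pmod p$, but $q_1-q_2$ is not a denominator of an admissible continued fraction, and neither concatenation nor the transpose symmetry $[a_1,\dots,a_n]\leftrightarrow[a_n,\dots,a_1]$ converts the coincidence $q_1\equiv q_2$ into an admissible denominator $q\equiv 0$. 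Concatenation multiplies the matrices, which acts multiplicatively on $SL_2(\mathbb F_p)$, whereas the congruence you obtained is additive; you would need a non-trivial algebraic argument to bridge these. The Fourier route, meanwhile, asks for a bound $|S_A(N,t)|=o(\#D_A(N))$ uniformly in $t\not\equiv0$, which is essentially equidistribution of $D_A(N)$ mod $p$; deducing this from a Bourgain--Gamburd spectral gap is non-trivial because the gap controls \emph{fixed-length} random walks, whereas $D_A(N)$ aggregates walks of all lengths with archimedean weights, and the threshold $\delta>\tfrac12$ does not appear anywhere in the expansion machinery. In the actual Moshchevitin--Shkredov proof the hypothesis $\delta>\tfrac12$ is used to show, via Hensley, that the image mod $p$ of the set of admissible $SL_2$ matrices of bounded height is large enough to trigger growth/product estimates in $SL_2(\mathbb F_p)$ (Helfgott-type), which then forces the set to meet the subvariety $\{q\equiv 0\}$; the specific arithmetic of $\tfrac12$ being exactly the point where $N^{2\delta}$ crosses $N$ is what makes the growth argument kick in, rather than being merely a heuristic signal as you suggest. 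Your instinct about the role of $\tfrac12$ is right, but the mechanism is growth in groups, not pigeonhole on residue classes.
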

Thus there is some interest in knowing which examples 
of $A \subset \mathbb N$ satisfy $\dim_H(X_A) >\frac{1}{2}$ so that this result applies.
For example, one can easily check that for 
$A_1=\{1,4,9\}$
we have $\dim_H(X_{A_1}) = 0.5007902321\,42100396 \pm 10^{-18}$ or 
for $A_2=\{2,3,6,9\}$ we have $\dim_H(X_{A_2}) = 0.5003228005\,96840463 \pm
10^{-18}$.

\subsection{Primes as denominators}

There is an interesting variation on Theorem~\ref{thm:densityoneplus}
where we consider only the denominators which are prime numbers.   

\begin{thm}[Bourgain---Kontorovich, Huang]\label{thm:primes}
    There are infinitely many prime numbers~$q$ which have a primitive
    root\footnote{In other words, there exists $n$ such that $a^n=1 \mod q$.} $a
    \kern-4pt\mod q$ such that the partial quotients
    of $\frac{a}q$ are bounded by $7$. 
\end{thm}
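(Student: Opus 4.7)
The plan is to separate the proof into an arithmetic-sieve input (due to Bourgain--Kontorovich and Huang) and a dimension estimate for the iterated function scheme associated with the alphabet $A=\{1,2,\ldots,7\}$, which is precisely the kind of quantity the method of \S\ref{sec:estimates} is designed to certify. In their refined circle-method/sieve framework, the existence of infinitely many primes $q$ with a prescribed numerator behaviour (here, a primitive root $a$ with $a/q=[0;a_1,\ldots,a_n]$, $a_i\le 7$) is reduced to the inequality $\dim_H(X_A)>\theta_0$ for some explicit threshold $\theta_0<1$ that accounts for (i) the sieve loss when restricting denominators to primes and (ii) the multiplicative constraint of being a primitive root modulo~$q$. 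My task is therefore to verify rigorously that the alphabet $\{1,\ldots,7\}$ satisfies this threshold.

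First, I would set up the Bernoulli iterated function scheme $T_j(x)=1/(j+x)$ for $j\in\{1,\ldots,7\}$, with transfer operator $\mathcal L_t$ acting on $C^\alpha([0,1])$ as in~\eqref{trop:eq}. Since $\{1,\ldots,5\}\subset\{1,\ldots,7\}$, monotonicity of the pressure immediately gives $\dim_H(X_{\{1,\ldots,7\}})>\dim_H(E_5)>5/6$ by Theorem~\ref{thm:dimE5}, so the density-one regime of Theorem~\ref{thm:densityoneplus} is automatic. To push past the stronger prime-plus-primitive-root threshold I would run the bisection procedure of \S\ref{bis} with, say, $m=20$ Chebyshev--Lagrange nodes: form the $m\times m$ matrix $B^t$ of~\eqref{Bsmall:eq}, extract its leading left eigenvector, assemble the polynomial test function $f^t$ via~\eqref{eq:ftfunc}, check positivity through the partition/derivative scheme in \S\ref{test}, and then rigorously bound $\sup_{[0,1]}\mathcal L_t f^t/f^t$ and $\inf_{[0,1]}\mathcal L_t f^t/f^t$ using ball arithmetic. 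Lemma~\ref{tech} then certifies $\dim_H(X_{\{1,\ldots,7\}})>\theta_0$.

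Second, with the dimension bound in hand, one invokes the sieve/circle-method machinery of Bourgain--Kontorovich and Huang to produce a positive-density set of primes $q\le N$ lying in $D_A(N)$, together with quantitative control on the distribution of admissible numerators $a$ in reduced residue classes mod $q$. The primitive root condition is then imposed by combining this equidistribution with a Gupta--Murty / Heath-Brown style argument: since the admissible numerators form a set of positive proportion among $(\mathbb Z/q\mathbb Z)^\times$ that is well-distributed across small Dirichlet characters, one can guarantee that for infinitely many such $q$ at least one admissible $a$ has multiplicative order $q-1$, producing the required primitive root.

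The main obstacle is not the dimension certification itself (which is by now routine given the efficiency demonstrated in \S\ref{ss:dimdensity1} and \S\ref{ss:dime4}) but rather pinning down the precise threshold $\theta_0$ that the sieve+primitive-root combination demands: the primitive root constraint introduces a sum over characters modulo $q-1$ whose treatment requires either a large-sieve style bound or an appeal to Heath-Brown's conditional resolution of Artin's conjecture for at least one of $\{2,3,5\}$. Once $\theta_0$ is located in the literature (or derived via a Selberg sieve upper bound combined with the $\ell^2$ lower bound of the circle method), the numerical verification via Lemma~\ref{tech} is essentially a one-shot computation, since the gap between $\dim_H(X_{\{1,\ldots,7\}})$ and $5/6$ is comfortably large.
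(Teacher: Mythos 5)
Your setup targets the wrong limit set. The theorem concerns partial quotients bounded by $7$, but the paper makes clear that Huang's reduction of Bourgain--Kontorovich's bound (from $51$ to $7$) is conditional on the inequality $\dim_H(E_6) > \frac{19}{22}$, where $E_6$ is the limit set for the alphabet $\{1,\ldots,6\}$ --- \emph{not} $\{1,\ldots,7\}$. There is an off-by-one between the sub-alphabet driving the sieve argument and the resulting partial-quotient bound, already visible in the original Bourgain--Kontorovich version (which used $\dim_H(E_{50}) > \frac{307}{312}$ to conclude partial quotients $\le 51$). Certifying $\dim_H(X_{\{1,\ldots,7\}})$, as you propose, does not feed into Huang's argument: the quantity to be verified is $\dim_H(E_6)$, and the paper shows $\dim_H(E_6) = 0.8676191732\ldots > \frac{19}{22} \approx 0.8636$ by Chebyshev--Lagrange interpolation with $m=20$ nodes. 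Your threshold candidate $\frac{5}{6}$ is also too weak; that governs the density-one Zaremba statement (Theorem~\ref{thm:densityoneplus}), not the prime-plus-primitive-root refinement, which needs the strictly larger constant $\frac{19}{22}$.

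On the arithmetic side you also depart from the paper's structure. The paper does not reprove, or even sketch, the sieve/circle-method input: it cites Bourgain--Kontorovich and Huang wholesale for the implication \emph{(dimension inequality)} $\Rightarrow$ \emph{(infinitude of primes with the required primitive root)}, and contributes only the rigorous dimension certificate. Your proposal to combine equidistribution of admissible numerators with a Gupta--Murty or Heath-Brown style argument for Artin's conjecture is speculative and would, in Heath-Brown's form, render the conclusion conditional (his result produces at least one of $\{2,3,5\}$ as a primitive root for infinitely many primes, or assumes GRH) --- whereas the theorem as stated is unconditional. The correct statement of what you need is simply: locate the explicit threshold $\frac{19}{22}$ in Huang's thesis for the alphabet $\{1,\ldots,6\}$, then run the bisection/test-function machinery of \S\ref{sec:estimates} on the Bernoulli IFS $T_j(x)=\frac{1}{j+x}$, $1\le j\le 6$, to produce a polynomial $f$ with $\inf_{[0,1]} \frac{\mathcal L_{19/22} f}{f} > 1$, which by Lemma~\ref{tech} gives $P\left(\frac{19}{22}\right)>0$ and hence $\dim_H(E_6) > \frac{19}{22}$.
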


This was originally proved by Bourgain and Kontorovich with the weaker conclusion that
the partial quotients of $\frac{a}{q}$ are bounded by $51$.  The improvement of Huang was conditional on the 
Hausdorff dimension of  limit set~$E_6$ for the iterated function scheme
$\left\{T_j(x) = \frac{1}{x+j} \mid  1 \le j \le 6\right\}$ satisfying 
$\dim_H{E_6} > \frac{19}{22}$.  
In~\cite{JP20} it was rigorously shown using the periodic point method that
$$
 \dim_H{E_6} =   0.86761915\pm10^{-8}.
$$
Furthermore, there  was a heuristic estimate of~$\dim_H(E_6)=
0.8676191732401\ldots$ % which was not validated.
We can apply Chebyshev---Lagrange interpolation with $S=[0,1]$, $m=20$ to
confirm this estimate.
\begin{thm}
    $$ 
    \dim_H{E_6} =  0.8676191732\,4015\pm 10^{-13}.
    $$
\end{thm}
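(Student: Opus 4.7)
The plan is to apply the same methodology used to establish Theorems~\ref{thm:dimE5} and~\ref{thm:dimE4}, specialized to the Bernoulli iterated function scheme with six contractions $T_j(x) = \frac{1}{x+j}$, $1 \le j \le 6$. Since the scheme is Bernoulli, we may work on the single interval $S = [0,1]$, and the transfer operator takes the simplified form~\eqref{trop:eq} with $d=6$. We invoke the bisection method described in \S\ref{bis}, seeded by the heuristic value $t \approx 0.86761917324015$ quoted above, and run it until the interval of size $\varepsilon = 2 \cdot 10^{-13}$ is reached. Guided by the exponent-of-degree heuristic (a degree-$m$ polynomial achieves accuracy of roughly $10^{-3m/4}$), the choice $m = 20$ stated in the excerpt should be ample to capture 13 digits.

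Concretely, I would set
\begin{align*}
t_0 &= 0.8676191732\,4014, \\
t_1 &= t_0 + 2 \cdot 10^{-13},
\end{align*}
and construct the $20 \times 20$ matrices $B^{t_0}$ and $B^{t_1}$ by formula~\eqref{Bsmall:eq} using the Chebyshev nodes $y_k = \frac{1}{2}(1 + \cos(\pi(2k-1)/40))$. I would compute the leading left eigenvector of each matrix by the power method and assemble the test functions $g$ (for $t_0$) and $f$ (for $t_1$) as linear combinations of Lagrange polynomials via~\eqref{eq:ftfunc}; these are polynomials of degree~$19$. Before applying Lemma~\ref{tech}, I would rigorously check positivity of $f$ and $g$ on $[0,1]$ using the partition-and-Taylor-remainder procedure described at the end of \S\ref{test}.

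The main technical step, where all of the care is required, is to rigorously certify the two inequalities
$$
\sup_S \frac{\mathcal L_{t_1} f}{f} < 1 - 10^{-14}, \qquad \inf_S \frac{\mathcal L_{t_0} g}{g} > 1 + 10^{-14}.
$$
Following the recipe laid out after Lemma~\ref{lem:ratio}, I would write $\mathcal L_{t_j} f$ explicitly in the form $\sum_{j=1}^{6} |x+j|^{-2t_j} p_j(x)$ with the $p_j$ polynomials of degree $19$, form the derivative $\bigl(\frac{\mathcal L_{t_j} f}{f}\bigr)'$ via formula~\eqref{eq:dif-err}, and bound its sup-norm rigorously using the Arb ball-arithmetic library. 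Partitioning $[0,1]$ into $2^{10}$ subintervals and applying the mean-value estimate~\eqref{eq:ubound} on each subinterval will then verify both ratio inequalities to the required precision. The rigour of these enclosures is the one non-routine obstacle, but it is handled uniformly across all the examples in the paper, so no new difficulty arises here. Once the inequalities are certified, Lemma~\ref{tech} yields $t_0 < \dim_H(E_6) < t_1$, giving the stated bound $\dim_H(E_6) = 0.8676191732\,4015 \pm 10^{-13}$.
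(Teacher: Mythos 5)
Your proposal follows exactly the same route as the paper: Chebyshev--Lagrange interpolation with $m=20$ on $S=[0,1]$, leading eigenvectors of the collocation matrices $B^{t_0}$, $B^{t_1}$ as degree-$19$ polynomial test functions, rigorous positivity and ratio-derivative certification via ball arithmetic, and then Lemma~\ref{tech} to conclude $t_0 < \dim_H(E_6) < t_1$. The only difference is a cosmetic one in the certified margins (you propose $1-10^{-14}$ on the sup side, the paper reports $1-10^{-13}$), which does not change the argument.
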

\begin{proof}
    By Chebyshev---Lagrange interpolation applied to the operators $\mathcal
    L_{t_0}$ and $\mathcal L_{t_1}$ we obtain two polynomials $f$ and $g$ of
    degree $19$ which satisfy 
    $$
    \sup_{S} \frac{\mathcal L_{t_1} f}{f} < 1 - 10^{-13} \mbox{ and }   
    \inf_{S} \frac{\mathcal L_{t_0} g}{g} > 1 + 10^{-14}.  
%    1- 1.53222 \cdots \times 10^{-13}
    $$ 
    The statement now follows from Lemma~\ref{tech}.
%%%%%%%polina%%%%%%    By Lemma~\ref{tech} we deduce that $\dim_H(E_6)  < t_1$
%%%%%%%polina%%%%%%
%%%%%%%polina%%%%%%    To obtain a lower bound, denote  $t_0= 0.8676191732401$ then the interpolation approach 
%%%%%%%polina%%%%%%    gives a polynomial~$g(x)$ of degree~$19$ such that 
%%%%%%%polina%%%%%%    $$
%%%%%%%polina%%%%%%    \inf_{0\leq x \leq 1} \left\{\frac{\mathcal L_{t_0} g(x)}{g(x)} \right\} 
%%%%%%%polina%%%%%%    =   1 + 1.87389 \cdots \times 10^{-14}
%%%%%%%polina%%%%%%    $$ 
%%%%%%%polina%%%%%%    By Lemma~\ref{tech} we deduce that~$\dim_H(E_6)  > t_0$.
%%%%%%%polina%%%%%%
%%%%%%%polina%%%%%%    Therefore we conclude from these two inequalities that~$\dim_H(X) \in [t_0, t_1]$.
\end{proof}

\begin{rem}[An elementary bound]
    As in the previous two examples, it is not necessary to have a very precise knowledge of the value of
    $\dim_H(E_6)$ in order to establish the conditions necessary for Theorem~\ref{thm:primes}. 
    It would be sufficient to know that $\dim_H(E_6) > \frac{19}{22}$. This can again be achieved 
    using a simplified choice of~$f$, although it might be a slight exaggeration to say that this is entirely elementary.
    %Taking $m=3$  an application of the interpolation method  suggests using as a test function the degree two  polynomial 
    %$$f(x) = 0.77  - 0.57 x + 0.21x^2$$
    We can consider the degree~$3$ polynomial $f\colon [0,1] \to \mathbb R^+$ defined by 
    $  f(x) = 0.67 - 0.57 x + 0.35 x^2 - 0.107 x^3 $.
    Letting $t = \frac{19}{22}$ we  can consider  the image under the transfer
    operator~$\mathcal L_t$. In particular, one can readily check that 
    $$
    \inf_{S} \frac{\mathcal L_{t} f}{f}  > 1 + 10^{-4}.
    $$
    It follows from Lemma~\ref{tech} that $P(t) >0$ and thus we
    conclude that $\dim_H(E_6) > t =  \frac{19}{22}$.
\end{rem}

%%%%figures%%%%     \begin{figure}[h!]\label{fig:primee}
%%%%figures%%%%     \centerline{
%%%%figures%%%%      \includegraphics[width=0.35\linewidth]{fig-prime1.pdf}
%%%%figures%%%%      \hskip 1cm
%%%%figures%%%%      \includegraphics[width=0.35\linewidth]{fig-prime2.pdf}
%%%%figures%%%%      }
%%%%figures%%%%      \caption{Plots of the graph of $f(x)$ (the plot of ${\mathcal L}_t f(x)$ is similar) and the plot of  $\mathcal L_{t}f(x)/f(x)$}
%%%%figures%%%%    \end{figure}
%%%%figures%%%%    
In the remainder of this section we will consider applications where the
alphabets, and thus the number of contractions in the iterated function scheme,
are infinite. 
%Since the method we have been using is less successful in this
%setting we will concentrate on proving basic inequalities rather than trying to
%get precise estimates on the dimension. 

\subsection[Modular results]{Modular results and countable iterated function schemes}

Given $N \geq 2$ and $0 < r \leq N$, we want to consider  a set 
$$
X_{r  (N)} = \left\{[0;a_1,a_2, a_3, \cdots ] \mid  a_n \equiv r \kern2pt(\kern-8pt\mod N) \right\}
$$
consisting of those numbers whose continued fraction expansion only has digits equal to $r$ (mod $N$).
This can be interpreted as a limit set for the countable family of contractions
$T_i(x) = \frac{1}{x+r+Nk}$ ($k \geq 0$).  However, unlike the case of a
finite iterated function scheme the limit set 
$X_{r  (N)}$ is not a compact set.

Similarly to the case of a finite alphabet,  a key ingredient  in determining 
the Hausdorff dimension $\dim_H(X_{r (N)})$  is to consider a one-parameter
family of transfer operators $\mathcal L_t: C^1([0,1] \to C^1([0,1])$ given by
\begin{equation}
    \label{ctbletransfer}
    (\mathcal L_t w)(x) = \sum_{k=0}^\infty (x+r + Nk)^{-2t} w \left( (x+r +
    Nk)^{-1}\right).
%\hbox{\rm  where $w \in C^1([0,1])$}
\end{equation}
It is well defined for $\Re(t) > \frac{1}{2}$.  A common approach to the
analysis of these operators is to truncate the series to a finite sum of first~$
K$, which contributes an error of $O\left(K^{-2t} \right)$ to the estimates of
leading eigenvalue. Then this would require $K$ to be chosen quite large 
 for even moderate error bounds.

A more successful alternative approach in the present context is to employ the
classical Hurwitz zeta function from analytic number theory.
\begin{definition} 
    \label{def:hurwicz}
The Hurwitz zeta function is a complex analytic function on a half-plane $\Re x
>0$, $\Re s >1$ defined by the series
 $$
\zeta(x, s) = \sum_{k=0}^\infty (x+k)^{-s}. %, \quad  \mbox{ for $\Re(s)>1$.}
 $$
\end{definition}
\noindent It can be extended to a meromorphic function on~$\mathbb C$ for $s \ne 1$.  
The famous Riemann zeta function is a particular case $\zeta(1,s)$. %  In the special case that $x=1$ then $\zeta(1,s)$ reduces to 

We would like to consider monomials $w_n(x):=x^n$, $n\ge0$ and to rewrite
$\mathcal L_t w_n$ using the Hurwitz zeta function as follows
 \begin{align}
     \label{eq:hurwicz}
 \mathcal L_tw_n(x) & = \sum_{k=0}^\infty (x + kN + r)^ {-2t} \cdot (x + kN +
 r)^{-n} \cr
  &=  {N} ^{-2t-n} \sum_{k=0}^\infty \Bigl(\frac{x+r}{N} + k\Bigr)^{-2t-n}\cr
& =  N^{-n-2t} \zeta\left(\frac{x+r}{N}, n + 2t\right).
\end{align}
 %$$
 %\begin{aligned}
 %\mathcal L_tw_l(x) &= \sum_{n=1}^\infty \frac{1}{(x+2n)^{l + 2t}}\cr
% &= \left(\frac{1}{2} \right)^{l+2t}  \sum_{n=1}^\infty \frac{1}{(x/2+n)^{l + 2t}}\cr
 % &= \left(\frac{1}{2} \right)^{l+2t}  \sum_{n=0}^\infty \frac{1}{((x/2+1) +n)^{l + 2t}}\cr
 %  &= \left(\frac{1}{2} \right)^{l+2t} L(x/2+1, l + 2t)\cr
 %\end{aligned}
 %$$
 From a computational viewpoint, the advantage we gain from expressing the transfer operator in terms of the
 Hurwitz zeta function stems from fact that there are very efficient algorithms for evaluation
 of~$\zeta(x,s)$ to arbitrary numerical precision (cf.~\cite{Johansson} and references
 therein). In particular, the Hurwitz zeta function is implemented both in
 Mathematica and within the Arb library.
 
%%%%%%%polina%%%%%% there being alternative presentations of the
%%%%%%%polina%%%%%% Hurwitz zeta function which can be much better estimated than the original
%%%%%%%polina%%%%%% definition (Definition \ref{hurwicz}).   In particular, the following
%%%%%%%polina%%%%%% paraphrases such a result. 
%%%%%%%polina%%%%%% 
%%%%%%%polina%%%%%% \begin{prop}[Bailey-Borwein 2015, Theorem 7, \cite{bb}]
%%%%%%%polina%%%%%% The Hurwitz zeta function $\zeta(x,s)$ can be written in terms of infinite series which converge exponentially quickly
%%%%%%%polina%%%%%%and whose coefficients are in terms of $\Gamma$ functions, which can be computed to high accuracy.
%%%%%%%polina%%%%%% \end{prop}
%%%%%%%polina%%%%%%
%%%%%%%polina%%%%%%  In practise  the calculations   performed by Mathematica which computes the Hurwitz zeta function to high accuracy and precision.

We can now return to our usual strategy to estimate $\dim(X_{r (N)})$.  We begin
with the following simple result. 

\begin{lemma} \label{lem:modular}
For $t > \frac{1}{2}$ the operator $\mathcal L_t$ has a simple maximal eigenvalue 
$e^{P(t)}$.  The function~$P$ is  real analytic and strictly decreasing 
on the interval $(\frac{1}{2}, +\infty)$.  Moreover, 
there is a unique $t_0 \in (\frac{1}{2}, +\infty)$ such that $P(t_0) = 0$ and $t_0=\dim(X_{r (N)})$.
\end{lemma}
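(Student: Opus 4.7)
The plan is to follow the strategy used in the finite alphabet case (Lemma~\ref{bowenRuelle} and Lemma~\ref{spectral}) while carefully handling the fact that the iterated function scheme is now countable, using the Hurwitz zeta function representation~\eqref{eq:hurwicz} to control the series. The key underlying framework is that of Mauldin--Urbanski~\cite{MU03} for countable (graph directed) Markov systems, which extends Bowen's formula to this setting.

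First, I would verify that for $t>\tfrac{1}{2}$ the operator $\mathcal L_t$ is well-defined and bounded on a suitable Banach space of analytic functions, say $H^\infty(U_\rho)$ on a Bernstein ellipse $U_\rho$ containing $[0,1]$. Convergence of the defining series is immediate from comparison with $\zeta\!\left(\frac{x+r}{N}, 2t\right)$, which is finite precisely when $2t>1$. Since each contraction $T_k(z) = (z+r+Nk)^{-1}$ is real analytic and maps $U_\rho$ strictly inside itself once $k$ is chosen properly (and the contraction ratios decay like $k^{-2}$), the operator is analyticity-improving. As in the finite case this yields compactness (in fact nuclearity, cf.~\cite{Mayer},~\cite{Ruelle}), and a positive-cone argument via Schauder's theorem (mirroring Part~1 of the proof of Lemma~\ref{spectral}) gives a simple maximal eigenvalue $\lambda(t)=e^{P(t)}$ with strictly positive eigenfunction $\underline h_t$, the rest of the spectrum lying in a strictly smaller disk.

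Next, real analyticity of $t \mapsto P(t)$ on $(\tfrac{1}{2},+\infty)$ follows from Kato's analytic perturbation theory~\cite{Kato}, since the eigenvalue is simple and isolated and the operator family $t \mapsto \mathcal L_t$ depends analytically on $t$ (the series converges uniformly in $t$ on compact subsets of $\{\Re t > \tfrac{1}{2}\}$, a fact which itself follows from~\eqref{eq:hurwicz} and the meromorphic extension of $\zeta$). Strict monotonicity comes from differentiating $P(t)$: standard thermodynamic formalism gives $P'(t) = \int \log |T_{\underline i}'| \, d\mu_t$ where $\mu_t$ is the equilibrium state, and since each $|T_k'(x)| < 1$ on $[0,1]$ we have $P'(t) < 0$. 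To locate the zero, I would show $P(t) \to +\infty$ as $t \to \tfrac{1}{2}^+$ (driven by the pole of $\zeta(\cdot, 2t)$ at $2t=1$) and $P(t) \to -\infty$ as $t \to +\infty$ (since then $\mathcal L_t \underline 1$ is dominated by $N^{-2t}\zeta(\frac{r}{N}, 2t) \to 0$), so the intermediate value theorem applied to the strictly decreasing continuous $P$ yields a unique zero $t_0 \in (\tfrac{1}{2}, +\infty)$.

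Finally, the identification $t_0 = \dim_H(X_{r(N)})$ is the Bowen formula for conformal countable Markov systems, which was established by Mauldin--Urbanski~\cite{MU03} under the bounded distortion property clearly satisfied by the linear fractional maps $T_k$. The main obstacle in making this argument fully rigorous is handling the non-compactness of the alphabet in the min--max style arguments used for the finite case: the suprema and infima of ratios $\mathcal L_t \underline f / \underline f$ must be controlled uniformly in $k$, which is precisely where the Hurwitz zeta representation~\eqref{eq:hurwicz} becomes indispensable by collapsing the infinite sum into an explicitly computable meromorphic function.
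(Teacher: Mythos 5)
Your proposal follows essentially the same route as the paper: both rely on Mauldin--Urbanski \cite{MU03} (Theorems~6.2.12 and~4.2.13) for simplicity of the leading eigenvalue, real-analytic dependence on $t$ via Kato's perturbation theory, the perturbation identity for $P'(t)$ to obtain strict monotonicity, and the Bowen-type formula identifying $\dim_H(X_{r(N)})$ with the zero of $P$. One inessential side remark is slightly off: the claim that $\mathcal L_t\underline 1$ is dominated by $N^{-2t}\zeta(r/N,2t)\to 0$ as $t\to+\infty$ fails when $r=1$ (the $k=0$ term equals $1$ for every $t$), but existence and uniqueness of the zero of $P$ is in any case supplied by the cited Mauldin--Urbanski theorem, so your argument stands.
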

\begin{proof}
 The analyticity comes from analytic perturbation theory and the simplicity of the maximal eigenvalue
 (see \cite{MU03}, Theorem 6.2.12).  The strict monotonicity of the function~$P$ 
 comes from the perturbation identity $P'(t) =  - 2  \int (\log x) h(x) d\mu(x)/\int h d\mu < 0$ where $\mathcal L_t h= e^{P(t)} h$
 and $\mathcal L_t^*\mu= e^{P(t)} \mu$.  
 The interpretation of $\dim(X_{r (N)})$ in terms of the pressure was shown in~\cite{MU03}, Theorem 4.2.13.
\end{proof}

Proceeding as in \S\ref{test} we can fix $m$, compute the zeros of the $m$'th
Chebyshev polynomials $\{y_k\}_{k=1}^m\in[0,1]$ and define Lagrange interpolation 
polynomials~$lp_k(x)$ for $0 \le k \le m$ as before using~\eqref{lagcheb:eq}. 
These polynomials can also be written in a standard form 
$$
lp_k(x) = \sum_{n=0}^{m-1} a_n^{(k)} x^n = \sum_{n=0}^{m-1} a_n^{(k)} w_n(x), \qquad
\mbox{ for } 0 \leq x \leq 1.
$$
Since the operator $\mathcal L_t$ is linear,  we can write
using equation (\ref{eq:hurwicz})
\begin{equation}
%%\begin{aligned}
    (\mathcal L_t lp_k)(x) =  \sum_{n=0}^{m-1} a_n^{(k)}  (\mathcal L_t w_n) (x)
    =   \sum_{n=0}^{m-1} a_n^{(k)} N^{-n-2t} \zeta \left(\frac{x+r}{N}, n + 2t\right).
%%\end{aligned}
\end{equation}
Since the Hurwitz zeta function can be evaluated to arbitrary precision, we can now
compute the $m\times m$ matrix $(\mathcal L_t lp_k)(y_j)_{k,j=1}^m$, the
associated left eigenvector $v = (v_1, \cdots, v_m)$ and construct
a test function $f = \sum_{j=1}^m v_j lp_j$ to use in Lemma~\ref{tech}. 
%%%%%%%polina%%%%%%\begin{enumerate}
%%%%%%%polina%%%%%%%\item
%%%%%%%polina%%%%%%%We can then fix $m \in \mathbb N$ and then choose Chebychev points $(y_i)_{i=1}^m$ in $[0,1]$.
%%%%%%%polina%%%%%%\item
%%%%%%%polina%%%%%%We can consider the $m \times m$ matrix \item
%%%%%%%polina%%%%%%We can consider the test function $f(x) = \sum_{i=1}^m c_i P_i$.  
%%%%%%%polina%%%%%%If $\min_x \mathcal L_tf(x)/f(x) > 1$ then $t < \dim_H(X_{r (N)})$.  
%%%%%%%polina%%%%%%If $\max_x \mathcal L_tf(x)/f(x) < 1$ then $t > \dim_H(X_{r (N)})$.  
%%%%%%%polina%%%%%%\end{enumerate}
\begin{example}
To illustrate the efficiency of the approach, we apply this general construction in
a number of cases, which we borrow from a recent work by Chousionis at
al.~\cite{CLU20}, where  
the estimates are obtained using  a combination of a truncation technique with
the method of Falk and Nussbaum. 
%%%%%%%polina%%%%%%More precisely, we consider the following sets:
%%%%%%%polina%%%%%%%\begin{align*}
%%%%%%%polina%%%%%%%X_{odd} := X_{1 (2)} = \{[a_1,a_2, a_3, \cdots ] \mid  a_n \in 2 \mathbb N-1\}.
%%%%%%%polina%%%%%%%
%%%%%%%polina%%%%%%%\end{align*}
%%%%%%%polina%%%%%%\begin{align*}
%%%%%%%polina%%%%%%X_{odd} &:= X_{1 (2)} = \{[a_1,a_2, a_3, \cdots ] \mid  a_n \in 2 \mathbb N-1
%%%%%%%polina%%%%%%\mbox{ for all } n \in \mathbb N \} \\
%%%%%%%polina%%%%%%X_{even} &:= X_{ 0 (2)} =  \{[a_1,a_2, a_3, \cdots ] \mid  a_n \in 2 \mathbb N
%%%%%%%polina%%%%%%\mbox{ for all } n  \in \mathbb N\} \\
%%%%%%%polina%%%%%%X_{0 (3)} &:= \{[a_1,a_2, a_3, \cdots ] \mid  a_n  \in 3\mathbb N  \mbox{ for
%%%%%%%polina%%%%%%all } n \in \mathbb N \} \\
%%%%%%%polina%%%%%%X_{1 (3)} &:= \{[a_1,a_2, a_3, \cdots ] \mid  a_n  \in 3\mathbb N + 1 \mbox{ for
%%%%%%%polina%%%%%%all } n \in \mathbb N \} \\
%%%%%%%polina%%%%%%X_{2 (3)} &:= \{[a_1,a_2, a_3, \cdots ] \mid  a_n  \in 3\mathbb N + 2 \mbox{ for
%%%%%%%polina%%%%%%all } n \in \mathbb N \}.
%%%%%%%polina%%%%%%X_{1 (8)} &:= \{[a_1,a_2, a_3, \cdots ] \mid  a_n  \in 3\mathbb N + 2 \mbox{ for
%%%%%%%polina%%%%%%all } n \in \mathbb N \}
%%%%%%%polina%%%%%%\end{align*}
We apply the bisection method with $m=12$ and summarise our results in the
Table~\ref{table:clu}
below. For comparison, we include estimates from~\cite{CLU20}.
\begin{table}[h!]
\begin{center}
\begin{tabular}{ |c|l|c|c| } 
 \hline
 \multirow{2}{*}{$r(N)$} & \multirow{2}{*}{ New estimate $\dim_H(X_{r(N)})$ } &
  \multicolumn{2}{|c|}{Old bounds} \\
  \hhline{~~--}
  &  & $s_0$ & $s_1$ \\ 
 \hline
 2 (2)  &$ 0.7194980248\,366    \pm		3\cdot 10^{-13} 	 $	&	$ 0.719360$ & $ 0.719500$ \\ 
 1 (2)  &$ 0.8211764906\,5      \pm	    3.5\cdot 10^{-10} 	 $	&	$ 0.821160$ & $ 0.821177$ \\ 
 3 (3)  &$ 0.6407253143\,83684  \pm		2\cdot 10^{-15} 	 $	&	$ 0.639560$ & $ 0.640730$  \\ 
 3 (2)  &$ 0.6654623380\,4075   \pm 	2.5\cdot 10^{-13} 	 $	&	$ 0.664900$ & $ 0.665460$   \\ 
 3 (1)  &$ 0.7435862804\,5      \pm 	2.5\cdot 10^{-10} 	 $	&	$ 0.743520$ & $ 0.743586$  \\ 
 1 (8)  &$ 0.6194381921\,5      \pm		1.5\cdot 10^{-10} 	 $	&	N/A & N/A \\ 
 \hline
\end{tabular}
\end{center}
\caption{The estimates for $\dim_H(X_{r(N)})$ based on the bisection
method with Hurwitz function employed and the bounds on 
$\dim_H(X_{r(N)}) \in [s_0, s_1]$ from \cite{CLU20}.}
\label{table:clu}
\end{table}
\end{example}

\subsection{Lower bounds for deleted digits}

We can also use the method in the previous section to address the following
natural problem: {\it Given $N \geq 1$ give a  uniform upper bound on the dimension $\dim_H(X_{\mathcal A})$ where 
 $\mathcal A$ ranges over all families of  symbols  satisfying $\mathcal A \cap
 \{1, \cdots, N\} \neq \emptyset$.}
 
   By the natural monotonicity (by inclusion) of $\mathcal A \mapsto \dim_H(X_{\mathcal A})$ we see that such an upper bound will be given by $\dim_H(X_{{\mathcal A}_{N+1}})$ where we let 
   $${\mathcal A}_{N+1} = \{N+1, N+2, N+3, \cdots\}.$$
   As in the preceding subsection we can write the transfer operator 
   associated to the infinite alphabet ${\mathcal A}_{N+1}$
   acting on $w_n$ in terms of the Hurwitz zeta function:
    \begin{align}
     \label{eq:hurwicz1}
 \mathcal L_tw_n(x) & = \sum_{k=N+1}^\infty (x + k)^ {-2t} \cdot (x +k)^{-n} \cr
  &= \sum_{k=0}^\infty (x+N+1 + k)^{-2t-n}\cr
& =   \zeta\left(x+N+1, n + 2t\right).
\end{align}
   
   We have the natural analogue of Lemma \ref{lem:modular}.
   
   \begin{lemma} 
For $t > \frac{1}{2}$ the operator $\mathcal L_t$ has a simple maximal eigenvalue 
$e^{P(t)}$.  The function~$P$ is  real analytic and strictly decreasing 
on the interval $(\frac{1}{2}, +\infty)$.  Moreover, 
there is a unique $t_0 \in (\frac{1}{2}, +\infty)$ such that $P(t_0) = 0$ and $t_0=\dim(X_{\mathcal A_{N+1}})$.
\end{lemma}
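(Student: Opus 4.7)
The plan is to prove this as a direct analogue of Lemma \ref{lem:modular}; the only change is that the countable alphabet $\mathcal A_{N+1}=\{N+1,N+2,\ldots\}$ replaces the arithmetic progression $\{r, r+N, r+2N,\ldots\}$, and the corresponding Hurwitz representation \eqref{eq:hurwicz1} takes the place of \eqref{eq:hurwicz}. The entire proof fits into the Mauldin--Urbanski framework of conformal infinite iterated function systems~\cite{MU03}. Thus I will first verify that the family $\{T_k(x)=1/(x+k):k\ge N+1\}$ satisfies the standard hypotheses (uniform contraction on $[0,1]$, the open set condition with $U=(0,1)$, the bounded distortion property, and conformality), which are all immediate for these linear-fractional maps.

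Next I would establish the existence of a simple maximal eigenvalue $e^{P(t)}$ and analyticity of $P$ on $(\tfrac12,+\infty)$. The key observation is that for $t>\tfrac12$ the series
\[
\mathcal L_t w_n(x)=\zeta(x+N+1,\, n+2t)
\]
from~\eqref{eq:hurwicz1} converges absolutely, and in fact converges uniformly on compact subsets of $\{\Re(t)>\tfrac12\}$, so $t\mapsto\mathcal L_t$ is an analytic family of compact operators on $C^{1}([0,1])$. Theorem~6.2.12 of~\cite{MU03} then guarantees that for each real $t>\tfrac12$ the operator $\mathcal L_t$ has a simple isolated maximal eigenvalue $e^{P(t)}$ with a gap in the rest of the spectrum, and classical analytic perturbation theory (Kato~\cite{Kato}) upgrades this to real analyticity of $P$.

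For strict monotonicity I would invoke the Ruelle-type derivative formula
\[
P'(t)=\frac{\int \log|T'|\, h\, d\mu}{\int h\, d\mu},
\]
where $\mathcal L_t h=e^{P(t)}h$ and $\mathcal L_t^{*}\mu=e^{P(t)}\mu$ are the positive eigenfunction and eigenmeasure supplied by the Perron--Frobenius theory above. Since $|T_k'(x)|=(x+k)^{-2}<1$ uniformly for $x\in[0,1]$ and $k\ge N+1$, we have $\log|T'|<0$ on the limit set, and therefore $P'(t)<0$ throughout $(\tfrac12,+\infty)$.

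Finally, for existence and uniqueness of the zero $t_0$, uniqueness is immediate from strict monotonicity, and existence follows from the intermediate value theorem once one shows $P(t)\to+\infty$ as $t\downarrow\tfrac12$ and $P(t)\to-\infty$ as $t\to+\infty$. The first limit is forced by the pole of $\zeta(x+N+1,2t)$ at $t=\tfrac12$ (which makes $\|\mathcal L_t\mathds{1}\|_\infty$, and hence $e^{P(t)}$, blow up), and the second follows since uniform contraction gives $\|\mathcal L_t\mathds{1}\|_\infty\le\sum_{k\ge N+1}(k)^{-2t}\to 0$. The identification $t_0=\dim_H(X_{\mathcal A_{N+1}})$ is then Bowen's formula for conformal infinite iterated function schemes (Theorem~4.2.13 of~\cite{MU03}). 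The only delicate point is the behaviour of $P$ at the abscissa of convergence $t=\tfrac12$; everything else is a routine transfer of the proof of Lemma~\ref{lem:modular}.
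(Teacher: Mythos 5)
Your proof follows the same route the paper uses for the analogous Lemma~\ref{lem:modular}: analyticity and simplicity of the maximal eigenvalue are taken from Mauldin--Urba\'nski (Theorem~6.2.12 of~\cite{MU03}) together with analytic perturbation theory, strict monotonicity comes from the Ruelle-type derivative formula $P'(t)=\int\log|T'|\,d\nu_t<0$, and the identification $t_0=\dim_H(X_{\mathcal A_{N+1}})$ is Theorem~4.2.13 of~\cite{MU03}; the only genuinely new ingredients you supply are the verification of the MU hypotheses and the IVT bookkeeping, neither of which the paper spells out. One small slip in that extra IVT step: the divergence of $\|\mathcal L_t\mathds{1}\|_\infty$ as $t\downarrow\tfrac12$ only gives an \emph{upper} bound on the spectral radius, so it does not by itself force $e^{P(t)}\to+\infty$; you should instead use the lower bound $e^{P(t)}\ge\inf_{x\in[0,1]}\mathcal L_t\mathds{1}(x)=\zeta(N+2,2t)$ (from the min--max inequality of Lemma~\ref{lem:minmax}), which also diverges as $t\downarrow\tfrac12$ and yields the desired conclusion.
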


Again proceeding as in \S\ref{test} we can fix $m$, compute the zeros of the $m$'th
Chebyshev polynomials $\{y_k\}_{k=1}^m\in[0,1]$ and define Lagrange interpolation 
polynomials~$lp_k(x)$ for $0 \le k \le m$ as before using~\eqref{lagcheb:eq}. 
%These polynomials can also be written in a standard form 
%$$
%lp_k(x) = \sum_{n=0}^{m-1} a_n^{(k)} x^n = \sum_{n=0}^k a_n^{(k)} w_n(x), \qquad
%\mbox{ for } 0 \leq x \leq 1.
%$$
By analogy with~\eqref{ctbletransfer} we can write using equation (\ref{eq:hurwicz1})
\begin{equation}
%%\begin{aligned}
    (\mathcal L_t lp_k)(x) =  \sum_{n=0}^{m-1} a_n^{(k)}  (\mathcal L_t w_n) (x)
    =   \sum_{n=0}^{m-1} a_n^{(k)} \zeta\left(x+N+1, n + 2t\right).
%%\end{aligned}
\end{equation}
Since the Hurwitz zeta function can be evaluated to arbitrary precision, we can now
compute the $m\times m$ matrix $(\mathcal L_t lp_k)(y_j)_{k,j=1}^m$, the
associated left eigenvector $v = (v_1, \cdots, v_m)$ and construct
a test function $f = \sum\limits_{j=1}^m v_j lp_j$ to use in Lemma~\ref{tech}. 
The results are presented in Table~\ref{table:bounds}.
\begin{table}[h!]
\begin{center}
\begin{tabular}{ |c|c| } 
 \hline
 $N$ & Estimate on $\dim_H(X_{{\mathcal A}_{N+1}})$  \\ 
 \hline
1 &$ 0.840884586 \pm 10^{-8}$  \\ 
2 &$  0.785953471  \pm 10^{-8}$  \\ 
3  &$   0.757889122 \pm 10^{-8}$   \\ 
4  &$   0.757889122 \pm 10^{-8}$   \\ 
5  &$  0.728307126   \pm 10^{-8}$ \\ 
 \hline
\end{tabular}
\end{center}
\caption{Estimates on  $\dim_H(X_{{\mathcal A}_{N+1}})$ which give upper bounds on 
$\dim_H(X_{{\mathcal A}})$ for those alphabets $\mathcal A$ 
with  $\mathcal A\cap \{1, \cdots, N\}= \emptyset$.}
\label{table:bounds}
\end{table}

\subsection{Local obstructions}

In his thesis,  Huang makes an interesting conjecture on local obstructions for the Zaremba conjecture
(see~\cite{Huang}, p. 18).
More precisely, to every $m \in \mathbb N$ we can associate the ``modulo~$m$
map''  which we denote by 
$$
\pi_m \colon \mathbb Z_+ \to \mathbb Z_m = \{0,1,2, \cdots, m-1\}
\hbox{ given by }
\pi_m(q) = q\kern2pt (\kern-8pt \mod m).
$$ 
We say that a finite set $A \subset \mathbb
N$ has no local obstructions if for all $m \geq 1$, 
$$ 
\pi_m\left(  \left\{ q\in \mathbb N \Bigl| \exists p \in \mathbb N, (p,q)=1; a_1,
\cdots, a_n \in\{1,2,3,4,5\} \colon \frac{p}q = [0;a_1, \cdots, a_n]
\right\}\right) =  \mathbb Z_m
$$
i.e., for each $m$ the map $\pi_m$ is surjective.

\begin{conjecture}[Huang,~\cite{Huang}]\label{conj:huang}
If the limit set~$X_A$ of the alphabet $A\subset \mathbb N$ %for which the limit set~$X_A$ 
has dimension $\dim_H(X_A)> \frac56$ then there are no local obstructions.  
\end{conjecture}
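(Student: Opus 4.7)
The plan is to derive Conjecture~\ref{conj:huang} directly from Theorem~\ref{thm:densityoneplus}, which under the identical hypothesis $\dim_H(X_A) > 5/6$ already yields the stronger conclusion $\lim_{N \to \infty} \#D_A(N)/N = 1$. The deduction is a pigeonhole argument on residue classes, which I would carry out by contrapositive: suppose $A$ admits a local obstruction, so there exist $m \geq 1$ and $r \in \mathbb{Z}_m$ with $r \notin \pi_m(D_A)$. Then every $q \in D_A$ satisfies $q \not\equiv r \pmod m$, confining $D_A$ to the union of the remaining $m-1$ residue classes modulo $m$. Hence $\#D_A(N) \leq (1 - 1/m)N + O(1)$ for every $N$, so $\limsup_{N \to \infty} \#D_A(N)/N \leq 1 - 1/m < 1$, contradicting the density one conclusion.

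Under this reading the conjecture is a short corollary of Theorem~\ref{thm:densityoneplus}, and the real difficulty lies upstream in the Bourgain--Kontorovich--Huang circle method analysis required to establish density one. If Conjecture~\ref{conj:huang} is instead intended in a stronger quantitative form (for instance, a lower bound of the shape $\#\{q \in D_A(N) : q \equiv r \pmod m\} \gg_m N$ with the implied constant uniform in $r$), then a more substantive plan is required. I would follow the circle method template of~\cite{BK14,Huang}: decompose the counting function using exponential sums $\sum_q e(aq/m)$ into major and minor arc contributions, extract a main term of order $N/m$ on the major arcs through a singular series computation, and control minor arc contributions using spectral gaps of the transfer operator associated with $A$, leveraging $\dim_H(X_A) > 5/6$ precisely as in the density one argument.

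The main obstacle in this quantitative approach would be establishing positivity of the singular series for every individual residue class modulo $m$, rather than merely on average. Averaged positivity is what suffices for density one, but excluding local obstructions requires ruling out an arithmetic conspiracy that annihilates a particular class. I would expect such positivity to follow from the analysis of the semigroup generated by the continued fraction matrices associated with $A$ inside $\mathrm{SL}_2(\mathbb{Z}/m\mathbb{Z})$: one must show that the denominators of products of such matrices realise every residue in $\mathbb{Z}_m$, a strong irreducibility property of the orbit modulo $m$ that does not follow from dimension considerations alone and that would likely need to be verified by direct inspection of the semigroup closure for each alphabet $A$ under consideration.
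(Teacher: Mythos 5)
Your pigeonhole deduction from Theorem~\ref{thm:densityoneplus} is valid as a formal consequence of that theorem as stated: a local obstruction at modulus~$m$ confines $D_A$ to $m-1$ residue classes, forcing $\limsup_N \#D_A(N)/N \le 1 - 1/m < 1$, contradicting density one. The paper, however, takes a genuinely different route. It invokes Proposition~\ref{thm:huang} (Huang's Theorem~1.3.11), a purely combinatorial/arithmetic reduction stating that a finite alphabet~$A$ has no local obstructions once $\dim_H(X_A) > \max\bigl(\dim_H(X_{2(2)}), \dim_H(X_{1(8)})\bigr)$, and then verifies rigorously from Table~\ref{table:clu} that $\dim_H(X_{2(2)}) \approx 0.7195$ and $\dim_H(X_{1(8)}) \approx 0.6194$ are both strictly below $5/6 \approx 0.8333$. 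The paper's route buys two things yours does not. First, it is quantitatively stronger: it rules out local obstructions already for $\dim_H(X_A) > 0.72$, not merely above $5/6$. Second, it is logically safer: the circle method argument underlying Theorem~\ref{thm:densityoneplus}, when applied to a general finite alphabet~$A$ rather than the concrete $\{1,\dots,50\}$ or $\{1,\dots,5\}$, must itself establish the non-vanishing of a singular series which is essentially the local-obstruction condition in disguise, so a deduction of the conjecture from density one risks circularity. Proposition~\ref{thm:huang} isolates that arithmetic question and resolves it by dimension considerations alone; this is exactly the ``direct inspection of the semigroup closure modulo~$m$'' you anticipate in your final paragraph, abstracted by Huang into one clean dimension inequality, with the paper supplying the rigorous numerical dimension estimates needed to close the argument.
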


Huang also reduced this to a statement about dimensions of specific limit sets.

%Let $X_{even}$ be the limit set of the contractions corresponding to the (infinite) alphabet $A_{even} = 2 \mathbb N$, i.e. 
%\begin{equation}
%\label{Teven:eq} 
 %   T_k(x) = \frac{1}{x+2k}, \quad k \in \mathbb N
%\end{equation}
%Let $X_{oct}$ be the limit set of the contractions corresponding to the
%(infinite) alphabet $A_{oct} = 8 \mathbb N -7$, i.e. 
%\begin{equation}
%\label{Toct:eq} 
%    T_k(x) = \frac{1}{x+8k-7}, \quad k \in \mathbb N
%\end{equation}

\begin{prop}[\cite{Huang}, Theorem 1.3.11]\label{thm:huang}
%    Let  $X_{oct}$ be the Hausdorff dimension of the alphabet $A_{oct} =
%    8\mathbb N-7$. Let $X_{even}$ be the Hausdorff dimension of the alphabet
%    $A_{even} = 2\mathbb N$. Denote by $\overline \delta$ the maximum of these two Hausdorff
%    dimensions, $\overline \delta = \max (\dim_H(X_{oct}) , \dim_H(X_{even}))$.
In notation introduced above, a {\rm finite} alphabet~$A$ has no local obstructions 
provided  $\dim_H X_A >
     \max(\dim_H(X_{2 (2)}),\dim_H(X_{1 (8)}))$.
\end{prop}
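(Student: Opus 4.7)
The plan is to argue by contrapositive: show that if the finite alphabet $A$ admits a local obstruction, then $\dim_H(X_A) \leq \max(\dim_H(X_{2(2)}), \dim_H(X_{1(8)}))$. The central object is the subgroup $\Gamma_A \leq SL_2(\mathbb{Z})$ generated by the matrices $M_a = \begin{pmatrix} a & 1 \\ 1 & 0 \end{pmatrix}$ for $a \in A$: the denominators arising from finite continued fraction expansions with partial quotients in $A$ are (up to sign) the lower-left entries of words in $\Gamma_A$. Hence $A$ has a local obstruction modulo $m$ precisely when the image $\pi_m(\Gamma_A) \subset SL_2(\mathbb{Z}/m)$ fails to realize every residue class in the lower-left entry, which happens only when $\pi_m(\Gamma_A)$ is a proper subgroup of a specific type.

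The first substantive step is to invoke Huang's classification of such local obstructions (essentially a strong-approximation-style result for $SL_2(\mathbb{Z})$): the only moduli $m$ for which $\pi_m(\Gamma_A)$ can be proper for some finite $A$ reduce to $m = 2$ or $m = 8$. The classification further identifies the distinguished proper subgroups that may arise in $SL_2(\mathbb{Z}/2)$ and in $SL_2(\mathbb{Z}/8)$, each corresponding to a common congruence condition forced on the generators~$M_a$, and therefore on the digits~$a \in A$.

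The second step is to translate each type of obstruction into an inclusion of limit sets. A mod-$2$ obstruction forces every $a \in A$ to be even, so $X_A \subseteq X_{2(2)}$; a mod-$8$ obstruction forces every $a \in A$ to satisfy $a \equiv 1 \pmod 8$, so $X_A \subseteq X_{1(8)}$. In both cases the inclusion is immediate from the structure of the generators, and monotonicity of Hausdorff dimension yields $\dim_H(X_A) \leq \dim_H(X_{2(2)})$ or $\dim_H(X_A) \leq \dim_H(X_{1(8)})$, which is the contrapositive we wanted.

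The hardest step, which I would simply cite from Huang's thesis, is the arithmetic classification: proving that no modulus outside $\{2, 8\}$ can generate a local obstruction. This involves showing surjectivity of $\pi_p \colon \Gamma_A \to SL_2(\mathbb{Z}/p)$ for primes $p \geq 3$ whenever $A$ reduces to a sufficiently rich set mod~$p$, together with a lifting argument through prime powers; it is the only non-dimensional ingredient. Once that is in hand, the proposition follows mechanically, and the rigorous numerical upper bounds on $\dim_H(X_{2(2)})$ and $\dim_H(X_{1(8)})$ produced earlier via the Hurwitz-zeta reformulation of the transfer operator convert the abstract criterion into a concrete, checkable inequality on any candidate alphabet.
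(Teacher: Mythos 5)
The paper itself gives no proof of this proposition --- it is stated verbatim and cited as Theorem 1.3.11 of Huang's thesis, and the paper's contribution here is the rigorous numerical verification (Table~\ref{table:clu}) that both $\dim_H(X_{2(2)})$ and $\dim_H(X_{1(8)})$ lie below $\tfrac56$, which then closes Conjecture~\ref{conj:huang} via this proposition. Your sketch is therefore describing the argument behind the cited result rather than reproducing a paper proof, and its overall skeleton --- contrapositive, pass to $\Gamma_A\le SL_2(\mathbb Z)$, invoke Huang's classification of the images $\pi_m(\Gamma_A)$, deduce that an obstruction forces $A\subseteq 2\mathbb N$ or $A\subseteq 1+8\mathbb N$, finish by $X_A\subseteq X_{2(2)}$ or $X_A\subseteq X_{1(8)}$ together with monotonicity of Hausdorff dimension --- is the right shape and is exactly what the paper is implicitly relying on when it cites Huang.

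One concrete slip worth flagging: ``a mod-$2$ obstruction forces every $a\in A$ to be even'' is not correct as stated. If every $a\in A$ is even then $M_a\equiv\bigl(\begin{smallmatrix}0&1\\1&0\end{smallmatrix}\bigr)\pmod 2$, so products of the $M_a$ alternate between this matrix and the identity mod~$2$, and the denominators take \emph{both} residues mod~$2$; there is no obstruction at $m=2$. The obstruction in the all-even case occurs at a higher power of $2$: indeed the Bourgain--Kontorovich counterexample to Hensley's conjecture recalled earlier in the paper is precisely the observation that all-even digits can never yield a denominator $\equiv 3\pmod 4$. Huang's classification therefore identifies the two critical \emph{alphabets} $2\mathbb N$ and $1+8\mathbb N$, not the \emph{moduli} $2$ and $8$, as the dichotomy; the moduli at which the associated obstructions actually occur are larger $2$-power moduli. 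This is bookkeeping rather than a structural error, but as written your classification step does not yet justify the inclusions $X_A\subseteq X_{2(2)}$ or $X_A\subseteq X_{1(8)}$, and the description of which residue classes matter needs to be corrected before those inclusions can be called immediate.
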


In particular, in light of the use of Proposition~\ref{thm:huang}
to  establish Conjecture \ref{conj:huang} it is sufficient to know  that 
$\dim_H(X_{2 (2)})$ and $\dim_H(X_{1 (8)})$ both 
 have
dimension at most $\frac56$. 
Fortunately, we have rigorously established these inequalities as
Table~\ref{table:clu} shows. 
%%%$in Examples \ref{exampleeven} and \ref{example8}. 

\begin{prop}
 Conjecture \ref{conj:huang} above is correct.
\end{prop}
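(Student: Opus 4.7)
The plan is straightforward: combine Huang's reduction (Proposition~\ref{thm:huang}) with the explicit numerical upper bounds on the two dimensions $\dim_H(X_{2(2)})$ and $\dim_H(X_{1(8)})$ that we have already established rigorously in Table~\ref{table:clu}.

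First I would observe that $\frac{5}{6} = 0.8\overline{3}$ and compare this with the two rigorous upper bounds read off from Table~\ref{table:clu}, namely
$$
\dim_H(X_{2(2)}) \le 0.7194980248\,366 + 3\cdot 10^{-13} < \tfrac{5}{6}
\quad\text{and}\quad
\dim_H(X_{1(8)}) \le 0.6194381921\,5 + 3\cdot 10^{-10} < \tfrac{5}{6}.
$$
Hence $\max(\dim_H(X_{2(2)}),\dim_H(X_{1(8)})) < \tfrac{5}{6}$.

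Next, assuming the hypothesis of Conjecture~\ref{conj:huang}, namely a finite alphabet $A \subset \mathbb N$ with $\dim_H(X_A) > \tfrac{5}{6}$, I would chain the inequalities
$$
\dim_H(X_A) > \tfrac{5}{6} > \max\bigl(\dim_H(X_{2(2)}),\dim_H(X_{1(8)})\bigr),
$$
so the hypothesis of Huang's Proposition~\ref{thm:huang} is satisfied. Applying that proposition directly yields the absence of local obstructions for $A$, which is exactly the statement of Conjecture~\ref{conj:huang}.

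The only nontrivial content is therefore the rigorous verification of the two dimension estimates in Table~\ref{table:clu}, which has already been carried out in the previous subsection using the Hurwitz-zeta implementation of the transfer operator~\eqref{ctbletransfer}, Chebyshev--Lagrange interpolation, and Lemma~\ref{tech}. In other words, the ``hard part'' of the argument is not in this final deduction but in the rigorous bounds on $\dim_H(X_{2(2)})$ and $\dim_H(X_{1(8)})$; once those are in hand, the conclusion is immediate.
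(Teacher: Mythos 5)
Your argument is correct and is precisely the paper's own: read off the rigorous bounds $\dim_H(X_{2(2)}) < 0.72$ and $\dim_H(X_{1(8)}) < 0.62$ from Table~\ref{table:clu}, note both are below $\frac{5}{6}$, and invoke Huang's reduction (Proposition~\ref{thm:huang}). The only slip is cosmetic: the tabulated error for $X_{1(8)}$ is $\pm 1.5\cdot 10^{-10}$ rather than $\pm 3\cdot 10^{-10}$, which changes nothing in the conclusion.
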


\subsubsection{Truncation method}
The use of the Hurwitz zeta function works well for the modular examples considered above.  
For more general countable alphabets $ A \subset \mathbb N$ we may have to resort 
to a cruder approximation argument. To illustrate this we can consider the
restriction to a finite alphabet $A_N =
    A \cap \{1,2, \cdots, N\}$.
    If we let $\mathcal L_{A,t}f(x) = \sum_{n \in A} f((x+n)^{-1})(x+n)^{-2t}$
    and $\mathcal L_{A_N,t}f(x) = \sum_{n \in A_N} f((x+n)^{-1})(x+n)^{-2t}$.
    Given $t$ we can pick an $m$ and apply the Chebyshev---Lagrange interpolation with $m$ nodes
    to find a polynomial test function
    $$
    f_{N,m}(x) = \sum_{j=0}^{m-1} a_j x^j
    $$ 
    approximating the eigenfunction for $\mathcal L_{A_N,t}$.  But if we
    subsequently want to use this function in Lemma~\ref{tech} 
    to study~$\mathcal L_{A,t}$ we also need to obtain an upper bound for the remainder
    $$
    E (x) = \sum_{n \in A \setminus A_N}  f_{N,m} \left(\frac{1}{x+n}\right) \frac{1}{(x+n)^{2t}} 
    =  \sum_{j=0}^m a_j \sum_{n \in A \setminus A_N} \frac{1}{(x+n)^{j+2t}}.
%    \right)^{i+2t}.
    $$
    We can also bound 
    $$
    \sum_{n \in A \setminus A_N} (x+n)^{-j-2t}
    \leq \sum_{N+1}^\infty (x+n)^{-j-2t}
    \leq \int_{N}^\infty x^{-j-2t} dx
    = \frac{1}{(j+2t-1) \cdot N^{j+2t-1} }
    $$
    and then 
    $$
    |E(x)| \leq  
    \sum_{j=0}^m |a_j| \frac{1}{(j+2t-1)\cdot N^{j+2t-1}}.
    $$
    This bound is only polynomial in~$N$ giving this approach limited
    use in precise estimates, in comparison with limits sets generated by a
    finite number of contractions. 

However, this elementary approach leads to a simple 
direct proof of Conjecture \ref{conj:huang} 
(without resorting to introducing  the Hurwitz zeta function).

\begin{example}[An elementary bound for $\dim_H X_{0 (2)}$ revisited]
%The first   case  is to show that the dimension of the limit set $X_{even}$
%generated by the even branches $\{T_k(x) = \frac{1}{x+2k} \colon k \in
%\mathbb N\}$ has 
We  will use a  more  elementary proof based on the interpolation method. 
%$f(x) = \frac{4}{5} - \frac{1}{5}x$.   
We can consider the transfer operator
$$
(\mathcal L_{t}f)(x) = \sum_{n=1}^\infty f\left( \frac{1}{x+2n}\right) \frac{1}{(x+2n)^{2t}}.
$$
We can separate the first term of the infinite series and estimate the remaining
sum by an integral. More precisely, we may write %split the summation in to two parts, one finite and the other infinite, to write
%$\mathcal L_{t}f(x) = \mathcal L_{t}^0f(x) +  E(x)$ where 
\begin{equation*}
(\mathcal L_{t}f)(x) = f\left( \frac{1}{x+2}\right) \frac{1}{(x+2)^{2t}} +
\sum_{n=2}^\infty f\left( \frac{1}{x+2n}\right) \frac{1}{(x+2n)^{2t}}.
%%%%\mathcal L_{t}^0f(x) &= \sum_{n=1}^M f\left( \frac{1}{x+2n}\right)
%%%%\frac{1}{(x+2n)^{2t}}, \mbox{ and } \\
%%%%E(x) &= \sum_{n=M+1}^\infty f\left( \frac{1}{x+2n}\right) \frac{1}{(x+2n)^{2t}}.
\end{equation*}
We can consider the trivial test function $f = \bbone_{[0,1]}$ and obtain an
upper bound
\begin{align*}
    |E(x)| \colon &=
   \sum_{n=2}^\infty f\left( \frac{1}{x+2n}\right) \frac{1}{(x+2n)^{2t}}  \leq
    \|f\|_\infty  \left( \sum_{n=2}^\infty \frac{1}{(x+2n)^{2t}}\right)   \leq 
    \int_{1}^\infty \frac{1}{(2u)^{2t}} du \\ &
    =\frac{1}{(2t-1)2^{2t}} .
\end{align*}
%where we bound the infinite series by simpler definite integrals.
In particular, if we take $t=\frac56$ then $|E(x)| \leq  3 \cdot 2^{-7/3}$.
%and $\mathcal L_t \bbone (x) = \frac{1}{(2+x)^{5/3}}$, which is monotone decreasing.  
Thus 
$$
\sup_{[0,1]} \frac{\mathcal L_{t} \bbone_{[0,1]} }{\bbone_{[0,1]}} 
\leq \sup_{[0,1]} \left(\frac{1}{(2+x)^{5/3}} + E(x) \right) \le 2^{-5/3} + 3 \cdot
2^{-7/3}<0.95.  
$$
We conclude that $P\left(\frac56\right) < 0$ and Lemma~\ref{tech}
implies that~$\dim_H(X_{even}) <\nolinebreak \frac{5}{6}$.
\end{example}

\begin{example}[An elementary bound for $\dim_H (X_{1 (8)})$ revisited]
This time we can take  $f(x) = 1 - \frac{x}{2}$ and consider the 
image under the transfer operator
\begin{equation}
  \label{trop:xoct:eq}
\mathcal L_{t}f(x) = f\left( \frac{1}{x+1}\right) \frac{1}{(x+1)^{2t}} +
\sum_{n=2}^\infty f\left( \frac{1}{x+8n-7}\right) \frac{1}{(x+8n-7)^{2t}}
\end{equation}
%For $M > 0$ we can write $L_{t}f(x) = L_{t}^0f(x) +  E(x)$ where 
%$$
%\mathcal L_{t}^0f(x) = \sum_{n=1}^M f\left( \frac{1}{x+8n-7}\right) \frac{1}{(x+8n-7)^{2t}}.
%$$
%In particular, if we let $f(x) = 1 - x/2$ then we can write 
We have an upper bound for the remainder term
\begin{align*}
    |E(x)| \colon &= 
    \sum_{n=2}^\infty f\left( \frac{1}{x+8n-7}\right) \frac{1}{(x+8n-7)^{2t}}
    \\&=  \sum_{n=2}^\infty  \frac{1}{(x+8n-7)^{2t}} \left(1- \frac{1}{2(x+8n-7)} \right) \cr
    &\leq  \left(\int_1^\infty \frac{1}{(8u-7)^{2t}} du 
    - \frac{1}{2}\int_{2}^\infty \frac{1}{(8u-7)^{2t+1}} du \right) \cr
    &= \frac18\left( \frac{1}{2t-1} - \frac{1}{4t \cdot 9^{2t}} 
    \right).
\end{align*}
Substituting $t = \frac{5}{6}$ we obtain %and $M=1$, by bounding the series by an integral we can write 
\begin{equation}
\label{E:xoct:eq}
|E(x)| \leq  \frac{3}{16} \left(1 - \frac{1}{5 \cdot 9^{5/3}} \right). % = 0.185574 \cdots
\end{equation}
In particular, we can now estimate 
\begin{align*}
\sup_{[0,1]}  \frac{{\mathcal L}_{t} f(x)}{f(x)} 
&= \sup_{[0,1]} 
%\left\{\frac{\mathcal L^0_{t} f(x) +E(x)}{f(x)} \right\} 
\left(\frac{f\bigl(\frac{1}{x+1}\bigr)}{f(x)(x+1)^{5/3}}+ \frac{E(x)}{f(x)}
\right) \leq
\sup_{[0,1]} \frac{f\bigl(\frac{1}{x+1}\bigr)}{f(x)(x+1)^{5/3}} +
\frac{\sup_{[0,1]}E(x)}{\inf_{[0,1]} f(x)} \\ &= 
\sup_{[0,1]} \frac{1-\frac{1}{2(x+1)}}{(1-\frac{x}{2})(x+1)^{5/3}} + \frac{3}{8}
\left(1 - \frac{1}{5 \cdot 9^{5/3}} \right) <1 .
\end{align*}
%%%%
%%%%We have the following upper bound for the first term of~\eqref{trop:xoct:eq}
%%%%\begin{align*}
%%%%\sup_{[0,1]} f\left(\frac1{x+1}\right) \frac{1}{(x+1)^{2t}} &= \sup_{[0,1]} \left( 1 -
%%%%\frac{1}{2(x+1)}\right) \frac{1}{(x+1)^{5/3} } \\ &= \left( 1 -
%%%%\frac{1}{2(x+1)}\right) \frac{1}{(x+1)^{5/3} }\biggl|_{x=1/4} = \frac{3\cdot
%%%%4^{5/3}}{ 5^{8/3}} + 
%%%%%\left(\frac45\rig ht)^{5/3} 
%%%%\end{align*}
%%%%
The result follows from Lemma~\ref{tech}.
%%%%we conclude that $e^{P(5/6)} < 1$ (i.e., $P(5/6) < 0$) which implies that 
%%%%$\dim_H(X_{oct}) < \frac{5}{6}$.
\end{example}

%\begin{rem} One could also consider the iterated function scheme on $\mathbb C$ defined by 
%$$
%\phi_{n,i}(z) = \frac{1}{z+a+bi}, \colon a \in \mathbb N, b \in \mathbb Z
%$$
%and let $X$ be the limit set. 
%It is known that  $1.825 < \dim(X) <1.885$.  Again the present approach does not give particularly good estimates.
%\end{rem}

\subsection{Symmetric Schottky group}

We can represent the limit set $X_\Gamma \subset \{z\in \mathbb C \mid
|z|=1\}$ of a Fuchsian Schottky group as the limit set of an associated  Markov
iterated function scheme.  To construct the contractions it is more convenient
to use the alternative model for hyperbolic space consisting of the upper
half-plane $\mathbb H^2 = \{x+iy\ \mid y > 0\}$ 
supplied with the Poincar\'e metric $ds^2 = \frac{dx^2+dy^2}{y^2}$.
%This can be achieved 
%using the transformation $T: \mathbb D \to \mathbb H^2$ given by 
%$T(z) = -i \frac{z-1}{z+1}$.
% The limit set of $\Gamma$ is the set of Euclidean accumulation points $\Gamma x_0$ for any $x_0$ in hyperbolic space, which will lie on the boundary space which in this case is the extended real line.
Geodesics on the upper half plane~$\mathbb H^2$ are either circular arcs which
meet the boundary orthogonally or vertical lines. 
    Applying the transformation $ T: \mathbb D \to \mathbb H^2$ given by $ T : z \mapsto -i
    \frac{z-1}{z+1}$    
    we obtain three geodesics on~$\mathbb H^2$. 
     The group generated by reflections with respect to three geodesics with end
points $e^{\pi i(2j\pm1)/6}$, $j = 0,1,2$ in the unit disk when transferred to the half-plane becomes a group generated by
reflections with respect to half-circles, see Figure~\ref{fig:schottky}. 
\begin{figure}[h!]
    \begin{center}
        \includegraphics{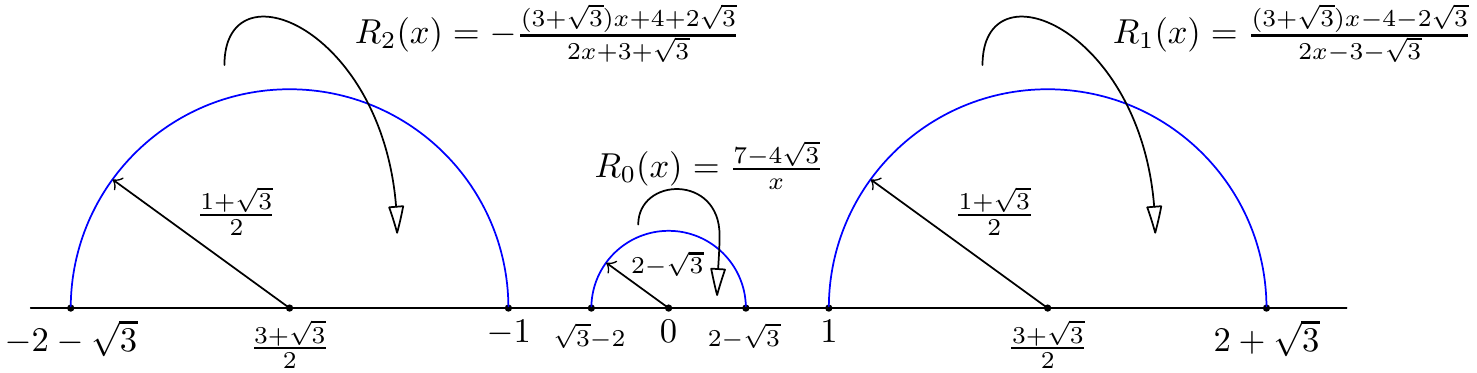}
%%%%%%%polina%%%%%%        \begin{tikzpicture}[scale=1.3]
%%%%%%%polina%%%%%%            \draw[thick] (-4,0)--(4,0);
%%%%%%%polina%%%%%%            \draw [red,thick,domain=0:180] plot ({0.267*cos(\x)}, {0.267*sin(\x)});
%%%%%%%polina%%%%%%            \draw [red,thick,domain=0:180] plot ({2.36603 + 1.36603*cos(\x)}, {1.36603*sin(\x)});
%%%%%%%polina%%%%%%            \draw [red,thick,domain=0:180] plot ({-2.36603 + 1.36603*cos(\x)}, {1.36603*sin(\x)});
%%%%%%%polina%%%%%%            \draw[dashed,->] (-0.267,2)--(-0.267,0);
%%%%%%%polina%%%%%%            \draw[dashed,->] (0.267,1)--(0.267,0);
%%%%%%%polina%%%%%%            \node at (0.5,  1.5) {$2-\sqrt{3}$};
%%%%%%%polina%%%%%%            \node at (-0.267,  2.5) {$-2+\sqrt{3}$};
%%%%%%%polina%%%%%%            \node at (1,  -0.5) {$1$};
%%%%%%%polina%%%%%%            \node at (-1,  -0.5) {$-1$};
%%%%%%%polina%%%%%%            \node at (3.72,  -0.5) {$2+\sqrt{3}$};
%%%%%%%polina%%%%%%            \node at (-3.72,  -0.5) {$-2-\sqrt{3}$};
%%%%%%%polina%%%%%%            %\draw[red] (3,0) arc (30:60:3);
%%%%%%%polina%%%%%%            %\draw[dashed, <->] (7.2,0.2)--(4,0.2);
%%%%%%%polina%%%%%%            %\node at (9.5,  0.5) {$t$};
%%%%%%%polina%%%%%%            %\node at (-0.7,  3.2) {$\log d$};
%%%%%%%polina%%%%%%            \end{tikzpicture}
        \end{center}
        \caption{Group $\Gamma$ generated by three reflections $R_0$, $R_1$,
        $R_2$. }
       % in
       % geodesics~$\gamma_1$,~$\gamma_2$ and~$\gamma_3$.}
      \label{fig:schottky}
    \end{figure}

A reflection with
respect to a circle of radius~$r$ centred at~$c$ is given by the formula
\begin{equation}
    R(z) = \frac{r^2}{(z-c)} + c. % \mbox{ where }i=1,2,3
    \label{refhyp:eq}
\end{equation}
 To compute the radius and the centre of the circle of reflection, we calculate
 the end points by the formula $T(e^{i2\varphi}) = \tan \varphi$ and 
%    and compute corresponding     reflections using formula
applying~\eqref{refhyp:eq}, we obtain
%%%%polina%%%%    in the example can be represented as  $\Gamma =
%%%%polina%%%%    \langle R_1, R_2, R_3\rangle$ we are considering is generated  by
%%%%polina%%%%    reflections $R_1, R_2, R_3$ in the three geodesics corresponding to circular
%%%%polina%%%%    arcs with end points: 
%%%%polina%%%%    \begin{enumerate}
%%%%polina%%%%        \item[(a)] $- 2- \sqrt{2}$ and $2-\sqrt{2}$; 
%%%%polina%%%%        \item[(b)] $1$ and $2 + \sqrt{3}$;  and 
%%%%polina%%%%        \item[(c)] $-1$ and $-(2 + \sqrt{3})$.
%%%%polina%%%%    \end{enumerate}
%%%%polina%%%%    These are represented in Figure \ref{figureSchottky}.
%%%%polina%%%%    The corresponding reflections in the upper half--plane model then take the form
%%%%polina%%%%    %We can consider the action of the reflection $R_1, R_2, R_3: \mathbb R\cup \{\infty\} \to \mathbb R\cup \{\infty\} $
%%%%polina%%%%    %on the boundary $\mathbb R\cup \{\infty\} $ given by 
%%%%polina%%%%    $$
%%%%polina%%%%    $$
%%%%polina%%%%    with 
%%%%polina%%%%    \begin{enumerate}
%%%%polina%%%%        \item[(a)] $c_1= 0$ and $r_1 = 2 - \sqrt{3}$;
%%%%polina%%%%        \item[(b)] $c_2= (3+\sqrt{3})/2$ and $r_1 = (1+ \sqrt{3})/2$; and 
%%%%polina%%%%        \item[(c)] $c_2=  -(3+\sqrt{3})/2$ and $r_1 = (1+ \sqrt{3})/2$.
%%%%polina%%%%    \end{enumerate}
%%%%polina%%%%    In particular, we can write 
%\begin{align*}
\begin{equation}
    \label{rjref:eq}
    R_0(z) = \frac{7-4\sqrt{3}}{z}, \quad
    R_1(z) = \frac{(3 + \sqrt{3})z - 4 - 2\sqrt{3}}{2z-3 - \sqrt{3} }, \quad
    R_2(z) = -\frac{(3 + \sqrt{3})z + 4 + 2\sqrt{3}}{2z+3 + \sqrt{3}}. % \mboz{ and }
\end{equation}
%  \end{align*}
    Then the limit set  $X_\Gamma \subset \cup_{j=0}^2 X_j  \subset \mathbb R$
    consists of accumulation points  of the set 
    $$
    \left\{ R_{j_1} R_{j_2} \cdots R_{j_n} (i) \mid 
    j_1, j_2, \cdots, j_n \in \{0,1,2\} \mbox{ where } j_r \neq j_{r+1} \mbox{
    for } 1 \leq r \leq n-1 \right\}.
    $$
    Since $T :\{z \colon |z| =1 \} \to \mathbb R \cup \{\infty\}$ is a conformal
    map we know that $X_\Gamma$ has the same dimension as the corresponding
    limit set in the unit circle. 

    \paragraph{Proof of Theorem \ref{mcm}.}
In order to apply the technique developed in \S\ref{sec:estimates} we need to
define a Markov iterated function scheme consisting of contractions
whose limit set coincides with~$X_\Gamma$. 
For instance we may consider the three intervals enclosed by the geodesics, more
precisely, we define
\begin{equation}
  \label{fuchsX:eq}
  X_0 := [-2+\sqrt{3}, 2-\sqrt{3}], \quad X_1 :=
  [1, 2+\sqrt{3} ], \quad X_2 := [-2-\sqrt{3}, -1].
\end{equation}
Then the limit set $X_\Gamma$ for $\Gamma$ can be identified with the limit set 
$X_A \subset \cup_{j=0}^2 X_j  \subset \mathbb R$ of the Markov iterated
function scheme  with contractions 
$R_j \colon \cup_{k \neq j} X_k \to X_j$ for $j=0,1,2$  
%$X_\Gamma \subset \cup_{i=1}^3 X_i  \subset \mathbb R$
with transition matrix
$$
M = \begin{pmatrix}
  0&1&1\\
  1&0&1\\
  1&1&0\\
\end{pmatrix}.
$$
The associated transfer operator takes the form  
\begin{align*}
(\mathcal L_t \underline f)_0(z) &= f_1 (R_1(z)) \cdot |R^\prime_1(z)|^t +
f_2(R_2(z)) \cdot |R^\prime_2(z)|^t, \quad z \in X_0\\
(\mathcal L_t \underline f)_1(z) &= f_0 (R_0(z)) \cdot |R^\prime_0(z)|^t +
f_2(R_2(z)) \cdot |R^\prime_2(z)|^t, \quad z \in X_1\\
(\mathcal L_t \underline f)_2(z) &= f_0 (R_0(z)) \cdot |R^\prime_0(z)|^t +
f_1(R_1(z)) \cdot |R^\prime_1(z)|^t, \quad z \in X_2.
\end{align*}
Looking at the formulae~\eqref{rjref:eq} we may observe that $R_1(z) = -R_2(-z)$, 
$R_1^\prime(z) = R_2^\prime(-z)$, and $R_0(z)=-R_0(-z)$ and therefore $\mathcal
L_t$ preserves the subspace 
$$
V_0:=\{(f_0,f_1,f_2) \in C^\alpha(S)\mid f_1(z) =
f_2(-z), f_0(z) = f_0(-z) \}.
$$

We can apply the bisection method to get rigorous estimates on the dimension of
the limit set $X_\Gamma = X_A$ with the setting $S = \cup_{j=0}^2 X_j$,   
$m = 20$ and $\varepsilon=10^{-7}$. We take the interpolation nodes to be zeros
of Chebyshev polynomials transferred to each of the intervals $X_j$ affinely. It gives 
\begin{equation}
  \label{dimfuchs:eq}
t_0 \colon = 0.29554647 < \dim_H X_A < 0.29554648= \colon t_1.
\end{equation}
Lagrange---Chebyshev interpolation gives test functions 
$\underline f,\underline g \in V_0$,
whose components are presented in Figure~\ref{mcfigure}.

\begin{figure}[h]
    \begin{center}    
\begin{tabular}{ccc}
\includegraphics{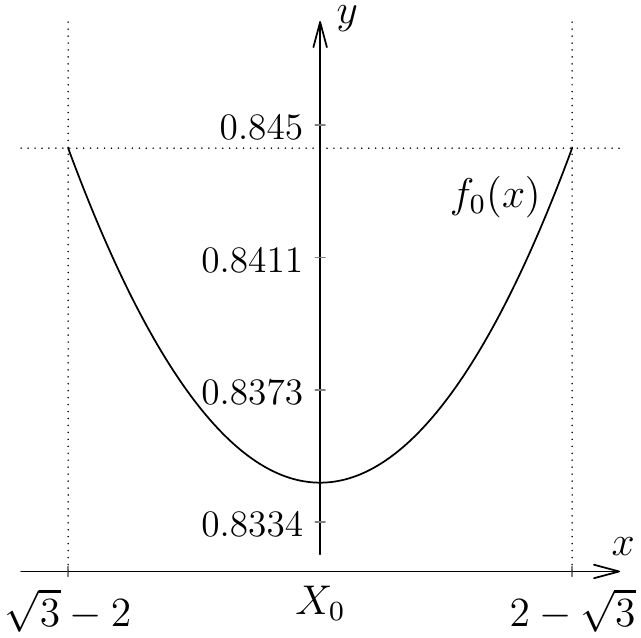}
& &
\includegraphics{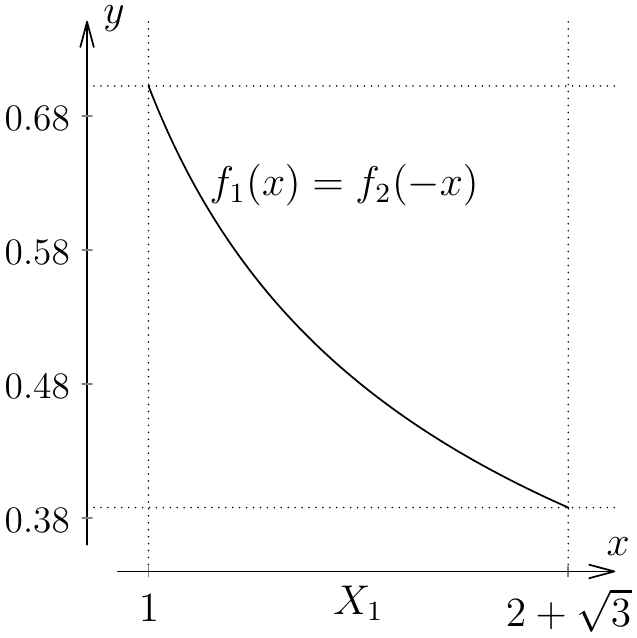}
\\
%$f_2(x)$ & & $f_3(x) = f_1(-x)$ \\%\includegraphics[width=0.3\textwidth]{mcfig3.pdf}
%\hskip 0.5cm
\end{tabular}
\end{center}
\caption{Theorem \ref{mcm}: 
% \ref{fourththeorem}:
The graphs of $f_0:X_0 \to \mathbb R$ and $f_1:X_1 \to \mathbb R$  
(the plots of $g_k$, $k=0,1$ are similar). The plot of $f_2$ is the mirror image
of $f_1$.} 
%The first plot is an even function and 
%the plots of the others are mirror images because the transfer operator preserves the space of functions with these symmetries.} 
\label{mcfigure}
\end{figure}

We can estimate numerically
$$
\sup_S \frac{\mathcal L_{t_1} \underline f}{\underline f}  < 1-10^{-10}, \qquad
\inf_S\frac{\mathcal L_{t_0} \underline g}{\underline g}  > 1 + 10^{-8}. 
$$
    The result follows from Lemma~\ref{tech}.

        There is a simple connection between the dimension $\dim_H(X)$ of the
        limit set $X$ and the smallest eigenvalue $\lambda_0>0$
        of the Laplace---Beltrami operator on the non-compact surface $\mathbb
        H^2/\Gamma$ \cite{Sullivan}. More precisely, $ \lambda_0 =
        \dim_H(X_\Gamma)(1-\dim_H(X_\Gamma))$. Applying the
        estimates~\eqref{dimfuchs:eq}, we
        obtain  $\lambda_0 = 0.2081987565\pm 2.5\cdot10^{-9}$. %   = 0.208198 \cdots 
    \qed 

    \begin{rem}
        By increasing $m$ it is an easy matter to get better estimates on the dimension of the limit set.
        For example, taking $m=25$ we can improve the bounds to
        $\dim(X_\Gamma) = 0.2955464798\,845\pm 4.5
        \cdot10^{-12}$   and $ \lambda_0 = 0.208198758112 \pm 2.5\cdot
        10^{-13}$. The coefficients of the corresponding test functions are
        given in~\S\ref{Appendix VII}. 
    \end{rem}

%%%%%%%%%%%%%%%%%%%%%%%%%%%%%%%%%%

    \subsubsection{Other symmetric Schottky groups}

    More generally, McMullen~\cite{McMullen} considered the Schottky  group $\Gamma_\theta =
    \langle R_0, R_1, R_2\rangle$ generated by reflections $R_0, R_1, R_2: \mathbb D^2 \to \mathbb D^2$ 
    in three symmetrically placed geodesics (with respect to the Poincar\'e metric) with six end 
    points~$e^{2 \pi i j/3 \pm \theta/2}$, $j=0,1,2$ on the unit circle (Figure~\ref{schottky1}).

    \begin{figure}[h!]
    \begin{center}
    \includegraphics{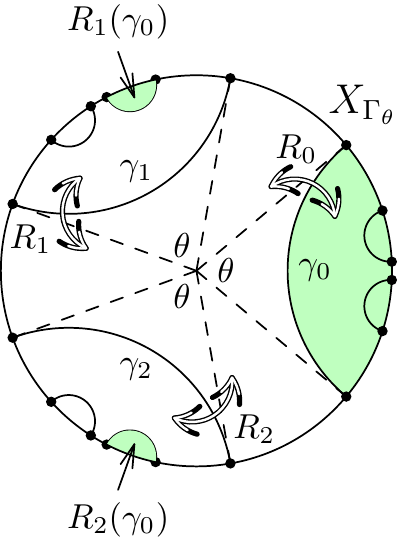}
    \end{center}
    \caption{The group $\Gamma_\theta$ generated by three reflections in geodesics~$\gamma_0$, $\gamma_1$, and $\gamma_2$.} 
    \label{schottky1}
    \end{figure}
    Similarly to the special case that $\theta = \frac{\pi}{3}$ which we have already considered,
    one can transform the unit disk $\mathbb D^2$ to the upper half plane $\mathbb H^2$ and compute the centres and
    the radii of reflections
    \begin{align*}
    c_j & = \frac12\left(\tan\left(\frac{\pi j }3 + \frac\theta4\right)+
    \tan\left(\frac{\pi j}3 - \frac\theta 4\right)\right), \mbox{ and }\\
    r_j & = \frac12\left|\tan\left(\frac{\pi j }3 + \frac\theta4\right)-
    \tan\left(\frac{\pi j}3 - \frac\theta 4\right)\right|, \ j = 0,1,2.
    \end{align*}
    The limit set $X_{\Gamma_\theta}$ is again defined as the accumulation points of the orbit $\Gamma_\theta i$.  
    We can introduce a corresponding Markov iterated function scheme whose limit set coincides with  
    $X_{\Gamma_\theta}$.  
    We can consider two representative examples and estimate the Hausdorff dimension of the associated limit set.
    \begin{example}[$\theta= 2\pi/9$]
    In this case setting $m=15$ %and working with accuracy of $10^{-50}$ 
    we can obtain an estimate to 11 decimal places of the form
    $$
    \dim_H(X_{\Gamma_\theta}) = 0.2177658102\,55 \pm 5 \cdot 10^{-12}.
    $$
    %$$
    % t_0=0.21776581025 < \dim_H(X_{\Gamma_\theta}) < t_1 = 0.21776581026
    %$$
    This agrees with McMullen's result (given to $8$ decimal places).
    %We can estimate numerically
    %$$
    %\sup_S \frac{\mathcal L_{t_1} \underline f}{\underline f}  < 1-10^{-11}, \qquad
    %\inf_S\frac{\mathcal L_{t_0} \underline g}{\underline g}  > 1 + 10^{-12}. 
    %$$
    %
    %The result follows from Lemma~\ref{tech}.
    \end{example}

    \begin{example}[$\theta= \pi/9$]
    In this case we can let $m=12$ to deduce an estimate to 11 decimal places of the form
    $$
    %t_0=0.15118368203 < \dim_H(X_{\Gamma_\theta}) < t_1 = 0.15118368204
    \dim_H(X_{\Gamma_\theta}) = 0.1511836820\,35 \pm 5 \cdot 10^{-12}.
    $$
    %also working with the accuracy of $10^{-50}$.
    %In particular, We can estimate numerically
    %$$
    %\sup_S \frac{\mathcal L_{t_1} \underline f}{\underline f}  < 1-10^{-11}, \qquad
    %\inf_S\frac{\mathcal L_{t_0} \underline g}{\underline g}  > 1 + 10^{-11}. 
    %$$
    This agrees with McMullen's result (given to $8$ decimal places).
%1.00000000003762105 and 0.99999999999193187
    %The result is a direct application of Lemma~\ref{tech}.
%   with the minimum and maximum 0.9999999999895195 and 1.0000000000073460
\end{example}

%%%%%%%%%%%%%%%%%%%%%%%%%%%%%%%%%%%%%%%%%%

    \subsection{Other iterated function schemes}
    \label{sec:moreExamples}
    To conclude we will collect together  a number of other examples of iterated
    function schemes that have attracted attention of other authors 
    and give estimates on the Hausdorff dimension of their limit sets.

    \subsubsection{Non-linear fractional example}
%%%%polina%%%%    A Blaschke product $f: \widehat {\mathbb C} \to \widehat {\mathbb C}$ is a
%%%%polina%%%%    classical example of an analytic function prserving the unit disk. It is defined by
%%%%polina%%%%    $$
%%%%polina%%%%    f(z) = e^{it} \prod_{j=1}^\infty \left( \frac{z - a_j}{ 1 - \overline{a_j} z}\right), \quad z \in  \widehat {\mathbb C}
%%%%polina%%%%    $$
%%%%polina%%%%    where $a_j$ is a sequence of points in the unit disk satisfying the Blaschke
%%%%polina%%%%    condition 
%%%%polina%%%%    $$
%%%%polina%%%%    \sum_{j=1}^\infty (1- |a_j|) < 1.
%%%%polina%%%%    $$
%%%%polina%%%%
 %In particular, $f$ preserves the  unit disk $\mathbb D^2$. 
    So far we have been studying iterated function schemes generated by
    linear fractional  transformations. 
    Following~\cite{McMullen}, \S6  we will consider 
    a simple example of a map of $\mathbb H^2$ of a different nature. 
%    which is more convenient to express by mapping the disk $\mathbb D^2$ to the
%    upper half plane $\mathbb H^2 = \{x+iy \colon y \geq 0\}$.  
    For any $0 < t \leq 1$ we can define
    $$
    f_t(z) = \frac{z}{t} - \frac{1}{z}.
    $$
    If $t = 1$ then the real line is $f$-invariant and 
    there is no strictly  smaller closed invariant set.  If $0 < t < 1$ then
    there there exist a $f$-invariant Cantor $X_t \subset \mathbb
    R$~\cite{Ruelle82}.
    \begin{example}[$t = \frac{1}{2}$] 
        The map $f(z) =  2z - \frac{1}{z}$ has a limit set $X \subset [-1,1]$
        and there are two inverse branches $T_1, T_2\colon [-1,1] \to [-1,1]$ given by
        
        \begin{align}
            T_1(x) &= \frac{1}{4} (x - \sqrt{8 + x^2]})\cr
            T_2(x) &= \frac{1}{4} (x + \sqrt{8 + x^2}),\label{nonlin:eq}
        \end{align}
        which define a Bernoulli system on $[-1,1]$. The transfer operator
        defined by~\eqref{trop:eq} takes the form 
        $$
        (\mathcal L_t f)(x) = f(T_1(x)) |T_1^\prime(x)|^t +  f(T_2(x))
        |T_2^\prime(x)|^t.
        $$
        We may observe that $T_1(x) = -T_2(-x)$ and
        $T_1^\prime(x)=T^\prime_2(-x)$. It follows that the transfer operator
        preserves subspaces consisiting of odd and even functions.
        %It follows that odd functions belong to
        %the kernel of $\mathcal L_t$, since the equality $f(x) = -f(-x)$ implies
        %\begin{multline}
        % (\mathcal L_t f)(x) = -f(-T_1(x)) |T_2^\prime(-x)|^t - f(-T_2(x))
        %|T_2^\prime(x)|^t \\ = -f(T_2(-x) )  |T_2^\prime(-x)|^t - f(T_1(-x))
        %|T_1^\prime(-x)|^t = - (\mathcal L_t f)(-x) 
        % \end{multline}
        Applying the bisection method with $S=[-1,1]$, $m=10$ we obtain
        that\footnote{Table 14 in~\cite{McMullen} gives an estimate
        $0.49344815$, which is correct except for the last two significant figures.  An
        inconsequential typographical mistake in~\cite{McMullen} is that
        there is an incorrect sign in the equation in the caption to Table~14.} 
        $$
        \dim_H X = 0.4934480908\,025 \pm  5\cdot 10^{-13}.
        $$
        The corresponding test functions $f$ and $g$ for $t_0 = 0.493448088\,02$ and $t_1 =
        0.4934480908\,03$, respectively, turn out to be even and given by
        $$
        f(x) = \sum_{n=0}^{7} a_{2n} x^{2n} \qquad g(x) = \sum_{n=0}^7 b_{2n}
        x^{2n}, 
        $$
        which are plotted in Figure~\ref{blasfig} and whose coefficients are
        given in \S\ref{Appendix VIII}.

            \begin{figure}[h!!]
                \centerline{   \includegraphics{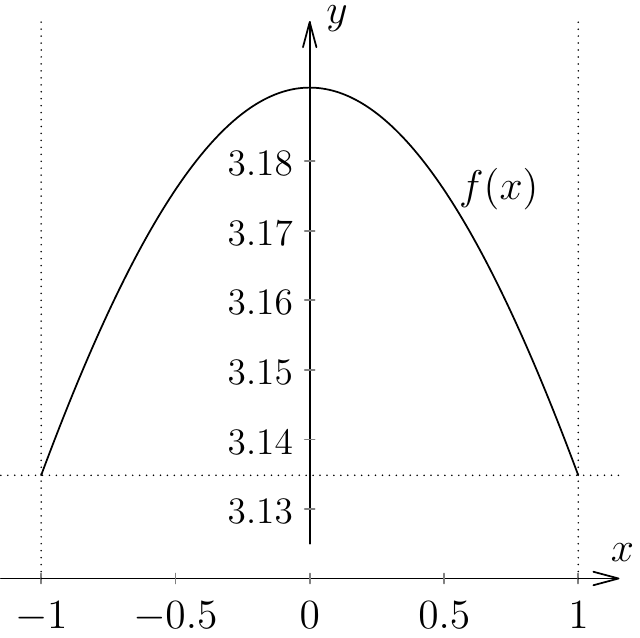}  }
                \caption{A plot of the function $f$ for the
                system~\eqref{nonlin:eq} (the function $g$ being similar)}
            \label{blasfig}
            \end{figure}
        %%%%figures%%%%    
        %%%%figures%%%%    %\vskip 0.5cm
        %%%%figures%%%%    %\centerline{
        %%%%figures%%%%    %\includegraphics[width=0.15\textwidth]{25aa.pdf}
        %%%%figures%%%%    %\hskip 0.25cm
        %%%%figures%%%%    %\includegraphics[width=0.15\textwidth]{25cc.pdf}
        %%%%figures%%%%    %\hskip 0.25cm
        %%%%figures%%%%    %\includegraphics[width=0.15\textwidth]{25dd.pdf}
        %%%%figures%%%%    %\hskip 0.25cm
        %%%%figures%%%%    %\includegraphics[width=0.15\textwidth]{24gg.pdf}
        %%%%figures%%%%    %\hskip 0.25cm
        %%%%figures%%%%    %\includegraphics[width=0.15\textwidth]{25hh.pdf}
        %%%%figures%%%%    %\hskip 0.25cm
        %%%%figures%%%%    %\includegraphics[width=0.15\textwidth]{25ii.pdf}
        %%%%figures%%%%    %}
        %%%%figures%%%%    %\vskip 0.5cm
        %%%%figures%%%%    %\centerline{
        %%%%figures%%%%    %\centerline{
        %%%%figures%%%%    %\includegraphics[width=0.15\textwidth]{25aaa.pdf}
        %%%%figures%%%%    %\hskip 0.25cm
        %%%%figures%%%%    %\includegraphics[width=0.15\textwidth]{25ccc.pdf}
        %%%%figures%%%%    %\hskip 0.25cm
        %%%%figures%%%%    %\includegraphics[width=0.15\textwidth]{25ddd.pdf}
        %%%%figures%%%%    %\hskip 0.25cm
        %%%%figures%%%%    %\includegraphics[width=0.15\textwidth]{25ggg.pdf}
        %%%%figures%%%%    %\hskip 0.25cm
        %%%%figures%%%%    %\includegraphics[width=0.15\textwidth]{25hhh.pdf}
        %%%%figures%%%%    %\hskip 0.25cm
        %%%%figures%%%%    %\includegraphics[width=0.15\textwidth]{25iii.pdf}
        %%%%figures%%%%    %}
        %%%%figures%%%%    %}
        %%%%figures%%%%    
        %%%%figures%%%%    % \ref{fourththeorem}:
        %%%%figures%%%%    }
        %%%%figures%%%%    \end{figure}
        %%%%figures%%%%    

        We can also compute
        $$
        \inf_S \frac{\mathcal L_{t_0} f}{f} >  1 + 10^{-13} \qquad % 000000001088 \cdots 
        \sup_S\frac{\mathcal L_{t_1} g}{g} < 1 - 10^{-13}.
        $$
        to justify the dimension estimates above.
%        and the result follows from Lemma~\ref{tech}.

    \end{example}

    \begin{rem}
        These estimates can easily be improved  by increasing the number of Chebyshev points.
        For example, letting $m = 20$ gives a better estimate
        $$
        \dim_H(X) = 0.4934480908\,02613 \pm 10^{-15}.
        $$
    \end{rem}

    \subsubsection{Hensley Examples}
    \label{ss:hensley}
    In a well known article from 1992, Hensley~\cite{Hensley92} presented an algorithm for
    calculating the Hausdorff dimension of the limit sets for suitable iterated
    function schemes. In this article there was included a table containing
    estimates on selected examples which were good for the computational
    resources available at the time and continued to be quoted up to the present time. 
    It is a simple matter to apply the method~in \S\ref{sec:estimates} 
    to improve these estimates. We give lower and upper bounds in
    Table~\ref{tab:hensley}.  
    %%%%%%%%%%%%%%%%%dimensions%%%%%%%%%%%%%%%%%%%%%%%%
     \begin{table}
         \begin{center}
        \begin{tabular}{ |l|l|c|c| } 
            \hline
            Alphabet $A$ & Hensley Estimate & $d$ \\ %% & $t_0$ \\ 
            \hline
         $  1,2				   $ & $   0.5312805062\,772051416   $ & $   0.5312805062\,7720514162\,4    $  \\   %%  $ & $   0.5312805062\,7720514162\,6 \\ 
         $  1,3				   $ & $   0.4544827                 $ & $   0.4544890776\,6182874384\,5    $  \\   %%  $ & $   0.4544890776\,6182874384\,7 \\ 
         $  1,4				   $ & $   0.4111827                 $ & $   0.4111827247\,7479177684\,4    $  \\   %%  $ & $   0.4111827247\,7479177684\,6 \\ 
         $  2,3				   $ & $   0.337437                  $ & $   0.3374367808\,0606363630\,4    $  \\   %%  $ & $   0.3374367808\,0606363630\,6 \\ 
         $  2,4				   $ & $   0.306313                  $ & $   0.3063127680\,5278403027\,7    $  \\   %%  $ & $   0.3063127680\,5278403028\,0 \\ 
         $  3,4				   $ & $   0.263737                  $ & $   0.2637374828\,9742655875\,0    $  \\   %%  $ & $   0.2637374828\,9742655877\,1 \\
         $  1,2,3			   $ & $   0.7056609080\,            $ & $   0.7056609080\,2873823060\,7    $  \\   %%  $ & $   0.7056609080\,2873823060\,9 \\ 
         $  1,2,4			   $ & $   0.66922149                $ & $   0.6692214869\,1028607643\,2    $  \\   %%  $ & $   0.6692214869\,1028607643\,4 \\ 
         $  1,3,4			   $ & $   0.6042422606\,9111965     $ & $   0.6042422577\,5648956551\,0    $  \\   %%  $ & $   0.6042422577\,5648956551\,3 \\ 
         $  2,3,4			   $ & $   0.480696                  $ & $   0.4806962223\,1757304132\,2    $  \\   %%  $ & $   0.4806962223\,1757304132\,4 \\ 
         $  1,2,3,4		       $ & $   0.788946                  $ & $   0.7889455574\,8315397254\,0    $  \\   %%  $ & $   0.7889455574\,8315397254\,2 \\ 
         $  1,2,7			   $ & $   0.6179036954\,6338        $ & $   0.6179036954\,6337565066\,2    $  \\   %%  $ & $   0.6179036954\,6337565066\,4 \\ 
         $  1,3,7			   $ & $   0.55324225                $ & $   0.5532422505\,6731096881\,6    $  \\   %%  $ & $   0.5532422505\,6731096881\,9 \\ 
         $  1,4,7			   $ & $   0.51788376                $ & $   0.5178837570\,0691696528\,4    $  \\   %%  $ & $   0.5178837570\,0691696528\,2 \\ 
         $  2,3,7			   $ & $   0.43801241                $ & $   0.4380124057\,1403118230\,1    $  \\   %%  $ & $   0.4380124057\,1403118232\,4 \\ 
         $  2,4,7			   $ & $   0.410329                  $ & $   0.4103293158\,3768700408\,7    $  \\   %%  $ & $   0.4103293158\,3768700408\,9 \\ 
         $  3,4,7			   $ & $   0.36757914                $ & $   0.3675791395\,9190093176\,3    $  \\   %%  $ & $   0.3675791395\,9190093176\,6\\ 
         $  1,2,3,7            $ & $   0.75026306                $ & $   0.7502630613\,3714304325\,2    $  \\   %%  $ & $   0.7502630613\,3714304325\,5\\ 
         $  1,2,4,7            $ & $   0.7185418875\,            $ & $   0.7185418874\,7036994981\,8    $  \\   %%  $ & $   0.7185418874\,7036994982\,0\\ 
         $  2,3,4,7            $ & $   0.540036                  $ & $   0.5400358121\,5475951902\,6    $  \\   %%  $ & $   0.5400358121\,5475951902\,8\\ 
         $  1,2,3,4,7          $ & $   0.820004                  $ & $   0.8200039471\,2686900746\,5    $  \\   %%  $ & $   0.8200039471\,2686900746\,8\\ 
         $  10,11              $ & $   0.146921                  $ & $   0.1469212353\,9078346331\,0    $  \\   %%  $ & $   0.1469212353\,9078346331\,3\\ 
         $  100,10000          $ & $   0.052247                  $ & $   0.0522465926\,3865887865\,1    $  \\   %%  $ & $   0.0522465926\,3865887865\,4\\ 
         $  2,7                $ & $   0.26022398                $ & $   0.2602238774\,2217867170\,7    $  \\   %%  $ & $   0.2602238774\,2217867171\,0\\ 
         $  1,3,4,7            $ & $   0.66015538                $ & $   0.6601553798\,3237807776\,6    $  \\   %%  $ & $   0.6601553798\,3237807776\,9\\ 
         $  1,7                $ & $   0.34623824                $ & $   0.3462382435\,3395787983\,0    $  \\   %%  $ & $   0.3462382435\,3395787985\,1\\ 
         $  4,7                $ & $   0.2052533419\,4           $ & $   0.2052534193\,6736493221\,5    $  \\   %%  $ & $   0.2052534193\,6736493221\,7\\ 
         $  3,7                $ & $   0.2249239471\,918         $ & $   0.2249239471\,9177898918\,3    $  \\   %%  $ & $   0.2249239471\,9177898918\,5\\ 
         $  1,2,3,4,5          $ & $   0.8368294437\,            $ & $   0.8368294436\,8120882244\,2    $  \\   %%  $ & $   0.8368294436\,8120882244\,5\\ 
         $  2,3,4,5            $ & $   0.55963645                $ & $   0.5596364501\,6477671331\,0    $  \\   %%  $ & $   0.5596364501\,6477671331\,3\\ 
         $  2,3,5              $ & $   0.4616137                 $ & $   0.4616136840\,1828922267\,4    $  \\   %%  $ & $   0.4616136840\,1828922267\,6\\ 
         $  1, 500             $ & $   0.1094760117\,37          $ & $   0.1094760117\,3723275274\,5    $  \\   %%  $ & $   0.1094760117\,3723275274\,8\\ 
            \hline
        \end{tabular}
    \end{center}
        \caption{Numerical data for Hensley examples from~\cite{Hensley92};
        $\dim_H X_A = d \pm 10^{-20}$. }
        \label{tab:hensley}

    \end{table}
    
    \begin{table}[h!]
\begin{center}
       \scalebox{0.9}{%
       \begin{tabular}{ |l|l|c| } 
 \hline
 \multirow{2}{*}{Alphabet $A$} & Jenkinson &\multirow{2}{*}{  $d$ } \\ %& $t_1$ \\ 
 & Estimate  & \\
 \hline
$  1,3,8       $	&	$ 0.5438  $	&	$ 0.5438505824\,0696620129\,10871  $  \\	 
$  1,3,6       $	&	$ 0.5652  $	&	$ 0.5652752192\,8623250537\,07768  $  \\	
$  1,3,5       $	&	$ 0.5813  $	&	$ 0.5813668211\,3469731449\,44763  $  \\	
$  1,2,10      $	&	$ 0.5951  $	&	$ 0.5951117365\,4560755184\,18957  $  \\	
$  1,3,4       $	&	$ 0.6042  $	&	$ 0.6042422576\,9541848140\,50596  $  \\	
$  1,2,7, 40   $	&	$ 0.6265  $	&	$ 0.6265741168\,9229403866\,4271   $  \\	
$  1,2,5       $	&	$ 0.6460  $	&	$ 0.6460620828\,3482621991\,26074  $  \\	
$  1,2,5, 40   $	&	$ 0.6532  $	&	$ 0.6532480771\,5774872727\,88226  $  \\	
$  1,2,4       $	&	$ 0.6692  $	&	$ 0.6692214868\,6131601289\,10582  $  \\	
$  1,2,4,40    $	&	$ 0.6754  $	&	$ 0.6754204446\,6970405658\,86491  $  \\	
$  1,2,4,15    $	&	$ 0.6899  $	&	$ 0.6899117699\,4923640369\,39765  $  \\	
$  1,2,4,6     $	&	$ 0.7275  $	&	$ 0.7275240485\,5844736070\,17215  $  \\	
$  1,2,4,5     $	&	$ 0.7400  $	&	$ 0.7400268606\,0207750663\,59866  $  \\	
$  1,2,3,6     $	&	$ 0.7588  $	&	$ 0.7588596765\,7522348478\,34758  $  \\	
$  1,2,3,5     $	&	$ 0.7709  $	&	$ 0.7709149398\,4418222560\,66922  $  \\	
$  1,2,3,4,10  $	&	$ 0.8081  $	&	$ 0.8081711218\,9508471948\,06225  $  \\	
$  1,2,3,4,6   $	&	$ 0.8269  $	&	$ 0.8269084945\,9163116837\,24267  $  \\	
$  1,2,3,4,5,9 $	&	$ 0.8541  $	&	$ 0.8541484705\,3932261542\,70362  $  \\	
$  1,2,3,4,5,7 $	&	$ 0.8616  $	&	$ 0.8616561744\,0626491056\,99743  $  \\	
$  1,2,3,4,5,6   $	&	$ 0.8676  $	&	$ 0.8676191730\,6718378091\,2243   $  \\	
$  1,2,3,4,5,6,8 $	&	$ 0.8851  $	&	$ 0.8851175915\,5644894823\,12343  $  \\	
$  1,2,3,4,5,6,7 $	&	$ 0.8889  $	&	$ 0.8889553164\,9195167843\,64394  $  \\	
$  1,2,\ldots, 8   $	&	$ 0.9045  $	&	$ 0.9045526893\,2916142728\,20095  $  \\
$  1,2,\ldots, 9   $	&	$ 0.9164  $	&	$ 0.9164211122\,6835174040\,64645  $  \\
$  1,2,\ldots, 10  $	&	$ 0.9257  $	&	$ 0.9257375908\,8754612367\,25506  $  \\
$  1,2,\ldots, 13  $	&	$ 0.9445  $	&	$ 0.9445341091\,7126158776\,76671  $  \\
$  1,2,\ldots, 18  $	&	$ 0.961   $	&	$ 0.9611931848\,1599230516\,44346  $  \\
$  1,2,\ldots, 34  $	&	$ 0.980   $	&	$ 0.9804196247\,7958255969\,58015  $  \\
 \hline
\end{tabular}
}
\end{center}
    \caption{Numerical data for Jenkinson examples from~\cite{Jenkinson};
    $\dim_H X_A = d \pm 2\cdot10^{-24}$. }
        \label{tab:jenkinson}
      \end{table}
    In Table~1 in another paper by Hensley~\cite{Hensley}, there are a number of
    numerical results on Hausdorff dimension various of limit sets.  Let us
    consider two typical examples from the
    list. 

    \paragraph{(i).} Let $X_{1,2,7} = \left\{[0;a_1,a_2, a_3, \cdots] \mid  a_n \in \{1,2, 7\}\right\}$.
    Hensley presents an estimate 
    $$
    \dim_H(X_{1,2,7}) = 0.6179036954\,6338,
    $$
    accurate to~$13$ decimal places. The
    bisection method with $S=[0,1]$, $\varepsilon = 10^{-23}$ and $m=30$ gives 
    $$
    \dim_H(X) = 0.6179036954\,6337565066\,3413 \pm 10^{-24},
    $$
    with the corresponding test functions $\underline f$ and $\underline g$
    satisfying 
    $$
    \inf_S\frac{\mathcal L_{t_0} g}{g}  >  1+10^{-24}, \qquad %.0000000000\,0000000000\,00012557 \cdots
    \sup_S\frac{\mathcal L_{t_1} f}{f}  <  1-10^{-24}. %0.9999999999\,9999999999\,9998692939\,209837 \cdots
    $$
%    The result follows from Lemma~\ref{tech}. %  $\dim_H(X_{1,2,7}) \in [t_0,t_1]$.

    \paragraph{(ii).} Let  $X_{1,3,4} = \left\{[0;a_1,a_2, a_3, \cdots]  \mid  a_n \in \{1,3, 4\}\right\}$. 
    Hensley presents an estimate 
    $\dim_H(X_{1,3,5}) = 0.6042422606\,9111965$.  However, this is only accurate
    to seven decimal places (there seeming to be a typographical error) and applying the
    bisection method with $S=[0,1]$, $\varepsilon = 10^{-23}$ and $m=30$ we can  correct the estimate as follows. 
    $$
    \dim_H(X) = 0.6042422577\,5648956551\,0773 \pm 10^{-24}.
    $$
%%%%polina%%%%
%%%%polina%%%%    Letting $m=30$ and $t_0 =  we see that 
%%%%polina%%%%    $$
%%%%polina%%%%    \inf_x\frac{\mathcal L_{t_0} f(x)}{f(x)}  =  1.0000000000000000000000016039846 \cdots
%%%%polina%%%%    %\mbox{ and }
%%%%polina%%%%    %\sup_x \frac{\mathcal L_{t_1} f(x)}{f(x)} = 1.000147762 \cdots
%%%%polina%%%%    $$
%%%%polina%%%%    and  letting $t_1 = 0.604242257756489565510774$ we see that 
%%%%polina%%%%    $$
%%%%polina%%%%    \sup_x\frac{\mathcal L_{t_1} f(x)}{f(x)}  =  0.999999999999999999999998370410 \cdots
%%%%polina%%%%    %\mbox{ and }
%%%%polina%%%%    %\sup_x \frac{\mathcal L_{t_1} f(x)}{f(x)} = 1.000147762 \cdots
%%%%polina%%%%    $$
%%%%polina%%%%    Thus by Lemma \ref{tech} we see can validate 
%%%%polina%%%%    $\dim_H(X_{1,2,7}) \in [t_0,t_1]$.
%%%%polina%%%%
    \subsubsection{Other limit sets}
    In~\cite{Moreira} Moreira considered a limit set~$X$ for the IFS  
%    Consider the limit set $X$ for 
    $$
    T_1(x) = \frac{1}{1+x} \mbox{ and } T_2(x) = \frac{1}{2+\frac{1}{2+x}}.
    $$
    After Theorem 3.4 therein he  gives a rigorous  estimate  $0.353 <
    \dim_H(X) < 0.3572$\footnote{Their application was superseded by other work,
    so the interest in this bound is mainly academic.}.
    Applying the bisection method with $\varepsilon=10^{-30}$ and $m=40$ we
    obtain 
    $$
    \dim_H X = 0.3554004768\,3384079791\,6306289490\,45 \pm 5\cdot10^{-32}.
    $$
%%%%polina%%%%    and $t_0 = 0.3554004768\,3384079791\,6306289490\,4$ we see that 
%%%%polina%%%%    $$
%%%%polina%%%%    \inf_x\frac{\mathcal L_{t_0} f(x)}{f(x)}  = 1.0000000000\,0000000000\,0000000000\,137166 \cdots
%%%%polina%%%%    %\mbox{ and }
%%%%polina%%%%    %\sup_x \frac{\mathcal L_{t_1} f(x)}{f(x)} = 1.000147762 \cdots
%%%%polina%%%%    $$
%%%%polina%%%%    and  letting $t_1 = 0.3554004768338407979163062894905$ we see that 
%%%%polina%%%%    $$
%%%%polina%%%%    \sup_x\frac{\mathcal L_{t_1} f(x)}{f(x)}  =  0.999999999999999999999999999999968045 \cdots
%%%%polina%%%%    %\mbox{ and }
%%%%polina%%%%    %\sup_x \frac{\mathcal L_{t_1} f(x)}{f(x)} = 1.000147762 \cdots
%%%%polina%%%%    $$
%%%%polina%%%%    Thus by Lemma~\ref{tech} we see can validate $\dim_H(X) \in [t_0,t_1]$.
%%%%polina%%%%

    There are also additional examples studied by Jenkinson, in connection with his numerical investigation of the 
Texan conjecture.  It is a simple matter to apply the bisection method to compute intervals $[t_0,t_1]$ containing the actual values
and these are presented in Table~\ref{tab:jenkinson}.

    \appendix 
    \section{Coefficients for polynomials}
    
    For completeness, we collect together the coefficients of the polynomials
    which appear in the proofs of the theorems.  The exceptions to this is
    Theorem~\ref{good} where the polynomials are of degree~$200$. 
    However, in all of these examples  the reader may easily reconstruct these
    polynomials using the method described in \S\ref{test} and
    \S\ref{bis}. 
    
    \smallskip
    \centerline{\emph{Coefficients given in this section are exact rational
    numbers.}} 

    \subsection{Estimates of $\dim_H \mathcal M \setminus \mathcal L$}

    \subsubsection{Part 1: $(\mathcal M \setminus \mathcal
    L)\cap(\sqrt5,\sqrt{13})$} %{Polynomials   coefficients for Theorem 2.3}
    \label{Appendix I}
    We present coefficients of the test functions $\underline f = (f_1,f_2,f_3,f_4)$ and
   
    $\underline g=(g_1,g_2,g_3,g_4)$ 
    used in~\eqref{testp1:eq}.  
    %in  C^\alpha\left(\oplus_{j=1}^4[0,1]\times\{j\}\right)$ 
    $$
    f_j = \sum_{k=0}^7 a_k^j x^k \quad 
    g_j = \sum_{k=0}^7 b_k^j x^k, \qquad j = 1,2,3,4.
    $$
    Straightforward calculation shows that the functions~$f_j$ and~$g_j$ are
    monotone decreasing on $[0,1]$ and achieve
    their minima at~$1$. For convenience, we give a lower bound $s_j <  \min(f_j(1), g_j(1))$.    
    \begin{center} 
    \hspace{-5mm}\begin{tabular}{|c | l ||c | l||c|l||c|l| }
        \hline
        \multicolumn{2}{|c||}{ $f_1$ } & \multicolumn{2}{|c||}{ $f_2$ } &
        \multicolumn{2}{|c||}{ $f_3$ } & \multicolumn{2}{|c|}{ $f_4$ } \\
        \hline
        %   \multirow{8}{*}{$ f $} &
        $a^1_0$&$  \phantom{-}0.9719420630    $ & $a^2_0$&$  \phantom{-}0.6996881504    $    &  $a^3_0$&$  \phantom{-}0.5151068706  $  &  $a^4_0$&$    \phantom{-}1.0035153909  $  \\ 
        $a^1_1$&$  -0.4083622154    $ &	$a^2_1$&$  -0.3257856180    $    &	$a^3_1$&$  -0.1503102708  $  &	$a^4_1$&$   -0.3675009146  $  \\ 
        $a^1_2$&$  \phantom{-}0.2105627503    $ &	$a^2_2$&$  \phantom{-}0.1808409982    $    &	$a^3_2$&$  \phantom{-}0.0521402712  $  &	$a^4_2$&$   \phantom{-}0.1667939317  $  \\ 
        $a^1_3$&$  -0.1166905024    $ &	$a^2_3$&$  -0.1054480473    $    &	$a^3_3$&$  -0.0190434708  $  &	$a^4_3$&$   -0.0824212096  $  \\ 
        $a^1_4$&$  \phantom{-}0.0649256757    $ &	$a^2_4$&$  \phantom{-}0.0606297675    $    &	$a^3_4$&$  \phantom{-}0.0070462566  $  &	$a^4_4$&$   \phantom{-}0.0416454207  $  \\ 
        $a^1_5$&$  -0.0321113965    $ &	$a^2_5$&$  -0.0305675512    $    &	$a^3_5$&$  -0.0024652813  $  &	$a^4_5$&$   -0.0191792606  $  \\ 
        $a^1_6$&$  \phantom{-}0.0114531320    $ &	$a^2_6$&$  \phantom{-}0.0110148596    $    &	$a^3_6$&$  \phantom{-}0.0006866462  $  &	$a^4_6$&$   \phantom{-}0.0065321353  $  \\ 
        $a^1_7$&$  -0.0020311821    $ &	$a^2_7$&$  -0.0019639179    $    &	$a^3_7$&$  -0.0001041463  $  &	$a^4_7$&$   -0.0011267545  $  \\ 
        \hline
        %$s_1 $&$   \phantom{-}0.6996881854    $&$ s_2 $&$    \phantom{-}0.4884085741$ & $ s_3 $ & $\phantom{-}0.403056777$ & $s_4$ & $    \phantom{-}0.7482586164$ \\
        $s_1 $&$   \phantom{-}0.6    $&$ s_2 $&$    \phantom{-}0.4$ & $ s_3 $ & $\phantom{-}0.4$ & $s_4$ & $    \phantom{-}0.7$ \\
        %\hline
        \hline
    \end{tabular}

    \begin{tabular}{|c|l||c|l||c|l||c|l| }
      \hline
      \multicolumn{2}{|c||}{$g_1$} & \multicolumn{2}{|c||}{$g_2$} &
      \multicolumn{2}{|c||}{$g_3$} & \multicolumn{2}{|c|}{$g_4$} \\
        \hline
            $b^1_0$&$  \phantom{-}0.9719420489   $  &  $b^2_0$&$  \phantom{-}0.6996881913   $  &  $b^3_0$&$  \phantom{-}0.5151068139  $   &	$b^4_0$&$   \phantom{-}1.0035153929    $\\ 
        	$b^1_1$&$  -0.4083624405   $  &	$b^2_1$&$  -0.3257858144   $  &	$b^3_1$&$  -0.1503103363  $   &	$b^4_1$&$  -0.3675011259    $\\
        	$b^1_2$&$  \phantom{-}0.2105629174   $  &	$b^2_2$&$  \phantom{-}0.1808411480   $  &	$b^3_2$&$  \phantom{-}0.0521403058  $   &	$b^4_2$&$  \phantom{-}0.1667940700    $\\
        	$b^1_3$&$  -0.1166906125   $  &	$b^2_3$&$  -0.1054481493   $  &	$b^3_3$&$  -0.0190434862  $   &	$b^4_3$&$  -0.0824212916    $\\
        	$b^1_4$&$  \phantom{-}0.0649257436   $  &	$b^2_4$&$  \phantom{-}0.0606298318   $  &	$b^3_4$&$  \phantom{-}0.0070462630  $   &	$b^4_4$&$  \phantom{-}0.0416454670    $\\
        	$b^1_5$&$  -0.0321114321   $  &	$b^2_5$&$  -0.0305675854   $  &	$b^3_5$&$  -0.0024652837  $   &	$b^4_5$&$  -0.0191792835    $\\  
        	$b^1_6$&$  \phantom{-}0.0114531451   $  &	$b^2_6$&$  \phantom{-}0.0110148723   $  &	$b^3_6$&$  \phantom{-}0.0006866469  $   &	$b^4_6$&$  \phantom{-}0.0065321434    $\\  
        	$b^1_7$&$  -0.0020311845   $  &	$b^2_7$&$  -0.0019639202   $  &	$b^3_7$&$  -0.0001041464  $   &	$b^4_7$&$  -0.0011267559    $\\  
        \hline
            $s_1 $&$   \phantom{-}0.6 $&$ s_2 $&$ \phantom{-}0.4$ & $ s_3 $ & $\phantom{-}0.4$ & $s_4$ & $    \phantom{-}0.7$ \\
        \hline
    \end{tabular}
    \end{center}

    \subsubsection{Part 2: $(\mathcal M \setminus \mathcal
    L)\cap(\sqrt{13},3.84)$}
    \label{Appendix II}
    We present coefficients of the test functions $\underline f = (f_1,f_2,f_3)$ and
    $\underline g=(g_1,g_2,g_3)$ 
    used in~\eqref{testp2:eq}.  
    %in  C^\alpha\left(\oplus_{j=1}^4[0,1]\times\{j\}\right)$ 
    $$
    f_j = \sum_{k=0}^7 a_k^j x^k \quad 
    g_j = \sum_{k=0}^7 b_k^j x^k, \quad j =1,2,3.
    $$
    Straightforward calculation shows that the functions~$f_j$ and~$g_j$ are
    monotone decreasing on $[0,1]$ and achieve
    their minima at~$1$. For convenience, we give a lower bound $s_j <  \min(f_j(1), g_j(1))$.    
    \begin{center}
    \begin{tabular}{|c|l|c|l|c|l| }
      \hline
      \multicolumn{2}{|c|}{$f_1$} & \multicolumn{2}{|c|}{$f_2$} &  \multicolumn{2}{|c|}{$f_3$} \\
      \hline
        $a^1_0$& $ \phantom{-}0.8909247279  $  &  $a^2_0$& $  \phantom{-}1.0057862651  $  &  $a^3_0$& $  \phantom{-}0.4637543240  $  \\ 
        $a^1_1$& $ -0.5666958388  $  &  $a^2_1$& $  -0.6057959903  $  &  $a^3_1$& $  -0.1989455013  $  \\ 
        $a^1_2$& $ \phantom{-}0.3563184411  $  &  $a^2_2$& $  \phantom{-}0.3687733983  $  &  $a^3_2$& $  \phantom{-}0.0812067365  $  \\ 
        $a^1_3$& $ -0.2268317558  $  &  $a^2_3$& $  -0.2307067719  $  &  $a^3_3$& $  -0.0328166697  $  \\ 
        $a^1_4$& $ \phantom{-}0.1399500506  $  &  $a^2_4$& $  \phantom{-}0.1411336470  $  &  $a^3_4$& $  \phantom{-}0.0130694788  $  \\ 
        $a^1_5$& $ -0.0741972949  $  &  $a^2_5$& $  -0.0745402261  $  &  $a^3_5$& $  -0.0048273012  $  \\ 
        $a^1_6$& $ \phantom{-}0.0275558152  $  &  $a^2_6$& $  \phantom{-}0.0276379657  $  &  $a^3_6$& $  \phantom{-}0.0013941859  $  \\ 
        $a^1_7$& $ -0.0049924560  $  &  $a^2_7$& $  -0.0050037264  $  &  $a^3_7$& $  -0.0002161328  $  \\ 
      \hline
        $s_1$ & $ \phantom{-}0.5$ & $s_2$ & $\phantom{-}0.6$ & $s_3$ & $\phantom{-}0.3$  \\
      \hline 
    \end{tabular}
    
    \begin{tabular}{|c|l|c|l|c|l| }
        \hline
        \multicolumn{2}{|c|}{ $g_1$} & \multicolumn{2}{|c|}{ $g_2$} & \multicolumn{2}{|c|}{ $g_3$} \\
        \hline
        $b^1_0$& $   \phantom{-}0.8909246136  $  &  $b^2_0$& $  \phantom{-}1.0057862594  $  &  $b^3_0$& $ \phantom{-}0.4637545188  $   \\ 
        $b^1_1$& $   -0.5666952424  $  &  $b^2_1$& $  -0.6057953968  $  &  $b^3_1$& $ -0.1989454076  $   \\ 
        $b^1_2$& $   \phantom{-}0.3563178857  $  &  $b^2_2$& $  \phantom{-}0.3687728381  $  &  $b^3_2$& $ \phantom{-}0.0812066594  $   \\ 
        $b^1_3$& $   -0.2268313254  $  &  $b^2_3$& $  -0.2307063388  $  &  $b^3_3$& $ -0.0328166280  $   \\ 
        $b^1_4$& $   \phantom{-}0.1399497528  $  &  $b^2_4$& $  \phantom{-}0.1411333481  $  &  $b^3_4$& $ \phantom{-}0.0130694591  $   \\ 
        $b^1_5$& $   -0.0741971263  $  &  $b^2_5$& $  -0.0745400571  $  &  $b^3_5$& $ -0.0048272932  $   \\ 
        $b^1_6$& $   \phantom{-}0.0275557503  $  &  $b^2_6$& $  \phantom{-}0.0276379006  $  &  $b^3_6$& $ \phantom{-}0.0013941834  $   \\ 
        $b^1_7$& $   -0.0049924440  $  &  $b^2_7$& $  -0.0050037144  $  &  $b^3_7$& $ -0.0002161324  $   \\ 
        \hline
        $s_1$ &  $   \phantom{-}0.5$ & $s_2$ & $\phantom{-}0.6$ & $ s_3$ & $ \phantom{-}0.3$ \\
        \hline
    \end{tabular}
    \end{center}
  
    \subsubsection{Part 3: $(\mathcal M \setminus \mathcal
    L)\cap(3.84,3.92)$}%{Polynomials   coefficients for Theorem 2.4}
    \label{Appendix III}
   We present coefficients of the polynomial test functions $\underline f =
    (f_1,\ldots,f_7)$ and  $\underline g=(g_1,\ldots,g_7)$ 
    used in~\eqref{testp3:eq}.  It follows from the equality between columns of the
    transition matrix $M$ that certain components are identical. 
    %in  C^\alpha\left(\oplus_{j=1}^4[0,1]\times\{j\}\right)$ 
    \begin{align*}
        f_1 &= f_2 = \sum_{k=0}^{7} a_k^1 x^k, \quad &&
        g_1 = g_2 = \sum_{k=0}^{7} b_k^1 x^k; \\ %\quad j =1,2,3,4.
        f_4 &= f_5 = f_6 = \sum_{k=0}^{7} a_k^4 x^k, \quad &&
        g_4 = g_5 = g_6 = \sum_{k=0}^{7} b_k^4 x^k; \\ %\quad j =1,2,3,4.
        f_j &= \sum_{k=0}^{7} a_k^j x^k, \quad &&
        g_j = \sum_{k=0}^{7} b_k^j x^k; \quad j =3,7,8,9.
    \end{align*}
    Straightforward calculation shows that the functions~$f_j$ and~$g_j$ are
    monotone decreasing on $[0,1]$ and achieve
    their minima at~$1$. For convenience, we give a lower bound $s_j <  \min(f_j(1), g_j(1))$.    

    \begin{center}
    \begin{tabular}{ |c|l||c|l||c|l| }
        \hline
        \multicolumn{2}{|c||}{ $f_1$} & \multicolumn{2}{|c||}{ $f_3$} &
        \multicolumn{2}{|c|}{ $f_4$} \\
        \hline
        $a^1_0$& $  \phantom{-}0.8752491446  $  &   $a^3_0$& $   \phantom{-}0.8242977486 $   &  $a^4_0$& $  \phantom{-}0.9435673129 $ \\ 
        $a^1_1$& $  -0.5862938673  $  &   $a^3_1$& $   -0.5673413247 $   &  $a^4_1$& $  -0.6108763701 $ \\ 
        $a^1_2$& $  \phantom{-}0.3817921234  $  &   $a^3_2$& $   \phantom{-}0.3753845810 $   &  $a^4_2$& $  \phantom{-}0.3898319394 $ \\ 
        $a^1_3$& $  -0.2509102043  $  &   $a^3_3$& $   -0.2488176120 $   &  $a^4_3$& $  -0.2534505061 $ \\ 
        $a^1_4$& $  \phantom{-}0.1594107381  $  &   $a^3_4$& $   \phantom{-}0.1587442151 $   &  $a^4_4$& $  \phantom{-}0.1601941335 $ \\ 
        $a^1_5$& $  -0.0866513835  $  &   $a^3_5$& $   -0.0864513792 $   &  $a^4_5$& $  -0.0868796783 $ \\ 
        $a^1_6$& $  \phantom{-}0.0328162711  $  &   $a^3_6$& $   \phantom{-}0.0327671074 $   &  $a^4_6$& $  \phantom{-}0.0328711315 $ \\ 
        $a^1_7$& $  -0.0060331410  $  &   $a^3_7$& $   -0.0060262852 $   &  $a^4_7$& $  -0.0060406787 $ \\ 
        \hline
        $s_1$ & $\phantom{-}0.4$  & $s_3$ & $\phantom{-}0.4$ &  $s_4$ & $\phantom{-}0.5$ \\   
        \hline
    \end{tabular}

    \begin{tabular}{ |c|l||c|l||c|l| }
        \hline
        \multicolumn{2}{|c||}{ $f_7$} & \multicolumn{2}{|c||}{ $f_8$} &
        \multicolumn{2}{|c|}{ $f_9$} \\
        \hline
         $a^7_0$ & $  \phantom{-}0.2183470684  $   &   $a^8_0$& $  \phantom{-}0.5232653503   $ &   $a^9_0$& $    \phantom{-}1.0058222185  $  \\ 
         $a^7_1$ & $  -0.0767355580  $   &   $a^8_1$& $  -0.2253827130   $ &   $a^9_1$& $   -0.6095554508  $  \\ 
         $a^7_2$ & $  \phantom{-}0.0245616045  $   &   $a^8_2$& $  \phantom{-}0.0906682129   $ &   $a^9_2$& $   \phantom{-}0.3705808500  $  \\ 
         $a^7_3$ & $  -0.0076106356  $   &   $a^8_3$& $  -0.0360797100   $ &   $a^9_3$& $   -0.2326605853  $  \\ 
         $a^7_4$ & $  \phantom{-}0.0023068479  $   &   $a^8_4$& $  \phantom{-}0.0141972990   $ &   $a^9_4$& $   \phantom{-}0.1434620328  $  \\ 
         $a^7_5$ & $  -0.0006627640  $   &   $a^8_5$& $  -0.0052036970   $ &   $a^9_5$& $   -0.0765436603  $  \\ 
         $a^7_6$ & $  \phantom{-}0.0001576564  $   &   $a^8_6$& $  \phantom{-}0.0014967921   $ &   $a^9_6$& $   \phantom{-}0.0286722918  $  \\ 
         $a^7_7$ & $  -0.0000215279  $   &   $a^8_7$& $  -0.0002316038   $ &   $a^9_7$& $   -0.0052379012  $  \\ 
        \hline
         $s_7$ & $\phantom{-}0.1$ & $s_8$ & $\phantom{-}0.3$ & $s_9$  & $\phantom{-}0.5$ \\
        \hline
    \end{tabular}
    
    \begin{tabular}{|c|l||c|l||c|l| }
        \hline
        \multicolumn{2}{|c||}{ $g_1$} & \multicolumn{2}{|c||}{ $g_3$} &
        \multicolumn{2}{|c|}{ $g_4$} \\
        \hline
        $b^1_0$& $ \phantom{-}0.8752491756  $  &   $b^3_0$& $ \phantom{-}0.8242978103  $  &    $b^4_0$& $  \phantom{-}0.9435673145 $   \\
        $b^1_1$& $ -0.5862941999  $  &   $b^3_1$& $ -0.5673416594  $  &    $b^4_1$& $  -0.6108767042 $   \\
        $b^1_2$& $ \phantom{-}0.3817924531  $  &   $b^3_2$& $ \phantom{-}0.3753849098  $  &    $b^4_2$& $  \phantom{-}0.3898322718 $   \\
        $b^1_3$& $ -0.2509104702  $  &   $b^3_3$& $ -0.2488178772  $  &    $b^4_3$& $  -0.2534507734 $   \\
        $b^1_4$& $ \phantom{-}0.1594109280  $  &   $b^3_4$& $ \phantom{-}0.1587444046  $  &    $b^4_4$& $  \phantom{-}0.1601943239 $   \\
        $b^1_5$& $ -0.0866514938  $  &   $b^3_5$& $ -0.0864514894  $  &    $b^4_5$& $  -0.0868797888 $   \\
        $b^1_6$& $ \phantom{-}0.0328163145  $  &   $b^3_6$& $ \phantom{-}0.0327671507  $  &    $b^4_6$& $  \phantom{-}0.0328711749 $   \\
        $b^1_7$& $ -0.0060331491  $  &   $b^3_7$& $ -0.0060262933  $  &    $b^4_7$& $  -0.0060406868 $   \\
        \hline
        $s_1$ & $\phantom{-}0.4$  & $s_3$ & $\phantom{-}0.4$ &  $s_4$ & $\phantom{-}0.5$ \\   
        \hline
    \end{tabular}

    \begin{tabular}{|c|l||c|l||c|l| }
        \hline
        \multicolumn{2}{|c||}{ $g_7$} & \multicolumn{2}{|c||}{ $g_8$} &
        \multicolumn{2}{|c||}{ $g_9$} \\
        \hline
      $b^7_0$& $ \phantom{-}0.2183469977   $  & $b^8_0$& $  \phantom{-}0.5232652393 $   &  $b^9_0$& $  \phantom{-}1.0058222217    $  \\ 
      $b^7_1$& $ -0.0767355702   $  & $b^8_1$& $  -0.2253827769 $   &  $b^9_1$& $ -0.6095557922    $  \\ 
      $b^7_2$& $ \phantom{-}0.0245616149   $  & $b^8_2$& $  \phantom{-}0.0906682634 $   &  $b^9_2$& $ \phantom{-}0.3705811764    $  \\ 
      $b^7_3$& $ -0.0076106401   $  & $b^8_3$& $  -0.0360797369 $   &  $b^9_3$& $ -0.2326608405    $  \\ 
      $b^7_4$& $ \phantom{-}0.0023068496   $  & $b^8_4$& $  \phantom{-}0.0141973115 $   &  $b^9_4$& $ \phantom{-}0.1434622111    $  \\ 
      $b^7_5$& $ -0.0006627646   $  & $b^8_5$& $  -0.0052037021 $   &  $b^9_5$& $ -0.0765437625    $  \\ 
      $b^7_6$& $ \phantom{-}0.0001576566   $  & $b^8_6$& $  \phantom{-}0.0014967937 $   &  $b^9_6$& $ \phantom{-}0.0286723317    $  \\ 
      $b^7_7$& $ -0.0000215279   $  & $b^8_7$& $  -0.0002316041 $   &  $b^9_7$& $ -0.0052379086    $  \\ 
        \hline
      $s_7$ & $\phantom{-}0.1$ & $s_8$ & $\phantom{-}0.3$ & $s_9$ & $\phantom{-}0.5$ \\
        \hline
    \end{tabular}
    \end{center}

    \subsubsection{Part 4: $(\mathcal M \setminus \mathcal
    L)\cap(3.92,4.01)$}%{Polynomials   coefficients for Theorem 2.4}
    \label{Appendix IV}
 
    It follows from the equalities between columns of the transition matrix~$M$~\eqref{ap4:eq}, that
    components of the test function $\underline f =
    (f_{111},\ldots,f_{333})$ and  $\underline g=(g_{111},\ldots,g_{333})$ 
    used in~\eqref{testp4:eq} satisfy the following identities.
\begin{align*}
f_{111} &= f_{112} = f_{113} = \sum_{k=0}^7 a_k^{111} x^k  &&
g_{111} = g_{112} = g_{113} = \sum_{k=0}^7 b_k^{111} x^k,  \\
f_{211} &= f_{2rs} = \sum_{k=0}^7 a_k^{211} x^k &&
g_{211} = g_{2rs} = \sum_{k=0}^7 b_k^{211} x^k, \ 1\le r, s \le 3,\\
f_{121} &= f_{122} = f_{123} = \sum_{k=0}^7 a_k^{121} x^k  &&
g_{121} = g_{122} = g_{123}  = \sum_{k=0}^7 b_k^{121} x^k,  \\
f_{321} &= f_{322} = f_{323} = \sum_{k=0}^7 a_k^{123} x^k &&
g_{321} = g_{322} = g_{323} = \sum_{k=0}^7 b_k^{123} x^k, \\
f_{331} &= f_{332} = f_{333} = \sum_{k=0}^7 a_k^{133} x^k &&
g_{331} = g_{332} = g_{333} = \sum_{k=0}^7 b_k^{133} x^k.
\end{align*}
    Straightforward calculation shows that the functions~$f_j$ and~$g_j$ are
    monotone decreasing on $[0,1]$ and achieve
    their minima at~$1$. For convenience, we give a lower bound $s_{qrs} <
    \min(f_{qrs}(1), g_{qrs}(1))$.    

    \begin{center}
    \begin{tabular}{ |c|l||c|l||c|l| }
        \hline
        \multicolumn{2}{|c||}{ $f_{111}$} & \multicolumn{2}{|c||}{ $f_{121}$} &
        \multicolumn{2}{|c|}{ $f_{132}$} \\
        \hline
        $a_0^{111}$& $  \phantom{-}0.9884535079  $ & $a_0^{121}$& $  \phantom{-}0.9249363898 $ & $a_0^{132}$& $  \phantom{-}0.5745876575  $   \\ 
        $a_1^{111}$& $  -0.6902235127  $ & $a_1^{121}$& $  -0.6661893079 $ & $a_1^{132}$& $  -0.4908751097  $   \\
        $a_2^{111}$& $  \phantom{-}0.4629840472  $ & $a_2^{121}$& $  \phantom{-}0.4549027221 $ & $a_2^{132}$& $  \phantom{-}0.3761466270  $   \\
        $a_3^{111}$& $  -0.3124708511  $ & $a_3^{121}$& $  -0.3098685263 $ & $a_3^{132}$& $  -0.2757231992  $   \\
        $a_4^{111}$& $  \phantom{-}0.2030567394  $ & $a_4^{121}$& $  \phantom{-}0.2022427698 $ & $a_4^{132}$& $  \phantom{-}0.1878939498  $   \\
        $a_5^{111}$& $  -0.1122167788  $ & $a_5^{121}$& $  -0.1119770597 $ & $a_5^{132}$& $  -0.1064665719  $   \\
        $a_6^{111}$& $  \phantom{-}0.0429529760  $ & $a_6^{121}$& $  \phantom{-}0.0428949582 $ & $a_6^{132}$& $  \phantom{-}0.0412628171  $   \\
        $a_7^{111}$& $  -0.0079480141  $ & $a_7^{121}$& $  -0.0079400087 $ & $a_7^{132}$& $  -0.0076832222  $   \\
        \hline
        $s_{111}$ & $\phantom{-}0.5$ & $s_{121} $ & $\phantom{-}0.4$ & $s_{132}$ & $\phantom{-}0.2$ \\
        \hline
    \end{tabular}
 
    \begin{tabular}{ |c|l||c|l||c|l| }
        \hline
        \multicolumn{2}{|c||}{ $f_{133}$} & \multicolumn{2}{|c||}{ $f_{211}$} &
        \multicolumn{2}{|c|}{ $f_{311}$} \\
        \hline
    $a_0^{133}$& $ \phantom{-}0.8813943456  $   &   $a_0^{211}$&$   1.0003583962 $ &  $a_0^{311}$& $  \phantom{-}0.5081203394$   \\ 
    $a_1^{133}$& $ -0.6491441142  $   &   $a_1^{211}$&$  -0.6654322134 $ &  $a_1^{311}$& $  -0.2326160640$   \\ 
    $a_2^{133}$& $ \phantom{-}0.4489736082  $   &   $a_2^{211}$&$  \phantom{-}0.4295333392 $ &  $a_2^{311}$& $  \phantom{-}0.0972736889$   \\ 
    $a_3^{133}$& $ -0.3078936043  $   &   $a_3^{211}$&$  -0.2812396746 $ &  $a_3^{311}$& $  -0.0398841238$   \\ 
    $a_4^{133}$& $ \phantom{-}0.2016042974  $   &   $a_4^{211}$&$  \phantom{-}0.1785878079 $ &  $a_4^{311}$& $  \phantom{-}0.0160797615$   \\ 
    $a_5^{133}$& $ -0.1117833249  $   &   $a_5^{211}$&$  -0.0971302423 $ &  $a_5^{311}$& $  -0.0060041586$   \\ 
    $a_6^{133}$& $ \phantom{-}0.0428469743  $   &   $a_6^{211}$&$  \phantom{-}0.0368077301 $ &  $a_6^{311}$& $  \phantom{-}0.0017487411$   \\ 
    $a_7^{133}$& $ -0.0079332871  $   &   $a_7^{211}$&$  -0.0067699556 $ &  $a_7^{311}$& $  -0.0002726247$   \\ 
        \hline
        $s_{133}$ & $\phantom{-}0.4$ & $s_{211} $ & $\phantom{-}0.5$ & $s_{311}$ & $\phantom{-}0.3$ \\   
        \hline
    \end{tabular}

    \begin{tabular}{ |c|l||c|l||c|l| }
        \hline
        \multicolumn{2}{|c||}{ $f_{312}$} & \multicolumn{2}{|c||}{ $f_{321}$} &
        \multicolumn{2}{|c|}{ $f_{331}$} \\
        \hline
    $a_0^{312}$& $  \phantom{-}0.2013136513 $   &   $a_0^{321} $& $ \phantom{-}0.8072631514 $  &  $a_0^{331}  $& $  \phantom{-}1.0061848844  $   \\  
    $a_1^{312}$& $  -0.0743470595 $   &   $a_1^{321} $& $ -0.4648188145 $  &  $a_1^{331}  $& $  -0.6476491585  $   \\  
    $a_2^{312}$& $  \phantom{-}0.0244467077 $   &   $a_2^{321} $& $ \phantom{-}0.2575884636 $  &  $a_2^{331}  $& $  \phantom{-}0.4067501349  $   \\  
    $a_3^{312}$& $  -0.0077137186 $   &   $a_3^{321} $& $ -0.1452652325 $  &  $a_3^{331}  $& $  -0.2603687665  $   \\  
    $a_4^{312}$& $  \phantom{-}0.0023694138 $   &   $a_4^{321} $& $ \phantom{-}0.0809083385 $  &  $a_4^{331}  $& $  \phantom{-}0.1624071150  $   \\  
    $a_5^{312}$& $  -0.0006874054 $   &   $a_5^{321} $& $ -0.0398683005 $  &  $a_5^{331}  $& $  -0.0872158384  $   \\  
    $a_6^{312}$& $  \phantom{-}0.0001645838 $   &   $a_6^{321} $& $ \phantom{-}0.0141619841 $  &  $a_6^{331}  $& $  \phantom{-}0.0327839671  $   \\  
    $a_7^{312}$& $  -0.0000225598 $   &   $a_7^{321} $& $ -0.0025036826 $  &  $a_7^{331}  $& $  -0.0060002449  $   \\  
        \hline
        $s_{312}$ & $\phantom{-}0.1$ & $s_{321} $ & $\phantom{-}0.4$ & $s_{331}$ & $\phantom{-}0.5$ \\   
        \hline
    \end{tabular}

    \begin{tabular}{ |c|l||c|l||c|l| }
        \hline
        \multicolumn{2}{|c||}{ $g_{111}$} & \multicolumn{2}{|c||}{ $g_{121}$} &
        \multicolumn{2}{|c|}{ $g_{132}$} \\
        \hline
        $b_0^{111}$& $  \phantom{-}0.9884535050   $ & $b_0^{121}$& $ \phantom{-}0.9249364233 $ & $b_0^{132}$& $  \phantom{-}0.5745877791  $   \\ 
        $b_1^{111}$& $  -0.6902239809   $ & $b_1^{121}$& $ -0.6661897749 $ & $b_1^{132}$& $  -0.4908755105  $   \\
        $b_2^{111}$& $  \phantom{-}0.4629845398   $ & $b_2^{121}$& $ \phantom{-}0.4549032114 $ & $b_2^{132}$& $  \phantom{-}0.3761470588  $   \\
        $b_3^{111}$& $  -0.3124712630   $ & $b_3^{121}$& $ -0.3098689364 $ & $b_3^{132}$& $  -0.2757235761  $   \\
        $b_4^{111}$& $  \phantom{-}0.2030570414   $ & $b_4^{121}$& $ \phantom{-}0.2022430710 $ & $b_4^{132}$& $  \phantom{-}0.1878942344  $   \\
        $b_5^{111}$& $  -0.1122169575   $ & $b_5^{121}$& $ -0.1119772381 $ & $b_5^{132}$& $  -0.1064667431  $   \\
        $b_6^{111}$& $  \phantom{-}0.0429530470   $ & $b_6^{121}$& $ \phantom{-}0.0428950291 $ & $b_6^{132}$& $  \phantom{-}0.0412628857  $   \\
        $b_7^{111}$& $  -0.0079480275   $ & $b_7^{121}$& $ -0.0079400220 $ & $b_7^{132}$& $  -0.0076832353  $   \\
        \hline
        $s_{111}$ & $\phantom{-}0.5$ & $s_{121} $ & $\phantom{-}0.4$ & $s_{132}$ & $\phantom{-}0.2$ \\
        \hline
    \end{tabular}
 
    \begin{tabular}{ |c|l||c|l||c|l| }
        \hline
        \multicolumn{2}{|c||}{ $g_{133}$} & \multicolumn{2}{|c||}{ $g_{211}$} &
        \multicolumn{2}{|c|}{ $g_{311}$} \\
        \hline
    $b_0^{133}$& $ \phantom{-}0.8813944143  $   &   $b_0^{211}$&$   1.0003583991  $ &  $b_0^{311}$& $ \phantom{-}0.5081201992   $   \\ 
    $b_1^{133}$& $ -0.6491445844  $   &   $b_1^{211}$&$  -0.6654326742  $ &  $b_1^{311}$& $ -0.2326161465   $   \\ 
    $b_2^{133}$& $ \phantom{-}0.4489740965  $   &   $b_2^{211}$&$  \phantom{-}0.4295338070  $ &  $b_2^{311}$& $ \phantom{-}0.0972737581   $   \\ 
    $b_3^{133}$& $ -0.3078940138  $   &   $b_3^{211}$&$  -0.2812400555  $ &  $b_3^{311}$& $ -0.0398841619   $   \\ 
    $b_4^{133}$& $ \phantom{-}0.2016045985  $   &   $b_4^{211}$&$  \phantom{-}0.1785880816  $ &  $b_4^{311}$& $ \phantom{-}0.0160797798   $   \\ 
    $b_5^{133}$& $ -0.1117835035  $   &   $b_5^{211}$&$  -0.0971304019  $ &  $b_5^{311}$& $ -0.0060041662   $   \\ 
    $b_6^{133}$& $ \phantom{-}0.0428470454  $   &   $b_6^{211}$&$  \phantom{-}0.0368077929  $ &  $b_6^{311}$& $ \phantom{-}0.0017487435   $   \\ 
    $b_7^{133}$& $ -0.0079333006  $   &   $b_7^{211}$&$  -0.0067699674  $ &  $b_7^{311}$& $ -0.0002726251   $   \\ 
        \hline
        $s_{133}$ & $\phantom{-}0.4$ & $s_{211} $ & $\phantom{-}0.5$ & $s_{311}$ & $\phantom{-}0.3$ \\   
        \hline
    \end{tabular}

    \begin{tabular}{ |c|l||c|l||c|l| }
        \hline
        \multicolumn{2}{|c||}{ $g_{312}$} & \multicolumn{2}{|c||}{ $g_{321}$} &
        \multicolumn{2}{|c|}{ $g_{331}$} \\
        \hline
    $b_0^{312}$& $ \phantom{-}0.2013135639  $   &   $b_0^{321} $& $  \phantom{-}0.8072631281 $  &  $b_0^{331}  $& $   1.0061848887  $   \\  
    $b_1^{312}$& $ -0.0743470726  $   &   $b_1^{321} $& $  -0.4648191338 $  &  $b_1^{331}  $& $  -0.6476496175  $   \\  
    $b_2^{312}$& $ \phantom{-}0.0244467204  $   &   $b_2^{321} $& $  \phantom{-}0.2575887534 $  &  $b_2^{331}  $& $  \phantom{-}0.4067505913  $   \\  
    $b_3^{312}$& $ -0.0077137243  $   &   $b_3^{321} $& $  -0.1452654416 $  &  $b_3^{331}  $& $  -0.2603691317  $   \\  
    $b_4^{312}$& $ \phantom{-}0.0023694160  $   &   $b_4^{321} $& $  \phantom{-}0.0809084732 $  &  $b_4^{331}  $& $  \phantom{-}0.1624073740  $   \\  
    $b_5^{312}$& $ -0.0006874061  $   &   $b_5^{321} $& $  -0.0398683727 $  &  $b_5^{331}  $& $  -0.0872159881  $   \\  
    $b_6^{312}$& $ \phantom{-}0.0001645840  $   &   $b_6^{321} $& $  \phantom{-}0.0141620110 $  &  $b_6^{331}  $& $  \phantom{-}0.0327840257  $   \\  
    $b_7^{312}$& $ -0.0000225598  $   &   $b_7^{321} $& $  -0.0025036874 $  &  $b_7^{331}  $& $  -0.0060002559  $   \\  
        \hline
        $s_{312}$ & $\phantom{-}0.1$ & $s_{321} $ & $\phantom{-}0.4$ & $s_{331}$ & $\phantom{-}0.5$ \\   
        \hline
    \end{tabular}
    \end{center}

    \subsubsection{Part 5: $(\mathcal M \setminus \mathcal
    L)\cap(\sqrt{20},\sqrt{21})$}%{Polynomials   coefficients for Theorem 2.4}
    \label{Appendix V}
 
    We present coefficients of the polynomial components of test functions $\underline f =
    (f_1,f_2,f_3,f_4)$ and $\underline g=(g_1,g_2,g_3,g_4)$ 
    used in~\eqref{testp5:eq}. It follows from the equality between the first
    and second columns of the matrix $M$ that $f_1 = f_2$ and $g_1 = g_2$. 
    %in  C^\alpha\left(\oplus_{j=1}^4[0,1]\times\{j\}\right)$ 
    $$
    f_j = \sum_{k=0}^9 a_k^j x^k \quad 
    g_j = \sum_{k=0}^9 b_k^j x^k, \quad j =1,2,3,4.
    $$
    Straightforward calculation shows that the functions~$f_j$ and~$g_j$ are
    monotone decreasing on $[0,1]$ and achieve
    their minima at~$1$. For convenience, we give a lower bound $s_j <  \min(f_j(1), g_j(1))$.    
    \begin{center}
    \begin{tabular}{|c|l||c|l||c|l| }
        \hline
        \multicolumn{2}{|c|}{$f_1$} & \multicolumn{2}{|c|}{$f_3$} &  \multicolumn{2}{|c|}{$f_4$} \\
        \hline
          $a^1_0$& $  \phantom{-}0.9799928531   $  & $a^3_0$& $  \phantom{-}1.0045893915    $  &  $a^4_0$& $  \phantom{-}0.1934516264    $   \\ 
          $a^1_1$& $  -0.7406258897   $  & $a^3_1$& $  -0.7487821988    $  &  $a^4_1$& $  -0.0765314787    $   \\ 
          $a^1_2$& $  \phantom{-}0.5273926877   $  & $a^3_2$& $  \phantom{-}0.5296982401    $  &  $a^4_2$& $  \phantom{-}0.0259667329    $   \\ 
          $a^1_3$& $  -0.3800229054   $  & $a^3_3$& $  -0.3806370202    $  &  $a^4_3$& $  -0.0083489461    $   \\ 
          $a^1_4$& $  \phantom{-}0.2763248479   $  & $a^3_4$& $  \phantom{-}0.2764834085    $  &  $a^4_4$& $  \phantom{-}0.0026152698    $   \\ 
          $a^1_5$& $  -0.1955487122   $  & $a^3_5$& $  -0.1955888334    $  &  $a^4_5$& $  -0.0008056462    $   \\ 
          $a^1_6$& $  \phantom{-}0.1241717000   $  & $a^3_6$& $  \phantom{-}0.1241816277    $  &  $a^4_6$& $  \phantom{-}0.0002419581    $   \\ 
          $a^1_7$& $  -0.0622409671   $  & $a^3_7$& $  -0.0622432782    $  &  $a^4_7$& $  -0.0000670179    $   \\ 
          $a^1_8$& $  \phantom{-}0.0206092394   $  & $a^3_8$& $  \phantom{-}0.0206096817    $  &  $a^4_8$& $  \phantom{-}0.0000146636    $   \\ 
          $a^1_9$& $  -0.0032442178   $  & $a^3_9$& $  -0.0032442666    $  &  $a^4_9$& $  -0.0000017681    $   \\ 
          \hline
          $s_1$ & $\phantom{-}0.4$ & $s_3$ & $\phantom{-}0.4$ & $s_4$ & $\phantom{-}0.05$ \\
        \hline
    \end{tabular}
         
     \begin{tabular}{|c|l||c|l||c|l| }
        \hline
        \multicolumn{2}{|c|}{$g_1$} & \multicolumn{2}{|c|}{$g_3$} &  \multicolumn{2}{|c|}{$g_4$} \\
        \hline
          $b^1_0$& $  \phantom{-}0.9799928382   $  & $b^3_0$& $  \phantom{-}1.0045893900    $  &  $b^4_0$& $  \phantom{-}0.1934516765    $   \\ 
          $b^1_1$& $  -0.7406256446   $  & $b^3_1$& $  -0.7487819559    $  &  $b^4_1$& $  -0.0765314768    $   \\ 
          $b^1_2$& $  \phantom{-}0.5273924149   $  & $b^3_2$& $  \phantom{-}0.5296979674    $  &  $b^4_2$& $  \phantom{-}0.0259667280    $   \\ 
          $b^1_3$& $  -0.3800226607   $  & $b^3_3$& $  -0.3806367755    $  &  $b^4_3$& $  -0.0083489435    $   \\ 
          $b^1_4$& $  \phantom{-}0.2763246450   $  & $b^3_4$& $  \phantom{-}0.2764832054    $  &  $b^4_4$& $  \phantom{-}0.0026152687    $   \\ 
          $b^1_5$& $  -0.1955485561   $  & $b^3_5$& $  -0.1955886769    $  &  $b^4_5$& $  -0.0008056458    $   \\ 
          $b^1_6$& $  \phantom{-}0.1241715958   $  & $b^3_6$& $  \phantom{-}0.1241815227    $  &  $b^4_6$& $  \phantom{-}0.0002419578    $   \\ 
          $b^1_7$& $  -0.0622409134   $  & $b^3_7$& $  -0.0622432239    $  &  $b^4_7$& $  -0.0000670177    $   \\ 
          $b^1_8$& $  \phantom{-}0.0206092213   $  & $b^3_8$& $  \phantom{-}0.0206096634    $  &  $b^4_8$& $  \phantom{-}0.0000146636    $   \\ 
          $b^1_9$& $  -0.0032442150   $  & $b^3_9$& $  -0.0032442637    $  &  $b^4_9$& $  -0.0000017681    $   \\ 
        \hline
          $s_1$ & $\phantom{-}0.4$ & $s_3$ & $\phantom{-}0.4$ & $s_4$ & $\phantom{-}0.05$ \\
        \hline
    \end{tabular}
    \end{center}
    
\subsection{Zaremba theory}

    \subsubsection{$\dim_H(E_5)$}
    \label{Appendix VI-1}
    We present coefficients of the polynomial test functions~$f$ and~$g$
    used in~\eqref{dimE5:eq}.  
    %in  C^\alpha\left(\oplus_{j=1}^4[0,1]\times\{j\}\right)$ 
    $$
    f= \sum_{k=0}^{15} a_k x^k, \quad 
    g = \sum_{k=0}^{15} b_k x^k. % , \quad j =1,2,3,4.
    $$
    Similarly to the previous examples, the functions $f$ and $g$ are monotone and they can be bounded
    from below by their value at~$1$. In particular, we have $f(1), g(1) \ge
    0.4$.  
    \begin{center}
    \begin{tabular}{|c|l||c|l|}
        \hline
 \multicolumn{2}{|c||}{$f$} & \multicolumn{2}{|c|}{$g$} \\
        \hline
        $a_0 		$&$   \phantom{-}1.002075775192587   $  &  $b_0     $&$  \phantom{-}1.002075775192580   $  \\ 
        $a_1 		$&$   -0.863832791554195   $  &  $b_1     $&$   -0.863832791551160   $  \\ 
        $a_2 		$&$   \phantom{-}0.694904605500679   $  &  $b_2     $&$   \phantom{-}0.694904605500778   $  \\ 
        $a_3 		$&$   -0.563982609501401   $  &  $b_3     $&$   -0.563982609938672   $  \\ 
        $a_4 		$&$   \phantom{-}0.462936795376894   $  &  $b_4     $&$   \phantom{-}0.462936803438964   $  \\ 
        $a_5 		$&$   -0.382787786278096   $  &  $b_5     $&$   -0.382787860060200   $  \\ 
        $a_6 		$&$   \phantom{-}0.317798375235142   $  &  $b_6     $&$   \phantom{-}0.317798788806115   $  \\ 
        $a_7 		$&$   -0.263717774563333   $  &  $b_7     $&$   -0.263719322983292   $  \\ 
        $a_8 		$&$   \phantom{-}0.216187532791877   $  &  $b_8     $&$   \phantom{-}0.216191581202906   $  \\ 
        $a_9 		$&$   -0.170354610076857   $  &  $b_9  $&$   -0.170362167218400   $  \\ 
        $a_{10}     $&$   \phantom{-}0.123046705346496   $  &  $b_{10}  $&$   \phantom{-}0.123056842712685   $  \\ 
        $a_{11}     $&$   -0.076412879097916   $  &  $b_{11}  $&$   -0.076422574631579   $  \\ 
        $a_{12}     $&$   \phantom{-}0.037844499485800   $  &  $b_{12}  $&$   \phantom{-}0.037850946915568   $  \\ 
        $a_{13}     $&$   -0.013650349050295   $  &  $b_{13}  $&$   -0.013653178641107   $  \\ 
        $a_{14}     $&$   \phantom{-}0.003133011516183   $  &  $b_{14}  $&$   \phantom{-}0.003133747261017   $  \\ 
        $a_{15}     $&$   -0.000339760445059   $  &  $b_{15}  $&$   -0.000339846126736   $  \\
        \hline
    \end{tabular}
    \end{center}
    
   \subsubsection{$\dim_H(E_4)$}
   \label{Appendix VI-2}
   Coefficients of the polynomial test functions~$f$ and~$g$
    used in~\eqref{dimE4:eq}.  
    %in  C^\alpha\left(\oplus_{j=1}^4[0,1]\times\{j\}\right)$ 
    $$
    f= \sum_{k=0}^{15} a_k x^k, \quad 
    g = \sum_{k=0}^{15} b_k x^k. % \quad j =1,2,3,4.
    $$
    Similar to the previous examples, the functions $f$ and $g$ are monotone and they can be bounded
    from below by their value at~$1$. In particular, we have $f(1), g(1) \ge
    0.4$.  
    \begin{center}
    \begin{tabular}{ |c|l||c|l| }
        \hline
        \multicolumn{2}{|c|}{$f$} &  \multicolumn{2}{|c|}{$g$} \\
        \hline
        $a_0    $&$    \phantom{-}1.001981557057916      $  &  $b_0    $&$  \phantom{-}1.001981557057906       $ \\ 
        $a_1    $&$    -0.824549641777407      $  &  $b_1    $&$   -0.824549641773357       $ \\ 
        $a_2    $&$    \phantom{-}0.632377740413372      $  &  $b_2    $&$   \phantom{-}0.632377740394589       $ \\ 
        $a_3    $&$    -0.489751892709023      $  &  $b_3    $&$   -0.489751892404770       $ \\ 
        $a_4    $&$    \phantom{-}0.384644919648888      $  &  $b_4    $&$   \phantom{-}0.384644915987248       $ \\ 
        $a_5    $&$    -0.305093638471734      $  &  $b_5    $&$   -0.305093610668140       $ \\ 
        $a_6    $&$    \phantom{-}0.243481281464752      $  &  $b_6    $&$   \phantom{-}0.243481142895873       $ \\ 
        $a_7    $&$    -0.194635007255869      $  &  $b_7    $&$   -0.194634539676458       $ \\ 
        $a_8    $&$    \phantom{-}0.154196605135780      $  &  $b_8    $&$   \phantom{-}0.154195521320957       $ \\ 
        $a_{9}  $&$    -0.118018014953805      $  &  $b_{9}  $&$   -0.118016296613633       $ \\ 
        $a_{10} $&$    \phantom{-}0.083325052299187      $  &  $b_{10} $&$   \phantom{-}0.083323249134991       $ \\ 
        $a_{11} $&$    -0.050893608677143      $  &  $b_{11} $&$   -0.050892477178423       $ \\ 
        $a_{12} $&$    \phantom{-}0.024910421954700      $  &  $b_{12} $&$   \phantom{-}0.024910155247198       $ \\ 
        $a_{13} $&$    -0.008908424701076      $  &  $b_{13} $&$   -0.008908569288906       $ \\ 
        $a_{14} $&$    \phantom{-}0.002031148876994      $  &  $b_{14} $&$   \phantom{-}0.002031269948929       $ \\ 
        $a_{15} $&$    -0.000219053588808      $  &  $b_{15} $&$   -0.000219079665840       $ \\ 
        \hline
    \end{tabular}
\end{center}

\subsubsection{$\dim_H(E_{1235})$}
   \label{Appendix VI-3}
   Coefficients of the polynomial test functions~$f$ and~$g$
   corresponding to $t_1 = 0.7709149399\,36$ and $t_0 = t_1 + 3\cdot 10^{-12}$
   respectively. 
   %in  C^\alpha\left(\oplus_{j=1}^4[0,1]\times\{j\}\right)$ 
    $$
    f= \sum_{k=0}^{15} a_k x^k, \quad 
    g = \sum_{k=0}^{15} b_k x^k.
    $$
    Similar to the previous examples, the functions $f$ and $g$ are monotone and they can be bounded
    from below by their value at~$1$. In particular, we have $f(1), g(1) \ge
    0.4$.  
    \begin{center}
    \begin{tabular}{ |c |l ||c|l| }
        \hline
       \multicolumn{2}{|c|}{$f$} & \multicolumn{2}{|c|}{$g$} \\
        \hline
        $a_0    $&$   \phantom{-}1.001943825796940    $   &    $b_0    $&$   \phantom{-}1.001943825796930    $  \\ 
        $a_1    $&$   -0.808837588421526    $   &    $b_1    $&$   -0.808837588417559    $  \\ 
        $a_2    $&$   \phantom{-}0.615494218966991    $   &    $b_2    $&$   \phantom{-}0.615494218954915    $  \\ 
        $a_3    $&$   -0.474372511899657    $   &    $b_3    $&$   -0.474372511806688    $  \\ 
        $a_4    $&$   \phantom{-}0.371576736077143    $   &    $b_4    $&$   \phantom{-}0.371576735828591    $  \\ 
        $a_5    $&$   -0.294687522275668    $   &    $b_5    $&$   -0.294687526007311    $  \\ 
        $a_6    $&$   \phantom{-}0.235767134415525    $   &    $b_6    $&$   \phantom{-}0.235767178162078    $  \\ 
        $a_7    $&$   -0.189403864881314    $   &    $b_7    $&$   -0.189404097267897    $  \\ 
        $a_8    $&$   \phantom{-}0.151071322536495    $   &    $b_8    $&$   \phantom{-}0.151072090158663    $  \\ 
        $a_{9}  $&$   -0.116493763140853    $   &    $b_{9}  $&$   -0.116495485397309    $  \\ 
        $a_{10} $&$   \phantom{-}0.082822264062997    $   &    $b_{10} $&$   \phantom{-}0.082824975856055    $  \\ 
        $a_{11} $&$   -0.050871691397334    $   &    $b_{11} $&$   -0.050874703061709    $  \\ 
        $a_{12} $&$   \phantom{-}0.025002624053740    $   &    $b_{12} $&$   \phantom{-}0.025004941184306    $  \\ 
        $a_{13} $&$   -0.008966855413747    $   &    $b_{13} $&$   -0.008968032838312    $  \\ 
        $a_{14} $&$   \phantom{-}0.002048316877335    $   &    $b_{14} $&$   \phantom{-}0.002048672642558    $  \\ 
        $a_{15} $&$   -0.000221169553698    $   &    $b_{15} $&$   -0.000221217982471    $  \\ 
        \hline
    \end{tabular}
\end{center}

    \subsection{Fuchsian Schottky groups}
    \label{Appendix VII} 
    Coefficients of the polynomial test functions $\underline f = (f_0,f_1,f_2)$
    and  $\underline g=(g_0,g_1,g_2)$, where each $f_j$ and $g_j$ for  $j=0,1,2$
    is defined on the interval $X_j$, respectively.
    $$
     f_j = \sum_{k=0}^{24} a_k^j x^k, \quad 
     g_j = \sum_{k=0}^{24} b_k^j x^k, \quad j = 0,1,2. 
    $$
  The functions $f_0$ and $g_0$ are even and therefore their odd coefficients
  vanish: $a^0_{2k+1} = b^0_{2k+1} \equiv 0$. Moreover, $f_1(x) =
  f_2(-x)$ and $g_1(x) = g_2(-x)$ therefore their even coefficients agree and
  their odd coefficients  have the opposite signs: 
  $a^1_{2k} = a^2_{2k}$, $a^1_{2k+1} = -a^2_{2k+1}$;
  $b^1_{2k} = b^2_{2k}$, $b^1_{2k+1} = -b^2_{2k+1}$.

  \bigskip

    \scalebox{0.85}{%
    \begin{tabular}{ |c  | r | }
        \hline
        \multicolumn{2}{|c|}{$f_1$} \\
        \hline
          $a^1_0   	$&$    2.8770704827462130787849251492030     $ \\
          $a^1_1   	$&$  -11.4401515732168029128293331165380     $ \\
          $a^1_2   	$&$   41.2783414403998127337761764215520     $ \\
          $a^1_3   	$&$ -111.0688407519945991015658074607160     $ \\
          $a^1_4   	$&$  229.2359826830719504436183231429020     $ \\
          $a^1_5   	$&$ -374.1839317918421651508580498351190     $ \\
          $a^1_6   	$&$  494.2486182891231866860550011193190     $ \\
          $a^1_7   	$&$ -537.1550718818002043450127874148360     $ \\
          $a^1_8   	$&$  486.1852872370976132985936371604760     $ \\
          $a^1_{9} 	$&$ -369.6791357119376158018979254258860     $ \\
          $a^1_{10}    $&$  237.5656093180881359651809836821330     $ \\
          $a^1_{11}    $&$ -129.5193450984147750120850142699940     $ \\
          $a^1_{12}    $&$   60.0196876244694040557343257845630     $ \\
          $a^1_{13}	$&$  -23.6421320534647605005954912805860     $ \\
          $a^1_{14}    $&$    7.9021374699139922100387246571970     $ \\
          $a^1_{15}	$&$   -2.2327574635963721814894503262740     $ \\
          $a^1_{16}    $&$    0.5301045742621054501101376372390     $ \\
          $a^1_{17}	$&$   -0.1048257433528365630840057396770     $ \\  
          $a^1_{18}    $&$    0.0170498220734007394058868515300     $ \\ 
          $a^1_{19}    $&$   -0.0022409149968744747201392201870     $ \\ 
          $a^1_{20}	$&$    0.0002320048619867762156560576330     $ \\
          $a^1_{21}    $&$   -0.0000182076188328122444526479340     $ \\
          $a^1_{22}	$&$    1.017711979517475899435180\cdot10^{-6}\phantom{18}  $ \\
          $a^1_{23}    $&$   -3.60864141942206682494120\cdot10^{-8}\phantom{188}   $ \\
          $a^1_{24}    $&$    6.09941161252684275320\cdot10^{-10}\phantom{88888}     $ \\
        \hline
        \multicolumn{2}{|c|}{$f_0$} \\
        \hline
          $a^0_0    $&$  0.988509120501286981726492299097        $\\
          $a^0_2    $&$  0.157542850847455232142931716289        $\\
          $a^0_4    $&$  0.042904006074342082007020639894        $\\
          $a^0_6    $&$  0.013311224077669383683707536427        $\\
          $a^0_8    $&$  0.004429850096065885316293787561        $\\
          $a^0_{10} $&$  0.001543440394063281474032350116        $\\
          $a^0_{12} $&$  0.000555351173373256765356036270        $\\
          $a^0_{14} $&$  0.000204517408718030083947390234        $\\ 
          $a^0_{16} $&$  0.000076609680108249015238001359        $\\ 
          $a^0_{18} $&$  0.000029061155169646923244681761        $\\ 
          $a^0_{20} $&$  0.000011137276040685233840494967        $\\ 
          $a^0_{22} $&$  4.2275802352717104321010360\cdot10^{-6}   $\\ 
          $a^0_{24} $&$  1.9876322730511567993854040\cdot10^{-6}   $\\ 
        \hline
    %\end{tabular}
    %}
    %\scalebox{0.65}{%
    %\begin{tabular}{ |c  | l | }
        \end{tabular}
    }
    \scalebox{0.85}{%
    \begin{tabular}{|c | r | }
        \hline
        \multicolumn{2}{|c|}{$g_1$}\\
        \hline
     $b^1_0  		$&$     2.877070483741637235339396793709    $ \\
     $b^1_1 		$&$   -11.440151580102508283298777329005  $ \\
     $b^1_2  		$&$    41.278341467154591279141888564443   $ \\
     $b^1_3 		$&$  -111.068840826093040583068331287781 $ \\
     $b^1_4       $&$   229.235982838419674330781933540686  $ \\
     $b^1_5 		$&$  -374.183932047938578510658766660908 $ \\
     $b^1_6 		$&$   494.248618629707611522216515713345  $ \\
     $b^1_7  		$&$  -537.155072253799253891555878102640 $ \\
     $b^1_8 		$&$   486.185287575077537070728753265866  $ \\
     $b^1_{9} 	$&$  -369.679135969693151986947450814820 $ \\
     $b^1_{10} 	$&$   237.565609484126480000044027316791  $ \\
     $b^1_{11} 	$&$  -129.519345189116330520934728232633 $ \\
     $b^1_{12} 	$&$    60.019687666569911991602610502470   $ \\
     $b^1_{13} 	$&$   -23.642132070071466367919223736091  $ \\
     $b^1_{14} 	$&$     7.902137475471208082330326553248    $ \\
     $b^1_{15} 	$&$    -2.232757465168180629595168192690   $ \\
     $b^1_{16} 	$&$     0.530104574635619914117715916573    $ \\
     $b^1_{17} 	$&$    -0.104825743426755206788900861842   $ \\ 
     $b^1_{18} 	$&$     0.017049822085431863890249313609    $ \\ 
     $b^1_{19} 	$&$    -0.002240914998456736922493194695   $ \\ 
     $b^1_{20} 	$&$     0.000232004862150679716335333676    $ \\ 
     $b^1_{21} 	$&$    -0.000018207618845681630026751529   $ \\ 
     $b^1_{22} 	$&$     1.0177119802371267732818260\cdot10^{-6}  $ \\ 
     $b^1_{23} 	$&$    -3.60864142197485386059250\cdot10^{-8}   $ \\ 
     $b^1_{24} 	$&$     6.099411616843192081320\cdot10^{-10}    $ \\ 
        \hline
        \multicolumn{2}{|c|}{$g_0$}\\
        \hline
         $b^0_0    $&$  0.988509120496506332431826934199       $  \\
         $b^0_2    $&$  0.157542850912623683440552637873       $  \\ 
         $b^0_4    $&$  0.042904006097255254396958615314       $  \\ 
         $b^0_6    $&$  0.013311224085738465758389603856       $  \\ 
         $b^0_8    $&$  0.004429850098979983540304963280       $  \\ 
         $b^0_{10} $&$  0.001543440395140704805733043742       $  \\ 
         $b^0_{12} $&$  0.000555351173779220956618567711       $  \\ 
         $b^0_{14} $&$  0.000204517408873232993380371066       $  \\ 
         $b^0_{16} $&$  0.000076609680168235804587352199       $  \\ 
         $b^0_{18} $&$  0.000029061155193021143267002387       $  \\ 
         $b^0_{20} $&$  0.000011137276049855568024571829       $  \\ 
         $b^0_{22} $&$  4.2275802388232323523285170\cdot10^{-6}  $  \\ 
         $b^0_{24} $&$  1.9876322747582711067418060\cdot10^{-6}  $  \\ 
        \hline
    %\end{tabular}
    %}
    %\scalebox{0.65}{%
    %\begin{tabular}{ |c  | l | }
          \end{tabular}
    }

   \subsection{Non-linear example}
    \label{Appendix VIII}
   Coefficients of the polynomial test functions~$f$ and~$g$
   corresponding to $t_0 = 0.4934480908\,02$ and $t_1 = t_0 +  10^{-12}$
   respectively.
    %in  C^\alpha\left(\oplus_{j=1}^4[0,1]\times\{j\}\right)$ 
    $$
    f= \sum_{k=0}^{7} a_{2k} x^{2k}, \quad 
    g = \sum_{k=0}^{7} b_{2k} x^{2k}.
    $$
  % Both functions are even, therefore their odd coefficients vanish:
  % $ a_{2k+1} \equiv 0 $, $b_{2k+1}  \equiv 0 $. 
  \begin{center}
\begin{tabular}{ |c| l ||c|l| }
        \hline
        \multicolumn{2}{|c|}{$f$} & \multicolumn{2}{|c|}{$g$} \\
        \hline
 $a_0    $&$ \phantom{-}0.260509445190371               $&$  b_0     $&$  \phantom{-}0.260509445190371             $  \\ 
 $a_2    $&$ -4.88560887219182 \cdot10^{-3}   $&$  b_2     $&$  -4.88560887219092\cdot10^{-3}  $  \\ 
 $a_4    $&$ \phantom{-} 3.65942002704976 \cdot10^{-4}   $&$  b_4     $&$ \phantom{-}  3.65942002704768\cdot10^{-4}  $  \\ 
 $a_6    $&$ -3.48521776330216 \cdot10^{-5}   $&$  b_6     $&$  -3.48521776329928\cdot10^{-5}  $  \\ 
 $a_8    $&$ \phantom{-} 3.61101766657243 \cdot10^{-6}   $&$  b_8     $&$ \phantom{-}  3.61101766656879\cdot10^{-6}  $ \\ 
 $a_{10} $&$ -3.88502731592718 \cdot10^{-7}   $&$  b_{10}  $&$  -3.88502731592271\cdot10^{-7}  $ \\ 
 $a_{12} $&$ \phantom{-} 4.09696977259504 \cdot10^{-8}   $&$  b_{12}  $&$ \phantom{-}  4.09696977258989\cdot10^{-8}  $ \\ 
 $a_{14} $&$ -3.23465524617897 \cdot10^{-9}   $&$  b_{14}  $&$  -3.23465524617468\cdot10^{-9}  $ \\ 
        \hline
\end{tabular}
\end{center}

\medskip
      
  {\footnotesize
  \noindent
  \textsc{P. Vytnova, Department of Mathematics, Warwick University, Coventry,
  CV4 7AL, UK}
  \noindent
  \textit{E-mail address}: \texttt{P.Vytnova@warwick.ac.uk}
  \par
  \addvspace{\medskipamount}
  \noindent
  \textsc{M. Pollicott, Department of Mathematics, Warwick University, Coventry, CV4 7AL, UK.}
  \noindent
  \textit{E-mail address}: \texttt{masdbl@warwick.ac.uk}
}

      \end{document}